\newcommand \driicell [1] {\drtwocell \omit {#1}}
\newcommand \drriicell [1] {\drrtwocell \omit {#1}}
\newcommand \uriicell [1] {\urtwocell \omit {#1}}
\newtheorem{theorem}{Theorem}
\newtheorem{lemma}{Lemma}
\newtheorem{varexample}{Example}
\newenvironment{example}{\begin{varexample}\em}{\em\end{varexample}}
\newtheorem{definition}{Definition}
\newtheorem{varremark}{Remark}
\newenvironment{remark}{\begin{varremark}\em}{\em\end{varremark}}
\newcommand{\ra}{\mathop{\rightarrow}}
\newcommand{\ralim}{\mathop{\ra}\limits}
\newcommand{\la}{\mathop{\leftarrow}}
\newcommand{\lalim}{\mathop{\la}\limits}
\newcommand{\longrightarrowlim}{\mathop{\longrightarrow}\limits}
\newcommand{\opname}[1]{\operatorname{#1}}
\newcommand{\catname}[1]{\boldsymbol{\opname{{#1}}}}
\newcommand{\inprod}[1]{\left\langle{#1}\right\rangle}
\newcommand{\C}{\catname{C}}
\newcommand{\X}{\catname{X}}
\newcommand{\Y}{\catname{Y}}
\newcommand{\Cop}{\catname{C}^{\opname{op}}}
\newcommand{\Cat}{\catname{Cat}}
\newcommand{\Set}{\catname{Set}}
\newcommand{\Bicat}{\catname{Bicat}}
\newcommand{\Hor}{\catname{Hor}}
\newcommand{\Ver}{\catname{Ver}}
\newcommand{\Squ}{\catname{Squ}}
\newcommand{\iiCob}{\catname{2Cob}}
\newcommand{\nCobi}{\catname{nCob}}
\newcommand{\nCob}{\catname{nCob}_2}
\newcommand{\Cspan}{\opname{Span}(\catname{C})}
\newcommand{\Cosp}{\opname{Cosp}}
\newcommand{\CCosp}{\opname{Cosp}(\catname{C})}
\newcommand{\iiCCosp}{\opname{2Cosp}(\catname{C})}
\newcommand{\Ob}{\catname{Obj}}
\newcommand{\M}{\catname{Mor}}
\newcommand{\B}{\catname{2Mor}}
\newcommand{\V}{\catname{Vect}}
\newcommand{\iiV}{\catname{2Vect}}
\newcommand{\Hilb}{\catname{Hilb}}
\newcommand{\iiH}{\catname{2Hilb}}
\newcommand{\CG}{\mathbbm{C}[G]}
\newcommand{\ZCG}{Z(\mathbbm{C}[G])}
\newcommand{\VG}{\catname{Vect}[G]}
\newcommand{\ZVG}{Z(\catname{Vect}[G])}
\newcommand{\db}{double bicategory}
\newcommand{\dbs}{double bicategories}
\newcommand{\vdb}{Verity double bicategory}
\newcommand{\vdbs}{Verity double bicategories}
\newcommand{\Obj}{\opname{Obj}}
\newcommand{\Mor}{\opname{Mor}}
\newcommand{\Path}{\opname{Path}}
\newcommand{\id}{\opname{id}}
\newcommand{\mathd}{\mathrm{d}}
\newcommand{\br}[1]{\langle {#1} \rangle}
\newcommand{\fc}[1]{[ \Pi_1({#1}) , G ]}
\newcommand{\Z}[1]{\bigl{[} \fc{#1} , \V \bigr{]}}
\begin{document}


\title{Extended TQFT's and Quantum Gravity}
\author{Jeffrey Morton}
\address{Mathematics Department, University of Western Ontario}
\email{\tt{jmorton9@uwo.ca}}

\begin{abstract}
\addcontentsline{toc}{chapter}{Abstract}
This paper gives a definition of an extended topological quantum
field theory (TQFT) as a weak 2-functor $Z: \nCob \ra \iiV$, by
analogy with the description of a TQFT as a functor $Z: \nCobi \ra
\V$.  We also show how to obtain such a theory from any finite group
$G$.  This theory is related to a topological gauge theory, the
Dijkgraaf-Witten model.  To give this definition rigorously, we first
define a bicategory of cobordisms between cobordisms.  We also give
some explicit description of a higher-categorical version of $\V$,
denoted $\iiV$, a bicategory of \textit{2-vector spaces}.  Along the
way, we prove several results showing how to construct 2-vector spaces
of \textit{$\V$-valued presheaves} on certain kinds of groupoids.  In
particular, we use the case when these are groupoids whose objects are
connections, and whose morphisms are gauge transformations, on the
manifolds on which the extended TQFT is to be defined.  On cobordisms
between these manifolds, we show how a construction of ``pullback and
pushforward'' of presheaves gives both the morphisms and 2-morphisms
in $\iiV$ for the extended TQFT, and that these satisfy the axioms for
a weak 2-functor.  Finally, we discuss the motivation for this
research in terms of Quantum Gravity.  If the results can be extended
from a finite group $G$ to a Lie group, then for some choices of $G$
this theory will recover an existing theory of Euclidean quantum gravity
in 3 dimensions.  We suggest extensions of these ideas which may be
useful to further this connection and apply it in higher dimensions.
\end{abstract}
\maketitle             
\pagebreak
\tableofcontents
\pagebreak

\section{Introduction}

In this paper, I will describe a connection between the ideas of
extended topological quantum field theory and topological gauge
theory.  This is motivated by consideration of a possible application
to quantum gravity, and in particular in 3 dimensions--a situation
which is simpler than the more realistic 4D case but has many of the
essential features.  Here, we consider this example as
related to one interesting case of a general formulation of
``Extended'' TQFT's.  This is described in terms of higher category
theory.

The idea that category theory could play a role in clarifying problems
in quantum gravity seems to have been first expressed by Louis Crane
\cite{crane}, who coined the term ``categorification'' .
Categorification is a process of replacing set-based concepts by
category-based concepts.  Categories are structures which have not
only elements (that is, \textit{objects}), but also arrows, or
\textit{morphisms} between objects as logically primitive concepts.
In many examples of categories, the morphisms are functions or
relations between the objects, though this is not always the case.
Categorification therefore is the reverse of a process of
\textit{decategorification} which involves discarding the structure
encoded in morphisms.  A standard example is the semiring of natural
numbers $\mathbbm{N}$, which can be seen as a decategorification of
the category of finite sets with set functions as arrows, since each
natural number can be thought of as an isomorphism class of finite
sets.  The sum and product on $\mathbbm{N}$ correspond to the
categorical operations of coproduct (disjoint union) and product
(cartesian product), which have purely arrow-based descriptions.  For
some further background on the concept of categorification, see work
by Crane and Yetter \cite{cycategorification}, or Baez and Dolan
\cite{categorification}.

So what we study here are categorified topological quantum field
theories (TQFT's).  The program of applying categorical notions to
field theories was apparently first described by Dan Freed
\cite{freed}, who referred to them as ``higher algebraic'' structures.
The motivation for doing this is that this framework appears to allow
us to find a new way of obtaining a known theory of quantum gravity in
3 dimensions---the Ponzano-Regge model---as a special case.  Moreover
what we recover is not just to the vacuum version of this 3D quantum
gravity---what one could expect from an ordinary TQFT---but to a form
in which spacetime contains matter.

To categorify the notion of a TQFT, we use the fact that a TQFT can be
described, in the language of category theory, as a \textit{functor}
from a category of \textit{cobordisms}---which is topological in
character--- into the category of Hilbert spaces.  To ``categorify''
this means to construct an analogous theory in the language of higher
categories---in particular, 2-categories.  One of the obstacles to
doing this is that one needs to have a suitable 2-category analogous
to the category of cobordisms, to represent structures such as the one
in Figure \ref{fig:cobcorners}.

A cobordism from a manifold $S$ to another manifold $S'$ is a manifold
with boundary $M$ such that $\partial M$ is the disjoint union of $S$
and $S'$, which we think of as an arrow $M : S \rightarrow S'$.  One
can define composition of cobordisms, by gluing along components of
the boundary, leading to the definition of a category $\nCobi$ of
$n$-dimensional cobordisms between $(n-1)$-dimensional manifolds.

\begin{figure}[h]
\begin{center}
\includegraphics{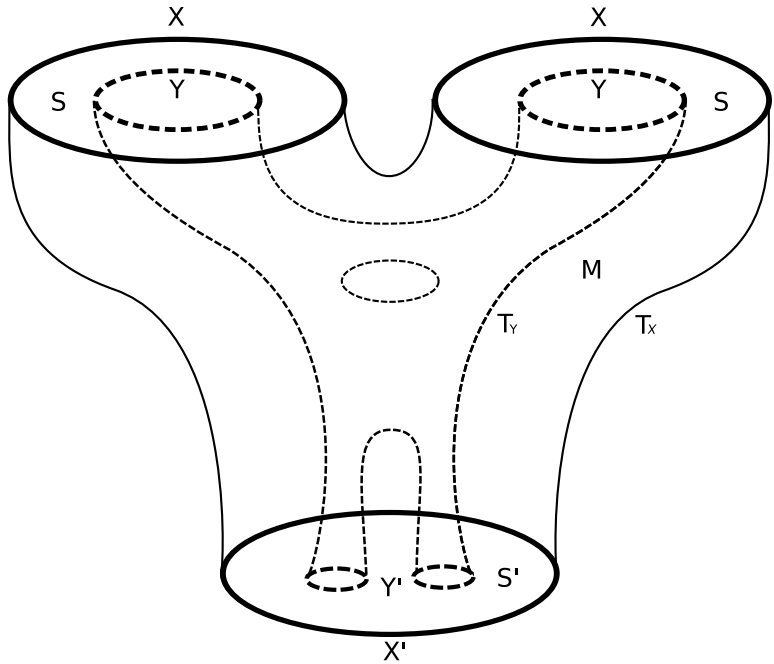}
\end{center}
\caption{\label{fig:cobcorners}A Cobordism With Corners}
\end{figure}

In Figure \ref{fig:cobcorners} we see a 3-manifold with corners which
illustrates these points and provides some motivating intuition.  This
can be seen a cobordism from the pair of annuli at the top to the
two-punctured disc at the bottom.  These in turn can be thought of,
respectively, as cobordisms from one pair of circles to another, and
from one circle to two circles.  The large cobordism has other
boundary components: the outside boundary is itself a cobordism from
two circles to one circle; the inside boundary (in dotted lines) is a
cobordism from one pair of circles to another pair.  We could
``compose'' this with another such cobordism with corners by gluing
along any of the four boundary components: top or bottom, inside or
outside.  This involves attaching another such cobordism along
corresponding boundary components by a diffeomorphism.  These
components are themselves manifolds with boundary, and ``gluing'' is
accomplished by specifying a diffeomorphism between them, fixing their
own boundaries.  Furthermore, as the Figure suggests, we can do such a
composition in either a ``vertical'' direction, gluing at $S$ or $S'$,
or a ``horizontal'' direction, gluing at $T_X$ or $T_Y$.

We want to define an ``extended TQFT'', which assigns higher
algebraic data to the manifolds, cobordisms, and cobordisms with
corners in this setting.  One necessary preliminary for the example we
are interested in is a description of topological quantum field
theories in the usual sense.  This is reviewed in Chapter
\ref{chap:TQFT}, beginning in Section \ref{sec:ncob}.  Atiyah's
axiomatic description of TQFTs \cite{atiyah}, reviewed in Section
\ref{sec:tqftfunctor}), can be interpreted as defining TQFT's as
functors from a category of \textit{cobordisms} into $\V$:
\begin{equation}\label{eq:introtqft}
Z : \nCobi \rightarrow \V
\end{equation}
Where $\nCobi$ has $(n-1)$-dimensional manifolds its objects and
$n$-dimensional cobordisms as its morphisms.  A TQFT assigns a space
of states to each manifold, and a linear transformation between states
to cobordisms.

Section \ref{sec:fhk} discusses a construction due to Fukuma, Hosono,
and Kawai \cite{FHK} for constructing a TQFT explicitly in dimension
$n=2$ starting from any finite group $G$.  The FHK construction is an
example of how this quantum theory intimately involves a relation
between smooth and discrete geometric structures.  Specifically, this
topological theory can be thought of as coming from structures built
on manifolds and cobordisms via a triangulation---a decomposition of
the manifold into simplices.  It turns out that there is a close
connection between the ideas of a theory having ``no local degrees of
freedom'' in the discrete and continuum setting.  In the continuum
setting, this means that the theory is \textit{topological}---the
vector spaces and linear operators it assigns depending only on the
isomorphism class of the manifold or cobordism.  In the discrete
setting of a triangulated manifold, it means that the theory is
\textit{triangulation independent}

An important feature of a TQFT constructed this way is that it assigns
to a closed, connected 1-manifold (i.e. a circle) just some element of
the centre of the group algebra of $G$, denoted $\ZCG$.  A
standard interpretation of such a space in quantum theory would hold
that this is a quantization of a classical space of states.  The
classical space would then simply be $\CG$, so that quantum states
are (complex) linear combinations of classical states.  An assignment
of a group element to a circle, or loop, can be interpreted as a
\textit{connection} on the circle.  Then $\CG$ consists of
complex-valued linear combinations (``superpositions'') of such
connections.  The centre, $\ZCG$, consists of such superpositions
which commute with any element of $G$ (and hence of $\CG$).  These are
thus invariant under conjugation by any element of $G$.  Such a
conjugation is a ``gauge transformation'' of a connection - so these
elements are \textit{gauge invariant superpositions} of connections.

These interpretations turn out to be useful when we aim to produce
\textit{extended TQFT's}.  This notion was described by Ruth Lawrence
\cite{lawrence}.  These are theories similar to TQFT's, for which the
theory is defined not on cobordisms, but on manifolds with corners.
One setting where this arises is if we consider the possibility of
manifolds \textit{with boundary} connected by a cobordism.  In
particular, we are interested in the case where $S: X \rightarrow Y$
and $S' : X' \rightarrow Y'$ are already themselves cobordisms.  These
cobordisms between cobordisms, then, are manifolds with corners.  Here
we shall present a formalism for describing the ways such cobordisms
can be glued together.  Louis Crane has written a number of papers on
this issue, including one with David Yetter \cite{CY} which gives a
\textit{bicategory} of such cobordisms.  We want to define a structure 
$\nCob$, whose objects are $(n-2)$-manifolds, whose morphisms are
$(n-1)$-cobordisms, and whose 2-morphisms are $n$-cobordisms with
corners.  Just as a TQFT assigns a space of states to a manifold and a
linear map to a cobordisms, an extended TQFT will assign some such
algebraic data to $(n-2)$-manifolds, $(n-1)$-manifolds with boundary,
and $n$-dimensional manifolds with corners.  This data should have an
interpretation similar to that for a TQFT.

To clarify how to do this, we need to consider more carefully what
kind of structure $\nCob$ must be.  So we consider some background on
higher category theory.  This field of study is still developing, but
there are good introductions by Leinster \cite{leinster} and by Cheng
and Lauda \cite{chenglauda}.  The essential idea of higher category
theory is that as well as objects (represented in diagrams as
zero-dimensional), and morphisms (or arrows) connecting them (which
are one-dimensional), there also should be morphisms represented
by ``cells'' of two, three, or even higher dimensions, connecting
lower-dimensional morphisms.  For our purposes here, we only need to
consider higher categories with morphisms represented by at most
2-dimensional cells.  Chapter \ref{chap:bianddbl} discusses
bicategories and double categories, which we will generalize later,
and briefly describes some standard examples of these from homotopy
theory.

Whereas a category has objects and morphisms between objects, a
bicategory will have an extra layer of structure: objects, morphisms
between objects, and 2-morphisms between morphisms:
\begin{equation}\label{xy:2morphism}
  \xymatrix{
    x \ar@/^1pc/[r]^{f}="0" \ar@/_1pc/[r]_{g}="1" & y \\
      \ar@{=>}"0"+<0ex,-2ex> ;"1"+<0ex,+2ex>^{\alpha}
  }
\end{equation} The ``strict'' form of a bicategory is a 2-category,
which are reviewed by Kelly and Street \cite{kellystreet}, but we are
really interested in the weak forms---here, all the axioms which must
be satisfied by a category hold only ``up to'' certain
higher-dimensional morphisms.  That is, what had been equations are
replaced by specified 2-isomorphisms, which then must themselves
satisfy certain equations called coherence conditions.  Such coherence
conditions have been a persistent theme of category theory since its
inception by MacLane and Eilenberg (see, for instance,
\cite{maclane}), and are important features of higher categorical
structures.

Double categories, introduced by Ehresmann \cite{ehresmann1}
\cite{ehresmann2}, may be seen as ``internal'' categories in $\Cat$.
That is, a double category is a structure with a category of objects
and a category of morphisms.  Less abstractly, it has objects,
horizontal and vertical morphisms which can be represented
diagrammatically as edges, and squares.  These can be composed in
geometrically obvious ways to give diagrams analogous to those in
ordinary category theory.  Our example of cobordisms with corners
appears to be an example of a double category: the objects are the
manifolds, the morphisms are the cobordisms, and the 2-cells are the
cobordisms with corners.  In fact, as we shall see, this is too strict
for our needs.

We note here that there are several relations between TQFT's and
extended TQFT's on the one hand, and higher categories on the other.
The categorical features of standard TQFT's are described in some
detail by Bruce Bartlett \cite{bartlett}.  Crane and Yetter \cite{CY}
describe the algebraic structure of TQFT's and extended TQFT's,
showing how certain algebraic and higher-algebraic structures are
implied in the definition of a TQFT.  Examples include the well known
equivalence between 2D TQFT's and Frobenius algebras; connections
between 3D TQFT's and either suitable braided monoidal categories, or
Hopf algebras; and the appearance of ``Hopf categories'' in 4D TQFT's.
These illustrate the move to higher-categorical structures in
higher-dimensional field theories.  Baez and Dolan \cite{hdatqft}
summarize the connection between TQFT's and higher category theory, in
the form of the \textit{Extended TQFT Hypothesis}, suggesting that all
extended TQFT's can be viewed as representations of a certain kind of
``free $n$-category''.

The kind of $n$-category we are interested in in this paper is a
common generalization of a double category and a bicategory.  Double
categories are too strict to be really natural for our purpose,
however---composition in a double category must be strictly
associative, and in order to achieve this, one only considers
equivalence classes of cobordisms, not cobordisms themselves, as
morphisms.  So we consider a weakening of this structure, in the sense
that axioms for a double category giving equations (such as
associativity) will be true only up to specified 2-morphisms.  This
allows us to take morphisms to be just cobordisms themselves, and the
diffeomorphisms between them as 2-morphisms.  This is analogous to the
way in which the idea of a bicategory is a weakening of the idea of a
category.

Bicategories, however, are not really what we want either, since we
want to describe systems with changing boundary conditions, and the
most natural way to do this is by thinking of both initial and final
states, and these changing conditions, as part of the boundary.  We
call the structure which accomplishes this a {\vdb}, referring to
Dominic Verity, who introduced them and called them simply {\dbs}.  On
the other hand, we show in Theorem \ref{thm:equiv} that {\vdbs}
satisfying certain conditions give rise to bicategories.  In fact
$\nCob$ is an example of this.  The structure we use to describe such
compositions is the one we call a \textit{\vdb}.  These we describe in
Chapter \ref{chap:doublebicat}.  In these, the composition
laws of double categories are weakened.  That is, the associativity of
composition, and unit laws, of the horizontal and vertical categories
apply only up to specified higher morphisms.

In Section \ref{sec:equiv} we prove that a {\vdb} satisfying certain
conditions gives a bicategory.  To finish Section
\ref{chap:doublebicat}, we describe a general class of examples of
{\vdbs}, analogous to the result that $\Cspan$ is a bicategory.  A
``span'' is a diagram of the form $A \la C \ra B$.  Given two spans $A
\la C \ra B$ and $A \la C' \ra B$, a span map is a morphism $f: C \ra
C'$ such that the diagram:
\begin{equation}
\xymatrix{
 &  C' \ar[dl] \ar[dr] & \\
A & C \ar[u]^{f} \ar[r] \ar[l] & B
}
\end{equation}
commutes.  A cospan is defined in the same way, but with the arrows
reversed.  It is a classical result of B\'enabou \cite{benabou} that
for any category $C$ which has all limits, there is a bicategory
$\Cspan$ whose objects are objects of $C$, whose morphisms are spans
in $C$, and whose 2-morphisms are span maps.  The composition of
morphisms is by pullback - a universal construction.  In Section
\ref{sec:dblspan}, a similar concept in 2 dimensions is introduced,
namely ``double spans'' and ``double cospans''.  These give a broad
class of examples of {\vdbs}, and in particular, we can use them to
derive the fact that there is a double bicategory of cobordisms with
corners.

To prove this fact, Theorem \ref{thm:maintheorem}, requires some
technical lemmas, which are put off until Appendix
\ref{sec:internalbicat}.  These extend some results about bicategories
and double categories, namely that a double category can be seen as an
\textit{internal} category in $\Cat$, and that \textit{spans} in a
category $\catname{C}$ with pullbacks constitute the morphisms of a
bicategory, $\Cspan$.  We show a way to describe {\dbs}, internal
bicategories in $\Bicat$, and that {\vdbs} are simply examples of
these which satisfy certain special conditions.  We also show that
\textit{double spans} most naturally form an example of a {\db}, but
that they can be reduced by taking isomorphism classes in order to
obtain a {\vdb}.

We describe more specifically the geometric framework for cobordisms
with corners in Chapter \ref{chap:cobcorn}.  Gerd Laures \cite{laures}
discusses the general theory of cobordisms of manifolds with corners.
In the terminology used there, introduced by J\"anich \cite{jan68},
what we primarily discuss in this work are $\br{2}$-manifolds.  This
describes the relation of ``faces'' of the manifold, but in particular
in this case it is related to the fact that the \textit{codimension}
of the manifold is 2.  That is, the manifold $M$ (whose dimension is
$\opname{dim}(M) = n$) will have a boundary $\partial M$, which will
in turn be composed of faces which are manifolds with boundary, of
dimension $(n-1)$.  However, the boundaries of these faces will be
closed manifolds: they are manifolds of dimension $(n-2)$.  This
separates into \textit{faces}.  For us, the faces decompose into
components, and the codimension-2 faces are the source and the target
in both horizontal and vertical directions.  We call the resulting
structure $\nCob$, and in Section \ref{sec:bicatcwc} we prove the main
result about $\nCob$, Theorem \ref{thm:maintheorem}, that this indeed
forms a {\vdb}.

In Chapter \ref{chap:2hilb} we turn to the next essential element of
an extended TQFT, the 2-category $\iiV$ of 2-vector spaces.  This is a
categorified analog of the category $\V$ of vector spaces.  There are
several alternative notions of what $\iiV$ should be---this is a
common feature of categorification, since the same structure may have
arisen by discarding structure in more than one way.  The view adopted
here is that a 2-vector space is a certain kind of
$\mathbbm{C}$-linear additive category.  The properties of being
\textit{$\mathbbm{C}$-linear} and \textit{additive} give analogs of
the linear structure of a vector space at both the object and morphism
levels.  $\mathbbm{C}$-linearity means that the set of morphisms are
complex vector spaces.  We should remark that these properties mean
that 2-vector spaces are closely related to abelian categories
(introduced by Freyd \cite{freyd}, and studied extensively as the general setting for
homological algebra) have a structure on objects which is similar to
addition for vectors.  In particular, we are interested in the
analog of ``finite dimensional'' vector spaces, so 2-vector spaces
also need to be \textit{finitely semi-simple}, so every object is a
finite sum of simple ones. 

Section \ref{sec:KV} describes Kapranov-Voevodsky (KV) 2-vector
spaces---the kind described above.  Each of these is equivalent to the
category $\V^n$ for some $n$ (a folklore theorem whose proof has been
difficult to find, so is presented here, along with some others).
Thus, KV vector spaces give a higher analog of complex vector spaces,
which are all equivalent to some $\mathbbm{C}^n$.  In fact, categories
with both $\mathbbm{C}$-linearity and additiveness naturally have a
kind of ``scalar'' multiplication by vector spaces.  So in the
categorified setting, the category $\V$ itself plays the role of
$\mathbbm{C}$ for complex vector spaces.  So Yetter's \cite{yet}
alternative definition of a 2-vector space as a $\V$-module turns out
to be equivalent to a KV vector space in the case where it is finitely
semisimple.

We describe the morphisms between KV 2-vector
spaces---\textit{2-linear maps}.  A 2-linear map $T: \V^n \ra \V^m$
can be represented as matrices of vector spaces:
\begin{equation}
\begin{pmatrix}
T_{1,1} & \dots & T_{1,n} \\
\vdots & & \vdots \\
T_{l,1} & \dots & T_{l,k} \\
\end{pmatrix}
\begin{pmatrix}
  V_1 \\ 
  \vdots \\
  V_k
\end{pmatrix}
\end{equation}
which act on 2-vectors by matrix multiplication, using the tensor
product $\otimes$ in the role of multiplication, and the direct sum
$\oplus$ in the role of addition.  All 2-morphisms between two such
2-linear maps can be represented as matrices of linear transformations
which act componentwise.  Proofs of these widely-known bits of
folklore are, again, difficult to find, so are presented here.

We also show that the concept of an \textit{adjoint} functor can be
described in terms of matrix representations of 2-linear maps in much
the same way that the description of the adjoint of a linear map
relates to its matrix representation.  So the two notions of
``adjoint'' turn out to be closely connected in 2-vector spaces.

A special example of a 2-vector spaces---a \textit{group
2-algebra}---is described.  This turns out to be the starting point to
describe what 2-vector space a 3-dimensional extended TQFT assigns to
a circle.

This example leads to discussion, in Section \ref{sec:kv2vsgrpd}, of
how to build 2-vector spaces from groupoids.  We introduce the concept
of ``$\V$-presheaves'' on $\X$.  These are just functors from
$\X^{op}$ to $\V$ (or equivalently, since $\X$ is a groupoid, just
from $\X$ to $\V$).  The totality of these functors forms a category,
which we call $[\X,\V]$, whose objects are functors from $\X$ to $\V$,
and whose morphisms are natural transformations between functors.  One
important result, Lemma \ref{lemma:fgfckv}, says that for any finite
groupoid $\X$ (or one which is ``essentially'' finite, in a precise
sense) the category $[\X,\V]$ is a KV 2-vector space.

Studying these $\V$-presheaves on groupoids is of interest, partly
because it opens up the possibility of a categorified version of
quantizing a system by taking the space of $L^2$ functions on its
classical configuration space.  This is a Hilbert space of
complex-valued functions on that space---so considering a 2-vector
space of $\V$-valued functions is a categorified analog.

On the other hand, $\Set$-valued presheaves on certain kinds of
categories are generic examples of toposes, about which much is known
(see, for example, Johnstone \cite{elephant1}, \cite{elephant2}).
Some results about these can be shown for $\V$-valued presheaves also,
although there are significant differences resulting from the fact
that $\V$ is an additive category, whereas $\Set$ is Cartesian.

A theorem for $\V$-valued presheaves which resembles one for
$\Set$ is that functors between groupoids give rise to ``pullback''
and ``pushforward'' 2-linear maps between these 2-vector spaces of
presheaves.  From a functor
\[
f : \X \ra \Y
\]
we get the ``pullback''
\[
f^{\ast}: [\Y,\V] \ra [\X,\V]
\]
and the ``pushforward''
\[
f_{\ast} : [\X,\V] \ra [\Y,\V]
\]
The pullback is easy to describe: a functor $F$ on $\Y$ gives a
functor $F \circ f$ on $\X$ by composition with $f$.  But the
pushforward depends on the structure of $\V$: as described in
Definition \ref{def:pushcolimit}, given a presheaf $V \in [\X,\V]$,
the pushforward $f_{\ast}V $ gives a presheaf in $[\Y,\V]$ which
gives, at any object $y$ in $\Y$, the colimit of a certain diagram.
This holds more famously for ordinary---that is,
$\Set$-valued---presheaves (see, e.g. \cite{macmoer}), where such a
collimit is simply the union of all the sets in the essential preimage
of the object $y$, modulo any relations imposed by the morphisms in
this essential preimage.  The intuition is that one ``adds up'' the
contributions from an entire preimage---but since there are
isomorphisms, this must be modified.  Similarly, for $\V$-valued
presheaves, the colimit is a coproduct (i.e. direct sum) of vector
spaces modulo similar relations.  Clearly, the ability to construct
this pushforward depends critically on the ability to take finite
colimits in $\V$.

Both the pullback and pushforward maps carry presheaves on one
groupoid to presheaves on another.  For a given $f$, the two 2-linear
maps $f^{\ast}$ and $f_{\ast}$ form an \textit{ambidextrous
adjunction}.  That is, $f_{\ast}$ is both a left and a right adjoint
to $f$, meaning in particular that for any presheaves $V \in [\X,\V]$
and $W \in [\Y,\V]$, we have both $\hom(V,f^{\ast}W) \cong
\hom(f_{\ast}V,W)$ and $\hom(f^{\ast}W,V) \cong \hom(W,f_{\ast}V)$.
We then say that they are adjoint 2-linear maps---this is an example
of the relationship between adjointness of functors and adjointness of
linear maps.

This pair of adjoint maps, the pullback and pushforward, turns out to
be essential to the constructions used to develop the extended TQFT's
we are interested in.  The reason is related to the fact that we
described the cobordisms on which they are defined in terms of
\textit{cospans}, as we will see shortly.

In Section \ref{sec:CY}, we fill out some of the details of what a
2-Hilbert space should be, including a definition of the inner
product, and an extension to infinite dimension.  Not all of this will
be used for our main theorem, but it is helpful to put the rest in
perspective, and will be referred to in Chapter \ref{chap:QG} when we
discuss proposed extensions of our main results to quantum gravity.

In Chapter \ref{chap:connexttqft} we discuss how to construct an
extended TQFT based on a double bicategory of cobordisms with corners,
by means of the interpretation of a TQFT in terms of a
\textit{connection} on the manifolds involved.  This is related to the
\textit{Dijkgraaf-Witten models}, which are topological gauge
theories.  Our aim is to give a construction of an extended TQFT $Z_G$
as a \textit{weak} 2-functor, starting from any finite gauge group $G$
(in a way which suggests how to extend the theory to an infinite gauge
group).

Section \ref{sec:ZGonMan} describes how to get a KV 2-vectorspace from
a manifold.  Given a manifold $B$, one first takes the fundamental
groupoid $\Pi_1(B)$, whose objects are the points in $B$ and whose
morphisms are homotopy classes of paths in $B$.  Then a connection on
the cobordism (or one of the components of the boundary) is a functor
$A : \Pi_1(B) \ra G$ where the gauge group $G$ is thought of as a
category (in fact a groupoid) with one object.

These functors correspond to \textit{flat $G$-bundles}---that is, each
such functor from $\Pi_1(M)$ to $G$ corresponds to a flat connection
on some principal $G$-bundle over $M$.  Some such functor corresponds
to any such connection on \textit{any} $G$-bundle.  For convenience,
we just call them ``connections''.  Gauge transformations between
connections correspond exactly to the natural transformations between
the functors into $G$.  So the connections and gauge transformations
are naturally organized into a functor category
$\opname{hom}(Pi_1(B),G)$, or just $\fc{B}$ for short.  This category
is a groupoid, and since manifolds have finitely generated fundamental
groups, it is a finite groupoid.  This now plays the role of the
``configuration space'' of the theory.

We then want to \textit{quantize} this configuration space $\fc{B}$.
In ordinary quantum mechanics, quantization might involve taking the
space of $L^2$ functions from a configuration space into
$\mathbbm{C}$.  In the categorified setting, we take the category
functors into $\V$---what we have called $\V$-presheaves---and get a
2-vector space.  We will be considering only the case $G$ is finite,
and as remarked, $\Pi_1(B)$ finitely generated.  So then $\fc{B}$ is
an essentially finite groupoid, and $Z_G(B) = \Z{B}$ will be a KV
2-vector space.

Next one wants to find 2-linear maps from cobordisms.  But a cobordism
$S: B \ra B'$ can be interpreted as a special cospan
\begin{equation}
\xymatrix{
  & S & \\
 B \ar[ur]^{i} & & \ar[ul]_{i'} B'
}
\end{equation}
with two inclusion maps.  Since the operation $\fc{-}$ is a
contravariant functor, applying it results in a span of the resulting
groupoids, where the inclusions are replaced with restriction maps:
\begin{equation}
\xymatrix{
  & \fc{S} \ar[dl]_{p} \ar[dr]_{p'} & \\
 \fc{B}  & &  \fc{B'}
}
\end{equation}

This is a span, which we can think of as giving restrictions from a
groupoid of ``histories'' in the middle to groupoids of
``configurations'' at the ends, via the projection maps $p$ and $p'$.
These are source and target maps, when we think of the original span
as a cobordism in $\nCobi$.  This groupoid represents configurations
of some system whose individual states are flat $G$-bundles.  Thinking
of spaces in terms of their path groupoids forces us to categorify the
gauge group.  The DW model fits this framework if we think of $G$ as a
one-object groupoid (though one might generalize to replace the gauge
group $G$ in various ways, such as a 2-group, as discussed by Martins
and Porter \cite{martinsporter}) and get a different theory.

After taking $\V$-presheaves, we are back to a cospan (again because
the functor $[-,\V]$ is contravariant).  It is:
\begin{equation}
\xymatrix{
  & \Z{S}   & \\
 \Z{B} \ar[ur]_{p^{\ast}} & &  \Z{B'} \ar[ul]_{(p')^{\ast}}
}
\end{equation}
where the most evident choices for 2-linear maps between these KV
2-vector spaces are the pullbacks along the restriction maps.  The
functor $\Z{-}$ which gives 2-vector spaces for manifolds, and indeed
topological spaces (as long as the fundamental group is finitely
generated).  We want to use it to yield some 2-functor $Z_G : \nCob \ra
\iiV$.  Objects in $\nCob$ are objects in a category of manifolds with corners, but we then
would like to get a 2-linear map from a cobordism.  However, this is
given as a cospan, so we have two pullback maps in the above diagram,
both of which have the adjoints discussed above.  Since $S$ is a
cobordism with source $B$ and target $B'$, we can take the adjoint
$(p')^{\ast}$ of the right-hand map, $(p')_{\ast}$, to get a 2-linear map:
\begin{equation}
(p')_{\ast} \circ (p)^{\ast} : Z_G(B) \ra Z_G(B')
\end{equation}
This will be $Z_G(S)$.  We refer to this as a ``pull-push'' process.
It consitsts of two stages.  The first stage is a ``pull'', which
gives a $\V$-presheaf $p^{\ast} F$ on the groupoid of connections on
the cobordism $S$ from $F$ on the manifold $B$.  This is done by
assigning to each connection $A$ on $S$ the vector space $p^{\ast}F(A)
= F \circ p (A)$ assigned by $F$ to the restriction $p(A) = A|_{B}$ of
$A$ to $B$ (and acts on gauge transformations in a compatible way).

The second stage is a ``push'', which gives a $\V$-presheaf on $B'$
from this $p^{\ast} F$ on the groupoid of connections on $S$.  This
assigns to each connection $A'$ on $B'$ a vector space
$(p')_{\ast}\circ p^{\ast}(F)= \opname{colim} p^{\ast}F(A)$, which is a
colimit over all the connections $A$ on $S$ which restrict to $A'$.
The colimit should be thought of as a direct sum over the equivalence
classes of such components.  The terms of the sum are, not the vector
spaces assigned by $p^{\ast}F$, but quotients of these which arise
from the fact that some connections may have nontrivial automorphisms.

The ``pull-push'' process is related to the idea of a ``sum over
histories''.  Recall that we can think of the 2-vector space of
$\V$-presheaves $Z_G(B)$ as a categorified equivalent of the Hilbert
space $L^2(X)$ we get when quantizing a classical system with
configuration space $X$.  So a component in the matrix representation
of the 2-linear transformation $Z_G(S)$ is indexed by configurations
(i.e. connections) on the initial and final spaces.  This component
vector space can be interpreted as a categorified ``amplitude'' to get
from the initial configuration to the final configuration.

A similar procedure, discussed in Section \ref{sec:ZGonCobCob}, is
used to get a 2-morphism from a cobordism between cobordisms.  That
is, given a cobordism with corners, $M : S_1 \ra S_2$, between two
cobordisms $S_1, S_2 : B \ra B'$, we have :
\begin{equation}
\xymatrix{
 & S_1 \ar[d]_{i} & \\
B \ar[ur]^{i_1} \ar[dr]_{i_2} & M & B' \ar[ul]_{i'_1} \ar[dl]^{i'_2} \\
 & S_2 \ar[u]^{i'} & \\
}
\end{equation}
To construct a natural transformation $Z_G(M) : Z_G(S_1) \ra
Z_G(S_2)$, a very similar process of ``pull-push'' The difference is
that instead of pulling and pushing $\V$-presheaves---that is,
2-vectors---one is pulling and pushing vectors.  These vectors can be
interpreted as $\mathbbm{C}$-valued functions on a basis of the vector
space which forms a component of the 2-linear map $Z_G(S_1)$ or
$Z_G(S_2)$.  Such a basis consists of equivalence classes of
connections on $S_1$ and $S_2$ respectively.  Choosing a particular
component (that is, fixing equivalence classes connections $A$ and
$A'$ on $B$ and $B'$), one then builds a linear transformation
\begin{equation}
Z_G(M)_{[A],[A']} : Z_G(S_1)_{[A],[A']} \ra Z_G(S_2)_{[A],[A']}
\end{equation}
by a ``pull-push''.  The ``pull'' phase of this process simply pulls
$\mathbbm{C}$-valued functors along the restriction map taking
connections on $M$ to connections on $S_1$.  The ``push'' phase here,
as at the previous level, assigns to a connection $A_2$ on $S_2$ a sum
over all connections on $M$ restricting to $A_2$.  And again, the sum
is not just of these components, but of a ``quotient'' which arises
from the automorphism group of each such connection on $M$.  This
quotient is related to the concept of ``groupoid cardinality'', and
this is discussed in Section \ref{sec:ZGonCobCob}.

So we have described a construction of an assignment $Z_G$ which gives
a KV 2-vector space for any manifold, a 2-linear map for any cobordism
of manifolds, and a natural transformation of 2-linear maps for any
cobordism between cobordisms. The main theorem here, which forms the
focus of Section \ref{sec:maintheorem}, is that this $Z_G$ indeed
forms a weak 2-functor from $\nCob$ to $\iiV$.  Along the way we will
have proved most of the properties needed, and it remains to verify
some technical conditions about the 2-morphisms which accomplish the
weak preservation of composites and units.

Finally, Chapter \ref{chap:QG} describes some of the motivation for
this work coming from quantum gravity, and particularly 3-dimensional
quantum gravity.  To really apply these results to that subject, one
would need to extend them.  Most immediately, one would need to show
that a construction like the one described will still give a weak
2-functor even when $G$ is not a finite group, but a Lie
group---or at least a compact one.

To do this would presumably require the use of the
infinite-dimensional variant of KV 2-vector spaces which Crane and
Yetter \cite{CY} call measurable categories.  This, and some of the
categorified equivalent of the structure of Hilbert spaces is
discussed in Section \ref{sec:CY}, and in Section \ref{sec:liegroups}
we address how it might be used to generalize the results above.  In
particular, we may not have infinite colimits available to perform the
``push'' part of our ``pull-push'' construction.  This means there
would have to be some other way to apply the idea of a ``sum over
histories'' in the categorified setting.  Our proposal is that this
should be related to the ``direct integral'' in the Crane-Yetter
measurable categories mentioned above.

Section \ref{sec:QG3D}, considers the special case when $G=SU(2)$,
which is the relevant gauge group for 3D quantum gravity.  The
particular case of interest is a 3D extended TQFT, where manifolds are
1-dimensional, joined by 2D cobordisms, which are in turn joined by 3D
cobordisms with corners.  We discuss how one might interpret the
theory as quantum gravity coupled to matter.  The basic idea is that
the manifolds represent boundaries of regions in space.  A circle
describes the boundary left when a point (up to homotopy) is removed
from 2-dimensional space.  The 2D cobordisms in our double bicategory
can then represent the ambient space it is removed from.
Alternatively the cobordisms can describe the ``world-line'' of such a
point particle''.  These two possibilities represent the
``horizontal'' and ``vertical'' directions within the {\vdb} of
cobordisms.  The cobordisms of cobordisms then represent the whole
``spacetime'', in a general sense, in which this situation is set.

The cobordism with corners in Figure \ref{fig:cobcorners} would then be
interpreted (reading top-to-bottom) as depicting a space in which two
regions bounded by the outside circles merge together into a single
region over time.  Inside each region at the beginning there is a
single puncture.  After the regions merge, the two punctures---now in
the same region---merge and split apart twice.  At the ``end''
(i.e. the bottom of the picture), there is a single region containing
two punctures.  The physical intuition is that a ``puncture'', or
equivalently the circular boundary around it, describes a point
particle.  The 2-vector space of states which the extended TQFT
assigns to the circle is then the 2-vector space of states for a
particle.

This 2-vector space consists of $\V$-presheaves on $\fc{S^1}$.
Example \ref{ex:ZonS1} shows for \textit{finite} groups $G$ that this
is generated by a finite set of objects, each of which corresponds to
a pair $([g],\rho)$, where $[g]$ is a conjugacy class in $G$, and
$\rho$ is a linear representation of $G$.  There is an obstacle to an
analogous fact in infinite dimensional 2-vector space, since these may
not have a basis of simple objects.  This fact is precisely analogous
to the fact that an infinite dimensional Hilbert space need not have a
countable basis, since it follows from the fact that not every object
will be \textit{finitely} generated from some set of simple objects -
and we do not have infinite sums available in $\V$.  However, even in an
infinite dimensional 2-vector space, it does make sense to speak of
simple objects, and we expect these to be of the form described.

So then for $G=SU(2)$, we have the simple $\V$-presheaves classified
by a conjugacy class in $SU(2)$, which is just an ``angle'' in
$[0,4\pi)$---since $SU(2)$ doubly covers $SO(3,1)$---and a
representation of $U(1)$, the stabilizer subgroup of a point under the
adjoint action of $SU(2)$ on itself.  Such representations give
integer ``spins''.  These are the same data which label particles in
3D quantum gravity - the ``angle'' is a mass, which has a maximum
value in 3D gravity, since mass causes a ``conical defect'' in the
geometry of space, which has a maximum possible angle.  The ``spin''
is related to angular momentum.

So this theory allows us to describe a space filled with world-lines
of ``particles'' labelled by (bounded) mass and spin.  This is exactly
the setup of the Ponzano-Regge model of 3D quantum gravity.  Our
expectation is that this model can be recovered from an extended TQFT
based on $SU(2)$.  This is related to a program, on which more details
can be found in a paper of Lee Smolin \cite{smolin}, which seeks to
study 3D quantum gravity by means of its relation to a 3D TQFT
associated to $SU(2)$ Chern-Simons theory.

Finally, in Section \ref{sec:QGwards}, we briefly suggest a possilble
direction to look for links between the theory given here, and
spin-foam models for BF theory, based on a categorification of the FHK
state sum approach to defining an ordinary TQFT.  We also suggest two
more directions in which one might generalize the theory described in
this paper in the same style as the passage from finite groups to
infinite Lie groups.  Two others are to pass from groups to
categorical groups, and to pass from groups to quantum groups.

We can think of a group as a kind of category with one object and all
morphisms invertible.  A categorical group will have a group of
objects and a group of morphisms, satisfying certain conditions.
Replacing our gauge group $G$ with a categorical group gives a theory
based not on \textit{connections}, but on \textit{2-connections}.
There is extensive work on this topic, but a good overview is the
discussion by Baez and Schreiber \cite{HGA} (see also the definition
of 2-bundles by Bartels \cite{bartels}).  The extension of the
Dijkgraaf-Witten model to categorical groups is discussed in a
somewhat different framework by Martins and Porter
\cite{martinsporter}.  An extension of these ideas to quantum groups
is less well studied, but the hope is to recover the connection
between $q$-deformed $SU(2)$ and the Turaev-Viro model, just as using
$SU(2)$ as gauge group recovers the Ponzano-Regge model, for quantum
gravity.

In all these directions, and possibly more, the expression of an
extended TQFT in functorial terms seems to provide a window on a
variety of potentially useful applications and generalizations.

\section{Topological Quantum Field Theories}\label{chap:TQFT}

\subsection{The Category $\nCobi$}\label{sec:ncob} 
In this section, we review the structure of the symmetric monoidal
category $\iiCob$ which we generalize in this paper.  Cobordism theory
goes back to the work of Ren\'e Thom \cite{thom}, who showed that it
is closely related to homotopy theory.  In particular, Thom showed
that \textit{cobordism groups}, whose elements are \textit{cobordism
classes} of certain spaces, can be computed as homotopy groups in a
certain complex.  However, this goes beyond what we wish to examine
here: a good introductory discussion suitable for our needs is found,
e.g. in Hirsch \cite{hirsch}.  There is substantial research on many
questions in, and applications of, cobordism theory: a brief survey of
some has been given by Michael Atiyah \cite{aticob}.  Some further
examples related to our motivation here include Khovanov homology
\cite{khovanov} (also discussed in \cite{barnatan} and
\cite{kauffman}), and Turaev's recent work on cobordism of knots on
surfaces \cite{turaev}.

Two manifolds $S_1, S_2$ are \textit{cobordant} if there is a compact
manifold with boundary, $M$, such that $\partial M$ is isomorphic to
the disjoint union of $S_1$ and $S_2$.  This $M$ is called a
\textit{cobordism} between $S_1$ and $S_2$.  We note that there is
some similarity between this concept and that of homotopy of paths,
except that such homotopies are understood as embedded in an ambient
space.  We will return to this in Section \ref{sec:homotopy}. Our aim
here is to describe a generalization of categories of cobordisms.  To
begin with, we recall some of the structure of $\nCobi$, and
particularly $\iiCob$, to recall why this is of interest.

\begin{definition}$\iiCob$ is the category with:
\begin{itemize}
\item Objects: one-dimensional compact oriented manifolds
\item Morphisms: diffeomorphism classes of two-dimensional compact oriented cobordisms between such manifolds.
\end{itemize}
\end{definition}

That is, the objects are collections of circles, and the morphisms are
(diffeomorphism classes of) manifolds with boundary, whose boundaries
are broken into two parts, which we consider their source and target.
We think of the cobordism as ``joining'' two manifolds, rather as a
relation joins two sets, in the category of sets and relations (this
analogy will be made more precise when we discuss spans and cospans).
More generally, $\nCobi$ is the category whose objects are (compact,
oriented) $(n-1)$-dimensional manifolds, and whose morphisms are
diffeomorphism classes of compact oriented $n$-dimensional cobordisms.

It has been known for some time that $\iiCob$ can be seen as the free
symmetric monoidal category on a commutative Frobenius object.  (This
is shown in the good development by Joachim Kock \cite{kock}.)  This
is a categorical formulation of the fact, shown by Abrams
\cite{abrams}, that $\iiCob$ is generated from four generators, called
the \textbf{unit}, \textbf{counit}, \textbf{multiplication},
\textbf{comultiplication}, subject to some relations.  The generating
cobordisms are the following: taking the empty set to the circle (the
unit); taking two circles to one circle (the multiplication); adjoints
of each of these (counit and comultiplication respectively).

\begin{figure}[h]
\begin{center}
\includegraphics{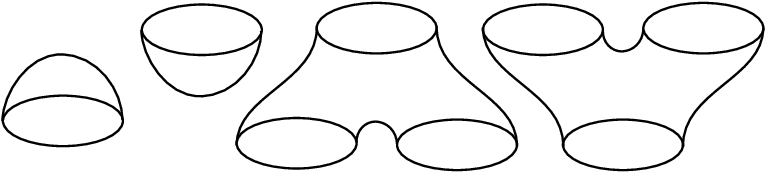}
\end{center}
\caption{\label{fig:cobgens}Generators of $\iiCob$}
\end{figure}

The ``commutative Frobenius object'' here is the circle, equipped with
these morphisms, as illustrated in Figure \ref{fig:cobgens}.  The
relations which these are subject to include associativity,
coassociativity, and relations for the unit and counit.  The most
interesting is the Frobenius relation, illustrated in Figure
\ref{fig:frobrel}.

\begin{figure}[h]
\begin{center}
\includegraphics{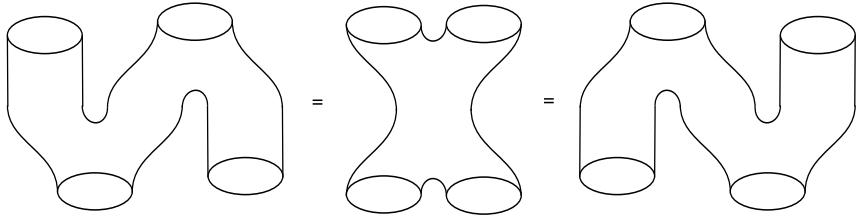}
\end{center}
\caption{\label{fig:frobrel}The Frobenius Relation}
\end{figure}

Diffeomorphism classes of cobordisms automatically satisfy these
relations, since they identify composites of cobordisms which are, in
fact, diffeomorphic.

Moreover, as a \textit{monoidal} category, $\iiCob$ must have a tensor
product operation.  For objects, this is just the disjoint union:
given objects $\mathbf{m}, \mathbf{n} \in \iiCob$, consisting of
collections of $m$ and $n$ circles respectively, the object
$\mathbf{m} \otimes \mathbf{n}$ is the disjoint union of $\mathbf{m}$
and $\mathbf{n}$: a collection of $m+n$ circles.  The tensor product
of two cobordisms $\mathbf{C_1}:\mathbf{m_1}\ra\mathbf{n_1}$
and $\mathbf{C_2}:\mathbf{m_2}\ra\mathbf{n_2}$ is likewise the
disjoint union of the two cobordisms, giving $\mathbf{C_1} \otimes
\mathbf{C_2} : \mathbf{m_1} \otimes \mathbf{m_2} \ra
\mathbf{n_1} \otimes \mathbf{n_2}$.

This monoidal operation has a \textit{symmetry}, so in particular
$\iiCob$ also includes the \textbf{switch} cobordism, exchanging the
order of two circles by two cylinders (this gives the symmetry for the
monoidal operation).  These are required to exist by the assumption
that $\iiCob$ is a free symmetric monoidal category.  They are
illustrated in Figure \ref{fig:symmongens} (along with the identity,
which is, of course, also required).

\begin{figure}[h]
\begin{center}
\includegraphics{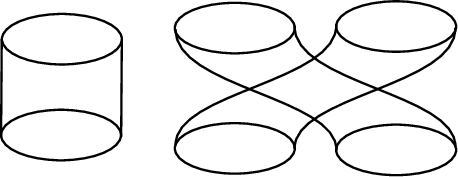}
\end{center}
\caption{\label{fig:symmongens}Morphisms Required for $\iiCob$ to be a Symmetric Monoidal Category}
\end{figure}

Two proofs can be given for the fact than $\iiCob$ is generated by
these cobordisms.  Each proof relies on some special conditions satisfied by
2D cobordisms.  The first is that 2-dimensional manifolds with
boundary can be completely classified up to diffeomorphism class by
genus and number of punctures.  The second is that we can use the
results of Morse theory to decompose any such surface, equipped with a
smooth Morse function into $[0,1]$, into a composite of pieces, in the
sense of composition of morphisms in $\iiCob$.  In each piece, there
is just one ``topology change'' (a value in $[0,1]$ where the preimage
changes topology).  We will return to this point when we discuss the
question of how to present $\nCob$ in terms of generators.

So far, we have described the presentation of $\iiCob$ in terms of
generators and relations, but not yet how the composition operation
for morphisms works.  The main idea is that we compose cobordisms by
identifying their boundaries.  However, since the morphisms in
$\iiCob$ are \textit{diffeomorphism} classes of manifolds with boundary, some extra
considerations are needed to ensure that the composite is equipped
with a differentiable structure.

In particular, the \textit{collaring theorem} means that any manifold
with boundary, $M$ can be equipped with a ``collar'': an injection
$\phi : \partial M \times [0,1] \ra M$ such that $\phi(x,0)=x,
\forall x \in \partial M$.  The idea is that, while we can compose
\textit{topological} cobordisms along their boundaries, we should
compose \textit{smooth} cobordisms $M_1$ and $M_2$ along collars.
This ensures that every point---including points on the boundary of
$M_i$---will have a neighborhood with a smooth coordinate chart.
Section \ref{sec:collarman} describes this in detail for a more
general setting.

The category $\iiCob$ is particularly interesting in the study of
topological quantum field theories (TQFT's), as formalized by Michael
Atiyah \cite{atiyah}.  Atiyah's axiomatic formulation of a TQFT
amounts to saying that it is a symmetric monoidal functor $F : \iiCob
\ra \catname{Vect}$.  The presentation of $\iiCob$ means that
this immediately defines an algebraic structure with a unit, counit,
multiplication, comultiplication, and identity, which satisfy the same
relations as the corresponding cobordisms. This, together with the
fact that $F$ preserves the symmetric monoidal structure of $\iiCob$
means that this structure satisfies the axioms of a commutative
Frobenius algebra.  A similar presentation has not been found for
$\nCobi$ for general $n$.

One may wish to describe an ``extended topological quantum field
theory'' in the same format.  These are topological field theories
which are defined not just on manifolds with boundary, but also on
manifolds with corners.  This idea is described by Ruth Lawrence in
\cite{lawrence}.  In particular, what we are interested in here is
that, instead of using a category of cobordisms between manifolds, we
would want to use some structure of cobordisms between
\textit{cobordisms between} manifolds, which we tentatively call
$\nCob$.  However, to do this, we must use a structure with more
elaborate than a mere category.

Later, we will describe such a structure---a \textit{\vdb}, and show
how the putative $\nCob$ is an example, and indeed a special case of a
wider class of examples.

\subsection{TQFT's as Functors}\label{sec:tqftfunctor}

Atiyah's formulation of the axioms for a TQFT can be summarized as follows:
\begin{definition}A Topological Quantum Field Theory is a (\textit{symmetric}) monoidal functor
\begin{equation}
Z: \iiCob \ra \V
\end{equation}
where $\iiCob$ is as described in Section \ref{sec:ncob}, and $\V$ is
the category whose objects are vector spaces and whose arrows are
linear transformations.
\end{definition}

We note that $\V$ is naturally made into a \textit{monoidal} category
with the tensor product $\otimes$, where $V_1 \otimes V_2$ is
generated by objects of the form $v_1 \otimes v_2$, modulo relations
imposing bilinearity.  Moreover, $\iiCob$ is a monoidal category as
well, whose monoidal product on objects and morphisms is just the
disjoint union of manifolds and cobordisms, respectively.

In fact, a quantum field theory should give a \textit{Hilbert} space
of states.  However, $\Hilb$, the category of Hilbert spaces and
bounded linear maps, is a subcategory of $\V$, so the above is still
true.

What, however, does this definition mean?

A TQFT should give a \textit{Hilbert space of states} for any manifold
representing ``space'', and a map from one space of states to another
for any cobordism representing ``spacetime'' connecting two space
slices.  Figure \ref{fig:TQFT} shows an example in the case where
space is 1-dimensional and spacetime is 2-dimensional:

\begin{figure}[h]
\begin{center}
\includegraphics{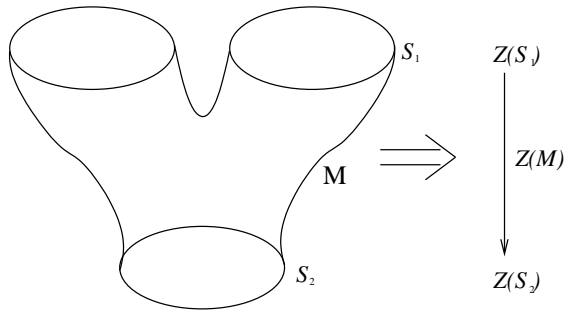}
\end{center}
\caption{\label{fig:TQFT}Effect of a TQFT}
\end{figure}

The TQFT should have the following properties:

\begin{itemize}
\item The Hilbert space assigned to a disjoint union of spaces $S_1
      \amalg S_2$ will be the tensor product of the spaces assigned to
      each, $Z(S_1) \otimes Z(S_2)$, and therefore also
      $Z(\varnothing) = \mathbbm{C}$ (a basic feature of quantum
      theories)

\item The linear maps assigned to cobordisms respect ``composition''
      of spacetimes, so $M_1$ followed by $M_2$ is assigned the map
      $Z(M_2) \circ Z(M_1)$, where ``followed by'' means the ending
      space of $M_1$ is the beginning space of $M_2$.
\end{itemize}

As remarked in Section \ref{sec:ncob}, $\iiCob$ is a free symmetric monoidal category on a
Frobenius object.  In $\V$, such an object is called a
\textit{Frobenius algebra}: in fact, a 2D TQFT $Z$ is equivalent to a
choice of Frobenius algebra, namely the image of the circle uvder $Z$.

In general higher dimensions, no equally straightforward description
of an $n$-dimensional TQFT is known.  To provide one would require a
presentation of $\nCobi$ in terms of generators and relations (for
both objects and morphisms).

Lauda and Pfeiffer \cite{openclosed} do provide such a presentation a
similar, though more complicated, characterization of 2-dimensional
\textit{open-closed} TQFT's.  In these, we do not assume that the
manifolds representing spaces have no boundary.  Lauda's doctoral
thesis \cite{laudathesis} develops this further.

\subsection{The Fukuma-Hosono-Kawai Construction and Connections}\label{sec:fhk}

Frobenius algebras are \textit{semisimple} algebras $A$ (direct sums
of simple algebras).  These are characterized by having a
nondegenerate linear pairing:
\begin{equation}
g: A \otimes A \ra \mathbbm{C}
\end{equation}
If $A$ is a matrix algebra, then such a $g$ is given by the Killing
form, or \textit{trace}: $g(a,b)= \opname{tr}(L_a L_b)$.  The
nondegeneracy of this pairing means that it gives an isomorphism
between $A$ and $A^{\ast}$.

Each algebra $A$ of this kind gives a TQFT whose effects can be
described in an explicit and combinatorial way.  This is the
construction of Fukuma, Honoso, and Kawai \cite{FHK}.  We will be
particularly interested in the case where the semisimple algebra $A$
is the group algebra $\mathbbm{C}[G]$ for some finite group $G$.

Now we want to see how to get a TQFT $Z: \iiCob \ra \V$ from
any such algebra $A$, keeping in mind the example
$A=\mathbbm{C}[G]$. To do this, we first construct a map $\hat{Z}
:\Delta\iiCob \ra \V$, where $\Delta\iiCob$ is the category of
\textit{triangulated} manifolds and cobordisms, then show it is
independent of the choice of triangulation.

To begin with, given a triangulated cobordism $M$ from $S_1$ to $S_2$,
(so $M$, $S_1$ and $S_2$ are all triangulated), label the dual graph
with copies of $A$.

\begin{figure}[h]
\begin{center}
\includegraphics{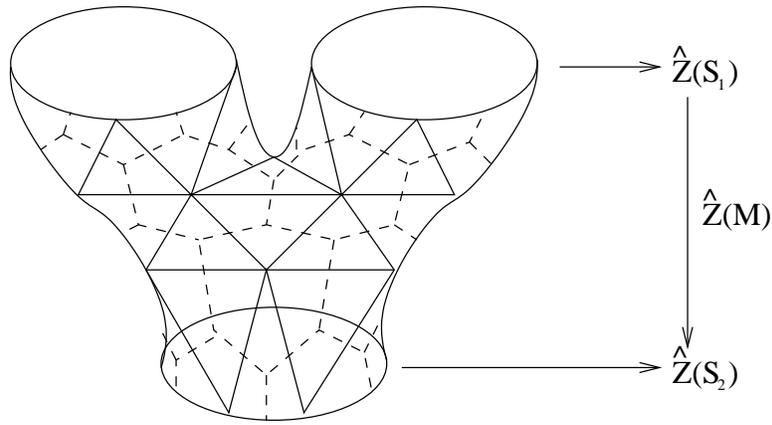}
\end{center}
\caption{\label{fig:FHK}The Fukuma-Honoso-Kawai Construction}
\end{figure}

So each edge of a triangle (hence of the dual graph) is labelled by
$A$ and each face of a triangle (hence each vertex of the dual graph)
by an operator $m$.
\begin{figure}[h]
\begin{center}
\includegraphics{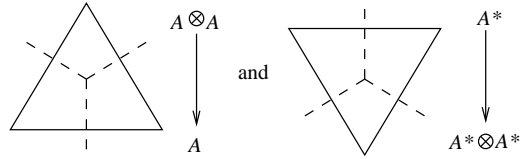}
\end{center}
\caption{\label{fig:trianglemult}Multiplication Operators Assigned to Triangles}
\end{figure}

In the case where the semisimple algebra is $\CG$, we can write
choices of vector in a basis consisting of group elements.  So
labellings of the dual edges can be described in terms of a basis
where the dual edges are labelled with group elements.

\subsection{Pachner Moves in 2D}\label{sec:pachner}

How does $\hat{Z}$, acting on $\Delta\iiCob$, give a TQFT acting on
$\iiCob$?  First, notice that it depends only on the topology of $M$,
and the triangulation on the boundary, not in the interior.

This is because \textbf{Alexander's Theorem} says that to pass between
any two triangulations of the same compact 2-manifold, it is enough to
repeatedly apply the two \textit{Pachner moves}---the \textbf{2-2 move}
and the \textbf{1-3 move} (and their inverses):

\begin{figure}[h]
\begin{center}
\includegraphics{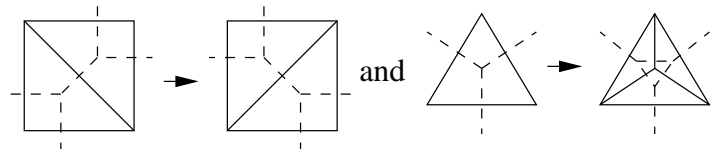}
\end{center}
\caption{\label{fig:pachner}Pachner Moves}
\end{figure}

This will prove that the linear map we construct is independent of the
triangulation chosen.  In particular, the 2-2 move does not affect the
outcome of composition, on applying $\hat{Z}$, since it passes from
\begin{equation}
V \otimes V \otimes V \longrightarrowlim^{1\otimes m} V \otimes V \longrightarrowlim^{m} V
\end{equation}
to
\begin{equation}
V \otimes V \otimes V \longrightarrowlim^{m\otimes 1} V \otimes V \longrightarrowlim^{m} V
\end{equation}

These are the same by associativity.

The 1-3 move has no effect precisely when $(A,\eta,m)$ is semisimple.
This comes from associativity and the ``bubble move'':

\begin{figure}[h]
\begin{center}
\includegraphics{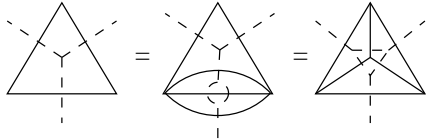}
\end{center}
\caption{\label{fig:bubble}The Bubble Move}
\end{figure}

We can think of the Pachner moves as coming from tetrahedrons.  Given
a triangulation, attach a tetrahedron along one, two, or three
triangular faces.  The move consists of replacing the attached faces
with the remaning faces of the tetrahedron.  We can think of this as
``evolving the triangulation by'' that tetrahedron:
\begin{figure}[h]
\begin{center}
\includegraphics{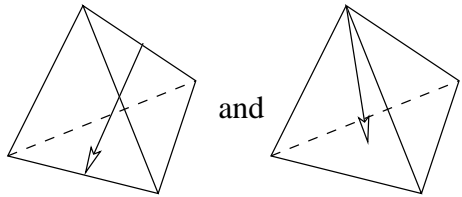}
\end{center}
\caption{\label{fig:pachnertetras}Pachner Moves as Tetrahedra}
\end{figure}

Any two triangulations are homologous---can be connected by a series
of such moves since there is no nontrivial third homology of a 2D
surface: any change in triangulation we want will be the boundary of
some collection of tetrahedra.  (A triangulation of a 2-dimensional
cobordism is a combination of 0, 1, and 2-chains---Pachner moves
correspond to 3-chains).

Now, we know that a TQFT is determined by its effect on the generators
of $\iiCob$, so we want to know the space of states on $S^1$, which is
a generator for objects.  One observation is that the image of the
generator $S^1 \times [0,1]$ is $\id$, the identity map on $Z(S^1)$.

Consider the following triangulation of $S^1 \times [0,1]$:
\begin{figure}[h]
\begin{center}
\includegraphics{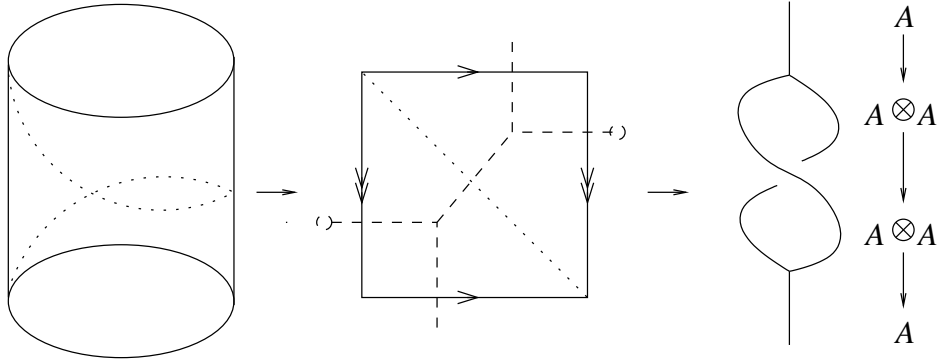}
\end{center}
\caption{\label{fig:centremap}Identity or Projection Operator?}
\end{figure}

$\hat{Z}$ assigns $A$ to the top and bottom circles, but says that we should have
\begin{equation}
m \circ B \circ m^{\dagger} = \id
\end{equation}
on $Z(S) \subset A$.  This means that $Z(S)$ is a subset of the
\textit{centre} of $A$.

We know that the identity map in $A$ must come from the cylinder, so define
\begin{equation}
Z(S^1) = \opname{Ran}(\hat{Z}(S^1 \times [0,1]))
\end{equation}

To get a TQFT $Z$, we restrict $\hat{Z}$ to $Z(S^1)$.  This is a
projection operator, and its range is in $Z(A)$.  Project the space of
states for a triangulated circle onto this to get the space of states
for the circle under $Z$ (note that there is only one way to do this,
independent of which triangulation of the cylinder we use to get the
projection operator).

So it is well-defined to say:
\begin{equation}
Z(M) = \hat{Z}(M) |_{Z(S^1)}
\end{equation}
since we always have $\hat{Z}(M)(Z(S_1)) = Z(S_2)$.  (One can
retriangulate $M$ to compose with the projection before and after,
without changing the result.)

Then one can show that this $Z$ defines a \textit{symmetric monoidal
functor} from $\iiCob$ to $\V$, namely a TQFT.

\subsection{TQFT's and Connections}

The FHK construction of a TQFT has a feature which may not at first be
obvious.  To the circle, $Z$ assigns a Hilbert space, but in a way
that has a canonical choice of basis.  This is $Z(S^1)$, the centre of
the group algebra $\mathbbm{C}[G]$, or simply
$\mathbbm{C}[\opname{Cent}(G)]$, the vector space spanned by the
centre of the group $G$.  So a basis for the space of states is just
the set of ways of assigning to the circle an element of the group $G$
which happens to be in the centre of $G$.

One way to think of this is as a $G$-\textit{connection} on the circle
- so that the space of states is a free vector space on the set of
$G$-connections on $S^1$.  This way of thinking of what $Z$ produces
is good because it will hold up even when we consider manifolds $B$ of
higher dimension (and codimension).  In particular, if a TQFT gives a
space of states from the set of connections on $B$, given a map from
the circle into $B$, any connection assigns to this loop a group
element, or \textit{holonomy}, up to conjugation.

So in order to look at extended TQFT's as examples of a
categorification of the concet of a TQFT, it is useful to take this
point of view relating the TQFT to connections.  We point out,
however, that there is a categorified analog of the FHK construction
more or less directly.  We expect that this would provide a
``state-sum'' point of view on the theory of a connection on a
manifold which our extended TQFT will in fact involve.  In fact, this
is understood to a considerable degree, but this point of view is
awkward because it involves the categorified versions of associativity
- Stasheff's associahedra \cite{stasheff}.  These play the role of
Pachner moves in higher dimensions.  We could proceed with this
categorified version of the construction, when $G$ is a finite group.

It turns out that a natural generalization of the FHK construction
gives a theory equivalent to the (untwisted) Dijkgraaf-Witten model
\cite{DW}.  This is a topological gauge theory, which crucially
involves a (flat) connection on a manifold.  We will discuss this in
more detail in Section \ref{sec:ZGonMan}, and explore how an extended
TQFT can be constructed by taking a categorifed version of the
(quantized) theory of a flat connection on manifolds and cobordisms.

\section{Bicategories and Double Categories}\label{chap:bianddbl}

We will want to give a description of a \textit{\vdb}, which is a
weakened version of the concept of a double category, in order to
describe cobordisms with corners.  Weakening a concept $X$ in category
theory generally involves creating a new concept in which equations in
the original concept are replaced by isomorphisms.  Thus, we say that
the old equations hold only ``up to'' isomorphism in the weak version
of $X$, and say that when they hold with equality, we have a ``strict
$X$''.  Thus, before describing our newly weakened concept, it makes
sense to recall how this process works, and examine the strict form of
the concept we want to weaken.  We also want to see what the weakening
process entails.  So we begin by reviewing bicategories and double
categories.

\subsection{2-Categories}

A category $\catname{E}$ is \textbf{enriched over} a category
$\catname{C}$ (which must have products) when for $x,y \in
\catname{E}$ we have $\hom(x,y) \in \catname{C}$. A special case of
this occurs in ``closed'' categories, which are enriched over
themselves---examples include $\Set$ (since there is a set of maps
between any two sets) and $\catname{Vect}$ (since the linear operators
between two vector spaces form a vector space).

A \textbf{2-category} is a category enriched over $\Cat$.  That is, if
$\catname{C_2}$ is a 2-category, and $x,y \in \catname{C_2})$, then
$\hom(x,y) \in \Cat$.  Thus, there are sets of objects and morphisms
in $\hom(x,y)$ itself, satisfying the usual category axioms.  We describe a
2-category as having \textbf{objects}, \textbf{morphisms} between
objects, and \textbf{2-morphisms} between morphisms.  The morphisms of
$\catname{C_2}$ are the objects of the $\hom$-categories, and the
2-morphisms of $\catname{C_2}$ are the morphisms of the
$\hom$-categories.  We depict these as in Diagram (\ref{xy:2morphism}).
There is a composition operation for morphisms in these $\hom$ categories, which we think of as ``vertical'' composition, denoted $\cdot$, between 2-morphisms.
Furthermore, for all $x,y,z \in \catname{C_2}$, the composition
operation
\begin{equation}
  \circ : \hom(x,y) \times \hom(y,z) \ra \hom(x,z)
\end{equation} must be a functor between $\hom$-categories.  So in
particular this operation applies to both objects and morphisms in
$\hom$ categories, and we think of these as ``horizontal'' composition
for both morphisms and 2-morphisms.  The requirement that this be a
functor means that the \textbf{interchange law} holds:
\begin{equation}\label{eq:bicatinterchange}
(\alpha \circ \beta) \cdot (\alpha' \circ \beta') = (\alpha \cdot \alpha') \circ (\beta \cdot \beta')
\end{equation}

Now, in a 2-category, the associative law holds strictly: that is, for
morphisms $f \in \hom(w,x)$, $g \in \hom(x,y)$, and $h \in \hom
(y,z)$, we have the two possible triple-compositions in $\hom(w,z)$
the same, namely $f \circ (g \circ h) = (f \circ g) \circ h$.  This is
one of the axioms for a category---that is, a category enriched over
$\Set$.  Since a 2-category is enriched over $\Cat$, however, a weaker
version of this rule is possible, since $\hom(w,z)$ is no longer a set
in which elements can only be equal or unequal: it is a category,
where it is possible to speak of isomorphic objects.  This fact leads
to the notion of bicategories.

\subsection{Bicategories}

Once we have the concept of a 2-category, we can \textit{weaken} this
concept, giving the idea of a \textbf{bicategory}.  The definition is
similar to that for a 2-category, but we only insist that the usual
equations should be natural isomorphisms (satisfying some equations).
That is, the following diagrams should commute up to natural
isomorphisms:
\begin{equation}\label{eq:bicatcompos}
  \xymatrix{
    \hom(w,x) \times \hom(x,y) \times \hom(y,z) \ar[d]_{\circ \times 1} \ar[r]^{1 \times \circ} & \hom(w,x) \times \hom(x,z) \ar[d]^{\circ} \\
    \hom(w,y) \times \hom(y,z) \ar[r]_{\circ} & \hom(w,z) \\
  }
\end{equation}
and
\begin{equation}\label{eq:bicatrunit}
  \xymatrix{
    \hom(x,y) \times \catname{1} \ar[dr]^{\pi_1} \ar[d]_{\opname{id} \times \opname{!}} & \\
    \hom(x,y) \times \hom(x,x) \ar[r]^{\circ} & \hom(x,y) \\
  }
\end{equation}
and
\begin{equation}\label{eq:bicatlunit}
  \xymatrix{
    \catname{1} \times \hom(x,y) \ar[dr]^{\pi_2} \ar[d]_{\opname{!} \times \opname{id}}& \\
    \hom(y,y) \times \hom(x,y) \ar[r]^{\circ} & \hom(x,y) \\
  }
\end{equation}

That is: given $(f,g,h) \in \hom(w,x) \times \hom(x,y)
\times \hom(y,z)$, there should be an isomorphism $a_{f,g,h} \in
\hom(w,z)$ with $a_{f,g,h} : (f \circ g) \circ h \ra f \circ
(g \circ h)$; and isomorphisms $r_f: f \circ 1_x$, $l_f : 1_y \circ
f$.  The equations these satisfy are \textit{coherence laws}.
MacLane's Coherence Theorem shows that all such equations follow from
two generating equations: the pentagon identity, and the unitor law:

In a category, the associativity property stated that two composition
operations can be performed in either order and the results should be
equal: equality is the only sensible relation between a pair of
morphisms in a category.  There is an analogous statement for the
associator 2-morphism: two different ways of composing it should yield
the same result (since equality is the only sensible relation between a
pair of 2-morphisms in a bicategory).  This property is the pentagon
identity:
\begin{equation}\label{eq:pentagonid}
\xy
 (0,20)*+{(f \circ g) \circ (h \circ j)}="1";
 (35,4)*+{f \circ (g \circ (h \circ j))}="2";
 (23,-20)*+{ \quad f \circ ((g \circ h) \circ j)}="3";
 (-23,-20)*+{(f \circ (g \circ h)) \circ j}="4";
 (-35,4)*+{((f \circ g) \circ h) \circ j}="5";
     {\ar^{a_{f,g,h \circ j}}     "1";"2"}
     {\ar_{1_f \circ a _{g,h,j}}  "3";"2"}
     {\ar_{a _{f,g \circ h,j}}    "4";"3"}
     {\ar_{a _{f,g,h} \circ 1_j}  "5";"4"}
     {\ar^{a _{f \circ g,h,j}}    "5";"1"}
\endxy
\\
\end{equation}

Similarly, the unit laws satisfy the property that the following
commutes:
\begin{equation}\label{eq:unitorlaws}
  \xymatrix{
    (g \circ 1_y) \circ f \ar[r]^{a_{g,1_y,f}} \ar[d]^{r_g \times 1_f} & g \circ (1 \circ f) \ar[dl]^{1_g \times l_f}\\
    g \circ f & \\
  }
\end{equation}

This last change is the sort of weakening we want to apply to the
concept of a double category.  Following the same pattern, we will
first describe the strict notion in Section \ref{sec:doublecat},
before considering how to weaken it, in Chapter \ref{chap:doublebicat}.
First, however, we will recall a standard, quite general, example of
bicategory, which we will generalize to give examples of double
bicategories in Section \ref{sec:dblspan}.

\subsection{Bicategories of Spans}\label{sec:spanbicat}

Jean B\'enabou \cite{benabou} introduced bicategories in a 1967 paper,
and one broad class of examples introduced there comes from the notion
of a \textit{span}.

\begin{definition}\label{def:span}(\textbf{B\'enabou}) Given any category $\C$, a \textbf{span}
$(S,\pi_1,\pi_2)$ between objects $X_1,X_2 \in \C$ is a diagram in
$\C$ of the form
\begin{equation}
  \xymatrix{
    P_1 & S \ar[l]_{\pi_1} \ar[r]^{\pi_2}  & P_2
  }
\end{equation}
Given two spans $(S,s,t)$ and $(S',s',t')$ between $X_1$ and $X_2$
between a \textbf{morphism of spans} is a morphism $g:S \ra S'$
making the following diagram commute:
\begin{equation}\label{eq:spanmorph}
  \xymatrix{
        & S \ar[dl]_{\pi_1} \ar[dr]^{\pi_2} \ar[d]^{g} &   \\
    X_1 & S' \ar[l]^{\pi'_1} \ar[r]_{\pi'_2} & X_2 \\
  }
\end{equation}

Composition of spans $S$ from $X_1$ to $X_2$ and $S'$
from $X_2$ to $X_3$ is given by a pullback: that is, an
object $R$ with maps $f_1$ and $f_2$ making the following diagram
commute:
\begin{equation}\label{eq:spanpullback}
  \xymatrix{
      &   & R \ar[dl]_{f_1} \ar[dr]^{f_2} &  & \\
      & S \ar[dl]_{\pi_1} \ar[dr]^{\pi_2} &   & S' \ar[dl]_{\pi'_2} \ar[dr]^{\pi'_3} & \\
    X_1 &  & X_2 &   & X_3 \\
  }
\end{equation} which is terminal among all such objects.  That is,
given any other $Q$ with maps $g_1$ and $g_2$ which make the analogous
diagram commute, these maps factor through a unique map $Q \ra
R$.  $R$ becomes a span from $X_1$ to $X_3$ with the maps $\pi_1 \circ f_1$
and $\pi_2 \circ f_2$.
\end{definition}
\mbox{}

The span construction has a dual concept:

\begin{definition}\label{def:cospan} A \textbf{cospan} in $\C$ is a span in $\Cop$,
morphisms of cospans are morphisms of spans in $\Cop$, and composition
of cospans is given by pullback in $\Cop$.  That is, by a pushout
in $\C$.
\end{definition}

One fact about (co)spans which is important for our purposes is that
any category $\C$ with limits (colimits, respectively) gives rise to a
bicategory of spans (or cospans).  This relies in part on the fact
that the pullback is a universal construction (universal properties of
$\Cspan$ are discussed by Dawson, Par\'e and Pronk \cite{unispan}).

\begin{remark}\textbf{\cite{benabou}, ex. 2.6}
\label{thm:spanbicat}Given any category $\C$ with all limits, there is
a bicategory $\Cspan$, whose objects are the objects of $\C$, whose
$hom$-sets of morphisms $\Cspan(X_1,X_2)$ consist of all spans between
$X_1$ and $X_2$ with composition as defined above, and whose
2-morphisms are morphisms of spans.  $\Cspan$ as defined above forms a
bicategory ($\Cosp(\C)$, of cospans similarly forms a bicategory).

This is a standard result, first shown by Jean B\'enabou
\cite{benabou}, as one of the first examples of a bicategory.  We
briefly describe the proof:

The identity for $X$ is $X \lalim^{id} X \ralim^{id}
X$, which is easy to check.

The associator arises from the fact that the pullback is a
\textit{universal} construction.  Given morphisms in $\Cspan$ $f: X
\ra Y$, $g: Y \ra Z$, $h: Z \ra W$, the
composites $((f \circ g) \circ h)$ and $(f \circ (g \circ h))$ are
pullbacks consisting of objects $O_1$ and $O_2$ with maps into $X$ and
$W$.  The universal property of pullbacks gives an isomorphism between
$O_1$ and $O_2$.  These isomorphisms satisfy the pentagon identity
since they are unique (in particular, both sides of the pentagon give
the same isomorphism).

It is easy to check that $\hom(X_1,X_2)$ is a category, since it
inherits all the usual properties from $\C$.
\end{remark}

\subsection{Double Categories}\label{sec:doublecat}

The idea of a double category extends that of a category into two
dimensions in a different way than does the concept of bicategory.  A
double category consists of: 
\begin{itemize}
\item a set $O$ of objects
\item \textit{horizontal} and \textit{vertical} categories, whose sets
      of objects are both $O$
\item for any diagram of the form
     \begin{equation}
      \xymatrix{
       x \ar[r]^{\phi} \ar[d]_{f} & x' \ar[d]^{f'} \\
       y \ar[r]_{\phi'} & y'
      }
\end{equation}
      a collection of square \textit{2-cells}, having horizontal
      source and target $f$ and $f'$, and vertical source
      and target $\phi$ and $\phi'$
\end{itemize} The 2-cells can be composed either horizontally or
vertically in the obvious way.  We denote a 2-cell filling the above
diagram like this:
\begin{equation}\label{xy:doublecat2cell}
 \xymatrix{
  x \ar[r]^{\phi} \ar[d]_{f} & x' \ar[d]^{f'} \\
  y \ar[r]_{\phi'} \uriicell{S} & y'
 }
\end{equation} and think of the composition of 2-cells in terms of
pasting these squares together along an edge.  The resulting 2-cell
fills a square whose boundaries are the corresponding composites of
the morphisms along its edges.

Moskaliuk and Vlassov \cite{dblcatmp} discuss the application of
double categories to mathematical physics, particularly TQFT's, and
dynamical systems with changing boundary conditions---that is, with
inputs and outputs.  Kerler and Lyubashenko \cite{KL} describe
extended TQFT's as ``double pseudofunctors'' between double
categories.  This formulation involves, among other things, a double
category of cobordisms with corners---we return to a weakening of this idea in Section \ref{sec:bicatcwc}

A double category can be thought of as an internal category in $\Cat$.
That is, it is a model of the theory of categories, denoted
$\catname{Th(\Cat)}$, in $\Cat$.  This $\catname{Th(\Cat)}$ consists
of a category containing all finite limits, and having two
distinguished objects called $\Obj$ and $\Mor$ with morphisms of the
form:
\begin{equation}
  \xymatrix{
    \Mor \ar@/^/[r]^{s} \ar@/_/[r]_{t} & \Obj \\
  }
\end{equation}
and
\begin{equation}
  \xymatrix{
    \Ob \ar[r]^{id} & \Mor  \\
  }
\end{equation}
subject to some axioms.  In particular, the composition
operation is a partially defined operation on pairs of morphisms.  In
particular, there is a collection of composable pairs of morphisms,
namely the fibre product $\catname{Pairs} = \Mor \times_{\Ob} \Mor$,
which is a pullback of the two arrows from $\Mor$ to $\Ob$.  So
$\catname{Pairs}$ is an equalizer in the following diagram:
\begin{equation}
  \xymatrix{
      &  & \Mor \ar[dr]^{t} & \\
    \catname{Pairs} \ar[r]^{i} & \Mor^2 \ar[ru]^{\pi_1} \ar[rd]_{\pi_2} & & \Obj \\
      &  & \Mor \ar[ur]^{s} & \\
  }
\end{equation} (Note that we assume the existence of pullbacks, here -
in fact, $\catname{Th(\Cat)}$ is a \textit{finite limits theory}.)  The
composition map $\circ : \catname{Pairs} \ra \Mor$ satisfies
the usual properties for composition.

There is also an identity for each object: there is a map $\Obj
\ralim^{1} \Mor$, such that for any morphism $f \in \Mor$, we
have $1_{s(f)}$ and $1_{t(f)}$ are composable with $f$, and the
composite is $f$ itself.

A model of $\catname{Th(\Cat)}$ in $\Cat$ is a (limit-preserving)
functor
\[
F : \catname{Th(\Cat)} \ra \Cat
\]
This gives a structure having a category $\catname{Ob}$ of objects and
a category $\catname{Mor}$ of morphisms, with two functors $s$
(``source'') and $t$ (``target'') satisfying the usual category
axioms. We can describe composition as a pullback
construction in this category, which makes sense since the functor
preserves finite limits (including pullbacks):
\begin{equation}\label{eq:thcatmodel}
  \xymatrix{
      &   & F(\Mor) \ar[dl]_{c_1} \ar[dr]^{c_2} \ar@/_2pc/[ddll] \ar@/^2pc/[ddrr] &  & \\
      & F(\Mor) \ar[dl]_{s} \ar[dr]^{t} &   & F(\Mor) \ar[dl]_{s} \ar[dr]^{t} & \\
    F(\Obj) &  & F(\Obj) &   & F(\Obj) \\
  }
\end{equation}

A category is a model of the theory $\catname{Th(\Cat)}$ in
$\catname{Set}$, and we understand this to mean that when two
morphisms $f$ and $g$ have the target of $f$ the same as the source of
$g$, there is a composite morphism from the source of $f$ to the
target of $g$.  In the case of a double category, we have a model of
$\catname{Th(\Cat)}$ in $\Cat$, so that $F(\Obj)$ and $F(\Mor)$ are
categories and $F(s)$ and $F(t)$ are functors, we have the same
condition for both objects and morphisms, subject to the
compatibility conditions for these two maps which any functor must
satisfy.

We thus have sets of objects and morphisms in $\catname{Ob}$, which of
course must satisfy the usual axioms.  The same is true for
$\catname{Mor}$.  The category axioms for the double category are
imposed in addition to these properties, for the composition and
identity functors.  Functoriality implies compatibility conditions
between the category axioms in the two directions.  The result is that
we can think of both the objects in $\catname{Mor}$ and the morphisms
in $\catname{Ob}$ as acting like morphisms between the objects in
$\catname{Ob}$, in a way compatible with the source and target maps.
A double category can be, and often is, thought of as including the
morphisms of two (potentially) different categories on the same
collection of objects.  These are the \textit{horizontal} and
\textit{vertical} morphisms, intuitively capturing the picture:
\begin{equation}
 \xymatrix{
  x \ar[r]^{\phi} \ar[d]_{f} & x' \ar[d]^{f'} \\
  y \ar[r]_{\hat{\phi}} & y'
 }
\end{equation}

Here, the objects in the diagram can be thought of as objects in
$F(\Obj)$, the vertical morphisms $f$ and $f'$ can be thought of as
morphisms in $F(\Obj)$ and the horizontal morphisms $\phi$ and
$\hat{\phi}$ as objects in $F(\Mor)$.  (In fact, there is enough
symmetry in the axioms for an internal category in $\Cat$ that we can
adopt either convention for distinguishing horizontal and vertical
morphisms).  However, we also have morphisms in $\Mor$.  We represent
these as two-cells, or {\textit{squares}}, like the 2-cell $S$
represented in (\ref{xy:doublecat2cell}).

The fact that the composition map $\circ$ is a functor means that
horizontal and vertical composition of squares commutes.

\subsection{Topological Examples}\label{sec:homotopy}

We can illustrate simple examples of bicategories and double
categories in a topological setting, namely homotopy theory.  This was
the source of much of the original motivation for higher-dimensional
category theory.  Moreover, as we have already remarked in Section
\ref{sec:ncob}, there are close connections between cobordism and
homotopy.  These examples will turn out to suggest how to describe
{\vdbs} of cobordisms.

Our first example is one of the original motivations for bicategories.

\begin{example}\label{ex:pathbicat} Given a space $S$ in the category
$\catname{Top}$ of topological spaces, we might wish to define a
category $\Path(S)$ whose objects are points of $X$, and whose
morphisms are paths in $S$.  That is, $Path(S)$ has:
\begin{itemize}
  \item objects: points in $S$
  \item morphisms: paths $\gamma : [m,n] \ra S$
\end{itemize} Where such a path is thought of as a morphism from
$\gamma(m)$ to $\gamma(n)$.  These are parametrized paths: so suppose
we are given two paths in $\Path(S)$, say $\gamma_1: [m_1,n_1] \ra S$
from $a$ to $b$ and $\gamma_2: [m_2,n_2] \ra S$ from $b$ to $c$.  Then
the composite is a path $\gamma_2 \circ \gamma_1 : [m_1 , n_1 + n_2 -
m_2 ] \ra S$, given by:
\begin{equation}
\gamma_2 \circ \gamma_1 (x) =  
\begin{cases} 
    \gamma_1(x) & {\text{if }} x \in [m_1, n_1) \\
    \gamma_2(x-n_1+m_2) & {\text{if }} x \in [n_1,n_1+n_2-m_2] \\
  \end{cases}
\end{equation}

This gives a well-defined category $Path(S)$, but has the awkward
feature that our morphisms are not paths, but paths \textit{equipped
with parametrization}.  So another standard possibility is to take
morphisms from $a$ to $b$ to be paths $\gamma: [0,1] \ra S$.$\gamma :
[0,1] \ra X$ such that $\gamma(0)=a$ and $\gamma(1)=b$.  The
obvious composition rule for $\gamma_1 \in
\hom(a,b)$ and $\gamma_2
\in \hom(b,c)$ is that
\begin{equation}
\gamma_2 \circ \gamma_1 (x) =
  \begin{cases} 
    \gamma_1(2x) & {\text{if }} x \in [0,\frac{1}{2}) \\
    \gamma_2(2x-1) & {\text{if }} x \in [\frac{1}{2},1] \\
  \end{cases}
\end{equation} However, this composition rule is not associative, and
resolving this involves the use of a bicategory, either implicitly or
explicitly. We get this bicategory $\Path_2(S)$, by first defining,
for $a,b \in S$, a category $\hom(a,b)$ with:
\begin{itemize}
\item objects: paths from $a$ to $b$
\item morphisms: homotopies between paths, namely a homotopy from
      $\gamma_1$ to $\gamma_2$ is $H : [0,1] \times [0,1] \ra
      S$ such that $H(x,0)=\gamma_1(x)$, $H(x,1)=\gamma(x)$,
      $H(0,y)=a$, $H(1,y)=b$ for all $(x,y) \in [0,1] \times [0,1]$.
\end{itemize}

Then we have a unit law for the identity morphism (the constant
path) at each point, and an associator for composition.  Both of these
are homotopies which reparametrize composite paths.

Finally, we note that, if we define horizontal and vertical
composition of homotopies in the same way as above (in each
component), then this composition is again not associative.  So to get
around this, we say that the bicategory we want has its
$\hom$-categories $\underline{\hom(a,b)}$, where the morphisms are
\textit{isomorphism classes} of homotopies.  The isomorphisms in
question will not be homotopies themselves (to avoid extra
complications), but rather smooth maps which fix the boundary of the
homotopy square.

We call the resulting bicategory $\Path_2(S)$.
\end{example}

A similar construction is possible for a double category.

\begin{example}\label{ex:pathdoublecat} We have seen that a double
category it is rather analogous to a bicategory, so we would like to
construct one analogous to the bicategory in Example
\ref{ex:pathbicat}.  To do this, we construct a model having the following:
\begin{itemize}
\item A category $\Obj$ of objects is the path \textit{category} $Path(S)$:
\item A category $\Mor$ of morphisms: this has the following data:
  \begin{itemize}
  \item objects: paths $\gamma : [m,n]$ in $S$
  \item morphisms: homotopies $H: [p,q] \times [m,n]$ between paths
        (these have source and target maps which are just $s: H(-,-)
        \ra H(-,m)$ and $t: H(-,-) \ra H(-,n)$.
  \end{itemize} These categories have source and target maps $s$ and
  $t$ which are functors from $\Mor$ to $\Obj$.  The object map for $s$
  is just evaluation at $0$, and for $t$ it is evaluation at $1$.  The
  morphism maps for these functors are $s : H(-,-) \ra H(p,-)$
  and $t : H(-,-) \ra H(q,-)$.
\end{itemize}

We call the result the double category of homotopies, $\catname{H}(S)$.
\end{example}

We observe here that the double category $\catname{H}(S)$ is similar
to the bicategory $\Path_2(S)$ in one sense.  Both give a picture in
which objects are points in a topological space, morphisms are
1-dimensional objects (paths), and higher morphisms involve
2-dimensional objects (homotopies).  There are differences, however:
the most obvious is that $\Path_2(S)$ involves only homotopies with
fixed endpoints: its 2D objects are {\textit{bigons}}, whereas in
$\catname{H}(S)$ the 2D objects are ``squares'' (or images of
rectangles under smooth maps).

A more subtle difference, however, is that, in order to make
composition strictly associative in $\catname{H}(S)$, it was necessary to
change how we parametrize the homotopies.  There are no associators
here, and so we make sure composition is strict by not rescaling our
source object (the product of two intervals) as we did in
$\Path_2(S)$.

This is rather unsatisfactory, and in fact improving it leads to a
general definition of a {\textit{\db}}, which has a large class of
examples, namely \textit{double cospans}.  A special, restricted case
of these is the {\db} of cobordisms with corners we want.

\section{Verity Double Bicategories}\label{chap:doublebicat}

The term {\db} seems to have been originally introduced by Dominic
Verity \cite{verity}, and the structure it refers to is the one we
want to use.  There is some ambiguity here since the term {\db}
appears to describe is an internal bicategory in $\Bicat$ (the
category of all bicategories).  This is analogous to the definition of
double category.  Indeed, it is what we will mean by a \db, and we
discuss these in Section \ref{sec:internalbicat}.   Since the two are
closely related, and both will be important for us, we will refer to
{\dbs} in the sense of Verity by the term {\vdbs}, while reserving
\textit{\db} for the former.  For more discussion of the relation
between these, see Section \ref{sec:internalbicat}.

We wish to describe a structure which is sufficient to capture the
possible compositions of cobordisms with corners just as $\iiCob$ does
for cobordisms.  These have ``horizontal'' composition along the
manifolds with boundary which form their source and target.  They also
have ``vertical'' composition along the boundaries of those manifolds
and of the cobordisms joining them (which, together, again form
cobordisms) .  However, to allow the boundaries to vary, we do not
want to consider them as diffeomorphism classes of cobordisms, but
simply as cobordisms.  However, composition is then not strictly
associative, but only up to diffeomorphism.

Thus, we want something like a double category, but with weakened
axioms, just as bicategories were defined by weakening those for a
category.  The concept of a ``weak double category'' has been defined
(for instance, see Marco Grandis and Robert Par\'e \cite{GP1}, and
Martins-Ferreira's \cite{MF} discussion of them as
``pseudo-categories'').  Thomas Fiore \cite{fiore} describes these as
``Pseudo Double Categories'', arising by ``categorification'' of the
theory of categories, and describes examples motivated by conformal
field theory.  A detailed discussion is found in Richard Garner's
Ph.D. thesis \cite{garner}. In these cases, the weakening only occurs
in only one direction---either horizontal or vertical.  That is, the
associativity of composition, and unit laws, in that direction apply
only up to certain higher morphisms, called \textit{associators} and
\textit{unitors}.  In the other direction, the category axioms hold
strictly.  In a sense, this is because the weakening uses the squares
of the double category as 2-morphisms: specifically, squares with two
sides equal to the identity.  Trying to do this in both directions
leads to some difficulties.

In particular, if we have associators for horizontal morphisms given
by squares of the form:
\begin{equation}
\xymatrix{
a \ar[r]^{f;g} \ar[d]_{1_a} \drriicell{a_{f,g,h}} & c \ar[r]^{h} & d \ar[d]^{1_d} \\
a \ar[r]^{f} & b \ar[r]^{g;h} & d
}
\end{equation}
then unless composition of vertical morphisms is strict, then to make
a equation (for instance, the pentagon equation) involving this
square, we would need to use unit laws (or associators) in the
vertical direction to perform this composition.  This would again be a
square with identities on two sides, and the problem arises again.  In
fact, there is no consistent way to do this.  Instead, we need to
introduce a new kind of 2-morphism separate from the squares, as we
shall see in Section \ref{sec:dbdef}.  The result is what Dominic
Verity has termed a {\db} \cite{verity}.

The problem of weakening the concept of a double category so that the
unit and associativity properties hold up to higher-dimensional
morphisms can be contrasted with a different approach.  One might
instead try to combine the notions of \textit{bicategory} and
\textit{double category} in a different way.  This is by ``doubling''
the notion of bicategory, in the same way that double categories did
with the notion of category.  Just as a double category is an internal
category in $\Cat$, the result would be an internal bicategory in $\Bicat$. 

We would like to call this a \textit{\db}: however, this term has
already been used by Dominic Verity to describe the structure we will
mainly be interested in.  Since the former concept is also important
for us in certain lemmas, and is most naturally called a \db, we will
refer to the latter as a \textit{\vdb}.  For more discussion of the
relation between these, see Section \ref{sec:internalbicat}.

\subsection{Definition of a Verity Double Bicategory}\label{sec:dbdef}

The following definition of a {\vdb} is due to Dominic Verity
\cite{verity}, and is readily seen as a natural weakening of the
definition of a double category.  Just as the concept of
\textit{bicategory} weakens that of \textit{2-category} by weakening
the associative and unit laws, {\vdbs} will do the same for double
categories.  The following definition can be contrasted with that for
a double category in Section \ref{sec:doublecat}.

\begin{definition}\label{def:doublebicat} (\textbf{Verity}) A \textbf{\vdb}
$\catname{C}$ is a structure consisting of the following data:

\begin{itemize}
  \item a class of \textbf{objects} $\Obj$,

  \item \textbf{horizontal} and \textbf{vertical bicategories} $\Hor$
        and $\Ver$ having $\Obj$ as their objects

  \item for every square of horizontal and vertical morphisms of the form
  \begin{equation}
    \xymatrix{
      a \ar[r]^{h} \ar[d]_{v} & b \ar[d]^{v'} \\
      c \ar[r]^{h'} & d \\
    }
  \end{equation}
  a class of \textbf{squares} $\Squ$, with maps $s_h, t_h : \Squ
  \ra \Mor(\Hor)$ and $s_v, t_v : \Squ \ra \Mor(\Ver)$,
  satisfying an equation for each corner, namely:
  \begin{eqnarray}\label{eq:squarestmaps}
    s(s_h)&=&s(s_v) \\
    \nonumber t(s_h)&=&s(t_v) \\
    \nonumber s(t_h)&=&t(s_v) \\
    \nonumber t(t_h)&=&t(t_v)
  \end{eqnarray}
\end{itemize}
  The squares should have horizontal and vertical composition
  operations, defining the vertical composite $F \otimes_V G$
  \begin{equation}\label{eq:squarevertcomp}
    \xymatrix{
      x \ar[r] \ar[d] & x' \ar[d] \\
      y \ar[r] \ar[d] \uriicell{F} & y'\ar[d] \\
      z \ar[r] \uriicell{G} & z' \\
    } \qquad = \qquad
    \xymatrix{
      x \ar[r] \ar[d] & x' \ar[d] \\
      z \ar[r] \uriicell{F\otimes_V G} & z'\\
    }
  \end{equation}
  and horizontal composite $F \otimes_H G$:
  \begin{equation}\label{eq:squarehorizcomp}
    \xymatrix{
      x \ar[r] \ar[d] & y \ar[d] \ar[r] & z \ar[d] \\
      x' \ar[r] \uriicell{F} & y' \ar[r] \uriicell{G} & z'
    } \qquad = \qquad
    \xymatrix{
      x \ar[r] \ar[d] & z \ar[d] \\  
      x' \ar[r] \uriicell{F \otimes_H G} & z' \\
    }
  \end{equation} The composites have the usual relation to source and target
  maps, satisfy the interchange law
  \begin{equation}\label{eq:squareinterchangelaw}
    (F \otimes_V F') \otimes_H (G \otimes_V G') = (F \otimes_H G) \otimes_V (F' \otimes_H G')
  \end{equation} and there is a unit for composition of squares: 
  \begin{equation}
    \xymatrix{
      x \ar[r]^{1_x} \ar[d]_{f} & x \ar[d]^{f} \\
      y \ar[r]^{1_y} \uriicell{1_f} & y \\
    }
  \end{equation}
  (and similarly for vertical composition).

  There is a left and right action by the horizontal
  and vertical 2-morphisms on $\Squ$, giving $F \star_V \alpha$,
  \begin{equation}
    \xymatrix{
      x \ar[r] \ar[d] & y \ar[d]^{ }="1" \ar@/^2pc/[d]^{ }="0" \\
      x' \ar[r] \uriicell{F} & y' \\
      \ar@{<=}"0" ;"1"^{\alpha}
    } \qquad = \qquad
    \xymatrix{
      x \ar [r] \ar[d] & y \ar[d] \\
      x' \ar[r] \uriicell{F \star_V \alpha} & y'
    }
  \end{equation}
  (and similarly on the left) and $F \star_H \alpha$,
  \begin{equation}
    \xymatrix{
      x \ar[r]^{ }="1" \ar@/^2pc/[r]^{ }="0" \ar[d] & y \ar[d]  \\
      x' \ar[r] \uriicell{F} & y' \\
      \ar@{=>}"0" ;"1"^{\alpha}
    } \qquad = \qquad
    \xymatrix{
      x \ar [r] \ar[d] & y \ar[d] \\
      x' \ar[r] \uriicell{\alpha \star_H F} & y'
    }
  \end{equation} The actions also satisfy interchange laws:
  \begin{equation}
    (F \otimes_H F') \star_H (\alpha \otimes_V \alpha') = (F \star_H \alpha) \otimes_h (F' \star_H \alpha')
  \end{equation}
(and similarly for the vertical case) and are compatible with composition: 
  \begin{equation}\label{eq:actioncompat}
    (F \otimes_H G) \star_V \alpha = F \otimes_H (G \star_V \alpha)
  \end{equation} (and analogously for vertical composition).  They
  also satisfy additional compatibility conditions: the left and right
  actions of both vertical and horizontal 2-morphisms satisfy the
  ``associativity'' properties,
 \begin{equation}
    \alpha \star (S \star \beta) = (\alpha \star S) \star \beta
  \end{equation}
  for both $\star_H$ and $\star_V$.  Moreover, horizontal
  and vertical actions are independent:
  \begin{equation}\label{eq:actionindep}
    \alpha \star_H (\beta \star_V S) = \beta \star_V (\alpha \star_H S)
  \end{equation}
  and similarly for the right action. 

Finally, the composition of squares agrees with the associators for
composition by the action in the sense that given three composable
squares $F$, $G$, and $H$:
\begin{equation}\label{eq:assocaction}
\xymatrix{
      x \ar[d] \ar[rrr]^{h \circ (g \circ f)}="0" \ar@/^3pc/[rrr]^{(h \circ g) \circ f}="1" & & & y \ar[d] \\
      x' \ar[rrr]_{h' \circ (g' \circ f')}  & \uriicell{(F \otimes_H G) \otimes_H H} & & y' \\
      \ar@{=>}"0" ;"1"^{a_{f,g,h}}
} \qquad = \qquad
\xymatrix{
      x \ar[rrr]^{(h \circ g) \circ f} \ar[d] & & & y \ar[d] \\
      x' \ar[rrr]_{(h' \circ g') \circ f}="0" \ar@/_3pc/[rrr]_{h' \circ (g' \circ f)}="1"  & \uriicell{F \otimes_H (G \otimes_H H)} & & y'\\
      \ar@{=>}"0" ;"1"^{a_{f',g',h'}}
}
\end{equation}
and similarly for vertical composition.  Likewise, unitors in the
horizontal and vertical bicategories agree with the identity for
composition of squares:
\begin{equation}\label{eq:unitaction}
\xymatrix{
      x \ar[d]_{g} \ar[r]_{f} \ar@/^2pc/[rr]^{f}="1" \ar@{}[rr]^{}="0" & y \ar[d]^{g'} \ar[r]_{1_y} & y \ar[d] \ar[d]^{g'} \\
      x' \ar[r]_{f'} \uriicell{F}  & y' \ar[r]_{1_{y'}} \uriicell{1_{g'}} & y' \\
      \ar@{=>}"0"+<0ex,+1.5ex>;"1"+<0ex,-1.5ex>^{l_f}
} \qquad = \qquad
\xymatrix{
      x \ar[d]_{g} \ar[r]^{1_x} &  \ar[d]_{g} \ar[r]^{f} & y \ar[d]^{g'} \\
      x' \ar[r]^{1_{x'}} \ar@/_2pc/[rr]_{f'}="1" \ar@{}[rr]^{}="0" \uriicell{1_{g}}  & x' \uriicell{F} \ar[r]^{f'} & y' \\
      \ar@{=>}"0"+<0ex,-1.5ex> ;"1"+<0ex,+2.5ex>^{r_{f'}}
}
\end{equation}
and similarly for vertical units.
\end{definition}

We will see in Chapter \ref{chap:cobcorn} that the higher categories
defined this way are well suited to dealing with cobordisms with
corners.  In Section \ref{sec:internalbicat} we will consider how this
definition arises as a special case of a broader concept of {\db}
which we define there.  For now, in Section \ref{sec:equiv}, we will
consider how {\vdbs} can give rise to ordinary bicategories.

\subsection{Bicategories from Double Bicategories}\label{sec:equiv}

There are numerous connections between double categories and
bicategories (or their strict form, 2-categories).  One is Ehresmann's
double category of quintets, relating double categories to
2-categories: a double category by taking the squares to be
2-morphisms between composite pairs of morphisms, such as $\alpha:
g'\circ f \ra f' \circ g$.

Furthermore, it is well known that double categories can yield
2-categories in three different ways.  Two obvious cases are when
there are only identity horizontal morphisms, or only vertical
morphisms, respectively, so that squares simply collapse into bigons
with the two nontrivial sides.  Notice that it is also true that a
{\vdb} in which $\Hor$ is trivial (equivalently, if $\Ver$ is trivial)
is again a bicategory.  The squares become 2-morphisms in the obvious
way, the action of 2-morphisms on squares then is just composition,
and the composition rules for squares and bigons are the same.  The
result is clearly a bicategory.

The other, less obvious, case, is when the horizontal and vertical
categories on the objects are the same: this is the case of
\textit{path-symmetric} double categories, and the recovery of a
bicategory was shown by Brown and Spencer \cite{brownspencer}.  Fiore
\cite{fiore} shows how their demonstration of this fact is equivalent
to one involving \textit{folding structures}.

In this case we again can interpret squares as bigons by composing the
top and right edges, and the left and bottom edges.  Introducing
identity bigons completes the structure.  These new bigons have a
natural composition inherited from that for squares.  It turns out
that this yields a structure satisfying the definition of a
2-category.  Here, our goal will be to show half of an analogous
result, that a {\vdb} similarly gives rise to a bicategory when the
horizontal and vertical bicategories are equal.  We will also show
that a double bicategory for which the horizontal (or vertical)
bicategory is trivial can be seen as a bicategory.  The condition that
$\Hor = \Ver$ holds in our general example of double cospans: both
horizontal and vertical bicategories in any $\iiCCosp_0$ are just
$\CCosp$.

\begin{theorem}\label{thm:equiv} Any {\vdb} $(\Obj, \Hor, \Ver, \Squ, \otimes_H, \otimes_V,
\star_H, \star_V)$ for which $\Hor = \Ver$ produces a bicategory by taking squares to be 2-cells.
\end{theorem}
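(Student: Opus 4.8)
The plan is to build the putative bicategory $\mathcal{B}$ directly, taking the objects of $\mathcal{B}$ to be $\Obj$, the morphisms to be the morphisms of the common bicategory $\Hor = \Ver$, and the $2$-morphisms of $\mathcal{B}(x,y)$ to be the squares $F \in \Squ$ whose four boundary morphisms are arranged as a bigon — that is, squares whose left and right vertical sides are identities (equivalently, using $\Hor=\Ver$, whose top and bottom sides are composed in the manner of the path-symmetric case). Concretely, I would interpret a square
\begin{equation}
\xymatrix{
  a \ar[r]^{h} \ar[d]_{1_a} & b \ar[d]^{1_b} \\
  a \ar[r]_{h'} & b
}
\end{equation}
as a $2$-cell $h \Rightarrow h'$ in $\mathcal{B}(a,b)$. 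The identity $2$-cell on $h$ is the unit square $1_h$ from the definition. First I would check that $\mathcal{B}(x,y)$ is a category: vertical composition of these bigon-squares is the square composition $\otimes_V$ (or $\otimes_H$, once one fixes the bookkeeping), associativity of which follows from the square-composition axioms in Definition \ref{def:doublebicat}, and the unit squares serve as identities by the unit axioms there.

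Next I would supply the horizontal composition functor $\circ : \mathcal{B}(x,y) \times \mathcal{B}(y,z) \to \mathcal{B}(x,z)$. On morphisms this is the composition in $\Hor = \Ver$; on $2$-cells (squares) it is $\otimes_H$ together with the action $\star_H$ of horizontal $2$-morphisms where needed to mediate between composed boundary morphisms. Functoriality of $\circ$ is exactly the interchange law \eqref{eq:squareinterchangelaw} for squares combined with the interchange laws for the actions $\star_H$, $\star_V$ and the compatibility conditions \eqref{eq:actioncompat}, \eqref{eq:actionindep}. The associator $a_{f,g,h}$ and unitors $r_f$, $l_f$ of $\mathcal{B}$ are taken to be the associator and unitors of the bicategory $\Hor = \Ver$, reinterpreted as squares via the action axioms \eqref{eq:assocaction} and \eqref{eq:unitaction}: those two axioms are precisely what guarantee that the associator/unitor $2$-cells are compatible with square composition, so that the resulting $2$-cells in $\mathcal{B}$ behave coherently. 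The pentagon and triangle identities for $\mathcal{B}$ then descend from the corresponding coherence laws \eqref{eq:pentagonid}, \eqref{eq:unitorlaws} already satisfied in $\Hor$, once translated through the square/action dictionary.

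The main obstacle I expect is not any single coherence computation but the bookkeeping of the translation itself: a square has four boundary edges and two kinds of composition, whereas a bigon $2$-cell in a bicategory has only two edges and one composition, so one must pin down precisely which squares count as $2$-cells of $\mathcal{B}$ and how $\otimes_H$, $\otimes_V$, $\star_H$, $\star_V$ all collapse onto the single vertical and single horizontal composition of $\mathcal{B}$. The delicate point is that when one composes two bigon-squares horizontally via $\otimes_H$, the middle object's vertical edges are identities but the resulting square's top and bottom edges are genuine composites $h_2 \circ h_1$ and $h_2' \circ h_1'$ in $\Hor$, and one must use $\star_H$ (or $\star_V$) acting by the associators to re-bracket these consistently — this is exactly where axioms \eqref{eq:assocaction} and \eqref{eq:actioncompat} do the work, and verifying that all these mediating cells fit together without ambiguity is the heart of the argument. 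Once the dictionary is fixed, every bicategory axiom for $\mathcal{B}$ is a direct consequence of a corresponding axiom in Definition \ref{def:doublebicat} or in the bicategory $\Hor=\Ver$, so the remainder is routine though lengthy verification.
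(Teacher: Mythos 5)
There is a genuine gap here, and it lies in your very first move: you declare that the $2$-cells of $\mathcal{B}(a,b)$ are only those squares whose left and right edges are identities. That is not what the theorem asks for ("taking squares to be $2$-cells" means all squares), and it is not the construction the paper needs downstream: in $\nCob$ a cobordism with corners $M$ has four genuinely non-identity faces $S, T_X, T_Y, S'$, and the $2$-cell it must become is $M : T_Y \circ S \Rightarrow S' \circ T_X$. The intended construction is the folding (quintet-style) one: a general square with top $f$, left $g$, right $g'$, bottom $f'$ is reinterpreted as a bigon $g' \circ f \Rightarrow f' \circ g$ between composites of adjacent edges. This is precisely where the hypothesis $\Hor = \Ver$ is used --- it is what makes $g' \circ f$ and $f' \circ g$ parallel morphisms in a single hom-category. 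Your construction never invokes $\Hor = \Ver$ at all (identity vertical edges exist in any {\vdb}), which is a strong sign it is not proving the stated theorem; it would at best produce a strictly smaller bicategory that forgets every square with non-trivial vertical boundary. The real work of the proof is then to express the composites $\otimes_H$ and $\otimes_V$ of squares, under this folding, as whiskered vertical composites such as $(1 \circ G)\cdot(F \circ 1)$ in $\Hor$, and to use the compatibility axioms (\ref{eq:assocaction}) and (\ref{eq:unitaction}) to check that no new $2$-cells are generated beyond those coming from $\Squ$ and $\Hor$.

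A secondary problem, even within your restricted setting: the bigon-squares are not closed under $\otimes_V$, because stacking two squares with identity vertical edges produces a square whose vertical edges are $1 \circ 1$, which in a weak bicategory is only isomorphic to $1$ via the unitor. You would need the actions $\star_V$ by unitor $2$-cells to repair this before $\mathcal{B}(a,b)$ is even a category, and you do not address how this is done unambiguously. The folding construction sidesteps this because vertical composition of $2$-cells in $\mathcal{B}$ is realized by whiskering and the single composition $\cdot$ of $\Hor$, not by stacking squares with identity edges.
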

\begin{proof}
We begin by defining the data of this bicategory, which we call
$\catname{B}$.  Its objects and morphisms are the same as those of
$\Hor$ (equivalently, $\Ver$).  We describe the 2-morphisms by
observing that $\catname{B}$ must contain all those in $\Hor$
(equivalently, $\Ver$), but also some others, which correspond to the
squares in $\Squ$.

In particular, given a square
\begin{equation}
 \xymatrix{
  a \ar[r]^{f} \ar[d]_{g} & b \ar[d]^{g'} \\
  c \ar[r]_{f'} \uriicell{S} & d
 }
\end{equation}
there should be a 2-morphism
\begin{equation}
 \xymatrix{
   a \ar@/^1pc/[rr]^{g'\circ f}="0" \ar@/_1pc/[rr]_{f'\circ g}="1" && d \\
   \ar@{=>}"0"+<0ex,-2.5ex> ;"1"+<0ex,+2.5ex>^{S}
 }
\end{equation}

The composition of squares corresponds to either horizontal or
vertical composition of 2-morphisms in $\catname{B}$, and the relation
between these two is given in terms of the interchange law in a
bicategory:

Given a composite of squares,
\begin{equation}
  \xymatrix{
    x \ar[r]^{f} \ar[d]_{\phi_x} & y \ar[d]^{\phi_y} \ar[r]^{g} & z \ar[d]^{\phi_z} \\
    x' \ar[r]_{f'} \uriicell{F} & y' \ar[r]_{g'} \uriicell{G} & z'
 }
\end{equation}
there will be a corresponding diagram in $\catname{B}$:
\begin{equation}
 \xymatrix{
    x \ar@{}[rr]^{}="1"\ar[r]^{f} \ar@/_2pc/[rr]_{\phi_x \circ f'}="0" & y \ar@{}[rr]^{}="3" \ar[r]^{\phi_y} \ar@/^2pc/[rr]^{\phi_z \circ g}="2" & y' \ar[r]^{g'} & z' \\
   \ar@{=>}"1"+<0ex,-1.5ex> ;"0"+<0ex,+2.5ex>^{F}
   \ar@{=>}"2"+<0ex,-2.5ex> ;"3"+<0ex,+1.5ex>^{G}
 }
\end{equation}

Using horizontal composition with identity 2-morphisms
(``whiskering''), we can write this as a vertical composition:
\begin{equation}
 \xymatrix{
   x \ar@/^2pc/[rrr]^{\phi_z \circ g \circ f}="0" \ar[rrr]_{g' \circ \phi_y \circ f}="1" \ar@/_2pc/[rrr]_{g' \circ f' \circ \phi_x}="2" & & & z' \\
   \ar@{=>}"0"+<0ex,-2.5ex> ;"1"+<0ex,+2.5ex>^{G \circ \opname{1}_{f}}
   \ar@{=>}"1" ;"2"^{\opname{1}_{g'} \circ F}
 }
\end{equation}

So the square $F \otimes_H G$ corresponds to $(\opname{1} \circ G)
\cdot (F \circ \opname{1})$ for appropriate identities
$\opname{1}$.  Similarly, the vertical composite of $F' \otimes_V G'$
must be the same as $(\opname{1} \circ F) \cdot (G \circ
\opname{1})$.  Thus, every composite of squares, which can all be
built from horizontal and vertical composition, gives a corresponding
composite of 2-morphisms in $\catname{B}$, which are generated by
those corresponding to squares in $\Squ$, subject to the relations
imposed by the composition rules in a bicategory.

To show the {\vdb} gives a bicategory, it now suffices to show that
all such 2-morphisms not already in $\Hor$ arise as squares (that is,
the structure is closed under composition).  So suppose we have any
composable pair of 2-morphisms which arise from squares.  If the
squares have an edge in common, then we have the situation depicted
above (or possibly the corresponding form in the vertical direction).
In this case, the composite 2-morphism corresponds exactly to the
composite of squares, and the axioms for composition of squares ensure
that all 2-morphisms generated this way are already in our bicategory.
In particular, the unit squares become unit 2-morphisms when composed
with left and right unitors.

Now, if there is no edge in common to two squares, the 2-morphisms in
$\catname{B}$ must be made composable by composition with identities.
In this case, all the identities can be derived from 2-morphisms in
$\Hor$, or from identity squares in $\Squ$ (inside commuting
diagrams).  Clearly, any identity 2-morphism can be factored this way.
Then, again, the composite 2-morphisms in $\catname{B}$ will
correspond to the composite of all such squares and 2-morphisms in
$\Squ$ and $\Hor$.

Finally, the associativity condition (\ref{eq:assocaction}) for the
action of 2-morphisms on squares ensures that composition of squares
agrees with that for 2-morphisms, so there are no extra squares from
composites of more than two squares.
\end{proof}
 
This allows us to think of $\nCob$ not only as a {\vdb}, but in the more
familiar form of a bicategory.

It is also worth considering here the situation of a double bicategory
with horizontal bicategory trivial (i.e. in which all horizontal
morphisms and 2-morphisms are identities).  In this case, one can
define a 2-morphism from a square with and bottom edges being
identities, whose source is the object whose identity is the
corresponding edge, and similarly for the target.  The composition
rules for squares in the vertical direction, then, are just the same
as those for a bicategory.  Likewise, the axioms for action of a
2-morphism on a square reduce to the composition laws for a bicategory
if one replaces the square by a 2-cell.

Next we describe the class of examples we will use
to develop a double bicategory of cobordisms with corners.

\subsection{Double Cospans}\label{sec:dblspan}

Now we construct a class of examples of double bicategories.  These
examples are analogous to the example of bicategories of spans,
discussed in Section \ref{sec:spanbicat}.  These span-ish examples of
{\vdbs} are will give the {\vdb} of cobordisms with corners as a
special case, which is similar in flavour to the topological examples
of bicategories and double categories in Section \ref{sec:homotopy}.
However, these will be based on \textit{cospans}.  Cospans in $\C$ are
the same as spans in the opposite category, $\Cop$.  In Remark
\ref{thm:spanbicat} we described B\'enabou's demonstration that
$\Cspan$ is a bicategory for any category $\C$ with
pullbacks. Similarly, there is a bicategory of cospans in a category
$\C$, with pushouts.

There will be an analogous fact about our example of a {\db}: double
cospans, described explicitly in Section \ref{sec:internalbicat}.
Here, we are interested in a more restricted structure:

\begin{definition}\label{def:cspan20}For a category $\C$ with finite
limits, the {\vdb} $\iiCCosp_0$, has:
\begin{itemize}
\item the objects are objects of $\C$
\item the horizontal and vertical bicategories $\Hor = \Ver$ are both
      equal to a sub-bicategory of $\CCosp$, which includes only
      invertible cospan maps
\item the squares are isomorphism classes of commuting
      diagrams of the form:
\begin{equation}\label{xy:cspan2}
  \xymatrix{
    X \ar[d]^{i_X} \ar[r]_{\iota_X} & S \ar[d]^{I} & Y \ar[l]^{\iota_Y} \ar[d]_{i_Y} \\
    T_X \ar[r]_{J_X}  & M  & T_Y   \ar[l]^{J_Y} \\
    X' \ar[u]_{i_Y'} \ar[r]_{\iota_{X'}} & S' \ar[u]_{I'} & Y' \ar[u]^{i_{Y'}} \ar[l]^{\iota_{Y'}}
  }
\end{equation}

\end{itemize} where two diagrams of the form (\ref{xy:cspan2}) are
isomorphic if they differ only in the middle objects, say $M$ and
$M'$, and their maps to the edges, and if there is an isomorphism $f:M
\ra M'$ making the combined diagram commute.

The action of 2-morphisms $\alpha$ in $\Hor$ and $\Ver$ on squares is by
composition in diagrams of the form:
\begin{equation}\label{xy:cspan2action}
  \xymatrix{
      & S_2 \ar[dl]_{\pi_1} \ar[dr]^{\pi_2} & \\
    X & S_1 \ar[l]_{\pi_1} \ar[r]^{\pi_2} \ar[u]^{\alpha} & Y \\
    T_X \ar[u]^{p_1} \ar[d]_{p_2} & M \ar[u]^{P_1} \ar[d]_{P_2} \ar[l]_{\Pi_1} \ar[r]^{\Pi_2} & T_Y \ar[u]_{p_1} \ar[d]^{p_2} \\
    X'& S' \ar[l]_{\pi_1} \ar[r]^{\pi_2}  & Y'
  }
\end{equation} (where the resulting square is as in \ref{xy:cspan2},
with $S_2$ in place of $S$ and $\alpha \circ P_1$ in place of $P_1$).

Composition (horizontal or vertical) of squares of cospans is, as in
$\iiCCosp$, given by composition (by pushout) of the three spans of
which the square is composed.  The composition operators for diagrams
of cospan maps are by the usual ones in $\CCosp$.
\end{definition}

\begin{remark} Notice that $\Hor$ and $\Ver$ as defined are indeed
bicategories: eliminating all but the invertible 2-morphisms leaves a
collection which is closed under composition by pushouts.
\end{remark}

We show more fully that this is a {\vdb} in Theorem
\ref{thm:cspanthm}, but for now we note that the definition of
horizontal and vertical composition of squares is defined on
equivalence classes.  One must show that this is well defined.  We
will get this result indirectly as a result of Lemmas
\ref{lemma:span2bicat} and \ref{lemma:doublebicat}, but it is
instructive to see directly how this works in $\CCosp$.

\begin{lemma}The composition of squares in Definition \ref{def:cspan20} is well-defined.
\end{lemma}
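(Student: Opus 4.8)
The plan is to reduce the statement entirely to the universal property of pushouts in $\C$: a pushout is a colimit, so it is determined up to canonical isomorphism and carries an isomorphism of the diagrams defining it to an isomorphism of the pushout objects. Recall that, as an element of $\Squ$, a square as in (\ref{xy:cspan2}) is an isomorphism class in which only the central object $M$ --- together with its four structure maps out of $S$, $T_X$, $T_Y$, $S'$ --- is allowed to vary, while all of $X, Y, X', Y', S, S', T_X, T_Y$ and the edge maps are fixed boundary data. So I would first observe that horizontal composition of two composable squares is formed row by row, by composing in $\CCosp$ the corresponding horizontal cospans of the two squares: the composite of the top cospans and of the bottom cospans, say $X \to S\cup_Y S_2 \leftarrow Z$ and $X' \to S'\cup_{Y'}S_2' \leftarrow Z'$, involves only fixed boundary data and so is unaffected by the equivalence relation on $\Squ$, whereas the only place the variable central objects enter is the composite of the middle cospans, with central object the pushout $M \cup_{T_Y} N$. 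Hence it suffices to show that replacing $M$ and $N$ by isomorphic representatives replaces $M \cup_{T_Y} N$ by a canonically isomorphic object, compatibly with all the structure maps of the composite square.

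Concretely, I would fix representatives and let $f : M \to M'$ and $g : N \to N'$ be isomorphisms witnessing the two equivalences, so that $f$ commutes with the maps out of $S, T_X, T_Y, S'$ and $g$ with the maps out of the corresponding objects of the second square. Then $(\id_{T_Y}, f, g)$ is an isomorphism between the spans $M \leftarrow T_Y \to N$ and $M' \leftarrow T_Y \to N'$, and since $M\cup_{T_Y}N$ and $M'\cup_{T_Y}N'$ are their colimits, it induces a unique isomorphism $h : M \cup_{T_Y} N \to M' \cup_{T_Y} N'$ commuting with both coprojections on each side.

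It then remains to verify that $h$ is compatible with every structure map of the composite square, so that it exhibits the two composite diagrams as isomorphic in the precise sense of Definition \ref{def:cspan20}. For $T_X \to M \cup_{T_Y} N$, which factors through the coprojection of $M$, this follows from the compatibility of $h$ with the coprojections together with that of $f$ with $T_X \to M$; the map from the opposite corner is symmetric via $g$. For the structure map $S\cup_Y S_2 \to M \cup_{T_Y} N$, which is the one induced by $S \to M$ and $S_2 \to N$ (these agreeing on $Y$), one post-composes with $h$, uses that $f$ and $g$ are compatible with $S \to M$ and $S_2 \to N$, and appeals to the universal property of $S\cup_Y S_2$ to identify the result with the corresponding map for the primed representatives; the bottom object $S'\cup_{Y'}S_2'$ is handled identically. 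This shows horizontal composition of squares is independent of the chosen representatives; vertical composition is the same argument with the roles of $\Hor$ and $\Ver$ exchanged, so composition descends to $\Squ$.

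The main obstacle is organisational rather than mathematical: one must keep careful track of which objects are genuinely fixed boundary data and which are the variable central objects --- so as to localise the only ambiguity in the composite to the middle pushout --- and then patiently check the induced isomorphism $h$ against each of the four structure maps of the composite square. Beyond that everything is a formal consequence of the universal property of pushouts, and indeed this is precisely the ``instructive'' special case of the later, more general Lemmas \ref{lemma:span2bicat} and \ref{lemma:doublebicat}, where one instead invokes that (co)spans in $\C$ already assemble into a bicategory.
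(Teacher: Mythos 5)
Your proof is correct and takes essentially the same route as the paper's: both reduce well-definedness to the universal property of the pushout of the variable middle objects over the (fixed) shared edge, noting that the boundary data is unaffected. Your version is in fact more careful than the paper's rather terse argument, since you explicitly construct the induced isomorphism $h$ of pushouts from the witnessing isomorphisms $f$ and $g$ and verify its compatibility with each structure map of the composite square, steps the paper leaves implicit.
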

\begin{proof}
Suppose we have two representatives of a square, bounded by horizontal
cospans $(S,\pi_1,\pi_2)$ from $X$ to $Y$ and $(S',\pi_1,\pi_2)$ from
$X'$ to $Y'$, and vertical cospans $(T_X,p_1,p_2)$ from $X$ to $X'$ and
$(T_Y,p_1,p_2)$ from $Y$ to $Y'$.  The middle objects $M_1$ and $M_2$
as in the diagram (\ref{xy:cspan2}).  If we also have a composable
diagram---one which coincides along an edge (morphism in $\Hor$ or
$\Ver$) with the first---then we need to know that the pushouts are
also isomorphic (that is, represent the same composite square).

In the horizontal and vertical composition of these squares, the maps
from the middle object $M$ of the new square to the middle objects of
the new sides (given by composition of cospans) arise from the universal
property of the pushouts on the sides being composed (and the induced
maps from $M$ to the corners, via the maps in the cospans on the other
sides).  Since the middle objects are defined only up to isomorphism
class, so is the pushout: so the composition is well defined, since
the result is again a square of the form (\ref{xy:cspan2}).
\end{proof}

We use this, together with Lemmas \ref{lemma:span2bicat} and
\ref{lemma:doublebicat}, (proved in Section \ref{sec:internalbicat})
to show the following:

\begin{theorem}\label{thm:cspanthm} If $\C$ is a category with finite
colimits, then $\iiCCosp_0$ is a {\vdb}.
\end{theorem}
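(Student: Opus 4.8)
The plan is to deduce the statement from the two technical lemmas cited just above, applied in the opposite category, together with a direct check that restricting to the invertible cospan maps preserves the structure. Recall that cospans in $\C$ are spans in $\Cop$, and that $\C$ has finite colimits exactly when $\Cop$ has finite limits; so Remark~\ref{thm:spanbicat} (equivalently Lemma~\ref{lemma:span2bicat} applied to $\Cop$) gives that $\CCosp$ is a bicategory, with composition of cospans by pushout and 2-cells the cospan maps. First I would check that the sub-structure of $\CCosp$ having all objects, all cospans, and only invertible cospan maps as 2-cells is again a bicategory: an identity cospan map is invertible; a horizontal or vertical composite of invertible cospan maps, and a whiskering of one by a cospan, is again invertible; and the associators and unitors of $\CCosp$ are invertible 2-cells and hence lie in this sub-structure. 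This produces the common horizontal and vertical bicategory $\Hor = \Ver$ required by Definition~\ref{def:cspan20}.

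Next I would invoke Lemma~\ref{lemma:doublebicat}, applied to $\Cop$: double cospans in $\C$ naturally form a {\db} --- an internal bicategory in $\Bicat$ --- and, since pushouts are defined only up to canonical isomorphism, passing to isomorphism classes of the middle object $M$ of a square of the form (\ref{xy:cspan2}) turns this {\db} into a {\vdb}. The structure $\iiCCosp_0$ of Definition~\ref{def:cspan20} is exactly this {\vdb}, restricted in its horizontal and vertical directions to the sub-bicategory of invertible cospan maps found above. Since that restriction is a genuine sub-bicategory, and since the squares, their horizontal and vertical composites, and the actions $\star_H, \star_V$ of 2-cells only ever produce diagrams whose bounding edges are cospans in $\C$ with edge-maps copied verbatim from the given boundary data, the restricted collection of squares is closed under all the operations; hence $\iiCCosp_0$ is again a {\vdb}.

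It then remains to match the concrete data of Definition~\ref{def:cspan20} against the axioms of Definition~\ref{def:doublebicat}. The maps $s_h, t_h, s_v, t_v$ read off the four bounding cospans $S, T_X, S', T_Y$ of a square, and the corner equations (\ref{eq:squarestmaps}) hold because each corner object $X, Y, X', Y'$ is literally the shared source or target of one horizontal and one vertical cospan. Horizontal and vertical composition of squares is composition-by-pushout of the three constituent cospans, well-defined on isomorphism classes by the lemma proved immediately after Definition~\ref{def:cspan20}; the interchange law (\ref{eq:squareinterchangelaw}) holds because both sides compute the same iterated pushout, which is unique up to the canonical comparison isomorphism. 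The unit squares are the identity-cospan squares $1_f$, and the actions of invertible 2-cells on squares are the pasting shown in (\ref{xy:cspan2action}); their compatibility with composition (\ref{eq:actioncompat}), the action interchange laws, and the independence condition (\ref{eq:actionindep}) are once more the uniqueness of maps induced out of pushouts. Finally the coherence conditions (\ref{eq:assocaction}) and (\ref{eq:unitaction}), relating square composites to the associators and unitors of $\Hor = \Ver$, hold because in $\CCosp$ those associators and unitors \emph{are} the canonical comparison isomorphisms between iterated pushouts, so the two sides of each identity agree by the universal property.

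The main obstacle is not any individual axiom but the coherent bookkeeping of the quotient by isomorphism classes of middle objects: one must see that \emph{every} composite and \emph{every} 2-cell action descends to isomorphism classes and stays compatible with the coherence 2-cells of $\Hor = \Ver$, which live among the cospan maps rather than among the squares. This is precisely what Lemma~\ref{lemma:doublebicat} is designed to handle, so the substance of the proof is to set $\iiCCosp_0$ up so that that lemma applies cleanly, and then to verify the one extra ingredient specific to this structure, namely that cutting the horizontal and vertical bicategories down to invertible cospan maps leaves everything closed.
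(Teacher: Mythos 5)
Your proposal is correct and follows essentially the same route as the paper: reduce to the two appendix lemmas (double cospans form a {\db}; a {\db} with at most unique fillers satisfying the action conditions yields a {\vdb}), note that restricting to invertible cospan maps leaves a sub-bicategory closed under composition by pushout, and pass to isomorphism classes of the middle objects. The only slips are a swapped citation --- it is Lemma~\ref{lemma:span2bicat} that produces the {\db} of double cospans and Lemma~\ref{lemma:doublebicat} that converts it to a {\vdb} --- and that the paper's verification of the action conditions leans specifically on the invertibility of the 2-cell $\alpha$ to force uniqueness of the filler square (an inverse gives a map back to $S_1$, identifying the two candidates' isomorphism classes), a point worth making explicit rather than folding into the universal property of pushouts.
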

\begin{proof}
In the construction of $\iiCCosp_0$, we take isomorphism classes of
double cospans as the squares.  We also restrict to invertible cospan maps
in the horizontal and vertical bicategories.

That is, take 2-isomorphism classes of morphisms in $\M$ in the \db, where
the 2-isomorphisms are invertible cospan maps, in both horizontal and
vertical directions.  We are then effectively discarding all morphisms
and 2-morphisms in $\B$, and the 2-morphisms in $\M$ except for the
invertible ones.  In particular, there may be ``squares'' of the form
(\ref{xy:cspan2}) in $\iiCCosp$ with non-invertible maps joining their
middle objects $M$, but we have ignored these, and also ignore
non-invertible cospan maps in the bicategories on the edges.  Thus, we
consider no diagrams of the form (\ref{xy:cospanmap1}) except for
invertible ones, in which case the middle objects (say, $M$ and $M'$) are
representatives of the same isomorphism class.  Similar reasoning
applies to the 2-morphisms in $\B$.

The resulting structure we get from discarding these will again be a
\db. In particular, the new $\M$ and $\B$ will be bicategories, since
they are, respectively, just a category and a set made into a discrete
bicategory by adding identity morphisms or 2-morphisms as needed.  On
the other hand, for the composition, source and target maps to be
weak 2-functors amounts to saying that the structures built from the
objects, morphisms, and 2-cells respectively are again bicategories,
since the composition, source, and target maps satisfy the usual
axioms.  But the same argument applies to those built from the
morphisms and 2-cells as within $\M$ and $\B$.  So we have a \db.

Next we show that the horizontal and vertical action conditions
(Definition \ref{def:actionconds} of section \ref{sec:decatfy}) hold
in $\iiCCosp$.  A square in $\iiCCosp$ is a diagram of the form
(\ref{xy:cspan2}), and a 2-cell is a map of cospans.  Given a square
$M_1$ and 2-cell $\alpha$ with compatible source and targets as in the
action conditions, we have a diagram of the form shown in
(\ref{xy:cspan2action}).  Here, $M_1$ is the square diagram at the
bottom, whose top row is the cospan containing $S_1$.  The 2-cell
$\alpha$ is the cospan map including the arrow $\alpha: S_1 \ra
S_2$.  There is a unique square built using the same objects as $M_1$
except using the cospan containing $S_2$ as the top row.  The map to
$S_2$ from $M$ is then $\alpha \circ P_1$.

To satisfy the action condition, we want this square $M_2$, which is
the candidate for $M_1 \star_V \alpha$, to be unique.  But suppose
there were another $M'_2$ with a map to $S_2$.  Since we are in
$\iiCCosp_0$, $\alpha$ must be invertible, which would give a map from
$M'_2$ to $S_1$.  We then find that $M'_2$ and $M_2$ are
representatives of the same isomorphism class, so in fact this is the
same square.  That is, there is a unique morphism in $\B$ taking
$M_1$ to $M_2$ (a diagram of the form \ref{xy:cospanmap2}, oriented
vertically) with invertible cospan maps in the middle and bottom
rows.  This is the unique filler for the pillow diagram required by
definition \ref{def:actionconds}.

The argument that $\iiCCosp_0$ satisfies the action compatibility
condition is similar.

So $\iiCCosp_0$ is a {\db} in which, there there is at most one unique
morphism in $\M$, and at most unique morphisms and 2-morphisms in
$\B$, for any specified source and target, and the horizontal and
vertical action conditions hold.  So $\iiCCosp_0$ can be interpreted
as a {\vdb} (Lemma \ref{lemma:doublebicat}).
\end{proof}

\begin{remark}
We observe here that the compatibility condition (\ref{eq:assocaction})
relating the associator in the horizontal and vertical bicategories to
composition for squares can be seen from the fact that the associators
are maps which come from the universal property of pushouts.  This is
by the parallel argument to that we gave for spans in Section
\ref{sec:spanbicat}.  The same argument applies to the middle objects
of the squares, and gives associator isomorphisms for that
composition.  Since these become the identity when we reduce to
isomorphism classes, we get a commuting pillow as in
(\ref{eq:assocaction}).  A similar argument shows the compatibility
condition for the unitor, (\ref{eq:unitaction}).
\end{remark}

Note that the analogous theorem beginning with a category $\C$ with
finite limits and using spans is equivalent to this case, by taking
$\Cop$.

In Section \ref{sec:cobcat} we use a similar argument to obtain a
{\vdb} of cobordisms with corners.  First, however, we must see how
these are defined.  This is the task of Chapter \ref{chap:cobcorn}.
In Appendix \ref{sec:doublecospan} we show that $\CCosp$ is a \vdb.  For
now, we will examine how cobordisms form a special topological example
of this sort of \vdb.

\section{Cobordisms With Corners}\label{chap:cobcorn}

Our motivation here for studying {\vdbs} is to provide the right
formal structure for our special example of higher categories of
cobordisms.  The objects in these categories are manifolds of some
dimension, say $k$.  In this case, the morphisms are
$(k+1)$-dimensional cobordisms between these manifolds: that is,
manifolds with boundary, such that the boundary decomposes into two
components, with one component as the source, and one as the target.
The 2-cells are equivalence classes $(k+2)$-dimensional cobordisms
between $(k+1)$-dimensional cobordisms: these can be seen as manifolds
with corners, where the corners are the $k$-dimensional objects.
Specifically, with these as with the cobordisms in our definition of
$\nCobi$, only the highest-dimensional level consists of isomorphism
classes.  This means that composition of the horizontal and vertical
cobordisms will need to be weak, which is why we use {\vdbs} as
defined in Definition \ref{def:doublebicat}.

Observe that we could continue building a ladder in which, at each
level, the $j+1$-cells are cobordisms between the $j$-cells, which are
cobordisms between the $(j-1)$-cells.  The two levels we consider here
are enough to give a {\vdb} of $n$-dimensional cobordisms with
corners, where we think of the top dimension ($k+2$ in the above) as
$n$.  We will see that these can be construed using the double cospan
construction of Section \ref{sec:dblspan}.

\subsection{Collars on Manifolds with Corners}\label{sec:collarman}

Here we will use a modified form of our construction from Section
\ref{sec:dblspan} of a {\vdb} $\iiCCosp$ in order to show an example
of a {\vdb} of cobordisms with corners, starting with $\C$ the
category of smooth manifolds.  To begin with, we recall that a smooth
manifold with corners is a topological manifold with boundary,
together with a certain kind of $C^\infty$ structure.  In particular,
we need a maximal compatible set of coordinate charts $\phi : \Omega
\ra [0,\infty)^n$ (where $\phi_1$, $\phi_2$ are compatible if
$\phi_2 \circ \phi_1^{-1}$ is a diffeomorphism).  The fact that the
maps are into the positive sector of $\mathbbm{R}^n$ distinguishes a
manifold with corners from a manifold.

J\"anich \cite{jan68} introduces the notion of $\br{n}$-manifold,
reviewed by Laures \cite{laures}.  This is build on a manifold with faces:
\begin{definition}A \textit{face} of a manifold with corners is the
closure of some connected component of the set of points with just one
zero component in any coordinate chart.  An
$\br{n}$-\textit{manifold} is a manifold with faces together with an
$n$-tuple $(\partial_0 M, \dots, \partial_{n-1}M)$ of faces of $M$,
such that
\begin{itemize}
\item $\partial_0 M \cup \dots \partial_{n-1} M = \partial M$
\item $\partial_i M \cap \partial_j M$ is a face of $\partial_i M$ and
      $\partial_j M$
\end{itemize}
\end{definition}

The case we will be interested in here is the case of
$\br{2}$-manifolds.  In this notation, a $\br{0}$-manifold is just a
manifold without boundary, a $\br{1}$-manifold is a manifold with
boundary, and a $\br{2}$-manifold is a manifold with corners whose
boundary decomposes into two components (of codimension 1), whose
intersections form the corners (of codimension 2).  We can think of
$\partial_0 M$ and $\partial_1 M$ as the ``horizontal'' and
``vertical'' part of the boundary of $M$.

\begin{example}Let $M$ be the solid 3-dimensional illustrated in Figure
\ref{fig:cobcorners-labelled}.  The boundary decomposes into 2-dimensional
manifolds with boundary.  Denote by $\partial_0 M$ the boundary
component consisting of the top and bottom surfaces, and $\partial_1
M$ be the remaining boundary component (a topological annulus).

In this case, $\partial_0 M$ is the disjoint union of the manifolds
with corners $S$ (two annuli) and $S'$ (topologically a three
punctured sphere); $\partial_1 M$ is the disjoint union of two
components, $T_X$ (which is topologically a three-punctured sphere)
and $T_Y$ (topologically a four-punctured torus).

Then we have $\partial_0 M \cup \partial_1 M = \partial M$.  Also,
$\partial_0 M \cap \partial_1 M$ is a 1-dimensional manifold without
boundary, which is a face of both $\partial_0 M$ and $\partial_1 M$
(in fact, the shared boundary).  In particular, it is the disjoint
union $X \cup Y \cup X' \cup Y'$.
\end{example}

We have described a {\vdb} of double cospans in a category with all
pushouts.  We could then form such a system of cobordisms with corners
in a category obtained by co-completing $\catname{Man}$, so that all
pushouts exist.  The problem with this is that the pushout of two
cobordisms $M_1$ and $M_2$ over a submanifold $S$ included in both by
maps $S \ralim^{i_1} M_1$ and $S \ralim^{i_2} M_2$ may
not be a cobordism.  If the submanifolds are not on the boundaries,
certainly the result may not even be a manifold: for instance, two
line segments with a common point in the interior.  So to get a {\vdb}
in which the morphisms are smooth manifolds with boundary, certainly
we can only consider the case where we compose two cobordisms by a
pushout along shared submanifolds $S$ which are components of the
boundary of both $M_1$ and $M_2$.

However, even if the common submanifold is at the boundary, there is
no guarantee that the result of the pushout will be a smooth manifold.
In particular, for a point $x \in S$, there will be a neighborhood $U$
of $x$ which restricts to $U_1 \subset M_1$ and $U_2 \subset M_2$ with
smooth maps $\phi_i : U_i \ra [0,\infty)^n$ with $\phi_i(x)$
on the boundary of $[0,\infty)^n$ with exactly one coordinate equal to
$0$.  One can easily combine these to give a homeomorphism $\phi: U
\ra \mathbbm{R}^n$, but this will not necessarily be a
diffeomorphism along the boundary $S$.

To solve this problem, we use the \textit{collaring theorem}: For any
smooth manifold with boundary $M$, $\partial M$ has a \textit{collar}:
an embedding $f : \partial M \times [0,\infty) \ra M$, with
$(x,0) \mapsto x$ for $x \in \partial M$.  This is a well-known result
(for a proof, see e.g. \cite{hirsch}, sec.  4.6).  It is an easy
corollary of this usual form that we can choose to use the interval
$[0,1]$ in place of $[0,\infty)$ here.

Gerd Laures (\cite{laures}, Lemma 2.1.6) describes a generalization of
this theorem to $\br{n}$-manifolds, so that for any $\br{n}$-manifold
$M$, there is an $n$-dimensional cubical diagram
($\br{n}$-\textit{diagram}) of embeddings of cornered neighborhoods of
the faces.  It is then standard that one can compose two smooth
cobordisms with corners, equipped with such smooth collars, by gluing
along $S$.  The composite is then the topological pushout of the two
inclusions.  Along the collars of $S$ in $M_1$ and $M_2$, charts
$\phi_i : U_i \ra [0,\infty)^n$ are equivalent to charts into
$\mathbbm{R}^{n-1} \times [0,\infty)$, and since the the composite has
a smooth structure defined up to a diffeomorphism\footnote{Note that
the precise smooth structure on this cobordism depends on the collar
which is chosen, but that there is always such a choice, and the
resulting composites are all equivalent up to diffeomorphism.  That
is, they are equivalent up to a 2-morphism in the bicategory.  So
strictly speaking, the composition map is not a functor but an
anafunctor.  It is common to disregard this issue, since one can
always define a functor from an anafunctor by using the axiom of
choice.  This is somewhat unsatisfactory, since it does not generalize
to the case where our categories are over a base in which the axiom of
choice does not hold, but this is not a problem in our example.  This
issue is discussed further by Makkai \cite{makkai}.} which is the identity
along $S$.

\subsection{Cobordisms with Corners}\label{sec:cobcat}

Suppose we take the category $\catname{Man}$ whose objects are smooth
manifolds with corners and whose morphisms are smooth maps.  Naively,
would would like to use the cospan construction from Section
\ref{sec:dblspan}, we obtain a {\vdb} $\opname{2Cosp}(\catname{Man})$.
While this approach will work with the category $\catname{Top}$,
however, it will not work with $\catname{Man}$ since this does not
have all colimits.  In particular, given two smooth manifolds with
boundary, we can glue them along their boundaries in non-smooth ways,
so to ensure that the pushout exists in $\catname{Man}$ we need to
specify a smoothness condition.  This requires using collars on the
boundaries and corners.

For each $n$, we define a {\vdb} within $\catname{Man}$, which we will
call $\nCob$:

\begin{definition}The {\vdb} $\nCob$ is given by the following data:
\begin{itemize}
\item The objects of $\nCob$ are of the form $P = \hat{P} \times I^2$
      where $\hat{P}$ may be any $(n-2)$ manifolds without boundary
      and $I=[0,1]$.
\item The horizontal and vertical bicategories of $\nCob$ have
  \begin{itemize}
  \item objects: as above
  \item morphisms: cospans $P_1 \ralim^{i_1} S
        \lalim^{i_2} P_2$ where $S = \hat{S} \times I$ and
        $\hat{S}$ may be any of those cospans of $(n-1)$-dimensional
        manifolds-with-boundary which are cobordisms with collars such
        that the $\hat{P_i} \times I$ are objects, the maps are
        injections into $S$, a manifold with boundary,
        such that $i_1(P_1) \cup i_2(P_2) = \partial S \times I$,
        $i_1(P_1) \cap i_2(P_2) = \emptyset$,
  \item 2-morphisms: cospan maps which are diffeomorphisms of the form
        $f \times \opname{id}: T\times[0,1] \ra T' \times
        [0,1]$ where $T$ and $T'$ have a common boundary, and $f$ is a
        diffeomorphism$T \ra T'$ compatible with the source
        and target maps---i.e. fixing the collar.
  \end{itemize} where the source of a cobordism $S$ consists of the
  collection of components of $\partial S \times I$ for which the
  image of $(x,0)$ lies on the boundary for $x \in \partial S$, and
  the target has the image of $(x,1)$ on the boundary
\item squares: diffeomorphism classes of $n$-dimensional manifolds $M$
      with corners satisfying the properties of $M$ in the diagram of
      equation (\ref{xy:cspan2}), where isomorphisms are
      diffeomorphisms preserving the boundary
\item the action of the diffeomorphisms on the ``squares'' (classes of
      manifolds $M$) is given by composition of diffeomorphisms of the
      boundary cobordisms with the injection maps of the boundary $M$
\end{itemize} The source and target objects of any cobordism are the
collars, embedded in the cobordism in such a way that the source
object $P = \hat{P} \times I^2$ is embedded in the cobordism $S =
\hat{S} \times I$ by a map which is the identity on $I$ taking the
first interval in the object to the interval for a horizontal
morphism, and the second to the interval for a vertical morphism.  The
same condition distinguishing source and target applies as above.

Composition of squares works as in $\opname{2Cosp}(\catname{C})_0$.
\end{definition}

We will see that $\nCob$ is a {\vdb} in Section \ref{sec:bicatcwc}, but for
now it suffices to note that since it is composed of double cospans,
we can hope to define composition to be just that in the \vdb
$\iiCCosp_0$ where $\C$ is the category of manifolds with corners.
The proof that this is a {\vdb} will entail showing that $\nCob$ is
closed under this composition.

\begin{lemma}\label{lemma:ncobclosedcomp}Composing horizontal
morphisms in $\nCob$ this way produces another horizontal morphism in
$\nCob$.  Similarly, composition of vertical morphisms produces a
vertical morphism, and composition of squares produces another square.
\end{lemma}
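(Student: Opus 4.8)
The plan is to verify closure in three steps --- horizontal morphisms, then vertical morphisms, then squares --- with the square case reducing to the first two. The common tool is the collaring discussion of Section~\ref{sec:collarman}: gluing two smooth manifolds with corners along a shared face carrying compatible collars produces a smooth manifold with corners, with smooth structure determined up to a diffeomorphism fixing the gluing locus, and with underlying space the topological pushout of the inclusions. So in each case the existence of a smooth composite is automatic, and the real content is to recognise that the composite again has the product shape prescribed in the definition of $\nCob$, and to track which boundary pieces survive the gluing.

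First I would treat horizontal morphisms. Let $P_1 \ralim^{i_1} S \lalim^{i_2} P_2$ and $P_2 \ralim^{i'_1} S' \lalim^{i'_2} P_3$ be composable, with $S = \hat S \times I$, $S' = \hat S' \times I$ and $P_2 = \hat P_2 \times I^2$. By the definition of morphisms in $\nCob$, the inclusions $i_2$ and $i'_1$ are each a product of a collar embedding of $\hat P_2$ (into $\hat S$, resp.\ into $\hat S'$) with the identity on the $I$-factor common to $S$ and $S'$. Composition in $\iiCCosp_0$ is by pushout, realised here as collar-gluing along $P_2$. Because the two gluing maps are products with $\id_I$ on that common factor, because $I \times (-)$ preserves pushouts of spaces, and because a collar on the glued $(n-1)$-dimensional manifold crossed with $I$ is a collar on the glued $n$-dimensional manifold, the composite is diffeomorphic to $\hat S'' \times I$, where $\hat S'' = \hat S \cup_{\hat P_2} \hat S'$ is the collar-glued cobordism one dimension down. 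It then remains to check that $\hat S''$ is a legitimate ``$\hat S$'': it is an $(n-1)$-manifold with boundary $\hat P_1 \sqcup \hat P_3$ (the two copies of $\hat P_2$ having been absorbed into the interior), it inherits collars near $\hat P_1$ and $\hat P_3$ from $\hat S$ and $\hat S'$ respectively, and the induced inclusions $P_1 \hookrightarrow \hat S'' \times I \hookleftarrow P_3$ are again products with $\id_I$, being restrictions of $i_1$ and $i'_2$; moreover the surviving source components come from $S$ and the surviving target components from $S'$. Hence $P_1 \to \hat S'' \times I \leftarrow P_3$ is a horizontal morphism of $\nCob$ with the correct source and target. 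The vertical case is the same argument with the two $I$-factors interchanged, using that $\Hor$ and $\Ver$ are defined symmetrically.

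For squares, recall that a square is a diffeomorphism class of $n$-manifolds with corners $M$ whose four boundary faces are the morphisms $S, S', T_X, T_Y$ of $\nCob$ arranged as in diagram~(\ref{xy:cspan2}), and that vertical composition of squares glues $M_F$ to $M_G$ along their common horizontal face $S_{\mathrm{mid}}$ (horizontal composition being the mirror image, a gluing along a common vertical face). Collar-gluing along $S_{\mathrm{mid}}$ --- here using the collaring theorem for $\br{n}$-manifolds recalled in Section~\ref{sec:collarman}, so that the corner structure along the two ends of $S_{\mathrm{mid}}$ is respected --- produces a smooth $n$-manifold with corners $M''$. Its two horizontal faces are the outer horizontal faces of $M_F$ and $M_G$, hence unchanged morphisms of $\nCob$; its two vertical faces are the vertical faces of $M_F$ glued to those of $M_G$ along the appropriate objects, hence exactly the composites of vertical morphisms, which are vertical morphisms by the previous step. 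Thus $M''$ satisfies the defining conditions of a square, and since the glued smooth structure is determined up to boundary-fixing diffeomorphism the class $[M'']$ is well-defined. Horizontal composition of squares is handled by the mirror argument, gluing along a vertical face and invoking the horizontal-morphism closure just established.

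I expect the main obstacle to be the identification, in the second paragraph, of the collar-pushout $(\hat S \times I) \cup_{P_2} (\hat S' \times I)$ with $(\hat S \cup_{\hat P_2} \hat S') \times I$ --- that is, that forming the thickened composite commutes with taking the product by $I$. This is precisely where the rigidity built into the definition of $\nCob$ is used, namely that morphisms are genuine products $\hat S \times I$ with structure maps that are products with $\id_I$; it is what lets the whole lemma reduce to ordinary collar-gluing of cobordisms one dimension lower. The remaining ingredients --- existence of the glued manifold, its well-definedness up to boundary-fixing diffeomorphism, and the bookkeeping of which faces and which source and target components survive --- are routine consequences of Section~\ref{sec:collarman} and the definitions.
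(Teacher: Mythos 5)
Your proposal is correct and follows essentially the same route as the paper: compose by collar-gluing along the shared face, observe that every point of the pushout lies in a smooth chart coming from the collar (so the composite is smooth and well-defined up to collar-preserving diffeomorphism), and then run the same argument for squares. You are in fact more careful than the paper in one respect --- explicitly verifying that the glued object retains the product form $\hat{S}'' \times I$ demanded by the definition of $\nCob$, i.e.\ that thickening by $I$ commutes with the pushout --- a point the paper's proof leaves implicit.
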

\begin{proof}
The horizontal and vertical morphisms are products of the interval $I$
with $\br{1}$-manifolds, whose boundary is $\partial_0 S$), equipped
with collars.  Suppose we are given two such cobordisms $S_1$ and
$S_2$, and an identification of the source of $S_2$ with the target of
$S_1$ (say this is $P = \hat{P} \times I$).  Then the composite $S_2
\circ S_1$ is topologically the pushout of $S_1$ and $S_2$ over $P$.
Now, $P$ is smoothly embedded in $S_1$ and $S_2$, and any point in the
pushout will be in the interior of either $S_1$ or $S_2$ since for any
point on $\hat{P}$ each end of the interval $I$ occurs as the boundary
of only one of the two cobordisms.  So the result is smooth.  Thus,
$\iiCob$ is closed under such composition of morphisms.

The same argument holds for squares, since it holds for any
representative of the equivalence class of some manifold with corners,
$M$, and the differentiable structure will be the same, since we
consider equivalence up to diffeomorphisms which preserve the collar
exactly.
\end{proof}

This establishes that composition in $\nCob$ is well defined, and
composites are again cobordisms in $\nCob$.  We show that it is a {\vdb}
in Section \ref{sec:bicatcwc}.

\begin{example}\label{ex:cobcospan} We can represent a typical
manifestation of the diagram (\ref{xy:cspan2}) as in Figure
\ref{fig:cobcorners-labelled}.

\begin{figure}[h]
\begin{center}
\includegraphics{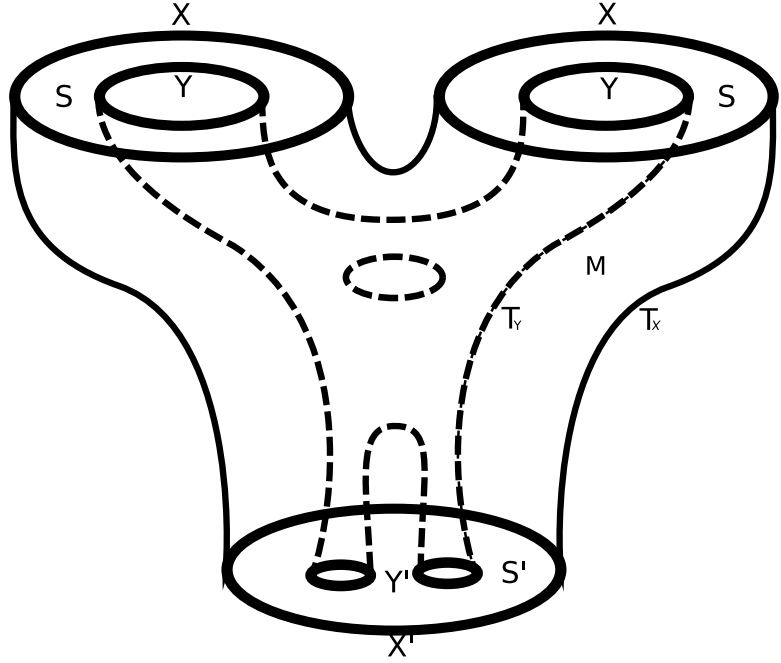}
\end{center}
\caption{\label{fig:cobcorners-labelled}A Square in $\nCob$ (Thickened Lines Denote Collars)}
\end{figure}

Consider how this picture is related to (\ref{xy:cspan2}).  In the
figure, we have $n=3$, so the objects are (compact, oriented)
$1$-dimensional manifolds, thickened by taking a product with $I^2$.
$X$ (top, solid lines) and $Y$ (top, dotted lines) are both isomorphic
to $(S^1 \cup S^1)\times I^2$, while $X'$ and $Y'$ (bottom, solid and dotted
respectively) are both isomorphic to $S^1 \times I^2$.

The horizontal morphisms are (thickened) cobordisms $S$, and $S'$,
which are a pair of thickened annuli and a two-holed disk,
respectively, with the evident injection maps from the objects $X, Y,
X', Y'$.  The vertical morphisms are the thickened cobordisms $T_X$
and $T_Y$.  In this example, $T_X$ happens to be of the same form as
$S'$ (a two-holed disk), and has inclusion maps from $X$ and $X'$, the
two components of its boundary, as the ``source'' and ``target'' maps.
$T_Y$ is homotopy equivalent to a four-punctured torus, where the four
punctures are the components of its boundary: two circles in $Y$ and
two in $Y'$, which again have the obvious inclusion maps.  Reading
from top to bottom, we can describe $T_Y$ as the story of two (thick)
circles which join into one circle, then split apart, then rejoin, and
finally split apart again.

Finally, the ``square'' in this picture is the manifold with corners,
$M$, whose boundary has four components, $S, S', T_X, \text{and} T_Y$,
and which has corners precisely along the boundaries of these
manifolds.  These boundaries' components are divided between the
objects $X, Y, X', Y'$.  The embeddings of these thickened manifolds
and cobordisms gives a specific way to equip $M$ with collars.

Given any of the horizontal or vertical morphisms (thickened
cobordisms $S$, $S'$, $T_X$ and $T_Y$), a 2-morphism would be a
diffeomorphism to some other cobordism equipped with maps from the
same boundary components (objects), which fixes the collar on that
cobordism (the embedded object).  Such a diffeomorphism is necessarily
a homeomorphism, so topologically the picture will be similar after
the action of such a 2-morphism, but we would consider two such
cobordisms as separate morphisms in $\Hor$ or $\Ver$.
\end{example}

\begin{remark} We note the resemblance between this example and
$\Path(S)_2$ and $\catname{H}(S)$ defined previously.  In those cases,
we are considering manifolds embedded in a topological space $S$, and
only a low-dimensional special case (the square $[0,1] \times [0,1]$
is a manifold with corners).  Instead of homotopies, which make sense
only for embedded spaces, $\nCob$ has diffeomorphisms.  However, in
both cases, we consider the squares to be \textit{isomorphism classes}
of a certain kind of top-dimensional object (homotopies or
cobordisms).  This eliminates the need to define morphisms or cells in
our category of dimension higher than 2.  We may omit this restriction
if we move to the more general setting of a {\db}, as described in
Section \ref{sec:internalbicat}.
\end{remark}

We conclude this section by illustrating composition in both
directions in $\nCob$, and in particular illustrating the interchange
law (\ref{eq:squareinterchangelaw}) for cobordisms with corners.
Figure \ref{fig:interchangelaw} shows four cobordisms with corners,
arranged to show three examples of horizontal composition and three of
vertical composition.  The vertical composites, denoted by
$\otimes_V$, can be seen as ``gluing'' the vertically stacked
cobordisms along the boundary between them, which is the bottom face
of the cobordisms on top, and the top face of those on bottom.  The
horizontal composites, denoted by $\otimes_H$, are somewhat less
obvious.  In the figure, they can be seen as ``gluing'' the right-hand
cobordism along a common face.  In each case, the common face is the
``inside'' face of the left-hand cobordism, and the ``outside'' face
of the right-hand one.

\begin{figure}[h]
\begin{center}
\includegraphics{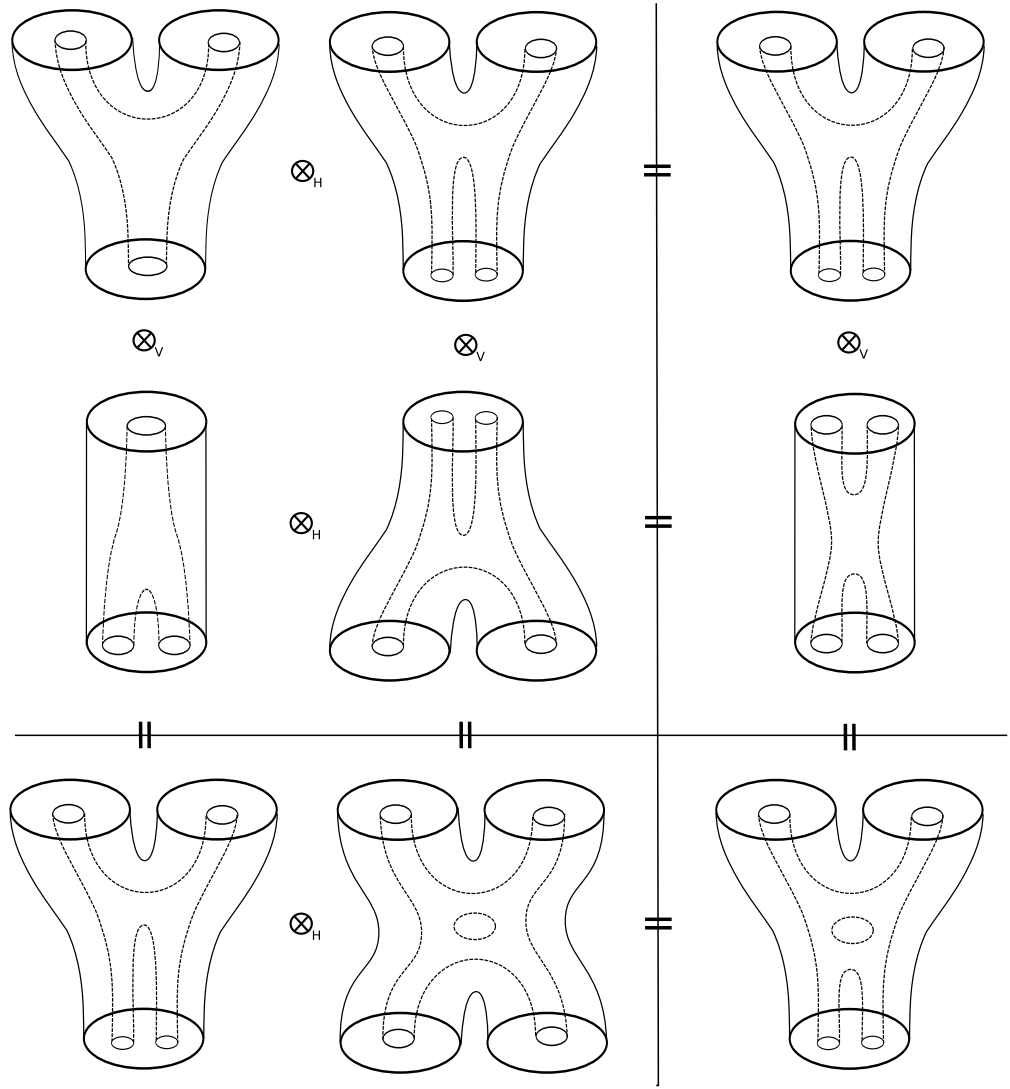}
\end{center}
\caption{\label{fig:interchangelaw}Compositions in $\nCob$ Satisfy the Interchange Law}
\end{figure}

\subsection{A Bicategory Of Cobordisms With Corners}\label{sec:bicatcwc}

Now we want to show that cobordisms of cobordisms form a {\vdb} under
the composition operations we have described.  We have shown in
Theorem \ref{thm:cspanthm} that there is a {\vdb} denoted $\iiCCosp_0$
for any category $\C$ with finite colimits.  We want to show that the
reduction from the full $\iiCCosp_0$ to just the particular cospans in
$\nCob$ leaves this fact intact.
 
The argument that double cospans form a {\vdb} can be slightly
modified to show the same about cobordisms with corners, which are
closely related.  We note that there are two differences.  First, the
category of manifolds with corners does not have all finite colimits,
or indeed all pushouts.  Second, we are not dealing with all double
cospans of manifolds with corners, so $\nCob$ is not $\iiCCosp_0$ for
any $\C$.  In fact, the second difference is what allows us to deal
with the first.

\begin{theorem}\label{thm:maintheorem} $\nCob$ is a {\vdb}. 
\end{theorem}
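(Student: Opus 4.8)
The plan is to reduce this to Theorem~\ref{thm:cspanthm}, which already establishes that $\iiCCosp_0$ is a {\vdb} for any category $\C$ with finite colimits. The strategy is not to apply that theorem verbatim---since $\catname{Man}$ lacks the needed colimits, and $\nCob$ is a proper sub-collection of all double cospans of manifolds with corners---but to show that the particular restriction defining $\nCob$ is \emph{exactly} what is needed to make all the relevant pushouts exist and stay inside the category of manifolds with corners. First I would invoke Lemma~\ref{lemma:ncobclosedcomp}, which says that $\nCob$ is closed under horizontal and vertical composition of morphisms and under composition of squares; combined with the collaring discussion of Section~\ref{sec:collarman}, this guarantees that the pushouts used to compose cospans and double cospans of the special form appearing in $\nCob$ both exist (as smooth manifolds, using collars) and remain of the required product-with-interval form, with the source/target decomposition of the boundary preserved.

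Next I would verify that the horizontal and vertical structures $\Hor$ and $\Ver$ of $\nCob$ are genuinely bicategories. These consist of the distinguished cospans $P_1 \to S \gets P_2$ with $S = \hat S \times I$, with the diffeomorphisms $f \times \id$ fixing collars as $2$-morphisms. Associators and unitors come from the universal property of pushouts, exactly as in B\'enabou's argument recalled in Remark~\ref{thm:spanbicat} and in the proof of Theorem~\ref{thm:cspanthm}; the coherence (pentagon, unit laws) follows from uniqueness of the universal maps. One must check that composing two collared cobordisms of this special form again yields one of this form---which is Lemma~\ref{lemma:ncobclosedcomp}---and that the collaring footnote's anafunctor subtlety does not obstruct this (the composite smooth structure is well-defined up to a $2$-morphism, so on diffeomorphism classes at the top level it is strictly well-defined, and at the morphism level the choice of collar is absorbed into the weak structure). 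Then, since here $\Hor = \Ver$, the squares are diffeomorphism classes of the cornered manifolds $M$ filling the diagram~(\ref{xy:cspan2}), with horizontal and vertical composition by pushout along the appropriate boundary faces, and the action of collar-fixing diffeomorphisms on squares by precomposition with the inclusion maps of $M$'s boundary.

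The remaining work is to check the axioms of Definition~\ref{def:doublebicat}: the interchange law~(\ref{eq:squareinterchangelaw}) for squares, the action interchange and compatibility laws~(\ref{eq:actioncompat})--(\ref{eq:actionindep}), and the compatibility of square composition with the associators and unitors, (\ref{eq:assocaction}) and~(\ref{eq:unitaction}). Here I would appeal directly to the proof of Theorem~\ref{thm:cspanthm}: every one of these conditions was verified there from the universal property of pushouts and the fact that passing to isomorphism classes of middle objects kills the associator/unitor isomorphisms, yielding the required commuting ``pillows''. The same arguments apply verbatim to $\nCob$, \emph{provided} the relevant pushouts exist in $\catname{Man}$---and that is precisely guaranteed by restricting to the special cospans of $\nCob$ and the collaring construction. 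So the proof amounts to: (i) cite Lemma~\ref{lemma:ncobclosedcomp} and the collaring theorem to get existence and closure of the needed pushouts; (ii) observe $\Hor = \Ver$ is a bicategory; (iii) transport the axiom verifications from Theorem~\ref{thm:cspanthm}, noting each uses only pushouts that are now available. The main obstacle---and the one point genuinely requiring care rather than citation---is (i): pinning down that for the pushout of a double cospan to be a smooth manifold with corners of the right form, it suffices that the gluing is along a full collared boundary face of the prescribed product type, so that near every glued point exactly one of the two pieces contributes each side of the local chart, and the smooth structures match after the collar identification. Everything downstream is then a routine re-run of the $\iiCCosp_0$ argument.
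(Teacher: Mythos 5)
Your proposal is correct and follows essentially the same route as the paper's proof: both rest on Lemma~\ref{lemma:ncobclosedcomp} and the collaring construction to guarantee that the restricted class of cospans in $\nCob$ is closed under the relevant pushouts, and then transport the verification of the {\vdb} axioms from the proof of Theorem~\ref{thm:cspanthm} (via Lemmas~\ref{lemma:span2bicat} and~\ref{lemma:doublebicat}). The extra care you devote to the local-chart argument at glued collared faces is a welcome elaboration of what the paper leaves to Lemma~\ref{lemma:ncobclosedcomp}, but it is not a different approach.
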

\begin{proof}
First, recall that objects in $\nCob$ are manifolds with corners of
the form $P = \hat{P} \times I^2$ for some manifold $\hat{P}$, and
notice that both horizontal and vertical morphisms are cospans.  In
general, if we have two cospans in the category of manifolds with
corners sharing a common object, we cannot take a pushout and get a
manifold with corners.  However, we are only considering a
subcollection of all possible cospans of smooth manifolds with corners,
all all those we consider have pushouts which are again smooth
manifolds with corners (Lemma \ref{lemma:ncobclosedcomp}).

In particular, since composition of squares is as in $\iiCCosp_0$,
before taking diffeomorphism classes of manifolds $M$ in $\nCob$, we
would again get a {\db} made from cobordisms with corners, together
with the embeddings used in its cospans, and collar-fixing
diffeomorphisms.  This is shown by arguments identical to those used
in the proof of Lemma \ref{lemma:span2bicat}.

When we reduce to diffeomorphism classes of these manifolds, then just
as in the proof of nTheorem \ref{thm:cspanthm}, we can cut down this
{\db} to a structure, and the result will satisfy the horizontal and
vertical action conditions, giving a \vdb, since it satisfies the
conditions of Lemma \ref{lemma:doublebicat}.

So in fact, by the same arguments as in these other cases, $\nCob$ is
a \vdb.
\end{proof}

By the argument of Section \ref{sec:equiv}, this means that we can
also think of $\nCob$ as a bicategory, which we will do for the
purposes of constructing an Extended TQFT as a weak 2-functor.  To do this,
we next describe, in Chapter \ref{chap:2hilb} some versions of a
bicategory of 2-vector spaces, and in particular 2-Hilbert spaces.

\section{2-Vector Spaces}\label{chap:2hilb}

\subsection{Kapranov-Voevodsky 2-Vector Spaces}\label{sec:KV}

We want to find a way of describing an extended TQFT---one acting on
manifolds with corners.  We would like to find something analogous to
Atiyah's characterization of a TQFT as a functor between a category of
cobordisms and a category of vector spaces.  We have now established
that there is a bicategory $\nCob$ of in which we can interpret
objects as manifolds, morphisms as cobordisms, and 2-morphisms as
cobordisms between cobordisms (which are diffeomorphism classes of
manifolds with corners).  The next constituent we need is a bicategory
to take the place of the category of vector spaces.  There are several
notions of a bicategory of ``2-vector spaces'' available, and each
gives rise to a notion of an extended TQFT as a 2-functor into this
bicategory.

There are two major philosophies of how to categorify the concept
``vector space''.  A Baez-Crans (BC) 2-vector space is a category
object in $\V$---that is, a category having a vector space of objects
and of morphisms, where source, target, composition, etc. are linear
maps.  This is a useful concept for some purposes---it was developed
to give a categorification of Lie algebras.  The reader may refer to
the paper of Baez and Crans \cite{BC} for more details.  However, a BC
2-vector space turns out to be equivalent to a 2-term chain complex
and, this is not the concept of 2-vector space which concerns us here.

The other, and prior, approach is to define a 2-vector space as a
category having operations such as a monoidal structure analogous to
the addition on a vector space.  In particular, We will restrict our
attention to \textit{complex} 2-vector spaces, though the
generalization to an arbitrary base field $K$ is straghtforward.

This ambiguity about the correct notion of ``2-vector space'' is
typical of the problem of categorificiation.  Since the categorified
setting has more layers of structure, there is a choice of level to
which the structure in the concept of a vector space should be lifted.
Thus in the BC 2-vector spaces, we have literal vector addition and
scalar multiplication within the objects and morphisms.  In KV
2-vector spaces and their cousins, we only have this for morphisms,
and for objects there is a categorified analog of these operations, as
wel shall see.

Indeed, there are different sensible generalizations of vector space
even within this second philosophy, however.  Josep Elgueta
\cite{elgueta} shows several different types of ``generalized''
2-vector spaces, and relationships among them.  In particular, while
KV 2-vector spaces can be thought of as having a \textit{set} of basis
elements, a generalized 2-vector space may have a general
\textit{category} of basis elements.  The free generalized 2-vector
space on a category is denoted $\catname{Vect[\mathcal{C}]}$.  Then KV
2-vector spaces arise when $\mathcal{C}$ is a discrete category with
only identity morphisms.  This is essentially a set $S$ of objects.
Thus it should not be surprising that KV 2-vector spaces have a
structure analogous to free vector spaces generated by some finite set
- which are isomorphic to $\mathbbm{C}^k$.

All such concepts of 2-vector space are $\mathbbm{C}$-linear additive
categories with some properties, so we begin by explaining this.  To
begin with, we have \textit{additivity} for categories, the equivalent
of linear structure in a vector space.  This is related to biproducts,
which are both categorical products and coproducts, in compatible
ways.  The motivating example for us is the \textit{direct sum}
operation in $\V$.  Such an operation plays the role in a 2-vector
space which vector addition plays in a vector space.  To be precise:

\begin{definition}
A \textbf{biproduct} for a category $\C$ is an operation giving, for
any objects $x$ and $y$ in $\catname{V}$ an object $x \oplus y$
equipped with morphisms $\iota_x,\iota_y$ from $x$ and $y$
respectively into $x \oplus y$; and morphisms $\pi_x, \pi_y$ from $x
\oplus y$ into $x$ and $y$ respectively, which satisfy the
biproduct relations:
\begin{equation}\label{eq:biproduct1}
\pi_x \circ \iota_x = \id_x \text{  and  } \pi_y \circ \iota_y = \id_y
\end{equation}
and similarly for $y$, and
\begin{equation}\label{eq:biproduct2}
\iota_x \circ \pi_x + \iota_y \circ \pi_y = \id_{x \oplus y}
\end{equation}
\end{definition}
Whenever biproducts exist, they are always both products and
coproducts.

\begin{definition} A \textbf{$\mathbbm{C}$-linear additive category}
is a category $\catname{V}$ with biproduct $\oplus$, and such that
that for any $x, y \in \catname{V}$, $\hom(x,y)$ is a vector space
over $\mathbbm{C}$, and composition is a bilinear map.  A
$\mathbbm{C}$-linear functor between $\mathbbm{C}$-linear categories
is one where morphism maps are $\mathbbm{C}$-linear.
\end{definition}

The standard example of this approach is the Kapranov-Voevodsky (KV)
definition of a 2-vector space \cite{KV}, which is the form we shall
use (at least when the situation is finite-dimensional).  To motivate
the KV definition, consider the idea that, in categorifying, one
should replace the base field $\mathbbm{C}$ with a monoidal category.
Specifically, it turns out, with $\V$, the category of finite dimensional complex vector
spaces.  This leads to the following replacements for concepts in elementary linear algebra:
\begin{itemize}
\item {Vectors = $k$-tuples of scalars} $\mapsto$ {2-vectors = $k$-tuples of vector spaces}
\item {Addition} $\mapsto$ {Direct Sum}
\item {Multiplication} $\mapsto$ {Tensor Product}
\end{itemize}

So just as $\mathbbm{C}^k$ is the standard example of a complex vector
space, $\V^k$ will be the standard example of a 2-vector space.
However, the axiomatic definition allows for other possibilities:

\begin{definition} A \textbf{Kapranov--Voevodsky 2-vector space} is a
$\mathbbm{C}$-linear additive category in which every object can be
written as a finite biproduct of simple objects (i.e. objects $x$
where $\hom(x,x) \cong \mathbbm{C}$).  A \textbf{2-linear map} between
2-vector spaces is a $\mathbbm{C}$-linear functor which preserves
biproducts.
\end{definition}

\begin{remark} It is a standard fact that preserving biproducts and
preserving exact sequences are equivalent in this setting: in a KV
2-vector space, every object is equivalent to a direct sum of simple
objects, so every exact sequence splits.  The above definition of a
2-linear map is sometimes given in the equivalent form requiring that
the functor preserve exact sequences.  Indeed, since every object is a
finite biproduct of simple objects, a 2-vector space is an
\textit{abelian} category.  For more on these, see Freyd \cite{freyd}.
\end{remark}

Now, it is worth mentioning that Yetter shows \cite{yet} (in his
Proposition 13), that the original definition of Kapranov and
Voevodsky gives an equivalent result to a definition of a 2-vector
space $\catname{V}$ as a finitely semi-simple $\V$-module.  A
$\V$-module $\catname{V}$ is finitely semi-simple if there is a finite
set $S \subset Ob(\catname{V})$ of simple objects, such that every
objects of $\catname{V}$ is a finite product of objects in $S$.  The
advantage of this definition is simply that it is a straightforward
categorification of the usual definition of a vector space as a
$\mathbbm{C}$-module.

The reader is referred to Yetter's paper (Definition 2) for a precise
version of the definition of a $\V$-module, but remark that to be a
$\V$-module requires that $\catname{V}$ has an ``action'' of $\V$ on
it.  That is, there is a functor
\begin{equation}
\odot : \V \times \catname{V} \ra \catname{V}
\end{equation}
which satisfies the usual module axioms only up to two isomorphisms,
similar to the associator and unitor, which satisfy some further
coherence conditions.  We will see the meaning this action when we
consider a standard example, where this is literally a tensor product.

\begin{example}The standard example \cite{KV} of a KV 2-vector space
highlights the analogy with the familar vector space $\mathbbm{C}^k$.
The 2-vector space $\V^k$ is a category whose objects are $k$-tuples
of vector spaces, maps are $k$-tuples of linear maps.  The
\textit{additive} structure of the 2-vector space $\V^k$ comes from
applying the direct sum in $\V$ componentwise.

Note that there is an equivalent of \textit{scalar multiplication},
using the tensor product:
\begin{equation}
V \otimes 
\begin{pmatrix}
  V_1 \\ 
  \vdots \\
  V_k \\
\end{pmatrix}
=
\begin{pmatrix}
  V \otimes V_1 \\
  \vdots \\
  V \otimes V_k
\end{pmatrix}
\end{equation}
and
\begin{equation}
\begin{pmatrix}
  V_1 \\ 
  \vdots \\
  V_k
\end{pmatrix}
 \oplus 
\begin{pmatrix}
  W_1 \\
  \vdots \\
  W_k
\end{pmatrix}
= 
\begin{pmatrix}
  V_1 \oplus W_1 \\
  \vdots \\
  V_k \oplus W_k
\end{pmatrix}
\end{equation}

As the correspondence with linear algebra would suggest, 2-linear maps
$T: \catname{Vect^k} \ra\catname{Vect^l}$ amount to $k \times l$
matrices of vector spaces, acting by matrix multiplication using the
direct sum and tensor product instead of operations in $\mathbbm{C}$:
\begin{equation}\label{eq:kv2linmatrix}
\begin{pmatrix}
T_{1,1} & \dots & T_{1,k} \\
\vdots & & \vdots \\
T_{l,1} & \dots & T_{l,k} \\
\end{pmatrix}
\begin{pmatrix}
  V_1 \\ 
  \vdots \\
  V_k
\end{pmatrix}
=
\begin{pmatrix}
  \bigoplus_{i=1}^k T_{1,i} \otimes V_i \\ 
  \vdots \\
  \bigoplus_{i=1}^k T_{l,i} \otimes V_i \\ 
\end{pmatrix}
\end{equation}

The natural transformations between these are matrices of linear
transformations:
\begin{equation}\label{eq:kvnattransmatrix}
\alpha = \begin{pmatrix}
\alpha_{1,1} & \dots & \alpha_{1,k} \\
\vdots & & \vdots \\
\alpha_{l,1} & \dots & \alpha_{l,k} \\
\end{pmatrix}
:
\begin{pmatrix}
T_{1,1} & \dots & T_{1,k} \\
\vdots & & \vdots \\
T_{l,1} & \dots & T_{l,k} \\
\end{pmatrix}
\longrightarrow
\begin{pmatrix}
T'_{1,1} & \dots & T'_{1,k} \\
\vdots & & \vdots \\
T'_{l,1} & \dots & T'_{l,k} \\
\end{pmatrix}
\end{equation}
where each $\alpha_{i,j} : T_{i,j} \ra T'_{i,j}$ is a linear
map in the usual sense.

These natural transformations give 2-morphisms between 2-linear maps, so
that $\V^k$ is a bicategory with these as 2-cells:
  \begin{equation}\label{xy:2vs2cell}
    \xymatrix{
      \catname{Vect^k} \ar@/^1pc/[r]^{F}="0"
      \ar@/_1pc/[r]_{G}="1" & \catname{Vect^l} \\ \ar@{=>}"0"
      ;"1"^{\alpha}
    }
  \end{equation}

In our example above, the finite set of simple objects of which every
object is a sum is the set of 2-vectors of the form
\begin{equation}
\begin{pmatrix}
0 \\
\vdots \\
\mathbbm{C} \\
\vdots \\
0
\end{pmatrix}
\end{equation}
which have the zero vector space in all components except one (which
can be arbitrary).  These are like categorified ``standard basis
vectors'', so we call them \textit{standard basis 2-vectors} . Clearly
every object of $\V^k$ is a finite biproduct of these objects, and
each is simple (its vector space of endomorphisms is 1-dimensional).
\end{example}

The most immediately useful fact about KV 2-vector spaces is the
following well known characterization:

\begin{theorem}\label{thm:kvvn} Every KV 2-vector space is equivalent as a category
to $\V^k$ for some $k \in \mathbbm{N}$.
\end{theorem}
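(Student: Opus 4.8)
The plan is to choose a complete set of representatives $e_1,\dots,e_k$ of the isomorphism classes of simple objects of $\catname{V}$ and to exhibit the ``multiplicity'' functor $F(x)=\big(\hom(e_1,x),\dots,\hom(e_k,x)\big)$ as an equivalence $\catname{V}\ra\V^k$, so that the $k$ in the statement is precisely the number of simple objects of $\catname{V}$ up to isomorphism.

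First I would establish a Schur's lemma for $\catname{V}$. By the Remark following the definition, $\catname{V}$ is an abelian category; combined with $\mathbbm{C}$-linearity and the semisimplicity hypothesis, the usual argument applies --- in an abelian, $\mathbbm{C}$-linear, semisimple category the objects $e$ with $\hom(e,e)\cong\mathbbm{C}$ are exactly the classically simple objects, and a nonzero morphism between two simple objects is an isomorphism --- so that $\hom(e,e')\cong\mathbbm{C}$ when $e\cong e'$ and $\hom(e,e')=0$ otherwise. I use here that $\catname{V}$ has only finitely many isomorphism classes of simple objects; this is implicit in the phrase ``finitely semi-simple'' and in the fact that the target $\V^k$ has a \emph{finite} $k$, but it is not forced by the bare wording of the definition (the category of $\mathbbm{N}$-indexed families of finite-dimensional vector spaces with finite support satisfies that wording yet is not any $\V^k$), so I would carry it as a standing hypothesis and flag it explicitly.

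Next I would define $F:\catname{V}\ra\V^k$ by $F(x)=\big(\hom(e_1,x),\dots,\hom(e_k,x)\big)$ on objects and by slotwise post-composition on morphisms. Because every object of $\catname{V}$ is a finite biproduct of the $e_i$, each $\hom(e_i,x)$ is finite-dimensional, so $F$ genuinely lands in $\V^k$; and since each $\hom(e_i,-)$ is a $\mathbbm{C}$-linear functor preserving biproducts (a biproduct being in particular a product), $F$ is a 2-linear map. It then remains to show $F$ is essentially surjective and fully faithful. For essential surjectivity, given $(V_1,\dots,V_k)$ choose isomorphisms $V_i\cong\mathbbm{C}^{n_i}$; Schur's lemma gives $\hom(e_j,\bigoplus_i e_i^{\oplus n_i})\cong\mathbbm{C}^{n_j}$, hence $F(\bigoplus_i e_i^{\oplus n_i})\cong(V_1,\dots,V_k)$. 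For full faithfulness, $F(e_i)$ is the standard basis 2-vector with $\mathbbm{C}$ in slot $i$, and $F$ carries $\id_{e_i}$ to $\id_{F(e_i)}$; hence $F$ induces an isomorphism $\hom(e_i,e_j)\ra\hom(F(e_i),F(e_j))$ (both sides vanish for $i\ne j$, and both are one-dimensional with $\id\mapsto\id$ for $i=j$, so $\mathbbm{C}$-linearity does the rest). Since $F$ preserves biproducts and every object of $\catname{V}$ is a finite biproduct of the $e_i$, this extends by additivity to an isomorphism on all hom-spaces, and $F$ is an equivalence.

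The step I expect to be the main obstacle is the Schur's lemma input: concretely, identifying the objects with one-dimensional endomorphism algebra with the classically simple objects and proving a nonzero morphism between them is invertible, which genuinely uses the abelian structure supplied by the Remark and not merely $\mathbbm{C}$-linearity together with semisimplicity. The only other point requiring care is the finiteness of the set of simple isomorphism classes noted above; granting these, the verification that $F$ is an equivalence is routine bookkeeping with biproduct decompositions, entirely parallel to the classical fact that a finitely semisimple $\mathbbm{C}$-algebra is a finite product of matrix algebras.
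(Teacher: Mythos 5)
Your proof is correct and follows the same basic route as the paper's: fix representatives $e_1,\dots,e_k$ of the simple isomorphism classes and send each object to its tuple of multiplicities. The difference is in implementation, and yours is the more careful one. The paper declares ``$E$ should be an additive functor with $E(X_i)=V_i$'' and extends ``by linearity,'' which is not literally a definition of a functor --- it requires either a choice of biproduct decomposition for every object (with attendant well-definedness and functoriality checks) or exactly the canonical formula you give, $F(x)=\bigl(\hom(e_1,x),\dots,\hom(e_k,x)\bigr)$, which defines $F$ on all objects and morphisms at once and makes full faithfulness and essential surjectivity routine. You are also right to flag the two inputs the paper leaves implicit: the Schur-type fact that $\hom(e_i,e_j)=0$ for $i\neq j$ (the paper only assumes $\hom(e,e)\cong\mathbbm{C}$ as the \emph{definition} of simple, so the vanishing of cross-homs genuinely needs the abelian/semisimple structure and is not free), and the finiteness of the set of simple isomorphism classes, which the bare wording of the definition does not force --- your example of finitely supported $\mathbbm{N}$-indexed families of vector spaces shows the theorem is false without it, so it must be read into ``finitely semi-simple'' as you do. Neither point is a gap in your argument; both are gaps in the paper's exposition that your write-up repairs.
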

\begin{proof}
Suppose $\catname{K}$ is a KV 2-vector space with a basis of simple
objects $X_1 \dots X_k$.  Then we construct an equivalence $E:
\catname{K} \ra \V^k$ as follows:

$E$ should be an additive functor with $E(X_i) = V_i$, where $V_i$ is
the $k$-tuple of vector spaces having the zero vector space in every
position except the $i^{th}$, which has a copy of $\mathbbm{C}$.  But
any object $X$, is a sum $\bigoplus_{i} X_i^{n_i}$, so by linearity
(i.e. the fact that $E$ preserves biproducts) $X$ will be sent to the
sum of the same number of copies of the $V_i$, which is just a
$k$-tuple of vector spaces whose $i^{th}$ component is
$\mathbbm{C}^{n_i}$.  So every object in $K$ is sent to an $k$-tuple
of vector spaces.  By $\mathbbm{C}$-linearity, and the fact that
hom-vector spaces of simple objects are one-dimensional, this
determines the images of all morphisms.

But then the weak inverse of $E$ is easy to construct, since sending
$V_i$ to $X_i$ gives an inverse at the level of objects, by the same
linearity argument as above. At the level of morphisms, the same
argument holds again.
\end{proof}

This is a higher analog of the fact that every finite dimensional
complex vector space is isomorphic to $\mathbbm{C}^k$ for some $k \in
\mathbbm{N}$.  So, indeed, the characterization of 2-vector spaces in
our example above is generic: every KV 2-vector space is equivalent to
one of the form given.  Moreover, our picture of 2-linear maps is also
generic, as shown by this argument, analogous to the linear algebra
argument for representation of linear maps by matrices:

\begin{lemma}\label{lemma:kv2linmatrix} Any 2-linear map $T: \V^n \ra
\V^m$ is naturally isomorphic to a map of the form (\ref{eq:kv2linmatrix}).
\end{lemma}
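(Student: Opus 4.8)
The plan is to read the matrix off the values of $T$ on the standard basis 2-vectors, and then exploit $\mathbbm{C}$-linearity together with preservation of biproducts to show that $T$ agrees, up to a natural isomorphism, with the matrix-multiplication map so defined. Concretely, first I would let $e_1,\dots,e_n$ be the standard basis 2-vectors of $\V^n$ (the $n$-tuples with $\mathbbm{C}$ in one slot and $0$ elsewhere), and set $T_{j,i}$ to be the $j$-th component of the 2-vector $T(e_i)\in\V^m$. This yields an $m\times n$ matrix of vector spaces, hence a 2-linear map $\hat{T}:\V^n\ra\V^m$ of the form (\ref{eq:kv2linmatrix}); the content of the lemma is then to produce a natural isomorphism $T\cong\hat{T}$.

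The key structural observation is that every object $V=(V_1,\dots,V_n)$ of $\V^n$ is canonically the biproduct $\bigoplus_{i=1}^n V_i\odot e_i$, where $\odot$ is the scalar action of $\V$ on $\V^n$ (componentwise tensor), the canonical map $\bigoplus_i V_i\odot e_i\ra V$ being the identity in each component; equivalently, $\hom_{\V^n}(e_i,V)\cong V_i$ naturally in $V$. Since $T$ preserves biproducts, $T(V)\cong\bigoplus_i T(V_i\odot e_i)$, and it remains to identify $T(V_i\odot e_i)$ with $V_i\odot T(e_i)$, which has $j$-th component $T_{j,i}\otimes V_i$. For this I would use that a $\mathbbm{C}$-linear, biproduct-preserving functor also preserves the $\V$-module action up to natural isomorphism: choosing a basis of $V_i$ identifies $V_i\odot e_i$ with the finite biproduct $e_i^{\oplus\dim V_i}$, $T$ carries it to $T(e_i)^{\oplus\dim V_i}\cong V_i\odot T(e_i)$, and one checks the result is independent of the chosen basis (equivalently, invokes the universal property of $V_i\odot e_i$ as $\hom(e_i,-)$ tensored with $V_i$) so that the isomorphism is natural in $V$. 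Combining these, $T(V)_j\cong\bigoplus_i T_{j,i}\otimes V_i=\hat{T}(V)_j$, naturally in the object variable.

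Finally I would check naturality in the morphism variable: given $f=(f_1,\dots,f_n):V\ra W$ in $\V^n$, I claim that under the isomorphisms above both $T(f)$ and $\hat{T}(f)$ become the map $\bigoplus_i\id_{T_{j,i}}\otimes f_i$ in each component $j$. This holds because on $V_i\odot e_i\cong e_i^{\oplus d_i}$ the morphism $f_i$ is a matrix of scalars (using $\hom(e_i,e_i)\cong\mathbbm{C}$), $T$ is $\mathbbm{C}$-linear, and so $T$ is forced to act by the ``same'' matrix of scalars, now tensored into the $T_{j,i}$ — this is exactly the argument already used for morphisms in the proof of Theorem \ref{thm:kvvn}. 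The family of component isomorphisms from the previous paragraph then assembles into the desired natural isomorphism $T\cong\hat{T}$.

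I expect the main obstacle to be the middle step: verifying carefully that $T$ commutes, up to a natural and suitably coherent isomorphism, with the scalar action $\odot$, since the obvious argument passes through a non-canonical choice of basis of each component $V_i$, and one must confirm both independence of that choice and naturality in $V$. Everything else — the decomposition of objects, the biproduct computation, and the behaviour on morphisms — is routine bookkeeping of the same kind appearing in the proof of Theorem \ref{thm:kvvn}.
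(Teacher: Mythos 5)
Your proposal is correct and follows essentially the same route as the paper's proof: read the matrix off the images of the standard basis 2-vectors, then use additivity and $\mathbbm{C}$-linearity to show $T$ agrees with the resulting matrix map. The one place you go beyond the paper is the middle step identifying $T(V_i \odot e_i)$ with $V_i \odot T(e_i)$ and checking its naturality — the paper's proof simply asserts that "the fact that the functor is additive means it has exactly the form given," so your more careful treatment of the compatibility with the scalar action fills in a detail the paper elides rather than diverging from it.
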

\begin{proof}
Any 2-linear map $T$ is a $\mathbbm{C}$-linear additive functor
between 2-vector spaces.  Since any object in a 2-vector space can be
represented as a biproduct of simple objects---and morphisms
likewise---such a functor is completely determined by its effect on
the basis of simple objects and morphisms between them.

But then note that since the automorphism group of a simple object is
by definition just all (complex) multiples of the identity morphism,
there is no choice about where to send any such morphism.  So a
functor is complely determined by the images of the basis objects,
namely the 2-vectors $V_i = (0,\dots,\mathbbm{C},\dots,0) \in \V^n$,
where $V_i$ has only the $i^{th}$ entry non-zero.

On the other hand, for any $i$, $T(V_i)$ is a direct sum of some
simple objects in $\V^m$, which is just some 2-vector, namely a
$k$-tuple of vector spaces.  Then the fact that the functor is
additive means that it has exactly the form given.
\end{proof}

And finally, the analogous fact holds for natural transformations
between 2-linear maps:

\begin{lemma}\label{lemma:kvnattransmatrix}Any natural transformation
$\alpha : T \ra T'$ from a 2-linear map $T : \V^n \ra \V^m$ to a
2-linear map $T' : \V^n \ra \V^m$, both in the form
(\ref{eq:kv2linmatrix}) is of the form (\ref{eq:kvnattransmatrix}).
\end{lemma}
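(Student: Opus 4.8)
The plan is to exploit the same strategy as in the proof of Lemma \ref{lemma:kv2linmatrix}: a natural transformation, like a $\mathbbm{C}$-linear additive functor, is pinned down by its behaviour on the standard basis $2$-vectors. First I would recall that a natural transformation $\alpha : T \ra T'$ consists of a component morphism $\alpha_W : T(W) \ra T'(W)$ for each object $W \in \V^n$, subject to the naturality squares. Since every object of $\V^n$ is isomorphic to a finite biproduct of standard basis $2$-vectors $V_1, \dots, V_n$ (each $V_i$ having $\mathbbm{C}$ in the $i$-th slot and $0$ elsewhere) --- because a slot-wise vector space $W_i$ is itself a biproduct of $\dim W_i$ copies of $\mathbbm{C}$ --- and since $T$ and $T'$ both preserve biproducts, naturality applied to the injection and projection morphisms of these biproducts forces $\alpha_W$ to be the corresponding biproduct of the components $\alpha_{V_i}$. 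Thus $\alpha$ is determined by the finitely many morphisms $\alpha_{V_i}$, $i = 1, \dots, n$.

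Next I would identify these components concretely. By the form (\ref{eq:kv2linmatrix}), $T(V_i)$ is the $2$-vector whose $j$-th component is $\bigoplus_k T_{j,k} \otimes (V_i)_k \cong T_{j,i}$, i.e. the $i$-th ``column'' of the matrix of $T$; likewise $T'(V_i)$ has $j$-th component $T'_{j,i}$. A morphism between two objects of $\V^m$ is exactly an $m$-tuple of linear maps acting componentwise, so $\alpha_{V_i}$ is given by linear maps $\alpha_{j,i} : T_{j,i} \ra T'_{j,i}$ for $j = 1, \dots, m$. Collecting these over all $i$ produces precisely the $m \times n$ matrix of linear transformations displayed in (\ref{eq:kvnattransmatrix}).

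Finally I would check that with this matrix the component at a general $2$-vector $W = (W_1, \dots, W_n)$ is the claimed one: namely that the $j$-th component of $\alpha_W$ is $\bigoplus_{i=1}^n \alpha_{j,i} \otimes \id_{W_i}$, so that $\alpha$ acts ``entrywise''. This follows by combining the biproduct argument above with $\mathbbm{C}$-linearity of $\alpha$ and naturality with respect to morphisms of the form $\id_{V_i} \otimes \phi$ for linear maps $\phi$, which is what lets one promote the identification $W_i \otimes V_i \cong V_i^{\oplus \dim W_i}$ to a statement about the tensor factor rather than a chosen basis.

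The step I expect to be the main obstacle --- though it is more bookkeeping than genuine difficulty --- is the reduction in the first paragraph: making fully precise that naturality of $\alpha$ against the biproduct structure maps (together with the fact that $T$, $T'$ preserve biproducts) leaves no freedom beyond the basis components, and in particular handling the tensoring-by-a-vector-space part of the $\V$-module structure so that one genuinely recovers the $\otimes \id_{W_i}$ factors rather than just a direct sum of copies. Everything else is the routine observation that morphisms in $\V^m$ are tuples of linear maps and that simple objects have one-dimensional endomorphism algebras, exactly as in Lemma \ref{lemma:kv2linmatrix}.
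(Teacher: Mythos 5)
Your proposal is correct and follows essentially the same route as the paper: reduce to the standard basis 2-vectors using additivity/biproduct preservation and naturality, observe that the components there are single linear maps $T_{j,i} \ra T'_{j,i}$, and reassemble into the matrix form by linearity. If anything, your explicit verification that the general component acts as $\bigoplus_i \alpha_{j,i} \otimes \id_{W_i}$ (via naturality against morphisms of the form $\id \otimes \phi$) is slightly more careful than the paper's closing appeal to ``linearity.''
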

\begin{proof}
By Lemma \ref{lemma:kv2linmatrix}, the 2-linear maps $T$ and $T'$ can
be represented as matrices of vector spaces, which act on an object
in $\V^n$ as in (\ref{eq:kv2linmatrix}).  A natural transformation
$\alpha$ between these should assign, to every object $X \in \V^n$, a
morphism $\alpha_X : T(X) \ra T'(X) $ in $\V^m$, such that the usual
naturality square commutes for every morphism $f : X \ra Y$ in $\V^n$.

Suppose $X$ is the $n$-tuple $(X_1, \dots , X_n)$,
where the $X_i$ are finite dimensional vector spaces.  Then
\begin{equation}\label{eq:TX}
T(X) = (\oplus_{k=1}^{n} V_{1,k} \otimes X_k, \dots,\oplus_{k=1}^{n} V_{m,k} \otimes X_k )
\end{equation}
where the $V_{i,j}$ are the components of $T$, and similarly
\begin{equation}
T'(X) = (\oplus_{k=1}^{n} V'_{1,k} \otimes X_k, \dots,\oplus_{k=1}^{n} V'_{m,k} \otimes X_k )\end{equation}
where the $V'_{i,j}$ are the components of $T'$.

Then a morphism $\alpha_X : T(X) \ra T'(X)$ consists of an $m$-tuple
of linear maps:
\begin{equation}
\alpha_j : \oplus_{k=1}^{n} V_{j,k} \otimes X_k \ra \oplus_{k=1}^{n} V'_{j,k} \otimes X_k
\end{equation}
but by the universal property of the biproduct, this is the same as
having an $(n \times m)$-indexed set of maps
\begin{equation}
\alpha_{jk} : V_{j,k} \otimes X_k \ra \oplus_{r=1}^{n} V'_{j,r} \otimes X_r
\end{equation}
and by the dual universal property, this is the same as having $(n
\times n \times m)$-indexed maps
\begin{equation}
\alpha_{jkr} : V_{j,k} \otimes X_k \ra V'_{j,r} \otimes X_r
\end{equation}
However, we must have the naturality condition for every morphism
$f:X \ra X'$:
\begin{equation}
\xymatrix{
  T(X) \ar[d]_{\alpha_{X}} \ar[r]^{T(f)} & T(X') \ar[d]^{\alpha_{X'}} \\
  T'(X) \ar[r]_{T'(f)} & T'(X')\\ 
}
\end{equation}
Note that each of the arrows in this diagram is a morphism in $\V^m$,
which are linear maps in each component---so in fact we have a
separate naturality square for each component.

Also, since $T$ and $T'$ act on $X$ and $X'$ by tensoring
with fixed vector spaces as in (\ref{eq:TX}), one has $T(f)_i =
\oplus_i f_i \otimes 1_{V_{ij}}$, having no effect on the $V_{ij}$.
We want to show that the components of $\alpha$ affect \textit{only}
the $V_{ij}$.

Additivity of all the functors involved implies that the assignment
$\alpha$ of maps to objects in $\V^n$ is additive.  So consider the
case when $X$ is one of the standard basis 2-vectors, having
$\mathbbm{C}$ in one position (say, the $k^{th}$), and the zero vector
space in every other position.  Then, restricting to the naturality
square in the $k^{th}$ position, the above condition amounts to having
$m$ maps (indexed by $j$):
\begin{equation}
\alpha_{j,k} : V_{j,k} \ra V'_{j,k}
\end{equation}
So by linearity, a natural transformation is determined by an $n\times
m$ matrix of maps as in (\ref{eq:kvnattransmatrix}).
\end{proof}

The fact that 2-linear maps between 2-vector spaces are functors
between categories recalls the analogy between linear algebra and
category theory in the concept of an \textit{adjoint}.  If $V$ and $W$
are inner product spaces, the adjoint of a linear map $F : V \ra V$ is
a map $F^{\dagger}$ for which $\inprod{Fx,y} =
\inprod{x,F^{\dagger}y}$ for all $x \in V_1$ and $y \in V_2$.  A
(right) adjoint of a functor $F : \catname{C} \ra \catname{D}$ is a
functor $G : \catname{D} \ra \catname{C}$ for which $\hom_D(Fx,y)
\cong \hom_C(x,Gy)$ (and then $F$ is a left adjoint of $G$).

In the situation of a KV 2-vector space, the categorified analog of
the adjoint of a linear map is indeed an adjoint functor.  (Note that
since a KV 2-vector space has a specified basis of simple objects, it
makes sense to compare it to an inner product space.)  Moreover, the
adjoint of a functor has a matrix representation which is much like
the matrix representation of the adjoint of a linear map.  We
summarize this as follows:

\begin{theorem}\label{thm:biadjoint} Given any 2-linear map $F : V \ra
W$, there is a 2-linear map $G : W \ra V$ which is both a left and
right adjoint to $F$, and $G$ is unique up to natural isomorphism.
\end{theorem}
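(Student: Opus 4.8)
The plan is to reduce everything to matrix representations and then produce $G$ explicitly as a ``conjugate transpose'' of the matrix for $F$. By Theorem~\ref{thm:kvvn} we may assume $V=\V^n$ and $W=\V^m$, and by Lemma~\ref{lemma:kv2linmatrix} the map $F$ is naturally isomorphic to one given by an $m\times n$ matrix of (finite-dimensional) vector spaces $(F_{i,j})$ acting as in (\ref{eq:kv2linmatrix}). Since adjointness is preserved under natural isomorphism and under equivalence of categories, it suffices to exhibit an ambidextrous adjoint for this matrix map and then transport it back along the equivalences $V\simeq\V^n$, $W\simeq\V^m$.

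First I would define $G:\V^m\ra\V^n$ to be the 2-linear map whose matrix has entries $G_{j,i}=F_{i,j}^{\ast}$, the dual vector space; this is again a matrix of finite-dimensional vector spaces, hence a genuine 2-linear map by Lemma~\ref{lemma:kv2linmatrix}. The core input is the ambidextrous adjunction $(-\otimes A)\dashv(-\otimes A^{\ast})\dashv(-\otimes A)$ available in $\V$ whenever $A$ is finite-dimensional (i.e.\ dualizable): for such $A$ and arbitrary $B,C$ there are natural isomorphisms $\hom_{\V}(A\otimes B,C)\cong\hom_{\V}(B,A^{\ast}\otimes C)$ and, replacing $A$ by $A^{\ast}$ and using $(A^{\ast})^{\ast}\cong A$, also $\hom_{\V}(A^{\ast}\otimes B,C)\cong\hom_{\V}(B,A\otimes C)$. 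Applying these componentwise and splitting the hom-spaces over the biproduct decomposition (exactly as in the proof of Lemma~\ref{lemma:kvnattransmatrix}), for $X\in\V^n$ and $Y\in\V^m$ one gets
\begin{multline}
\hom_{\V^m}\bigl(F(X),Y\bigr)\cong\prod_{i,j}\hom_{\V}\bigl(F_{i,j}\otimes X_j,\,Y_i\bigr)\\
\cong\prod_{i,j}\hom_{\V}\bigl(X_j,\,F_{i,j}^{\ast}\otimes Y_i\bigr)\cong\hom_{\V^n}\bigl(X,G(Y)\bigr),
\end{multline}
and, running the analogous chain in the other order with the second adjunction, $\hom_{\V^n}(G(Y),X)\cong\hom_{\V^m}(Y,F(X))$. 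Each step (biproduct splitting, tensor--hom adjunction in $\V$) is natural in all variables, so both isomorphisms are natural in $X$ and $Y$; hence $G$ is simultaneously a left and a right adjoint of $F$.

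For uniqueness I would invoke the standard categorical fact that a right adjoint (respectively a left adjoint) of a functor is determined up to unique natural isomorphism, so any two ambidextrous adjoints of $F$ are naturally isomorphic; transporting back through the chosen equivalences gives the $G:W\ra V$ claimed in the statement, unique up to natural isomorphism.

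The main obstacle is bookkeeping rather than conceptual: one must verify carefully that the tensor--hom isomorphisms in $\V$ are natural in all three arguments and interact correctly with the biproduct splitting, and that, because the passage to matrix form is only an equivalence and not an identification, the matrix-level adjunction must be conjugated by those equivalences to yield the adjunction for the original $F$. A secondary point worth stating is the essential use of finite-dimensionality: dualizability of the entries $F_{i,j}$ is precisely what makes the adjunction \emph{ambidextrous}---for an infinite-dimensional analog of $\V$ one expects only a one-sided adjoint---so the hypothesis that we are working with KV 2-vector spaces cannot be dropped.
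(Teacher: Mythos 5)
Your proposal is correct and follows essentially the same route as the paper: reduce to matrix form via Theorem~\ref{thm:kvvn} and Lemma~\ref{lemma:kv2linmatrix}, take the dual-transpose matrix, and deduce both adjunctions from the componentwise $\hom$--$\otimes$ duality for finite-dimensional vector spaces. The only cosmetic difference is that the paper gets ambidexterity from the involutivity of the dagger prescription ($[T]^{\dagger\dagger}=[T]$) whereas you run the hom-isomorphism chain explicitly in both directions; your added remarks on naturality, transport along the equivalences, and the essential role of dualizability are welcome but do not change the argument.
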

\begin{proof}
By Theorem \ref{thm:kvvn}, we have $V \simeq \V^n$ and $W \simeq \V^m$
for some $n$ and $m$.  By composition with these equivalences, we can
restrict to this case.  But then we have by Lemma
\ref{lemma:kv2linmatrix} that $F$ is naturally isomorphic to some
2-linear map given by matrix multiplication by some matrix of vector
spaces $[F_{i,j}]$:
\begin{equation}
\begin{pmatrix}
F_{1,1} & \dots & F_{1,n} \\
\vdots & & \vdots \\
F_{m,1} & \dots & F_{m,n} \\
\end{pmatrix}
\end{equation}

We claim that a (two-sided) adjoint functor $F^{\dagger}$ is given by the ``dual
transpose matrix'' of vector spaces $[F_{i,j}]^{\dagger}$:
\begin{equation}
\begin{pmatrix}
F^{\dagger}_{1,1} & \dots & F^{\dagger}_{1,m}\\
\vdots & & \vdots \\
F^{\dagger}_{n,1} & \dots & F^{\dagger}_{n,m} \\
\end{pmatrix}
\end{equation}
where $F^{\dagger}_{i,j}$ is the vector space dual $(F_{j,i})^{\ast}$
(note the transposition of the matrix).

We note that this prescription is symmetric, since $[T]^{\dagger
\dagger} = [T]$, so if $G$ is always left adjoint of $F$, then $F$ is
also a left-adjoint of $G$, hence $G$ a right adjoint of $F$.  So if
this prescription gives a left adjoint, it gives a two-sided adjoint.
Next we check that it does.

Suppose $x=(X_i) \in \V^n$ is the 2-vector with vector space $X_i$ in
the $i^{th}$ component, and $y=(Y_j) \in \V^m$ has vector space $Y_j$
in the $j^{th}$ component.  Then $Fx \in \V^m$ has $j^{th}$ component
$\oplus_{k=1}^n V_{k,j} \otimes X_k$.  Now, a map in $\V^m$ from $Fx$
to $y$ consists of a linear map in each component, so it is an
$m$-tuple of maps:
\begin{equation}
f_j: \oplus_{k=1}^n V_{k,j} \otimes X_k \ra Y_j
\end{equation}
for $j = 1 \dots m$.  But since the direct sum (biproduct) is a
categorical product, this is the same as an $m \times n$ matrix of
maps:
\begin{equation}
f_{kj}: V_{k,j} \otimes X_k \ra Y_j
\end{equation}
for $k = 1 \dots n$ and $j = 1 \dots m$, and $\hom(Fx,y)$ is the
vector space of all such maps.

By the same argument, a map in $\V^n$ from $x$ to $Gy$ consists of an
$n \times m$ matrix of maps:
\begin{equation}
g_{jk}: :X_k \ra V_{j,k}^{\ast} \otimes Y_j \cong \hom(V_{j,k},Y_j)
\end{equation}
for $k = 1 \dots n$ and $j = 1 \dots m$, and $\hom(x,Gy)$ is the
vector space of all such maps.

But then we have a natural isomorphism $\hom(Fx,y) \cong \hom(x,Gy)$
by the duality of $\hom$ and $\otimes$, so in fact $G$ is a right
adjoint for $F$, and by the above argument, also a left adjoint.

Moreover, no other nonisomorphic matrix defines a 2-linear map with
these properties, and since any functor is naturally isomorphic to
some matrix, this is the sole $G$ which works.
\end{proof}

We conclude this section by giving an example of a 2-vector space
which we shall return to again later.  It is motivated by the attempt
to generalize the FHK construction of a TQFT from a group, as
described in Section \ref{sec:fhk}.  During the construction of the
vector space assigned to a circle, one makes use of the group algebra
of a finite group $G$---the set of complex linear combinations of
group elements.  There is a categorified analog:

\begin{example}\label{ex:grp2alg} As an example of a KV 2-vector space, consider the
\textit{group 2-algebra} on a finite group $G$, defined by analogy
with the group algebra:

The group algebra $\CG$ consists of the set of elements formed as
formal linear combinations elements of $G$:
\begin{equation}
b = \sum_{g \in G} b_g \cdot g
\end{equation}
where all but finitely many $b_g$ are zero.  We can think of these as
complex functions on $G$.  The algebra multiplication on $\CG$ is
given by the multiplication in $G$:
\begin{equation}
b \star b' = \sum_{g,g' \in G} (b_g b'_{g'}) \cdot g g' \\
\end{equation}
This does not correspond to the multiplication of functions on $G$,
but to \textit{convolution}:
\begin{equation}
(b \star b')_g = \sum_{h\cdot h' = g}b_h b'_{h'}
\end{equation}

Similarly, the \textit{group 2-algebra} $A=\VG$ is the
\textit{category} of $G$-graded vector spaces.  That is, direct
sums of vector spaces associated to elements of $G$:
\begin{equation}
V = \bigoplus_{g \in G} V_g
\end{equation}
where $V_g \in \V$ is a vector space.  This is a $G$-graded vector
space.  We can take direct sums of these pointwise, so that $(V
\oplus V')_g = V_g \oplus V'_g)$, and there is a ``scalar'' product
with elements of $\V$ given by $(W \otimes V)_g = W \otimes V_g$.
There is also a \textit{group 2-algebra} product of $G$-graded vector
spaces, involving a convolution on $G$:
\begin{equation}
(V \star V')_h = \bigoplus_{g \cdot g' = h}V_g \otimes V'_{g'}
\end{equation}

The category of $G$-graded vector spaces is clearly a KV 2-vector
space, since it is equivalent to $\V^k$ where $k = |G|$.  However, it
has the additional structure of a 2-algebra because of the group
operation on the finite set $G$.
\end{example}

The analogy between group algebras and group 2-algebras highlights one
motivation for thinking of 2-vector spaces.  This is the fact that, in
quantum mechanics, one often ``quantizes'' a classical system by
taking the Hilbert space of $\mathbbm{C}$-valued functions on its
phase space.  Similarly, one approach to finding a higher-categorical
version of a quantum field theory is to take $\V$-valued functions.
We have noted in Section \ref{sec:fhk} that, given a finite group, the
Fukuma-Hosono-Kawai construction gives 2D TQFT, whose Hilbert space of
states on a circle is just $\mathbbm{C}[G]$.  For this reason, we
expect that Example \ref{ex:grp2alg} should be relevant to
categorifying this theory.  However, it is not quite sufficient, as we
discuss in Section \ref{sec:kv2vsgrpd}.

\subsection{KV 2-Vector Spaces and Finite Groupoids}\label{sec:kv2vsgrpd}

The group 2-algebra of Example \ref{ex:grp2alg} shows that we can get
a 2-vector space as a category of functions from some finite set $S$
into $\V$, and this may have extra structure if $S$ does.  However,
this is somewhat unnatural, since $\V$ is a category and $S$ a mere
set.  It seems more natural to consider functor categories into $\V$
from some category $\C$.  These are the generalized 2-vector spaces
described by Elgueta \cite{elgueta}.  Then the above way of looking at
a KV 2-vector space can be reduced to the situation when $\C$ is a
discrete category with a finite set of elements.

However, there are interesting cases where $\C$ is not of this form,
and the result is still a KV vector space.  A relevant class of
examples, as we shall show, come from special kinds of groupoids.

\begin{definition}\label{def:essfingpd} An \textbf{essentially finite}
groupoid is one which is equivalent to a finite groupoid.  A
\textbf{finitely generated} groupoid is one with a finite set of
objects, and all of whose morphisms are generated under composition by
a finite set of morphisms.  An \textbf{essentially finitely generated}
groupoid is one which is equivalent to a finitely generated one.
\end{definition}

We first show that finite groupoids are among the special
categories $\C$ we want to consider:

\begin{lemma}\label{lemma:fgfckv} If $\X$ is an essentially finite
groupoid, the functor category $[\X,\V]$ is a KV 2-vector space.
\end{lemma}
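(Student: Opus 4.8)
The plan is to reduce the statement to the representation theory of finite groups. First I would invoke the fact that the functor category construction is invariant under equivalence of the source: if $\X$ is equivalent to a finite groupoid $\X'$, then precomposition with an equivalence $\X \ra \X'$ induces an equivalence $[\X',\V] \simeq [\X,\V]$. Since each of the defining properties of a KV 2-vector space --- $\mathbbm{C}$-linearity, additivity, and the existence of a finite set of simple objects generating everything under biproduct --- is preserved under equivalence of categories, it suffices to prove the lemma when $\X$ is itself a finite groupoid.

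Next I would replace $\X$ by a skeleton. A finite groupoid $\X$ is equivalent to the coproduct of categories $\coprod_{i=1}^{n} \mathbf{B}G_i$, where $x_1,\dots,x_n$ are representatives of the (finitely many) isomorphism classes of objects of $\X$, each $G_i = \opname{Aut}_{\X}(x_i)$, and $\mathbf{B}G_i$ is the corresponding one-object groupoid. Since a functor out of a coproduct is the same as a tuple of functors, this gives $[\X,\V] \simeq \prod_{i=1}^{n} [\mathbf{B}G_i,\V] = \prod_{i=1}^{n} \opname{Rep}_{\mathbbm{C}}(G_i)$, a finite product of categories of finite-dimensional complex representations of finite groups.

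It then remains to check the three requirements directly. $\mathbbm{C}$-linearity and additivity are inherited pointwise from $\V$: for functors $F,G : \X \ra \V$ the natural transformations $F \Rightarrow G$ form a complex vector space under pointwise operations (naturality is a linear condition, so these operations preserve it), composition of natural transformations is bilinear, and the pointwise direct sum $(F \oplus G)(x) = F(x) \oplus G(x)$, equipped with pointwise inclusions and projections, satisfies the biproduct relations since they hold componentwise in $\V$. For finite semisimplicity I would use Maschke's theorem, which says each $\opname{Rep}_{\mathbbm{C}}(G_i)$ is semisimple with only finitely many isomorphism classes of irreducible objects, together with Schur's lemma, which gives $\hom(S,S) \cong \mathbbm{C}$ for each irreducible $S$ (using that $\mathbbm{C}$ is algebraically closed). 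The simple objects of $[\X,\V]$ are then precisely the functors supported on a single component $\mathbf{B}G_i$ and equal there to an irreducible representation of $G_i$; there are finitely many of these. Finally, an arbitrary functor $F : \X \ra \V$ assigns a finite-dimensional vector space to each object, so on each component its value is a finite-dimensional representation, which by semisimplicity is a finite direct sum of irreducibles; hence $F$ is a finite biproduct of the simple objects just described, which is exactly what is required.

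The genuinely substantive point, rather than an obstacle of real depth, is making sure that ``finitely many simple objects overall'' really holds: this needs both that $\X$ has finitely many connected components (guaranteed by finiteness of $\X$, hence by essential finiteness) and that each finite group has only finitely many irreducible complex representations (the content of Maschke together with the count by conjugacy classes). The equivalence-invariance step also deserves a line of care, to confirm that it transports the full KV structure and not merely the underlying category; everything beyond this is the routine verification that a functor category into an additive category is again additive.
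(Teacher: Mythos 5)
Your proof is correct and follows essentially the same route as the paper's: pass to a skeleton, identify $[\X,\V]$ with a finite product of categories $\opname{Rep}_{\mathbbm{C}}(G_i)$ for finite groups $G_i$, verify $\mathbbm{C}$-linearity and biproducts pointwise, and invoke semisimplicity of complex representations of finite groups together with Schur's lemma to exhibit the finite basis of simple objects. Your explicit attention to equivalence-invariance and to naming Maschke's theorem merely makes precise steps the paper leaves implicit.
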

\begin{proof}
To begin with, we note that $\V$ is trivially a KV 2-vector space.  In
particular, it is a $\mathbbm{C}$-linear additive category, which we
use to give $[\X,\V]$ the same structure.

Define a biproduct $\oplus$ on $[\X,\V]$ as follows. Given two
functors $F_1,F_2 : \X \ra \V$, define for both objects and morphisms,
\begin{equation}
(F_1 \oplus F_2) (x) = F_1(x) \oplus F_2(x)
\end{equation}
where we are using both the direct sum of vector spaces, and the fact
that linear maps between vector spaces inherit a direct sum.  The
projections and injections are defined pointwise.  Since the biproduct
axioms (\ref{eq:biproduct1}) and (\ref{eq:biproduct2}) hold pointwise,
this is indeed a biproduct.

Now $\X$ is equivalent to a skeleton of itself, $\underline{\X}$,
which contains a single object in each isomorphism class.  Since $\X$
is essentially finite, this is also a finite set of objects, and each
object has a finite set of endomorphisms.  Since these are all
invertible, $\X$ is in fact equivalent to a finite coproduct of finite
groups, thought of as single-object categories.

But then a functor $F: \underline{\X} \ra \V$ is just a direct sum of
functors from these groups.  A functor from a group $G$ (as a
one-object category) to $\V$ is just a finite dimensional
representation of $G$.  Now, Schur's Lemma states that the only
intertwining operators from an irreducible representation to itself
are multiples of the identity.  That is, it ensures that all such
representations are simple objects.  On the other hand, every
representation is a finite direct sum of irreducible ones.

So in particular, the finite dimensional representations of a finite
group form a KV 2-vector space.  A direct sum of such categories is
again a KV 2-vector space, and so $[\underline{\X},\V]$ is one.

But $[\X,\V]$ is equivalent to this, so it is a KV 2-vector space.
\end{proof}

We notice that we are speaking here of groupoids, and any groupoid
$\X$ is equivalent to its opposite category $\X^{op}$, by an
equivalence that leaves objects intact and replaces each morphism by
its inverse.  So there is no real difference between $[\X,\V]$, the
category of $\V$-valued functors from $\X$, and $[\X^{op},\V]$, the
category of \textit{$\V$-valued presheaves} (henceforth simply
``$\V$-presheaves'') on $\X$, where we emphasize that unlike ordinary
presheaves, these are functors into $\V$, rather than $\Set$.  So we
have shown that $\V$-presheaves on a groupoid $\X$ form a KV 2-vector
space.  We will work with these examples from now on.

Since many results about presheaves are well known, we will find it
convenient to use this terminology for objects of $[\X,\V]$ for the
sake of compatibility, and to highlight the connection to these
results.  We will ignore the distinction in the sequel, but remind
readers here that our uses of the term ``presheaf'' are valid only
because we are working with groupoids.  The importance of
$\Set$-valued presheaves to topos theory, and the richness of existing
results for these, is one reason to keep this relationship in mind.

Now we want to show a result analogous to a standard result for
presheaves (see, e.g. MacLane and Moerdijk \cite{macmoer}, Theorem
1.9.2).  This is that functors between underlying groupoids induce
2-linear maps between the 2-vector spaces of $\V$-presheaves on them.

\begin{theorem}\label{thm:2mapadjoints}If $\X$ and $\Y$ are
essentially finite groupoids, a functor $f:\X \ra \Y$ gives two
2-linear maps between KV 2-vector spaces:
\begin{equation}
f^{\ast} : [ \Y, \V ] \ra [\X,\V]
\end{equation}
called ``pullback along $f$'' and
\begin{equation}
f_{\ast} : [\X,\V] \ra [\Y,\V]
\end{equation}
called ``pushforward along $f$''.  Furthermore, $f_{\ast}$ is the
(two-sided) adjoint to $f^{\ast}$.
\end{theorem}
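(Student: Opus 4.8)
The plan is to produce the two functors explicitly, check each is 2-linear, obtain one of the two adjunctions from the universal property of a Kan extension, and reduce the other --- the genuinely nontrivial one --- to the representation theory of finite groups. Throughout, by Lemma~\ref{lemma:fgfckv} the categories $[\X,\V]$ and $[\Y,\V]$ are already known to be KV 2-vector spaces, so ``2-linear'' here means exactly: a $\mathbbm{C}$-linear functor preserving biproducts.

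First I would define $f^{\ast}F = F \circ f$ on objects and $f^{\ast}\eta = \eta f$ on natural transformations. Functoriality is immediate; biproducts in functor categories are computed pointwise (as in the proof of Lemma~\ref{lemma:fgfckv}), so $f^{\ast}$ preserves them, and it is $\mathbbm{C}$-linear on hom-spaces since the vector-space structure on natural transformations is pointwise and whiskering by $f$ is linear. Hence $f^{\ast}$ is 2-linear. Next I would define $f_{\ast}$ as the pointwise left Kan extension of $V$ along $f$: $(f_{\ast}V)(y) = \opname{colim}\bigl((f\!\downarrow\! y) \to \X \to \V\bigr)$, the colimit over the comma category of the composite of the projection with $V$ (this is the content of Definition~\ref{def:pushcolimit}). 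The one place the hypothesis is needed is \emph{existence}: since $\X$ and $\Y$ are essentially finite, each comma category $(f\!\downarrow\! y)$ is essentially finite, so the defining diagram may be replaced by a finite one, and $\V$ is finitely cocomplete. That $f_{\ast}$ is $\mathbbm{C}$-linear and preserves biproducts follows because biproducts are finite colimits and colimits commute with colimits --- equivalently, via Lemmas~\ref{lemma:kv2linmatrix} and~\ref{lemma:kvnattransmatrix} one exhibits an explicit matrix of vector spaces representing $f_{\ast}$. The left adjunction $f_{\ast} \dashv f^{\ast}$, i.e.\ a natural isomorphism $\hom(f_{\ast}V, W) \cong \hom(V, f^{\ast}W)$, is then precisely the universal property of the pointwise left Kan extension.

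The substantive claim is that $f_{\ast}$ is \emph{also} right adjoint to $f^{\ast}$. Here I would reduce to connected components: arguing as in the proof of Theorem~\ref{thm:kvvn}, every essentially finite groupoid is equivalent to a finite disjoint union of one-object groupoids, i.e.\ of finite groups, so after passing to skeleta we may assume $\X = \coprod_i BG_i$, $\Y = \coprod_j BH_j$, with $f$ given on each component by a group homomorphism $\phi \colon G \to H$. Then $[\X,\V] \simeq \prod_i \opname{Rep}(G_i)$ and $[\Y,\V] \simeq \prod_j \opname{Rep}(H_j)$, under which $f^{\ast}$ is pullback of representations along the $\phi$'s and $f_{\ast}$ is induction, $\opname{Ind}_{\phi}V = \mathbbm{C}[H] \otimes_{\mathbbm{C}[G]} V$. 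The right adjoint of $\phi^{\ast}$ is coinduction, $\hom_{\mathbbm{C}[G]}(\mathbbm{C}[H], V)$, and the classical point --- this is where finiteness of the groups and $\opname{char}\mathbbm{C} = 0$ enter essentially --- is that $\mathbbm{C}[H]$ is a finitely generated $\mathbbm{C}[G]$-module whose symmetrizing (trace) form identifies it bimodule-isomorphically with its $\mathbbm{C}[G]$-dual, so that coinduction is naturally isomorphic to induction. Hence $f_{\ast}$ is a two-sided adjoint of $f^{\ast}$, and uniqueness of adjoints up to natural isomorphism pins it down.

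I expect the main obstacle to be exactly this last step: the ambidexterity $\opname{Ind}_{\phi} \cong \opname{Coind}_{\phi}$, equivalently that the pointwise left and right Kan extensions along $f$ agree. This fails for a general functor between general categories; it relies on the groupoid hypothesis, which lets us invert morphisms so that the comma categories governing the two Kan extensions become equivalent, together with the semisimplicity of $\V$ over $\mathbbm{C}$ and the finiteness of $\X$ and $\Y$. I would isolate it as a lemma about homomorphisms of finite groups and prove it either by the Frobenius-algebra argument sketched above, or --- more in the spirit of the rest of the paper --- by writing down the unit and counit of the second adjunction explicitly as maps built by averaging over the (finite) automorphism groups, which is legitimate since one may divide by group orders in characteristic zero.
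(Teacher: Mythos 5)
Your proposal is correct, but it takes a genuinely different route from the paper's. The paper's proof of Theorem~\ref{thm:2mapadjoints} is essentially two lines: it checks that $f^{\ast}=(-)\circ f$ is $\mathbbm{C}$-linear and preserves biproducts (exactly as you do), and then simply \emph{invokes} Theorem~\ref{thm:biadjoint} --- every 2-linear map between KV 2-vector spaces, being representable as a matrix of vector spaces, admits a two-sided adjoint given by the dual transpose matrix --- to conclude that $f_{\ast}$ exists as the ambidextrous adjoint. The colimit/Kan-extension formula for $f_{\ast}$ is only introduced afterwards (Definition~\ref{def:pushcolimit}) and verified to be \emph{a} left adjoint in Theorem~\ref{thm:pushcolimit}, with the right adjunction again outsourced to Theorem~\ref{thm:biadjoint}. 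You instead construct $f_{\ast}$ intrinsically from the start, note that essential finiteness is exactly what makes the comma categories finite enough for the colimit to exist in $\V$, get $f_{\ast}\dashv f^{\ast}$ for free from the universal property, and then prove the second adjunction directly by reducing to group homomorphisms $\phi\colon G\ra H$ and showing $\opname{Ind}_{\phi}\cong\opname{Coind}_{\phi}$ via the symmetrizing form on $\mathbbm{C}[H]$ (correctly locating where semisimplicity and characteristic zero enter; note only that for non-injective $\phi$ the module $\mathbbm{C}[H]$ is finitely generated projective rather than free over $\mathbbm{C}[G]$, which is all the Frobenius argument needs). The paper's approach buys brevity and makes the existence of the adjoint a purely formal consequence of the matrix calculus for KV 2-vector spaces, at the cost of giving no formula for $f_{\ast}$ inside the proof; yours is longer but self-contained, produces the explicit description that the paper needs anyway for Remark~\ref{rk:pullpushunit} and Theorem~\ref{thm:zcomposfunc}, and isolates the ambidexterity as the genuinely nontrivial representation-theoretic input rather than hiding it in the duality $V\cong V^{\ast}$ built into Theorem~\ref{thm:biadjoint}.
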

\begin{proof}
First we define, for any functor $F: \Y \ra \V$,
\begin{equation}
f^{\ast}(F) = F \circ f
\end{equation}
which is a functor from $\X$ to $\V$.  This is just the pullback of
$F$ along $f$.

To show that this is a 2-linear map (that is, a biproduct-preserving,
$\mathbbm{C}$-linear functor), we first note that it is trivially
$\mathbbm{C}$-linear since a linear combination of maps in some
$\hom$-category in $[\Y,\V]$ is taken by $f^{\ast}$ to the
corresponding linear combination in the $\hom$-category in $[\X,\V]$,
where maps are now between vector spaces thought of over $x \in \X$.

To check that the functor $f^{\ast} : [\Y,\V] \ra [\X,\V]$ preserves
biproducts, note that for any $x \in \X$ we have that $f^{\ast}(F_1
\oplus F_2)(x) = (F_1 \oplus F_2)(f(x)) = F_1(f(x)) \oplus F_2(f(x))
= (f^{\ast}F_1 \oplus f^{\ast}F_2)(x)$.

So indeed there is a 2-linear map $f^{\ast}$.  But then by Theorem
\ref{thm:biadjoint}, there is a two-sided adjoint of $f^{\ast}$,
denoted $f_{\ast}$.
\end{proof}

\begin{remark} The argument in this proof for the existence of the
adjoint to $f^{\ast}$ uses Theorem \ref{thm:biadjoint}.  While no such
theorem exists for $\Set$-valued presheaves, there is a corresponding
theorem defining a ``pushforward'' of presheaves of sets.  In fact,
the only major difference between what we have shown for
$\V$-presheaves and the standard results for $\Set$-presheaves is that
the left and right adjoint are the same.  This means that the
``pushforward'' map is an \textit{ambidextrous adjunction} for the
pullback (for much more on the relation between ambidextrous
adjunctions and TQFTs, see Lauda \cite{laudaambidjunction}).

It seems useful, then, to have another approach to the ``pushforward''
map than the matrix-dependent view of Theorem \ref{thm:biadjoint}.
Fortunately, there is a more instrinsic way to describe the 2-linear
map $f_{\ast}$, the adjoint of $f^{\ast}$, and we know this must be
the same as the one given in matrix form.
\end{remark}

\begin{definition}\label{def:pushcolimit}For a given $y \in \Y$,
define the diagram $D_y$ whose objects are objects $x \in \X$ equipped
with maps $f(x) \ra y$ in $\Y$, and whose morphisms are morphisms $a :
x \ra x'$ whose images make the triangles
\begin{equation}
\xymatrix{
f(x) \ar[d] \ar[r]^{f(a)} & f(x') \ar[dl] \\
y
}
\end{equation}
in $\Y$ commute.  Given a $\V$-presheaf $G$ on $\X$, define
$f_{\ast}(G)(y) = \opname{colim}G(D_y)$---a colimit in $\V$.

The pushforward of a morphism $b:y \ra y'$ in $\Y$, $f_{\ast}(G)(b) :
f_{\ast}(G)(y) \ra f_{\ast}(G)(y')$ is left to the reader.
\end{definition}

This definition of the pushforward involved the diagram $D$, which is
the \textit{comma category} of objects $x \in \X$ equipped with maps
from $f(x)$ to $y$.  This is the appropriate categorical equivalent of
a \textit{preimage}---rather than requiring $f(x) = y$, one accepts
that they may be isomorphic, in different ways.  So this is a
categorified equivalent of taking a sum over a preimage.  It needs to
be confirmed directly that it really is the adjoint.

\begin{theorem}\label{thm:pushcolimit} This $f_{\ast}$ is a 2-linear
map, and a two-sided adjoint for $f^{\ast}$.
\end{theorem}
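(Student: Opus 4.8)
The plan is to treat Definition~\ref{def:pushcolimit} as the pointwise formula for the left Kan extension of a $\V$-presheaf along $f$, to verify directly that it defines a 2-linear functor, to prove it is left adjoint to $f^{\ast}$ by the usual colimit universal property, and finally to invoke Theorem~\ref{thm:biadjoint} to upgrade the one-sided adjunction to a two-sided one.

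First I would dispatch well-definedness and functoriality. Since $\X$ and $\Y$ are essentially finite, for each $y \in \Y$ the comma category $D_y = (f \downarrow y)$ of objects $x$ equipped with a morphism $f(x) \ra y$ is again essentially finite, so $\opname{colim} G(D_y)$ exists in $\V$ (a coproduct of finitely many vector spaces, with coequalizers imposing the relations coming from the morphisms of $D_y$) and is determined up to canonical isomorphism. A morphism $b : y \ra y'$ in $\Y$ induces a functor $D_y \ra D_{y'}$ by post-composing $f(x) \ra y$ with $b$, hence a map $f_{\ast}(G)(y) \ra f_{\ast}(G)(y')$ of colimits; functoriality in $b$ is immediate and this fills the gap left to the reader in Definition~\ref{def:pushcolimit}. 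A natural transformation $\eta : G \ra G'$ induces a morphism of cocones over each $D_y$, hence components $f_{\ast}(\eta)_y : f_{\ast}(G)(y) \ra f_{\ast}(G')(y)$ natural in $y$, so $f_{\ast}$ is a functor $[\X,\V] \ra [\Y,\V]$. It is $\mathbbm{C}$-linear because $\opname{colim}$, being a $\mathbbm{C}$-linear left adjoint to the constant-diagram functor, acts $\mathbbm{C}$-linearly on hom-spaces; and it preserves biproducts because colimits commute with finite direct sums, giving $f_{\ast}(G_1 \oplus G_2)(y) \cong f_{\ast}(G_1)(y) \oplus f_{\ast}(G_2)(y)$ naturally in $y$. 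Hence $f_{\ast}$ is a 2-linear map.

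Next I would establish the adjunction $f_{\ast} \dashv f^{\ast}$. The unit $G \ra f^{\ast} f_{\ast} G$ at $x$ is the coprojection $G(x) \ra \opname{colim} G(D_{f(x)}) = f_{\ast}(G)(f(x))$ indexed by the object $(x, \id_{f(x)}) \in D_{f(x)}$; the counit $f_{\ast} f^{\ast} H \ra H$ at $y$ is induced by the cocone whose component at $(x, \alpha : f(x) \ra y)$ is $H(\alpha) : f^{\ast}H(x) = H(f(x)) \ra H(y)$. Verifying the triangle identities, and that the induced map $\hom_{[\Y,\V]}(f_{\ast}G, H) \ra \hom_{[\X,\V]}(G, f^{\ast}H)$ is a bijection natural in $G$ and $H$, is exactly the classical computation showing that a colimit over comma categories computes the left Kan extension; the only fiddly point is keeping the cocone-compatibility conditions straight as $y$ and the intervening transformations vary. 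Then the two-sidedness follows formally: both $f^{\ast}$ (by Theorem~\ref{thm:2mapadjoints}) and $f_{\ast}$ (just shown) are 2-linear maps between KV 2-vector spaces, so Theorem~\ref{thm:biadjoint} produces a two-sided adjoint of $f^{\ast}$, unique up to natural isomorphism; since $f_{\ast}$ is a left adjoint of $f^{\ast}$ and left adjoints are unique, $f_{\ast}$ is that adjoint, hence also a right adjoint of $f^{\ast}$.

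The main obstacle is not any isolated hard step but two pieces of bookkeeping: carrying out the naturality verification in the adjunction argument cleanly, and reconciling this explicit colimit description with the matrix-theoretic adjoint of Theorem~\ref{thm:biadjoint} --- though the uniqueness clause there makes the reconciliation automatic once left-adjointness is known. As a sanity check, and an alternative to invoking Theorem~\ref{thm:biadjoint}, one can see right-adjointness of $f_{\ast}$ directly: the right Kan extension of $G$ along $f$ is a limit over $(y \downarrow f)$, which for a groupoid $\X$ is isomorphic to $(f \downarrow y)^{\opname{op}}$, and over $\mathbbm{C}$ the limit and colimit of such a finite-groupoid-indexed diagram in $\V$ coincide, invariants being identified with coinvariants via the averaging projector, so $\opname{Lan}_f \cong \opname{Ran}_f$ and $f_{\ast}$ is simultaneously both.
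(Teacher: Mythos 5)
Your proposal is correct and follows essentially the same route as the paper: the colimit over the comma category $D_y$ gives the left adjunction via its universal property (your explicit unit and counit just make the paper's hom-set bijection more concrete), and two-sidedness is then imported from Theorem~\ref{thm:biadjoint}. Your closing observation --- that for groupoid-indexed diagrams over $\mathbbm{C}$ the left and right Kan extensions coincide via the averaging projector, so right-adjointness can be seen directly --- is a nice independent confirmation that the paper does not include, but it is supplementary rather than a different proof strategy.
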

\begin{proof}
The given $f_{\ast}$ certainly defines a $\V$-presheaf $f_{\ast}G$ on
$\Y$, and the operation of taking colimits is functorial and preserves
biproducts, so $f_{\ast}$ is a 2-linear map.

Consider the effect of $f_{\ast}$ on a 2-vector $G:\X \ra \V$ by
describing $f_{\ast}G : \Y \ra \V$.  If $F : \Y \ra \V$ is as above,
there should be a canonical isomorphism between $[G,f^{\ast}(F)]$ (a
hom-set in $[\X,\V]$) and $[p_{\ast}(G),F]$ (a hom-set in $[\Y,\V]$).

The hom-set $[G,f^{\ast}(F)]$ is found by first taking the pullback of
$F$ along $f$.  This gives a presheaf on $\X$, namely $F(f(-))$.  The
hom-set is then the set of natural transformations $\alpha : G \ra
f^{\ast}F$.  Each such $\alpha$, given an object $x$ in $\X$, picks a
linear map $\alpha_x : F(f(x)) \ra G(x)$ (subject to the naturality
condition).

For an object $y$ in $\Y$, pulling back $F$ onto $\X$ gives the vector
space $F(y)$ at each object $x$ with $f(x)=y$.  This is the presheaf
$f^{\ast}F$.  So an element of $[f^{\ast}F,G]$ is an assignment, to
every $x \in \X$, a linear map $f^{\ast}F = F(y) \ra G(x)$.

To get the equivalence required for adjointness, given a linear map $h
: f_{\ast}G(y) \ra F(y)$, one should get a collection of maps $h_x :
G(x) \ra F(y)$ for each object $x$ in $D$ (which commute with all
arrows in $D$).  But $f_{\ast}(G)(x)$ was defined to be a colimit,
hence there is a unique compatible map $i_x$ from each $G(x)$ into it,
so take $h_x = h \circ i_x : G(x) \ra F(y)$.  This gives a map from
$[p_{\ast}(G),F]$ to $[G,f^{\ast}(F)]$.  To see that this is an
equivalence, note that the colimit is a \textit{universal} object with
the specified maps.  So given the collection of $h_x$, one gets the map
$h$ from the universal property.

So $f_{\ast}$ is a left adjoint to $f^{\ast}$.  By Theorem
\ref{thm:biadjoint}, it is therefore also a right-adjoint.
\end{proof}

\begin{remark}\label{rk:pullpushunit} For future reference, we will
describe the pair of adjoint functors, $f^{\ast}$ and $f_{\ast}$ in
even more detail, since this is used in the construction of our
extended TQFT in Chapter \ref{chap:connexttqft}.  Since we will want
to make use of the simplifying fact that any groupoid is equivalent to
a skeletal groupoid, it is particularly helpful to consider this case.

A skeletal groupoid has exactly one object in each isomorphism class,
so it is equivalent to a disjoint union of one-object groupoids -
which can be interpreted as groups.  Since $\X$ and $\Y$ are
essentially finite, these are finite groups.  So a $\V$-presheaf on
$\X$ is a functors which assigns a vector space $V_x$ to each object
$x \in \X$, and a linear map $V \ra V$ for each morphism (i.e. group
element).  This is just a representation of the finite group
$\opname{Aut}(x)$ on $\V_x$.

If $\X$ and $\Y$ are skeletal, then $f : \X \ra \Y$ on objects
is just a set map, taking objects in $\X$ to objects in $\Y$.  For
morphisms, $f$ gives, for each object $x \in \X$, a homomorphism from
the group $\hom(x,x)$ to the group $\hom(f(x),f(x))$.

So the pullback $f^{\ast}$ is fairly straightforward: given $F : \Y
\ra \V$, the pullback $f^{\ast}F = F \circ f : \X \ra \V$ assigns to
each $x \in \X$ the vector space $F(f(x))$, and gives a representation
of $\opname{Aut}(x)$ on this vector space where $g : x \ra x$ acts by
$f(g)$.  This is the \textit{pullback representation}.  In the special
case where $f$ gives an inclusion of groups, this is usually called
the ``restricted representation''.

The adjoint process to the restriction of representations is generally
called finding the \textit{induced representation} (see, e.g. Burrows
\cite{burrow} for a classical discussion of this when $f$ is an
inclusion).  We will use the same term for the general case when $f$
is just a homomorphism, and slightly generalize the usual description.

The pushforward $f_{\ast}$, recall, assigns each object the vector
spaces which is the colimit of its essential preimage.  For any
presheaf $V$, this is determined by the colimit for each component of
that essential preimage.  In particular, in the simple case where $\X$
and $\Y$ are discrete (i.e. have only identity isomorphisms, so they
can be thought of as sets, and the essential preimage is just the
usual preimage for sets), for each $y \in \Y$,
\begin{equation}
f_{\ast}F(y) \in \Y = \bigoplus_{g: f(x) \ra y} F(x)
\end{equation}
So we just get the biproduct of all vector spaces over the preimage.

In any component, which can be seen as a group $H$, the colimit is
again a direct sum over the components of the essential preimage, but
each component of the essential preimage amounts to the induced
representiation of $F(x)$ under the homomorphism given by $f$.  So the
colimit is a direct sum of such representations.

To see what this does, consider the case where $\X$ and $\Y$ are just
single groups, so we have a group homomorphism $f : G \ra H$, and we
have a representation of $G$ on $V$.  Now such a representation is the
same as a representation of the group algebra $\CG$ on $V$ - i.e. it
makes $V$ into a $\CG$-module.  Furthermore, $f$ induces an algebra
homomorphism $f : \mathbbm{C}[G] \ra \mathbbm{C}[H]$.

To get a $\mathbbm{C}[H]$-module from $V$ (i.e. in order to produce a
representation of $H$, the pushforward of $V$), we first allow
$\mathbbm{C}[H]$ to act freely on $V$.  Then, to be the pushforward -
that is, the colimit of the diagram $D_y$ described above - we must
take the quotient under the relation that all morphisms coming from
$G$ act on $V$ by letting $f(g)$ have the same action as $g$.  Taking
the quotient, we get $f_{\ast}V = \mathbbm{C}[H]
\otimes_{\mathbbm{C}[G]} V$.

Then for general groupoids, we have a direct sum of such components:
\begin{equation}\label{eq:inducedrep}
f_{\ast}F(y) \in \Y = \bigoplus_{g:f(x) \ra y} \mathbbm{C}[H_y] \otimes_{\mathbbm{C}[G_x]} V
\end{equation}
where $H_y = \opname{Aut}(y)$ and $G_x = \opname{Aut}(x)$.
\end{remark}

\begin{remark}\label{rk:units} To describe an adjunctions, we should
describe its unit and counit.  To begin with, we give a description of
the ``pull-push'':
\begin{equation}
f_{\ast} \circ f^{\ast} : [\Y,\V] \ra [\Y,\V]
\end{equation}

The unit
\begin{equation}
\eta : \mathbbm{1}_{[\Y,\V]} \Longrightarrow f_{\ast} \circ f^{\ast}
\end{equation}
is a natural transformation which, for each $V \in [\Y,\V]$ gives a
morphism.  This is itself a natural transformation between functors:
\begin{equation}
\Delta_y : V(y) \ra f_{\ast} \circ f^{\ast} V(y)
\end{equation}
This takes $V(y)$ into the colimit described above by a diagonal map.
If there is no special symmetry (the discrete case) and the
colimit is just the direct sum $V \oplus \dots \oplus V$, this map is
obvious.  If not, there is a canonical map into the colimit (a
quotient space) which factors through the direct sum with the diagonal
map.  This is because the map from $V(y)$ to the pullback on any
bject in its essential preimage in $\X$ is evidently the identity,
and then one uses the injection into the colimit.

Now consider the other, ``push-pull'' side of the (two-sided)
adjunction, $f^{\ast} \circ f_{\ast}$.  Here, we first push a presheaf
$V'$ from $\X$ to $\Y$, then pull back up to $\X$, has a similar
effect on the vector spaces.

Here we start with a presheaf $V'$ on $\X$.  The "push-pull" along $f$
just takes every vector space on an object and replaces it by a
colimit over the diagram consisting of all objects with the same image
in $\Y$, and morphisms agreeing with these maps:
$\opname{colim}D_{f(x)}$.  This is because this is the result of
pushing $V'$ along $f$ at $f(x)$, which is then pulled back to $x$.

Then the unit
\begin{equation}
\eta : \mathbbm{1}_{[\X,\V]} \Longrightarrow f^{\ast} \circ f_{\ast}
\end{equation}
is a natural transformation giving, for any presheaf $V'$, a morphism
(i.e. natural transformation of functors):
\begin{equation}
\iota_{x} : V'(x) \ra \opname{colim}D_{f(x)}
\end{equation}
This is just the canonical map into the colimit.
\end{remark}

Now, in Section \ref{sec:CY} we discuss a generalization of 2-vector spaces
based on the fields of measurable Hilbert spaces discussed by Crane
and Yetter \cite{CYmeas}.  This generalization has much in common with
KV 2-vector spaces, but corresponds to infinite dimensional Hilbert
spaces in the way that they correspond to the finite dimensional case.

\subsection{2-Hilbert Spaces}\label{sec:CY}

The KV 2-vector spaces we have discussed so far are a categorified
analog of finite dimensional vector spaces.  However, there are
situations in which this is insufficient, and analogs of infinite
dimensional vector spaces are needed.  Moreover, and perhaps more
important, we have not yet discussed the equivalent of an inner
product on 2-vector spaces.

In fact, both of these issues are closely related to applications to
quantum mechanics.  A standard way to describe a quantum mechanical
system, starting with the corresponding classical system, involves
$L^2$ spaces, which in general will be infinite dimensional, and
possess an inner product.  The relationship is that the Hilbert space
of states of the quantum system is $L^2(X)$, where $X$ is the phase
space of a classical system.  A possible motivation for trying to find
a higher analog for Hilbert spaces is to reproduce this framework for
quantizing a classical theory in the categorified setting.

The form of an inner product on a KV 2-vector space is not difficult
to infer from the intuition that categorification corresponds to
replacing sums and products in vector spaces by $\oplus$ and $\otimes$
in 2-vector spaces, together with the fact that a KV 2-vector space
has a specified basis.  However, we will put off describing it until
we have discussed infinite dimensional 2-vector spaces, since we can
put the expression in a more general form.

One approach to infinite-dimensional 2-vector spaces is developed by
Crane and Yetter \cite{CYmeas}, who develop a 2-category called
$\catname{Meas}$.  This is a 2-category of categories, functors, and
natural transformations, but in particular, the objects are all of the
form $\catname{Meas(X)}$ for some measurable space $X$.  This object
can be interpreted as infinite-dimensional 2-vector spaces associated
to $X$, analagous to the Hilbert space $L^2(X)$.  A simplified form of
the definition is as follows:

\begin{definition}Suppose $(X,\mathcal{M})$ is
a measurable space, so that $X$ is a set and $\mathcal{M}$ is a
sigma-algebra of measurable subsets of $X$.  Then $\catname{Meas(X)}$
is a category with:
 \begin{itemize}
\item Objects: \textit{measurable fields of Hilbert spaces} on
      $(X,\mathcal{M})$: i.e. $X$-indexed families of Hilbert spaces
      $\mathcal{H}_x$ such that the preimage of any $H \in \Hilb$ is
      measurable.
\item Morphisms: \textit{measurable fields of bounded linear maps}
      between Hilbert spaces.  That is, an X-indexed family
\[
f_x : \mathcal{H}_x \ra \mathcal{K}_x
\]
      so that $||f||$, the operator norm of $f$, is measurable.  The
      field $f$ is \textit{bounded} if $||f_x||$ is bounded.
\end{itemize}
\end{definition}

\begin{remark}The original definition given by Crane and Yetter is
somewhat different, in the way it specifies how to identify when a
function selecting $v_x \in \mathcal{H}_x \forall x \in X$ is
measurable.  This somewhat simplified definition should suffice for
our later discussion, since we return to these ideas only briefly in
Chapter \ref{chap:QG}.
\end{remark}

The construction of fields of Hilbert spaces is due to Jacques Dixmier
\cite{dixmier}, although he described them, not as categories, but
merely as Hilbert spaces with a particular decomposition in terms of
the measurable space $X$.  As with $L^2$ spaces, to get what we will
call a 2-Hilbert space, we need to have a standard measure on $X$.
This is used to define a
\textit{direct integral} of Hilbert spaces:
\begin{equation}
H = \int_X^{\oplus}\mathcal{H}_x \mathd \mu(x)
\end{equation}
As a vector space, this is the direct sum of all $\mathcal{H}_x$.  The
measure enters when we define its inner product:
\begin{equation}
\br{\phi|\psi} = \int_X \br{\phi_x|\psi_x} \mathd \mu (x)
\end{equation}
We will use this notation to define 2-linear maps of 2-Hilbert spaces.

The 2-vector space $\catname{Meas(X)}$ is the category of all
measurable fields of Hilbert spaces on $X$.  Then we have the
2-category of all such categories:

\begin{definition}The 2-category $\catname{Meas}$ is the collection of
all categories $\catname{Meas(X)}$, with functors between them, and
natural transformations between functors.
\end{definition}

Crane and Yetter describe how functors between such categories arise from:
\begin{itemize}
\item a measurable field of Hilbert spaces $\mathcal{K}_{(x,y)}$ on $X \times Y$
\item a $Y$-family $\mu_y$ of measures on $X$
\end{itemize}
Given these things, there is a functor $\Phi_{\mathcal{K},\mu_y}:
\catname{Meas(X)} \ra \catname{Meas(Y)}$ any field
$\mathcal{H}_x$ on $X$:
\begin{equation}
\Phi_{\mathcal{K},\mu_y}(\mathcal{H})_y = \int_X^{\oplus}\mathcal{H}_x \otimes \mathcal{K}_{(x,y)}\mathd \mu_y(x)
\end{equation}

This is a generalization of the 2-linear maps between
Kapranov-Voevodsky 2-vector spaces: summing over an index set in
matrix multiplication is a special case of integrating over $X$, when
$X$ is a finite set with counting measure (and all the vector spaces
$\mathbbm{C}^n$ which appear as components in a 2-vector or 2-linear
map are equipped with the standard inner product).  Indeed, these
functors generalize the matrices (\ref{eq:kv2linmatrix}).  Yetter
conjectures that all functors between categories like
$\catname{Meas(X)}$ are of this form.

The 2-maps are ways to get from one functor to another.  In this case,
given $\Phi_{\mathcal{K},\mu_y}$ and $\Phi_{\mathcal{K'},\nu_y}$, if
there is such a 2-map, it will be given by:
\begin{itemize}
\item A measurable field of bounded linear operators
\begin{equation}
\alpha_{(x,y)}: \mathcal{K}_{(x,y)} \ra \mathcal{K'}_{(x,y)}
\end{equation}
\item A $Y$-indexed family $\bigl{\{}\bigl{(}\frac{\mathd \nu}{\mathd \mu}\bigr{)}_y\bigr{\} \text{s.t.} y \in Y}$, the \textit{Radon-Nikodym} derivatives of $\nu_y$ w.r.t. $\mu_y$ (or, equivalently, )
\end{itemize} Once again, the KV 2-vector space situation is a special
case as above.

Now, just as integration is used to define the inner product on
$L^2(X)$, the direct integral gives a categorified equivalent of an
\textit{inner product} of fields of Hilbert spaces:
\begin{equation}
\br{\mathcal{H}|\mathcal{H'}} = \int_X^{\oplus}\mathcal{H}_x^{\ast} \otimes \mathcal{H'}_x\mathd \mu(x)
\end{equation}
So in particular, the inner product is given by linearity, and the
fact that, for $\phi_i \in \mathcal{H}$ and ${\phi '}_i \in
\mathcal{H'}$:
\begin{equation}
\br{\phi_1^{\ast} \otimes {\phi '}_1|\phi_2^{\ast} \otimes {\phi '}_2} = \int_X \br{\phi_1^{\ast}|\phi_2^{\ast}} \cdot \br{\phi'_1|\phi'_2} \mathd \mu(x)
\end{equation}
where $\phi^{\ast}$ is the dual of $\phi$, namely $\br{\phi|-}$.

%

We will mostly consider the finite-dimensional (Kapranov-Voevodsky)
2-vector spaces, which remain better understood than these infinite
dimensional 2-Hilbert spaces in the style of Crane and Yetter.
However, we return to these ideas to justify some of the physical
motivation for this paper in Chapter \ref{chap:QG}.

\section{Extended TQFTs as 2-Functors}\label{chap:connexttqft}

We began in our preliminary section by discussing Atiyah's description
of an $n$-dimensional TQFT as a functor \begin{equation} Z : \nCobi
\ra \Hilb\ \end{equation} The development since that point has been
aimed at setting up what we need to give a parallel description of an
\textit{extended TQFT} in terms of 2-functors.  This concept extends
the definition of a TQFT to more general manifolds with corners, and
is due to Ruth Lawrence.

One of the values of TQFT's has been as a method for finding
invariants of manifolds, and in particular, for 3-manifolds
(potentially with boundary).  This is closely connected to the subject
of knot theory, since knots are studied by their complement in some
3-manifold.  One way to think of the invariants which appear this way
is as ways of cutting up the manifold into pieces, assigning algebraic
data to the pieces, and then recombining it.  The possibility of
recombining the pieces unambiguously to form the invariant for the
whole manifold is precisely what we want to express as some form of
\textit{functoriality}.

By now we have considered two examples of the process of
categorification.  The first involved passing from $\nCobi$, a
category of manifolds and cobordisms between them, to $\nCob$, a
(double) bicategory in which we allowed cobordisms between cobordisms.
The second case was the passage from $\V$, the category of vector
spaces and linear maps, to $\iiV$ with 2-vector spaces, 2-linear maps,
and natural transformations.

In the first case we saw that in both $\nCobi$ and $\nCob$, each level
of structure involves entities of one higher dimension than the
previous level.  So in $\nCob$, the objects (manifolds) have
codimension one higher in the total spaces represented by the
(isomorphism class of) cobordisms, than is the case in $\nCobi$.  One
sometimes says that categorification allows us to ``go up a
dimension'', or rather codimension.  This theme appears in what is
probably the prototypical example of higher categories (and indeed
categories of any kind), namely homotopy theory. where we consider
homotopies between spaces, homotopies between homotopies, and so
forth.

We want to use this to develop the following definition:
\begin{definition}An \textbf{extended TQFT} is a weak 2-functor
\begin{equation}
Z : \nCob \ra \iiH
\end{equation}
\end{definition}

So in particular, such a $Z$ assigns:
\begin{itemize}
\item To an $(n-2)$-manifold, a 2-Hilbert space (i.e. a
      $\mathbbm{C}$-linear additive category)
\item To an $(n-1)$-manifold, a 2-linear map between 2-Hilbert spaces
      (an exact $\mathbbm{C}$-linear functor) 
\item To an $n$-manifold, a 2-natural transformation between 2-linear maps
\end{itemize} Where all this data satisfies the conditions for a weak
2-functor (e.g. it preserves composition and units up to coherent
isomorphism, and so forth).  To take this as a definition seems
reasonable enough, but we then need to show how particular examples of
extended TQFT's satisfy this definition.

\subsection{$Z_G$ on Manifolds: The Dijkgraaf-Witten Model}\label{sec:ZGonMan}

Here we want to consider explicit construction of some extended TQFT's
based on a finite group $G$.  We saw in Section \ref{sec:fhk} that the
Fukuma-Hosono-Kawai construction gave a way to define a regular 2D
TQFT for any finite group.  In that case, space of states for a circle
which is just the centre of the group algebra $\mathbbm{C}[G]$.  In
particular, this means that the space of states has a basis consisting
of elements of the group $G$.  Each state therefore consists of some
linear combination of group elements.  Extending this to higher
dimensions is somewhat nonobvious, but turns out to be related to the
Dijkgraaf-Witten (DW) model \cite{DW}.  This can also be described as
a \textit{topological gauge theory}.

The DW model describes a flat connection on a manifold $B$ (we use $B$
rather than $M$ here for consistency with our previous notation).
Being flat, the nontrivial information about a connection is that
which depends only on the topology of $B$.  In particular, all the
information available about the connection comes in the form of
holonomies of the connection around loops.  The holonomy is an element
of the gauge group $G$, which is the ``symmetry group'' of some field.
The element assigned to a loop gives the element of $G$ by which the
field would be transformed if it is ``parallel transported'' around
that loop.  We then define:

\begin{definition} A \textit{flat $G$-bundle} on a connected, pointed
manifold $B$ is a homomorphism
\begin{equation}
A: \pi_1{B} \ra G
\end{equation}
We denote the set of all such functions as $\mathcal{A}_0(B)$.
\end{definition}
This definition is different from the usual concept of a
``$G$-bundle'' equipped with a flat connection in terms of fibre
bundles, but the two concepts are equivalent, as established by
Thurston \cite{thurston}.

Generally, a flat $G$-bundle on $B$ takes loops in $B$ into elements
$G$.  For any loop $\gamma$ in $B$, it assigns an element $A(\gamma)
\in G$.  This is the \textit{holonomy} around the loop $\gamma$.  The
$G$-connection is flat if the holonomy assigned to a loop is invariant
under homotopy. In particular, any contractible loop must have trivial
holonomy.  On the other hand, nontrivial elements of the fundamental
group of $M$ may correspond to nontrivial elements of $G$.  These are
thought of as describing the ``parallel transport'' of some object, on
which $G$ acts as a symmetry group, around the loop.  The usual
picture in gauge theory has this object being the fibre of some
bundle, such as a vector space, so that $G$ is a Lie group such as
$SO(3)$.  However, the same picture applies when $G$ is finite.

However, instead of the \textit{set} of flat bundles here, we want
to categorify this usual picture, to extend TQFT's to give a functor
into $\iiV$.  So there must be a category to take the place of
$\mathcal{A}_0$, which has morphisms as well as objects.  Fortunately,
the structure of gauge theory which we have not captured in the
definition of $\mathcal{A}_0$ does precisely this.

The principle here is that the fundamental group is too restrictive,
and we should instead use the \textit{fundamental groupoid} of $B$,
and describe connections as functors.

\begin{definition}
The \textbf{fundamental groupoid} $\Pi_1(B)$ of a space $B$ is a groupoid
with points of $B$ as its objects, and whose morphisms from $x$ to $y$
are just all homotopy classes paths in $B$ starting at $x$ and ending
at $y$.  
\end{definition}

The operation of taking $\Pi_1$ of a space can be thought of as a form
of categorifying: instead of spaces considered as sets of points (with
some topology), we now think of them as categories, whose set of
objects is just the original space.  In fact, these categories are
groupoids, since we consider paths only up to homotopy, so every
morphism is invertible.  Moreover, a loop can be thought of as an
automorphism of the chosen base point in $B$, so the fundamental group
$\pi_1(B)$ is just the group of automorphisms of a single object in
$\Pi_1(B)$.

Then, following the principle that a connection gives a group element
in $G$ for each such loop, we can generalize this to the whole of
$\Pi_1(B)$:

\begin{definition}
A \textbf{flat connection} is a functor
\begin{equation}
A : \Pi_1(B) \ra G
\end{equation}
where $G$ is thought of as a one-object groupoid (hence every $b \in
B$ is sent to the unique object).  A \textbf{gauge transformation}
$\alpha : A \ra A'$ from one connection to another is a natural
transformation of functors: it assigns to each point $x \in B$ a group
element in such a way that for each path $\gamma : x \ra y$ the
naturality square
\begin{equation}
\xymatrix{
\star \ar[r]^{A(\gamma)} \ar[d]_{\alpha(x)} & \star \ar[d]^{\alpha(y)} \\
\star \ar[r]_{A'(\gamma)} & \star \\
}
\end{equation}
commutes.
\end{definition}

\begin{remark}
Using the notation that $[ C_1, C_2 ]$ is the category whose objects
are functors from $C_1$ to $C_2$ and whose morphisms are natural
transformations, then we can say that flat connections and natural
transformations form the objects and morphisms of the category
\begin{equation}
\fc{B}
\end{equation}
\end{remark}

Physicaly, a gauge transformation can be thought of as a change, at
each point in $B$, of the way of measuring the internal degrees of
freedom of the object which is transformed by $G$.  In gauge theory,
two connections which are related by a gauge transformation are
usually considered to describe physically indistinguishable states -
the differences they detect are due only to the system of measurement
used.

We stop here to note that this definition is somewhat different from
the usual notion of a smooth connection on a bundle---indeed, we have
not used any concept of smoothness.  To make all these connections
into smooth connections on a definite bundle would be impossible.
What we have described would have to be a sum over all possible
bundles.  However, for discrete $G$, we can ignore this issue.

So then the ``configuration space'' for an $(n-2)$-dimensional
manifold $B$ in our extended TQFT will be a category whose objects are
flat $G$-connections on $B$ and whose morphisms (all invertible) are
gauge transformations between connections.

\begin{remark} If $\gamma : x \ra x$ in $\Pi_1(B)$ is a loop, and $A$
and $A'$ are two connections related by a gauge transformation
$\alpha$, we have $A'(\gamma) = \alpha(x)^{-1} A(\gamma) \alpha(x)$ -
that is, the holonomies assigned by the two connections around the
loop are conjugate.  So physically distinct holonomies correspond to
conjugacy classes in $G$.

In particular, in the case of 1-dimensional manifolds, if $B$ is just
a circle, then the space of states of the field in the DW model has a
basis consisting just of elements in the centre of $G$.  (We remark
here that this is the same as the TQFT for the FHK construction, which
we have obtained now in a different way.)
\end{remark}

But indeed, any category, and in particular the groupoid $\Pi_1(B)$,
is equivalent to its skeleton.  If $B$ is connected, all points are
related by paths, so $\Pi_1(B) \cong \pi_1(B)$: the fundamental group,
as a single-object category, is equivalent to the path category.
However, the gauge transformations for connections measured from a
fixed base point are determined by a single group element at the base
point, which acts on holonomies around any loop by conjugation.

The groupoid $\fc{B}$, the configuration space for our theory, is the
``moduli stack'' of connections \textit{weakly} modulo gauge
transformations.  This is a categorified equivalent of the usual
physical configuration space, which consists of the set of equivalence
classes of flat connetions modulo gauge transformations.  Instead of
imposing equations between connections related by gauge
transformations, however, we simply add isomorphisms connecting these
objects.  This is the ``weak quotient'' of the space of connections by
the action of a group.

Finally, using this, we can define a 2-vector space associated to any
manifold:
\begin{definition}\label{def:ZGonB}
For any compact manifold $B$, and finite group $G$, define $Z_G(B)$ to
be the functor category $\Z{B}$.
\end{definition}
as we verify in the following theorem.

\begin{theorem}\label{thm:ZB2VS}
For any compact manifold $B$, and finite group $G$, the functor
category $Z_G(B)=\Z{B}$ is a Kapranov-Voevodsky 2-vector space.
\end{theorem}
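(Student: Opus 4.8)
The plan is to deduce this directly from Lemma~\ref{lemma:fgfckv}, which asserts that $[\X,\V]$ is a Kapranov--Voevodsky 2-vector space for every essentially finite groupoid $\X$. Since $Z_G(B)=\Z{B}=[\fc{B},\V]$ by Definition~\ref{def:ZGonB}, the whole content of the theorem reduces to the claim that the configuration groupoid $\fc{B}=[\Pi_1(B),G]$ is an essentially finite groupoid in the sense of Definition~\ref{def:essfingpd}.

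First I would observe that $\fc{B}$ really is a groupoid: its objects are functors $A:\Pi_1(B)\ra G$ and its morphisms are natural transformations between them, and since $G$, regarded as a one-object category, is a groupoid, every component of such a natural transformation is invertible, hence so is the natural transformation itself. For essential finiteness, write $B$ as the disjoint union of its (finitely many) connected components $B_i$; then $\Pi_1(B)$ is the coproduct of the $\Pi_1(B_i)$ and $\fc{B}\simeq\prod_i[\Pi_1(B_i),G]$, so it suffices to treat one factor. Fixing a basepoint $x_i\in B_i$, the full subcategory of $\Pi_1(B_i)$ on $x_i$ is exactly $\pi_1(B_i,x_i)$, and its inclusion is an equivalence (every object of $\Pi_1(B_i)$ is isomorphic to $x_i$ since $B_i$ is path-connected), so precomposition gives an equivalence $[\Pi_1(B_i),G]\simeq[\pi_1(B_i,x_i),G]$. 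The objects of the latter are the group homomorphisms $\pi_1(B_i,x_i)\ra G$; since $B_i$ is a compact manifold its fundamental group is finitely generated, and $G$ is finite, so there are only finitely many such homomorphisms, and for any two the set of natural transformations between them (the elements $g\in G$ conjugating one to the other) is a subset of $G$, hence finite. In particular the automorphism group of a homomorphism $A$ is the centralizer in $G$ of its image, a finite group. Thus $[\pi_1(B_i,x_i),G]$ is a finite category, $[\Pi_1(B_i),G]$ is essentially finite, and so is the finite product $\fc{B}$.

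Having shown $\fc{B}$ is an essentially finite groupoid, Lemma~\ref{lemma:fgfckv} applies verbatim to give that $\Z{B}=[\fc{B},\V]$ is a Kapranov--Voevodsky 2-vector space, which is the assertion. The only step that needs a little care --- the ``main obstacle'' --- is the standard but easily overlooked point that an equivalence of groupoids $\X\simeq\X'$ induces an equivalence of functor categories $[\X,\V]\simeq[\X',\V]$, and that $[\,\coprod_i\X_i,\V]\simeq\prod_i[\X_i,\V]$; these are what license replacing $\Pi_1(B)$ by the finitely generated groupoid $\coprod_i\pi_1(B_i,x_i)$. Once this is granted, no property of $B$ beyond compactness --- used only to guarantee finite generation of each $\pi_1(B_i)$ --- enters the argument at all.
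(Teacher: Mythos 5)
Your proposal is correct and follows essentially the same route as the paper: decompose $B$ into its finitely many path components, use compactness to get finite generation of each $\pi_1(B_i)$, conclude that $\fc{B}$ is an essentially finite groupoid, and invoke Lemma~\ref{lemma:fgfckv}. Your version is somewhat more explicit about the equivalences $[\coprod_i\X_i,\V]\simeq\prod_i[\X_i,\V]$ and $[\X,\V]\simeq[\X',\V]$ for equivalent groupoids, which the paper uses implicitly, but the argument is the same.
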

\begin{proof}
First, note that for any space $B$,
\begin{equation}
\Pi_1(B) \equiv \coprod_{i=1}^{n}(\pi_1(B_i))
\end{equation}
where the sum is taken over all path components of $B$.  That is,
objects in $\Pi_1(B)$ are by definition isomorphic if and only if they
are in the same path component.  But this groupoid is equivalent to a
skeletal version which has just one object for each isomorphism
class---that is, one object for each path component.  The
automorphisms for the object corresponding to path component $B_i$ are
then just the equivalence classes of paths from any chosen point to
itself---namely, $\pi_1(B_i)$.

Moreover, if $B$ is a compact manifold, so is each component $B_i$,
which is also connected.  But the fundamental group for a compact,
connected manifold is finitely generated.  So in particular, each
$\pi_1(B_i)$ is finitely generated, and there are a finite number of
components.  So $\Pi_1(B)$ is an essentially finitely generated
groupoid.

But if $\Pi_1(B)$ is essentially finitely generated, then since $G$ is
a finite group, $\fc{B}$ is an essentially finite groupoid.  This is
because each functor's object map is determined by the images of the
generators, and there are finitely many such assignments.  Similarly,
$\Pi_1(B)$ is equivalent to a skeleton of itself, and a natural
transformation in this case is just given by a group element in $G$
for each component of $B$, so there are finitely many of these.  By
Lemma \ref{lemma:fgfckv} this means that $\Z{B}$ is a KV 2-vector
space.
\end{proof}

So we have a KV 2-vector space for each manifold, which is defined as
$\V$-valued functors, on the groupoid $\fc{B}$.  As remarked earlier,
we will describe these as $\V$-presheaves, since $\fc{B}$ is
isomorphic to $\fc{B}^{\opname{op}}$.

\begin{example}\label{ex:ZonS1}
Consider the circle $S^1$.

The 2-vector space assigned to the cricle by our TQFT $Z_G$ is the
Hilbert space of of flat connections modulo gauge transformations, on
the circle:
\begin{equation}
\Z{S^1}
\end{equation}
Now, $\fc{S^1}$ looks like the group $G$ equipped with the adjoint
action on itself, in the following sense.  The fundamental group of
the circle is $\mathbbm{Z}$, and $\Pi_1(S^1)$ is thus equivalent to
$\mathbbm{Z}$ as a one-object category.  Then taking maps into $G$, we
note that each functor takes the unique object of $\mathbbm{Z}$ to the
unique object of $G$, and thus is determined entirely by the image of
$1 \in \mathbbm{Z}$.  This will be some morphism $g \in G$ (i.e. an
element of the group $G$), so we simply denote the corresponding
functor by $g$.

A natural transformation between two functors $g$ and $g'$ assigns to
the single object in $\mathbbm{Z}$ a morphism $h \in G$---that is, it
is again a group element.  This must satisfy the naturality condition
that $g'h = hg$, or simply $g' = hgh^{-1}$.  So there is a natural
transformation between functors for each conjugacy relation of this
kind.

So $\fc{S^1}$ is equivalent to a groupoid whose objects correspond to
elements of the group $G$, and whose morphisms are conjugacy relations
between elements (which are clearly all invertible).  This is also
known as $G$ weakly modulo $G$, or $G /\!\!/ G$.  Another
equivalent category is the skeleton of this, whose set of objects is
the set of conjugacy classes of $G$.  Each such object has a group of
automorphisms $\opname{Stab}(g)$, the stabilizer of any element in it.

Finally, the 2-vector space corresponding to the circle is the
category of functors from $G /\!\!/ G$ into $\V$.  This gives a vector
space for each object (element of $G$).  It also assigns an
isomorphism in $\V$ for each isomorphism in $G /\!\!/ G$: the functors
must be equivariant under conjugation by any $h \in G$.  So the
adjoint action of G on itself is already built into this 2-vector
space, and an object of $\ZVG$ is functor $F : G \ra \V$, which comes
equipped with natural isomorphisms
\begin{equation}
R_g: F \ra gFg^{-1}
\end{equation}
such that
\begin{equation}
R_g R_h = R_{gh}
\end{equation}
Where $gFg^{-1}$ is a functor whose image vector space at a point $h$
under $F$ becomes the image of the point $ghg^{-1}$.

So we have \textit{$G$-equivariant functors} as the objects of the
2-vector space, and all $G$-equivariant natural transformations
between them as the morphisms.

As a 2-vector space, this category of $G$-equivariant functors can be
described in terms of its irreducible objects---since every other
functor is isomorphic to a direct sum of these.  Any equivariant
functor will have the same value on every element of each conjugacy
class in $G$, but an irreducible one will only assign nonzero to
elements of ONE conjugacy class.

Moreover, since the action of $G$ by conjugation gives linear
isomorphisms between the vector spaces over elements of $\fc{S^1}$,
and since $\fc{S^1}$ is equivalent to its skeleton, we can think of
this functor as specifying as a \textit{conjugacy classes}, and single
vector space $V$ assigned to it, together with a \textit{linear
representation} of $G$ on $V$.

So the objects of $Z{S^1}$ can be seen as consisting of pairs: a
conjugacy class in $G$, and a representation of $G$.
\end{example}

This example of the circle returns to a previous remark about Example
\ref{ex:grp2alg}, the ``group 2-algebra'' $\V[G]$, the generalization
of the group algebra $\mathbbm{C}[G]$.  As seen in Section
\ref{sec:fhk}, a TQFT based on the finite group $G$ assigns
$\ZCG$ to the circle.  So one expects a categorified version to assign
something like the centre of $\V[G]$ to a circle.  What was not
obvious in Example \ref{ex:grp2alg} was exactly what this is.

Irreducible elements of $\ZCG$ are indeed specified by conjugacy
classes of $G$, but as we see here, a difference appears because we
think of functions on $G$ not precisely as a group, but as a groupoid
of connections.  Since the objects are the elements of $G$, and the
morphisms are conjugations (as distinct from the view of a group as a
one-object category), we get something new.  The new ingredient is the
representation of $G$.  We return to this fact for infinite $G$ in
Chapter \ref{chap:QG}.

\begin{example}
Consider the torus $T^2 = S^1 \times S^1$.  We want to find
\begin{equation}
Z_G(T^2) = \Z{T^2}
\end{equation}
This will be equivalent to the category we get if we replace the
fundamental groupoid $\Pi_1(T^2)$ by the equivalent skeletal groupoid.
This is just the fundamental group of $T^2$, which is isomorphic to
$\mathbbm{Z}^2$.  So we simplify here by using this version.

The category $\fc{T^2}$ has, as objects, functors from $\Pi_1(T^2)$ to
$G$ (both seen as a categories with one object), and morphisms which
consist of natural transformations.  A functor $F \in [ \mathbbm{Z}^2
, G ]$ is then equivalent to a group homomorphism from $\mathbbm{Z}^2$
to $G$.  Since $\mathbbm{Z}^2$ is the free abelian group on the two
generators $(1,0)$ and $(0,1)$, the functor $F$ is determined by the
images of these two generators.  The only restriction on $F$ is that
since $\mathbbm{Z}^2$ is abelian, the images $g_1 = F(1,0)$ and $g_2 =
F(0,1)$ must commute.

So the objects of $\fc{T^2}$ are indexed by commuting pairs
of elements $(g_1,g_2) \in G^2$.

A natural transformation $g : F \ra F'$ assigns to the single
object $\star$ of $\mathbbm{Z}^2$ a morphism in $G$---that is, a group
element $h$.  This must satisfy the naturality condition that this
commutes for every $a \in \mathbbm{Z}^2$:
\begin{equation}
\xymatrix{
  \star \ar[d]^{h} \ar[r]^{F(a)} & \star \ar[d]^h \\
  \star \ar[r]_{F'(a)} & \star
}
\end{equation}
Equivalently, since $h$ is invertible, we can write this in the form
$h F(a) h^{-1} = F'(a)$ for all $a$.  This will be true for all $a$ in
$\mathbbm{Z}^2$ as long as it is true for $(1,0)$ and $(0,1)$.

In other words, functors $F$ and $F'$ represented by $(g_1,g_2) \in
G^2$ and $(g'_1,g'_2) \in G^2$, the natural transformations $\alpha :
F \ra F'$ correspond to group elements $h \in G$ which act in both
components at once, so $(h^{-1}g_1h,h^{-1}g_2h) = (g'_1,g'_2)$.

So we have that the groupoid $\fc{T^2}$ is equivalent to $A /\!\!/ G$,
where $A = \{(g_1,g_2) \in G^2 : g_1g_2 = g_2g_1\}$, and the action of
$G$ on $A$ comes from the action on $G^2$ as above.

So the 2-vector space $Z_G(T^2)$ is just the category of
$\V$-presheaves on $A$, equivariant under the given action of $G$.
This assigns a vector space to each connection $(g_1,g_2)$ on $T^2$,
and an isomorphism of these vector spaces for each gauge
transformation $h : (g_1,g_2) \mapsto (h^{-1}g_1h,h^{-1}g_2h)$.
Equivalently (taking a skeleton of this), we could say it gives a
vector space for each equivalence class $[(g_1,g_2)] \in G^2$ under
simultaneous conjugation, and a representation of $G$ on this vector
space.
\end{example}

Both of these examples conform to a general pattern, which should be
clear by now:

\begin{theorem}\label{thm:ZBmatrixsize} The KV 2-vector space $Z_G(B)$
for any connected manifold $B$ is equivalent to $\V^n$, where $n$ is
\begin{equation}
\sum_{[A] \in \mathcal{A}/G} |\{\text{irreps of} \opname{Aut}(A)\}|
\end{equation}
where the sum is over equivalence classes of connections on $B$, and
$\opname{Aut}(A) \subset G$ is the subgroup of $G$ which leaves $A$
fixed. 
\end{theorem}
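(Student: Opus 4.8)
The plan is to reduce to the skeletal case and then count dimensions directly, using the machinery already established. First I would invoke Theorem \ref{thm:ZB2VS}, which tells us that $Z_G(B) = \Z{B}$ is a KV 2-vector space, and then Theorem \ref{thm:kvvn}, which says every KV 2-vector space is equivalent to $\V^n$ for a unique $n$ — namely the number of isomorphism classes of simple objects. So the entire content of the theorem is the identification of that number $n$ with the stated sum. Since $B$ is connected, $\Pi_1(B)$ is equivalent to $\pi_1(B)$ viewed as a one-object groupoid, and hence (as in Example \ref{ex:ZonS1} and the torus example) the configuration groupoid $\fc{B}$ is equivalent to its skeleton: a disjoint union, over equivalence classes $[A] \in \mathcal{A}/G$ of connections, of one-object groupoids, where the object corresponding to $[A]$ has automorphism group $\opname{Aut}(A) \subset G$ (the isotropy group of the $G$-action by gauge transformations, well-defined up to isomorphism on the class $[A]$). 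Because $B$ is a compact connected manifold, $\pi_1(B)$ is finitely generated and $G$ is finite, so there are finitely many such classes and each $\opname{Aut}(A)$ is a finite group.

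Next I would use the description, from the proof of Lemma \ref{lemma:fgfckv}, of $[\X,\V]$ for a skeletal finite groupoid $\X = \coprod_i G_i$: a $\V$-presheaf is a choice, for each $i$, of a finite-dimensional representation of $G_i$, and $[\X,\V] \simeq \prod_i [\,G_i, \V\,]$, where $[\,G_i,\V\,]$ is the category of finite-dimensional representations of $G_i$. By Schur's Lemma the simple objects of $[\,G_i,\V\,]$ are exactly the irreducible representations of $G_i$, and every object is a finite direct sum of these, so $[\,G_i,\V\,] \simeq \V^{r_i}$ where $r_i$ is the number of irreducible representations of $G_i$. Taking the product over $i$ — which corresponds to concatenating basis lists — gives $[\X,\V] \simeq \V^{\sum_i r_i}$. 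Applying this with the index $i$ running over $\mathcal{A}/G$ and $G_i = \opname{Aut}(A)$ yields
\begin{equation}
Z_G(B) = \Z{B} \simeq \V^n, \qquad n = \sum_{[A] \in \mathcal{A}/G} |\{\text{irreps of }\opname{Aut}(A)\}|,
\end{equation}
which is exactly the claim.

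The only genuinely delicate point — and the one I would spell out carefully rather than wave at — is the passage from $\fc{B}$ to the explicit skeleton with automorphism groups $\opname{Aut}(A)$, i.e. checking that a gauge transformation fixing a connection $A$ corresponds precisely to an element of the stabilizer subgroup of $A$ under the $G$-action, and that this identification is independent (up to isomorphism) of the representative chosen in the class $[A]$. This is essentially the observation, already made in the remark following Definition \ref{def:ZGonB} and in Example \ref{ex:ZonS1}, that $\fc{B}$ is the weak quotient $\mathcal{A} /\!\!/ G$; I would just make the isotropy-group bookkeeping explicit. Everything else is a routine assembly of Theorems \ref{thm:ZB2VS}, \ref{thm:kvvn} and Lemma \ref{lemma:fgfckv} together with Schur's Lemma, so I would keep that part brief.
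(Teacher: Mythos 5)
Your proposal is correct and follows essentially the same route as the paper's proof: skeletalize $\fc{B}$ into a disjoint union of one-object groupoids with automorphism groups $\opname{Aut}(A)$, identify the simple objects of the presheaf category with pairs of a class $[A]$ and an irreducible representation of $\opname{Aut}(A)$ (via Schur's Lemma, as in Lemma \ref{lemma:fgfckv}), and count. Your version is slightly more explicit about the product decomposition over components and about the stabilizer bookkeeping, but the argument is the same.
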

\begin{proof}
The groupoid $\fc{B}$ is equivalent to its skeleton $S$.  This has as
objects the gauge equivalence classes of connections on $B$, and on
each object, a group of morphisms isomorphic to the group of gauge
transformations fixing a representative (i.e. the automorphism group
of any object in the original $\fc{B}$).  Now we want to consider
$[S,\V]$, which is equivalent to $\Z{B}$.  We know $[S,\V]$ is a KV
vector space, hence equivalent to some $\V^n$, where $n$ is the number
of nonisomorphic simple objects.  So consider what these are.

A functor $F : S \ra \V$ assigns a vector space to each equivalence
class of connections (i.e. each object), but also a representation of
the group of automorphisms of that object.  This is $\opname{Aut}(A)$.
Note that two functors giving inequivalent representations cannot have
a natural isomorphism between them.  On the other hand, any
representation of $\opname{Aut}(A)$ is a direct sum of irreducible
representations.  So a simple objects in $\Z{B}$ amount to a choice of
$[A]$, and an irreducible representation of $\opname{Aut}(A)$.  The
theorem follows immediately.
\end{proof}

The next thing to consider is how $Z_G$ will act on cobordisms.

\subsection{$Z_G$ on Cobordisms: 2-Linear Maps}\label{sec:ZGonCob}

We have described a construction which builds an extended TQFT from a
finite group $G$.  This takes a manifold $M$---possibly with boundary
or corners---and produces a 2-vector space of states on it.  This
involved a 2-step construction: first one finds $\fc{M}$, the moduli
stack of flat connections; then one takes $\Z{M}$, which is the
2-vector space having $\fc{M}$ as basis.

This begins to describe the extended TQFT $Z : \nCob \ra
\catname{2Vect}$ that we are interested in.  However, $Z$ is to be a
2-functor, and so far we have only described what it does to objects
of $\catname{Top}$.  This tells us its effect on objects in $\nCob$,
and goes some way to describing its effect on morphisms, but recall
that a morphism in $\nCob$ can be seen as a cospan in $\catname{Top}$.
A cobordism (``space'') from a boundary $B$ to a boundary $B'$ is the
cospan given by inclusion maps:
\begin{equation}\label{eq:zstep0}
\xymatrix{
  & S & \\
  B \ar[ur]^{\iota} & & B' \ar[ul]_{\iota'} \\
}
\end{equation}

Our construction amounts to a sequence of functorial operations, which
therefore give a corresponding sequence of spans (or cospans) in three
different catgories.  Next we will consider each of these steps in
turn, remarking on the co- or contravariance of the operation at each
step.

The first step is the operation of taking the fundamental groupoid.
This is somewhat more elaborate than the fundamental group of a
(pointed) space, but it is closely related.  Since any inclusion of
spaces gives an inclusion of points, and also of paths, we again have
a cospan:
\begin{equation}\label{eq:zstep1}
\xymatrix{
  & \Pi_1(S) & \\
\Pi_1(B) \ar[ur]^{\iota} & & \Pi_1(B') \ar[ul]_{\iota'} \\
}
\end{equation}
(Where we are abusing notation somewhat by using the same notation for
the inclusion maps of spaces and path groupoids.)

In the next step, we apply a contravariant functor, $[ - , G]$.
Recall that we are thinking of the group $G$ as the category with one
object $\star$ and the elements of $G$ as morphisms.  Taking
functors into $G$ is contravariant, since if we have a functor $F : X
\ra Y$, then any from $Y$ into $G$ becomes a map from $X$ into
$G$ by pullback along $F$ (i.e. $\psi \mapsto \psi \circ F =
F^{\ast}\psi$).  That is, we get a functor $F^{\ast} : [ Y , G ] \ra [
X , G ]$.  So at this stage of the construction we have a span:
\begin{equation}\label{eq:zstep2}
\xymatrix{
  & \fc{S} \ar[dl]^{\pi} \ar[dr]_{\pi'} & \\
\fc{B}  & & \fc{B'}  \\
}
\end{equation}
For convenience here we have made the convention that the pullback
maps along the inclusions are denoted $\iota^{\ast} = \pi$ and
$\iota'^{\ast} = \pi'$.

Finally, to this span, we apply another functor, namely $[ - , \V ]$.
This is contravariant for the same reason as $[ - , G ]$, and thus we
again have a cospan:
\begin{equation}\label{eq:zstep3}
\xymatrix{
  & \Z{S} & \\
\Z{B} \ar[ur]^{\pi^{\ast}} & & \Z{B'} \ar[ul]_{\pi'^{\ast}} \\
}
\end{equation}

We now recall that the pullbacks $\pi^{\ast}$ and $\pi'^{\ast}$ have
adjoints: this is a direct consequence of Theorem
\ref{thm:2mapadjoints}.  This reveals how to transport a $\V$-presheaf
on $\fc{B}$ along this cospan.  In fact, it gives two 2-linear maps,
which are adjoint.  Having written the cobordism as a morphism from
$B$ to $B'$, we find a corresponding 2-linear map, though we observe
that the adjoint is equally well defined.  We first do a pullback
along $\pi$, giving a $\V$-presheaf on $S$.  Then we use the adjoint
map $\pi'_{\ast}$.  So we have the following:

\begin{definition}\label{def:ZGonS} For any cobordism $S : B \ra B'$
between compact manifolds, and finite group $G$, define $Z_G(S)$ to be
the 2-linear map:
\begin{equation}\label{eq:zstep4}
(\pi')_{\ast} \circ \pi^{\ast} : Z_G(B) \longrightarrow Z_G(B')
\end{equation}
\end{definition}

Here we have used the notation of Definition \ref{def:ZGonB}.  Note
that again by Theorem \ref{thm:2mapadjoints}, both of these functors
are 2-linear maps, so the composite $\pi'_{\ast} \circ
\pi^{\ast}$ is also a 2-linear map.  It remains to show that $Z_G$
preserves horizontal composition of functors \textit{weakly}---that
is, up to a natural isomorphism.

\begin{remark} We can think of the pullback-pushforward construction
as giving---in the language of quantum field theory---a ``sum over
histories'' for evolving a 2-vector built from the space of
connections.  Each 2-vector in $\Z{B}$ picks out a vector space for
each $G$-connection on $B$.  The 2-linear map we have described tells
us how to evolve this 2-vector along a cobordism (i.e. a change of
spatial topology).  First we consider the pullback to $\Z{S}$, which
gives us a 2-vector consisting of all assignments of vector spaces to
connections on $S$ which restrict to the chosen one on $B$.  Each of
these could be considered a ``history'' of the 2-vector along the
cobordism.  We then ``push forward'' this assignment to $B'$, which
involves a colimit.  This is more general than a sum, though so one
could describe this as a ``colimit of histories''.  It takes into
account the symmetries between individual ``histories''
(i.e. connections on the cobordism, which are related by gauge
transformations).
\end{remark}

It still needs to be seen that this operation is compatible with
composition of cobordisms.  Now, a composite of two cobordisms is a
special case of a composite of cospans.  This is a composition in a
bicategory cobordisms---either the horizontal or vertical bicategory
in the {\vdb} defined in Chapter \ref{chap:cobcorn}.  It is given by a
pushout as described in Definition \ref{def:cospan}:
\begin{equation}\label{xy:cobordcomposite}
  \xymatrix{
      &   & S' \circ S  &  & \\
      & S  \ar[ur]_{i_S}  &   & S' \ar[ul]^{i_{S'}} & \\
    B_1 \ar[ur]_{i_1} &  & B_2 \ar[ul]^{i_2} \ar[ur]_{i'_1} &   & B_3 \ar[ul]^{i'_2} \\
  }
\end{equation}

When we take the groupoid of connections, however, the corresponding
diagram of spans between groupoids of connections weakly mod gauge
transformations contains a weak pullback square.  This is since the
objects are now groupoids, it makes sense to speak of two connections
being gauge equivalent, whereas the manifolds in cobordisms are sets,
where elements can only be equal or unequal.  So for connections on
$S$ and $S'$, it is possible for the restrictions to the same set
$B_2$ to be isomorphic, rather than merely equal.  Thus, we should
consider this larger groupoid, the weak pullback, whose objects come
with a specified isomorphism between the two restrictions:

\begin{equation}\label{xy:conncompos}
 \xymatrix{
   & & \fc{S'\circ S} \ar_{P_S}[ld]\ar^{p_{S'}}[rd]\ar@/_2pc/_{p_1 \circ P_S}[ddll]\ar@/^2pc/^{p'_2 \circ P_{S'}}[ddrr] & & \\
   & \fc{S}\ar^{p_1}[ld]\ar_{p_2}[rd] \ar@{=>}[rr]^{\alpha}_{\sim} & & \fc{S'}\ar^{p'_1}[ld]\ar_{p'_2}[rd] & \\
   \fc{B_1} & & \fc{B_2} & & \fc{B_3} \\
}
\end{equation}

That this is a weak pullback square of functors between groupoids
means that this diagram commutes up to the natural isomorphism $\alpha
: p_2 \circ P_S \longrightarrow p'_1 \circ P_{S'}$.  In the case of
groupoids, a weak pullback can be seen as an example of a
\textit{comma category} (the concept, though not the name, introduced
by Lawvere in his doctoral thesis \cite{lawverethesis}).  We briefly
discuss this next before stating the theorem regarding composition.

\begin{remark}\label{rk:commasquare} In general, suppose we have a
diagram of categories $\catname{A} \ralim^{F} \catname{C} \lalim^{G}
\catname{B}$.  Then an object in the comma category $(F \downarrow G)$
consists of a triple $(a,f,b)$, where $a \in \catname{A}$ and $b \in
\catname{B}$ are objects, and $f : F(a) \ra G(b)$ is a morphism in
$\catname{C}$.  A morphism in $(F \downarrow G)$ consists of a pair of
morphisms $(h,k) \in \catname{A} \times \catname{B}$ making the square
\begin{equation}\label{eq:commacatmor}
\xymatrix{
F(a_1) \ar[r]^{f_1} \ar[d]_{F(h)} & G(b_2) \ar[d]^{G(k)} \\
F(a_2) \ar[r]_{f_2} & G(b_2)
}
\end{equation}
commute.  Note that in a weak pullback, the morphisms $f$ would be
required to be an \textit{isomorphism}, but when we are talking about
a weak pullback of groupoids, these conditions are the same.

The comma category has projection functors which complete the (weak)
pullback square for the two projections:
\begin{equation}
\xymatrix{   & (F \downarrow G) \ar_{P_A}[ld] \ar^{P_B}[rd] & \\
   \catname{A}\ar_{F}[rd] \ar@{=>}[rr]^{\alpha}_{\sim} & & \catname{B}\ar^{G}[ld] \\
   & \catname{C} & \\
}
\end{equation}
such that $(F \downarrow G)$ is a universal object (in $\Cat$) with
maps into $\catname{A}$ and $\catname{B}$ making the resulting square
commute up to a natural isomorphism $\alpha$.  This satisfies the
universal condition that, given any other category $\catname{D}$ with
maps to $\catname{A}$ and $\catname{B}$, there's an equivalence
between $[\catname{D},\catname{C}]$ and the comma category
$(P_A^{\ast},P_B^{\ast})$ (where $P_S*$ and $P_T*$ are the functors
from $\catname{D}$ to $\catname{B}$ which factor through $P_S$ and
$P_T$ respectivery).  This equivalence arises in a natural way.  This
is the weak form of the universal property of a pullback.

So suppose we restrict to the case of a weak pullback of groupoids.
This is equivalent to the situation where $\catname{A}$, $\catname{B}$
and $\catname{C}$ are skeletal - that is, each is just a disjoint
union of groups.  Then the set of objects of $(F \downarrow G)$ is a
disjoint union over all the morphisms of $\catname{C}$ (which are all
of the form $g: x \ra x$ for some object $x$) of all the pairs of
objects $a \in \catname{A}$ and $b \in \catname{B}$ with $g: F(a) \ra
G(b)$.  In particular, since we assume $\catname{C}$ is skeletal, this
means $F(a) = G(b)$, though there will be an instance of this pair in
$(F \downarrow G)$ for each $g$ in the group of morphisms on this
object $F(a) = G(b)$.

So as the set of objects in $(F \downarrow G)$ we have a disjoint
union of products of sets---for each $c \in \catname{C}$, we get
$|\hom(c,c)|$ copies of $F^{-1}(c) \times G^{-1}(c)$.  The set of
morphisms is just the collection of commuting squares as in
(\ref{eq:commacatmor}) above.

Note that if we choose a particular $c$ and $g: c \ra c$, and choose
objects $a$, $b$ with $F(a)=c$, $G(b)=c$, and if $H=\opname{Aut}(a)$,
$K=\opname{Aut}(b)$ and $M=\opname{Aut}(c)$, then the group of
automorphisms of $(a,g,b) \in (F \downarrow G)$ is isomorphic to the
fibre product $H \times_M K$.  In particular, it is a subgroup of the
product group $H \times K$ consisting of only those pairs $(h,k)$ with
$F(h) g = g G(k)$, or just $F(h) = g G(k) g^{-1}$.  We can call it $H
\times_M K$, keeping in mind that this fibre product depends on $g$.
Clearly, the group of automorphisms of two isomorphic objects in $(F
\downarrow G)$ are isomorphic groups.
\end{remark}

In our example, the connections on $S$ and $S'$ need only restrict to
gauge-equivalent connections on $B_2$---since two such connections can
be ``pasted'' together using a gauge transformation.  Moreover, we
note that since all categories involved in our example are groupoids,
we have the extra feature that every morphism mentioned must be
invertible.  This is what makes this a \textit{weak} pullback rather
than a \textit{lax} pullback, where $\alpha$ is only a natural
transformation.

We are interested in the weak pullback square in the middle of
(\ref{xy:conncompos}), since the two 2-linear maps being compared
differ only by arrows in this square.  The square as given is a weak
pullback, with the natural isomorphism $\alpha$ ``horizontally''
across the square.  When considering a corresponding square of
categories of $\V$-presheaves, the arrows are reversed.  So, including
the adjoints of $p_2^{\ast}$ and $p_{S'}^{\ast}$, namely
$(p_2)_{\ast}$ and $(p_{S'})_{\ast}$, we have the square:
\begin{equation}\label{eq:adjunctionmatesquare}
\xymatrix{
   & \Z{S'\circ S} \ar@<1ex>^{(p_{S'})_{\ast}}[rd] & \\
   \Z{S} \ar^{(p_S)^{\ast}}[ur] \ar@<1ex>^{(p_2)_{\ast}}[rd] & & \Z{S'} \ar@<1ex>^{(p_{S'})^{\ast}}[ul] \\
   & \Z{B_2} \ar_{(p'_1)^{\ast}}[ur] \ar@<1ex>^{(p_2)^{\ast}}[ul] &
} \end{equation}

Note that there are two squares here---one by taking only the ``pull''
morphisms $(-)^{\ast}$ from the indicated adjunctions, and the other
by taking only the ``push'' morphisms $(-)_{\ast}$.  The first is just
the square of pullbacks along morphisms from the weak pullback square
of connection groupoids.  Comparing these is the core of the following
theorem, which gives one of the necessary properties for $Z_G$ to be a
weak 2-functor.

\begin{theorem}\label{thm:zcomposfunc}The process $Z_G$ weakly
preserves composition.  In particular, there is a natural isomorphism
\begin{equation}
\beta_{S',S} : Z_G(S' \circ S) \ra Z_G(S') \circ Z_G(S)
\end{equation}
\end{theorem}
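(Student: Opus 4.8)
The plan is to unwind each of the three 2-linear maps $Z_G(S'\circ S)$, $Z_G(S')$ and $Z_G(S)$ into composites of pullbacks and pushforwards along the legs of the span diagram~(\ref{xy:conncompos}), and then reduce the desired isomorphism to a single base-change (Beck--Chevalley) isomorphism attached to the weak pullback square in the middle of that diagram. Everything in sight is built from $\V$-presheaves and the adjoint pair $f^{\ast}\dashv f_{\ast}$ of Theorem~\ref{thm:2mapadjoints}, so the whole argument is a manipulation of these functors and their adjunctions.

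First I would record functoriality. Pullback is strictly contravariant functorial, $(g\circ f)^{\ast}=f^{\ast}\circ g^{\ast}$, since it is precomposition; and since $f_{\ast}$ is the adjoint of $f^{\ast}$, uniqueness of adjoints gives a canonical natural isomorphism $(g\circ f)_{\ast}\cong g_{\ast}\circ f_{\ast}$ (composite of left adjoints). Using that $\fc{S'\circ S}$ is the weak pullback of $p_2$ and $p'_1$ — this is the content of~(\ref{xy:conncompos}), resting on van Kampen applied to the collared pushout $S\cup_{B_2}S'$ — Definition~\ref{def:ZGonS} together with these functoriality isomorphisms gives
\begin{equation}
Z_G(S'\circ S)\;=\;(p'_2\circ P_{S'})_{\ast}\circ(p_1\circ P_S)^{\ast}\;\cong\;(p'_2)_{\ast}\circ(P_{S'})_{\ast}\circ(P_S)^{\ast}\circ(p_1)^{\ast},
\end{equation}
whereas $Z_G(S')\circ Z_G(S)=(p'_2)_{\ast}\circ(p'_1)^{\ast}\circ(p_2)_{\ast}\circ(p_1)^{\ast}$. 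The outer factor $(p'_2)_{\ast}$ and the inner factor $(p_1)^{\ast}$ agree, so it suffices to construct a natural isomorphism
\begin{equation}
\gamma\colon\;(P_{S'})_{\ast}\circ(P_S)^{\ast}\;\Longrightarrow\;(p'_1)^{\ast}\circ(p_2)_{\ast}
\end{equation}
of 2-linear maps $[\fc{S},\V]\ra[\fc{S'},\V]$; then $\beta_{S',S}$ is $\gamma$ whiskered by $(p'_2)_{\ast}$ on the left and $(p_1)^{\ast}$ on the right, composed with the functoriality isomorphisms above.

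To build $\gamma$ I would use the explicit colimit description of the pushforward from Definition~\ref{def:pushcolimit}. Fix $F\in[\fc{S},\V]$ and $s'\in\fc{S'}$. On the left, $\bigl((P_{S'})_{\ast}P_S^{\ast}F\bigr)(s')$ is the colimit of $F\circ P_S$ over the comma category of objects $\tilde s\in\fc{S'\circ S}$ equipped with a morphism $P_{S'}(\tilde s)\ra s'$; writing $\fc{S'\circ S}$ as the comma (weak pullback) category as in Remark~\ref{rk:commasquare}, such an object is a tuple $(s,\bar s',\psi,\phi)$ with $s\in\fc{S}$, $\bar s'\in\fc{S'}$, $\psi\colon p_2(s)\ra p'_1(\bar s')$ an isomorphism in $\fc{B_2}$, and $\phi\colon\bar s'\ra s'$ a morphism. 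On the right, $\bigl((p'_1)^{\ast}(p_2)_{\ast}F\bigr)(s')=\bigl((p_2)_{\ast}F\bigr)(p'_1(s'))$ is the colimit of $F$ over the comma category of objects $s\in\fc{S}$ equipped with a morphism $p_2(s)\ra p'_1(s')$. Composing the two morphisms on the left side, $p_2(s)\xrightarrow{\psi}p'_1(\bar s')\xrightarrow{p'_1(\phi)}p'_1(s')$, and discarding the (terminal, hence contractible) data $(\bar s',\phi)$ exhibits an equivalence between the two indexing categories that is compatible with the diagrams being colimited; since a colimit only depends on the indexing diagram up to equivalence, the two colimits in $\V$ are canonically isomorphic, naturally in $s'$ and in $F$. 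This is precisely the statement that base change holds \emph{because} the square is a weak pullback of groupoids.

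The routine remainder is to check that $\gamma$, hence $\beta_{S',S}$, is an isomorphism — immediate, as an equivalence of indexing categories induces an isomorphism of colimits — and that it is natural as a transformation of functors, which follows because the colimit construction, the comparison equivalence of comma categories, the functoriality isomorphisms and whiskering are all natural. I expect the main obstacle to lie in the weak/$2$-categorical bookkeeping: making the comparison equivalence of comma categories genuinely functorial and independent of incidental choices, carrying the mediating isomorphism $\alpha$ of~(\ref{xy:conncompos}) through correctly, and confirming that the pushforward of a presheaf is computed component-wise (so that the colimit description of Definition~\ref{def:pushcolimit} may be substituted freely under the pullbacks and the outer pushforward). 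The coherence of the whole family $\{\beta_{S',S}\}$ — the associativity pentagon that makes $Z_G$ an honest weak $2$-functor — is deferred to Section~\ref{sec:maintheorem} and is not needed for this statement.
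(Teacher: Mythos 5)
Your proposal is correct, and your opening reduction --- peeling off the outer $(p'_2)_{\ast}$ and inner $(p_1)^{\ast}$ and reducing everything to a single base-change isomorphism $\gamma\colon (P_{S'})_{\ast}\circ(P_S)^{\ast}\Rightarrow(p'_1)^{\ast}\circ(p_2)_{\ast}$ across the middle weak-pullback square --- is exactly the paper's first move. But you construct $\gamma$ by a genuinely different argument. The paper passes to skeletal groupoids and computes both composites explicitly as direct sums of induced representations, with terms $\mathbbm{C}[G_{A'}]\otimes_{\mathbbm{C}[G_A\times_{M} G_{A'}]}F(A)$ on one side against $\mathbbm{C}[M]\otimes_{\mathbbm{C}[G_{A}]}F(A)$ on the other; it then defines $\gamma$ on generators by $k\otimes v\mapsto p'_1(k)g^{-1}\otimes v$ and verifies well-definedness and an explicit inverse by hand over several pages. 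You instead stay at the level of Definition~\ref{def:pushcolimit}: both composites evaluated at $s'$ are colimits of the same $\V$-valued diagram over two comma categories, and the comparison functor $((s,\psi,\bar s'),\phi)\mapsto(s,\,p'_1(\phi)\circ\psi)$ is an equivalence --- surjective on objects via $(\bar s',\phi)=(s',\mathrm{id})$, and fully faithful because $\phi_2$ is invertible, so $k=\phi_2^{-1}\circ\phi_1$ is forced and the remaining compatibility with $\psi_1,\psi_2$ then holds automatically --- so the colimits are canonically isomorphic, naturally in $s'$ and in $F$. Your route is shorter and makes the role of the weak pullback (Remark~\ref{rk:commasquare}) transparent; the paper's computation is heavier but delivers the explicit group-algebra/matrix form of $\beta$ that is reused later (Remark~\ref{rk:composbeta} and the horizontal-composition theorem). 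The one phrase I would tighten is ``terminal, hence contractible'': what you actually need, and what does hold, is that the comparison functor between the two indexing groupoids is an equivalence compatible with the diagrams being colimited, and this is worth checking on morphisms as above rather than by an appeal to contractibility of a fibre.
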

\begin{proof}
The process $Z_G$ acts by on $S' \circ S$ by taking the spans of
groupoids in \ref{xy:conncompos}, and giving 2-linear maps:
\begin{equation}
(p'_2 \circ P_{S'})_{\ast} \circ (p_1 \circ P_S)^{\ast}
\end{equation}
On the other hand, $Z_G(S') \circ Z_G(S)$ is found in the same diagram to be
\begin{equation}
(p'_2)_{\ast} \circ (p'_1)^{\ast} \circ (p_2)_{\ast} \circ (p_1)^{\ast}
\end{equation}
So we want to show there is a natural isomorphism:
\begin{equation}
\beta_{S',S} : (p'_2 \circ P_{S'})_{\ast} \circ (p_1 \circ P_S)^{\ast} \ra
(p'_2)_{\ast} \circ (p'_1)^{\ast} \circ (p_2)_{\ast} \circ (p_1)^{\ast}
\end{equation}
It suffices to show that there is an isomorphism between the upper and
lower halves of the square in the middle:
\begin{equation}
\gamma: (P_{S'})_{\ast} \circ (P_S)^{\ast} \ra (p'_1)^{\ast} \circ(p_2)_{\ast}
\end{equation}
since then $\beta_{S',S}$ is obtained by tensoring with identities.

Now, as we saw when discussing comma squares, the objects of the weak
pullback $\fc{S' \circ S}$ consist of pairs of connections, $A \in
\fc{S}$, and $A' \in \fc{S'}$, together with a morphism in ${B_2}$, $g
: p_2(A) \ra {p'}_1(A')$.  The morphisms from $(A_1,g_1,{A'}_1)$ to
$(A_2,g_2,{A'}_2)$ in the weak pullback are pairs of morphisms, $(h,k) \in
\fc{S} \times  \fc{S'}$, making the square
\begin{equation}
\xymatrix{
p_2(A_1) \ar[r]^{g_1} \ar[d]_{p_2(h)} & {p'}_1({A'}_2) \ar[d]^{{p'}_1(k)} \\
p_2(A_2) \ar[r]_{g_2} & {p'}_1({A'}_2)
}
\end{equation}
commute.

We may assume that the groupoids we begin with are skeletal---so the
objects consist of gauge equivalence classes of connections.  Then
recall from Remark \ref{rk:commasquare} that in this weak pullback the
set of objects in $\fc{S' \circ S}$ is a disjoint union of products of
sets - for each $c \in \fc{B_2}$, we get $|\hom(c,c)|$ copies of
${p_2}^{-1}(c) \times {p'}_1^{-1}(c)$.

So first taking a $\V$-presheaf $F$ on $\fc{S}$, we get that
$(P_S)^{\ast}F$ is a $\V$-presheaf on $\fc{S' \circ S}$.  Now over any
fixed object (connection) $A$, we have a set of objects in $\fc{S'
\circ S}$ which restrict to it: there is one for each choice $(g,A')$
which is compatible with $A$ in the sense that $(A,g,A')$ is an object
in the weak pullback - that is, $g: p_2(A) \ra {p'}_1(A')$.  Each
object of this form is assigned the vector space $F(A)$ by
$(P_S)^{\ast}F$.

Further, there are isomorphisms between such objects, namely pairs
$(h,k)$ as above.  There are thus no isomorphisms except between
objects $(A,g_1,A')$ and $(A,g_2,A')$ for some fixed $A$ and $A'$.
For any such fixed $A$ and $A'$, objects corresponding to $g_1$ and
$g_2$ are isomorphic if
\begin{equation}
g_2 p_2(h)  = {p'}_1(k) g_1
\end{equation}.
Denote the isomorphism class of any $g$ by $[g]$.

Then if $G_A$ is the group of automorphisms of any gauge equivalence
class of connections $A$, and for notational convenience $M$ is here
the group of automorphisms of $p_2(A)$ in $\fc{B_2}$ (note that this
$M$ depends on $A$, which we are considering fixed for now), we get:
\begin{equation}
(P_{S'})_{\ast} \circ (P_S)^{\ast}F (A') = \bigoplus_{A} \Bigl{(}
\bigoplus_{[g]: p_2(A) \ra {p'}_1(A')} \mathbbm{C}[G_{A'}]
\otimes_{\mathbbm{C}[G_A \times_{M} G_{A'}]} F(A) \Bigr{)}
\end{equation}
since $G_A \times_M G_{A'}$ is the automorphism group of the object in
$\fc{S' \circ S}$ which restricts to $A$ and $A'$ by gluing along $g$.
The outside direct sum here is written over all connections $A$ on
$S$, but note that the only ones which contribute any factor are those
for which $g : p_2(A) \ra {p'}_1(A')$ for some $g$.  The inside direct
sum is over all isomorphism classes of elements $g$ for which this
occurs: in the colimit, vector spaces over objects with isomorphisms
between them are identified.

Note that in the direct sum over $[g]$, there is a tensor product term
for each class $[g]: p_2(A) \ra {p'}_1(A')$.  By the definition of the tensor
product over an algebra, we can pass elements of $\mathbbm{C}[G_A
\times_{M} G_{A'}]$ through the tensor product.  These are generated
by pairs $(h,k) \in G_A \times G_{A'}$ where the images of $h$ and $k$
are conjugate by $g$ so that $p_2(h) g = g {p'}_1(k)$.  These are just
automorphisms of $g$: so this says we are considering objects only up
to these isomorphisms.

This is the result of the ``pull-push'' side of the square applied to
$F$.  Now consider the ``push-pull'' side: $(p'_1)^{\ast}
\circ(p_2)_{\ast}$.

First, pushing down to $B_2$, we get, on any connection $A''$ on $B_2$
(whose automorphism group is $M$):
\begin{equation}
(p_2)_{\ast}F(A'') = \bigoplus_{p_2(A) =A''} \mathbbm{C}[M] \otimes_{\mathbbm{C}[G_A]}F(A)
\end{equation}

Then, pulling this back up to $S'$, we get (with $M$ again the
symmetry group of $p_2(A)$) that:
\begin{equation}
(p'_1)^{\ast} \circ(p_2)_{\ast}F(A') = \bigoplus_{g : p_2(A) \ra {p'}_1(A')} \Bigl{(} \mathbbm{C}[M] \otimes_{\mathbbm{C}[G_{A}]} F(A)  \Bigr{)}
\end{equation}
Now we define a natural isomorphism
\begin{equation}
\gamma_{S,S'} :  (P_{S'})_{\ast} \circ (P_S)^{\ast} \ra (p'_1)^{\ast} \circ(p_2)_{\ast}
\end{equation}
as follows.  For each $A'$, this must be an isomorphism between the above vector
spaces.  The first step is to observe that there is a 1-1
correspondence \textit{between} the terms of the first direct sums, and then
secondly to note that the corresponding terms are isomorphic.

Since the outside direct sums are over all connections $A$ on $S$ for
which $p_2(A) = {p'}_1(A')$, it suffices to get an isomorphism between
each term.  That is, between 
\begin{equation}\label{eq:pullpushcomponentvs}
\bigoplus_{[g]: p_2(A) \ra {p'}_1(A')} \mathbbm{C}[G_{A'}] \otimes_{\mathbbm{C}[G_A \times_{M} G_{A'}]} F(A)
\end{equation}
and
\begin{equation}\label{eq:pushpullcomponentvs}
\mathbbm{C}[M] \otimes_{\mathbbm{C}[G_{A}]} F(A) 
\end{equation}

In order to define this isomorphism, first note that both of these
vector spaces are in fact $\mathbbm{C}[G_{A'}]$-modules.  An element
of $G_{A'}$ acts on (\ref{eq:pullpushcomponentvs}) in each component
by the standard group algebra multiplication, giving an action of
$\mathbbm{C}[G_{A'}]$ by extending linearly.  An element $g \in
G_{A'}$ acts on (\ref{eq:pushpullcomponentvs}) by the action of
${p'}_1(g)$ on $\mathbbm{C}[M]$.  Two $g \in [g]$ have the same action
on this tensor product, since they differ precisely by $(h,k) \in G_A \times
G_{A'}$, so that $g_2 p_2(h) = {p'}_1(k) g_1$.

Also, we notice that, in (\ref{eq:pullpushcomponentvs}), for each $g
\in M$, the corresponding term of the form $\mathbbm{C}[G_{A'}]
\otimes_{\mathbbm{C}[G_A \times_{M} G_{A'}]} F(A)$ is generated by
elements of the form $k \otimes v$, for $k \in \mathbbm{C}[G_{A'}]$.  
and $v \in F(A)$.  These are subject to the relations that, for any
$(h,k_1) \in \mathbbm{C}[G_{A}] \times \mathbbm{C}[G_{A'}]$ such that
$p_2(h) = g^{-1} {p'}_1(k_1) g$:
\begin{equation}
kk_1 \otimes v = k (h,k_1) \otimes v = k \otimes (h,k_1) v = k \otimes hv
\end{equation}
since elements of $\mathbbm{C}[G_{A}] \times \mathbbm{C}[G_{A'}]$ act
on $F(A)$ and $\mathbbm{C}[G_{A'}]$  by their projections into
the first and second components respectively.

Now, we define the map $\gamma_{A,A'}$.  First, for any element of the form
$k \otimes v \in \mathbbm{C}[G_{A'}] \otimes_{\mathbbm{C}[G_A
\times_{M} G_{A'}]} F(A)$ in the $g$ component of the direct sum (\ref{eq:pullpushcomponentvs}):
\begin{equation}
\gamma_{A,A'} (k\otimes v) = {p'}_1(k) g^{-1} \otimes v
\end{equation}
which we claim is in $\mathbbm{C}[M] \otimes_{\mathbbm{C}[G_{A}]}
F(A)$.  This map extends linearly to the whole space.

To check this is well-defined, suppose we have two representatives
$k_1 \otimes v_1$ and $k_2 \otimes v_2$ of the class $k \otimes v$.
So these differ by an element of $\mathbbm{C}[G_A \times_{M} G_{A'}]$,
say $(h,k)$, so that
\begin{equation}
k_1 = k_2 k
\end{equation}, 
and
\begin{equation}
h v_1 =  v_2
\end{equation}
where
\begin{equation}
p_2(h) = g {p'}_1(k) g^{-1}
\end{equation}

But then
\begin{eqnarray}
\gamma_{A,A'}(k_1 \otimes v_1) & = &  {p'}_1(k_1) g^{-1} \otimes v_1 \\
\nonumber & = & {p'}_1(k_2 k) g^{-1} \otimes v_1 \\
\nonumber & = & {p'}_1(k_2) g^{-1} g {p'}_1(k) g^{-1} \otimes v_1 \\
\nonumber & = & {p'}_1(k_2) g^{-1} p_2(h) \otimes v_1\\
\nonumber & = & {p'}_1(k_2) g^{-1} \otimes h v_1
\end{eqnarray}
while on the other hand,
\begin{eqnarray}
\gamma_{A,A'} (k_2 \otimes v_2) & = & {p'}_1(k_2) g^{-1} \otimes v_2\\
\nonumber & = & {p'}_1(k_2) g^{-1} \otimes h v_1
\end{eqnarray}
But these are representatives of the same class in $\mathbbm{C}[M]
\otimes_{\mathbbm{C}[G_{A}]} F(A)$, so $\gamma$ is well defined on
generators, and thus extends linearly to give a well-defined function
on the whole space.

Now, to see that $\gamma$ is invertible, note that given an element $m
\otimes v \in \mathbbm{C}[M] \otimes_{\mathbbm{C}[G_{A}]} F(A)$ (where
we are fixing $A$, since both 2-vectors decompose into components
corresponding to connections $A$), we can define
\begin{equation}
\gamma^{-1}(m \otimes v) = 1 \otimes v \in  \bigoplus_{[g]: p_2(A) \ra {p'}_1(A')} \mathbbm{C}[G_{A'}]
\otimes_{\mathbbm{C}[G_A \times_{M} G_{A'}]} F(A)
\end{equation}
in the component coming from the isomorphism class of $g = m^{-1}$ (we
will denote this by $(1 \otimes v)_{m^{-1}}$ to make this explicit,
and in general an element in the class of $g$ will be denoted with
subscript $g$ whenever we need to refer to $g$).

Now we check that this is well-defined.  Given $m_1 \otimes v_1$ and
$m_2 \otimes v_2$ representing the same element $m \otimes v$ of
$\mathbbm{C}[M] \otimes_{\mathbbm{C}[G_{A}]} F(A)$, we must have $h_1
\in G_A$ with
\begin{equation}
m_1 p_2(h_1) = m_2
\end{equation}
and
\begin{equation}
h_1 v_2 = v_1
\end{equation}
But then applying $\gamma^{-1}$, we get:
\begin{equation}
\gamma^{-1}(m_1 \otimes v_1) = (1 \otimes v_1)_{m_1^{-1}} = (1 \otimes h_1 v_2)_{m_1^{-1}}
\end{equation}
and
\begin{equation}
\gamma^{-1}(m_2 \otimes v_2) = (1 \otimes v_2)_{m_2^{-1}} = (1 \otimes v_2)_{p_2(h_1)^{-1} m_1^{-1}}
\end{equation}
but these are in the same component, since $g \sim g'$ when $g'
{p'}_1(k) = p_2(h) g$ for some $h \in G_A$ and $k \in G_{A'}$.  But
then, taking $k=1$ and $h = h_1^{-1}$, we get that $m_1^{-1} \sim
m_2^{-1}$, and hence the component of $\gamma(m \otimes v)$ is well
defined.

But then, consider $m \otimes v = \gamma((k \otimes v)_g) = {p'}_1(k)
g^{-1} \otimes v$.  Applying $\gamma^{-1}$ we get:
\begin{equation}
\gamma^{-1} \circ \gamma (k \otimes v)_g = (1 \otimes v)_{g {p'}_1(k)^{-1}}
\end{equation}
so we hope that these determine the same element.  But in fact, notice
that the morphism in the weak pullback which gives that $g^{-1}$ and
${p'}_1(k) g^{-1}$ are isomorphic is just labelled by $(h,k) = (1,k)$,
which indeed takes $k$ to $1$ and leaves $v$ intact.  So these are the
corresponding elements under this isomorphism.

So $\gamma$ is invertible, hence an isomorphism.  Thus we define
\begin{equation}
\beta_{S,S'} = 1 \otimes \gamma \otimes 1
\end{equation}
This is the isomorphism we wanted.
\end{proof}

\begin{remark} The weak pullback square gave a natural isomorphism:
\begin{equation}
\alpha^{\ast} :  P_{S'}^{\ast} \circ  (p'_1)^{\ast}   \ra  P_{S}^{\ast} \circ p_2^{\ast}
\end{equation}
Given a connection on a composite of cobordisms $S' \circ S$, $\alpha$ gives
the gauge transformation of the restriction, on their common boundary
$B_2$, needed so the gluing of connections on $S$ and $S'$ is
compatible.

We proved that the other square---the ``mate'' under the adjunctions,
also has a natural isomorphism (``vertically'' across the square),
namely that there exists:
\begin{equation}
\beta_{S,S'} : (P_{S'})_{\ast} \circ (P_S)^{\ast} \ra ({p'}_1)^{\ast} \circ (p_2)_{\ast}
\end{equation}

In fact, these are related by the units for both pairs of adjoint functors:
\begin{equation}
\eta_{S'} : 1_{Z_G(S'\circ S)} \ra  (P_{S'})_{\ast} \circ (P_{S'})^{\ast}
\end{equation}
and
\begin{equation}
\eta_2 : 1_{Z_G(S)} \ra  (p_2)_{\ast} \circ (p_2)^{\ast}
\end{equation}

So the desired ``vertical'' natural transformation across the square
\ref{eq:adjunctionmatesquare} is determined  by the condition that it complete the following
square of natural transformations to make it commute:
\begin{equation}\label{eq:nearlybeck2}
\xymatrix{
(P_{S'})_{\ast} \circ (P_S)^{\ast} \ar@{=>}[rr]^(.4){1 \otimes \eta_2} \ar@{==>}[d]^{\beta_{S,S'}} & & (P_{S'})_{\ast} \circ (P_S)^{\ast} \circ p_2^{\ast} \circ (p_2)_{\ast} \ar@{=>}[d]^{1 \otimes (\alpha^{\ast})^{-1} \otimes 1} \\ 
(p'_1)^{\ast} \circ (p_2)_{\ast} \ar@{=>}[rr]^(.4){1 \otimes \eta_{S'}} & & (P_{S'})_{\ast} \circ (P_{S'})^{\ast} \circ (p'_1)^{\ast} \circ (p_2)_{\ast} \\ 
}
\end{equation}

The crucial element of this is the fact that the (weak) pullback
square for the groupoids of connections in the middle of the
composition diagram gives rise to a square of $\V$-presheaf
categories.  To get this we used that the adjunction between the
pullback and pushforward along the $\pi$ maps had unit and counit
2-morphisms which turn a natural transformation vertically across the
first square to one horizontally across the second.  Note, however,
that we do not expect this to be invertible.  When it is, the square
is said to satisfy the \textit{Beck-Chevalley} (BC) condition.  This
is discussed by B\'enabou and Streicher \cite{BCES}, MacLane and
Moerdijk \cite{macmoer}, and by Dawson, Par\'e and Pronk
\cite{unispan}.
\end{remark}

\begin{remark}It is useful to consider a description of the two
functors between which we have found this natural isomorphism
$\beta_{S,S'}$---namely, the two 2-linear maps across the central
square in (\ref{xy:conncompos}).  See Remark \ref{rk:pullpushunit} for
the general case.  In this situation, these behave as follows:

First, the ``push-pull'': given a functor $f : \fc{S} \ra \V$ (i.e. in
$Z_G(S)$), in the first stage, push forward to a functor in
$Z_G(B_2)$.  This gives, for each connection $C$ on $B_2$, a vector
space which is the colimit of a diagram of the vector spaces $f(C_i)$
for all connections $C_i$ on $S$ which restrict to $C$ on $B_2$.  In
the second stage, pull back to $\fc{S'}$: for each connection $C'$ on
$S'$, find the connection $C$ it restricts to on $B_2$, and assign
$C'$ the vector space obtained for $C$ above.  Namely, the colimit of
the diagram of vector spaces $f(C_i)$ for connections $C_i$ which also
restrict to $C$.

Next, the ``pull-push'' given a functor $f \fc{S} \ra \V$, in the first
stage, pull back to a functor on $\fc{ST}$.  This gives, at each
connection $C$ on $S' \circ S$, a vector space which is just
$f(C|_S)$, the one assigned to the connection given by $C$ restricted
to $S$.  At the second stage, push this forward to a functor in
$\fc{S'}$.  This gives, at each connection $C'$ on $S'$, the colimit
of a diagram whose objects are all the $f(C_i|_S)$ obtained in the
first stage, for any $C_i$ which restricts to $C'$ on $T$.

In both cases there is a colimit over a diagram including all possible
connections on $S$ which match some specified one on $S'$.  This
``matching'' can occur either by inclusion in a bigger entity (the
composite being the minimal cobordism $S' \circ S$ containing both $S$
and $S'$).  Or it can occur just by matching along the shared boundary
$B_2$.  However, since the composition of $S$ and $S'$ is weak, the
groupoid of connections on $S' \circ S$ only needs to have inclusions
of the groupoids $\fc{S'}$ and $\fc{S}$ which agree on $B_2$ up to
gauge equivalence.  This gauge equivalence is part of the
specification of an object in the weak pullback of the groupoids of
connections.
\end{remark}

\begin{remark} \label{rk:composbeta}We can describe more explicitly
the effect of $\beta$.  Suppose we have a composite of cobordisms, $S'
\circ S$.  ,By Lemma \ref{lemma:kv2linmatrix}, we have that the
functors $(P_{S'})_{\ast} \circ (P_S)^{\ast}$ and $(p'_1)^{\ast} \circ
(p_2)_{\ast}$ can be written in the form of a matrix of vector spaces
as in (\ref{eq:kv2linmatrix}).  The matrix components for each
2-linear map are given by colimits of diagrams of vector spaces in
groupoids of connections on $S'$ matching a specified one on $S$.

However, the criterion for ``matching'' is different: when we push
first, then pull, the connections must match exactly on $B_2$; when
pulling first, then pushing, the connections must both be restrictions
of one on $S' \circ S$, but are only required to match up to gauge
equivalence $\alpha$ on $B_2$.  The isomorphism $\beta_{A,A'}$ is just
the isomorphism between the colimits which induced by the permutation
of vector spaces associated to these gauge transformations.

Recall that the source, $Z_G(S' \circ S)$, is given by a matrix
indexed by gauge equivalence classes of connections $[A_1]$ on $B_1$
and $[A_3]$ on $B_3$.  The entries are isomorphic to $\mathbbm{C}^n$
where $n$ is the number of classes of connections on $S_2 \circ S_1$
restricting to representatives of $[A_1]$ and $[A_3]$.

On the other hand, this can be seen (by the isomorphisms $\beta$) as a
matrix product of $Z_G(S')$ with $Z_G(S)$, which has components given
by a direct sum over equivalence classes $[A_2]$ of connections on
$B_2$:
\begin{equation}\label{eq:cobmatmultcomponent}
[Z_G(S_2) \circ Z_G(S_1)]_{[A_1],[A_3]} \ralim^{\beta_{S,S'}} \bigoplus_{[A_2]} [Z_G(S)]_{[A_1],[A_2]} \otimes [Z_G(S)]_{[A_2],[A_3]}
\end{equation}
Recall that $[Z_G(S)]_{[A_1],[A_2]} \cong \mathbbm{C}^m$, where $m$ is
the number of gauge equivalence classes of connections on $S$ which
restrict to $[A_1]$ and $[A_2]$.  Similarly $[Z_G(S')]_{[A_2],[A_3]}
\cong \mathbbm{C}^{m'}$, where $m'$ is the number of classes of
connections on $S'$ which restrict to $[A_2]$ and $A_3$.  Indeed, the
components are just the vector spaces whose bases are these
equivalence classes.

The isomorphism $\beta$ identifies the composite, whose components
count connections on $S' \circ S$, with this product.  This consists
of identification maps in each component.  A component indexed by
$[A_1]$ and $[A_3]$ comes from the groupoid of all connections on $S'
\circ S$ which restrict to $[A_1]$ and $[A_3]$ on $B_1$ and $B_3$.
Each such connection restricts to connections on $S'$ and $S$ by the
maps $\pi_{S'}$ and $\pi_{S}$.  These in turn restrict by $\pi_2$ and
${\pi}'_2$ to $B_2$ to gauge-equivalent (by $\alpha$) connections -
and those restricting to different $[A_2]$ are in different components
of $\fc{S' \circ S}$.  Over each $[A_2]$, the we have the product
groupoid of the groupoids of all connections on $S$ and $S'$
restricting to this $[A_2]$ (and to $[A_1]$ and $[A_3]$ respectively).
So the groupoid of such connections on $S' \circ S$ is isomorphic to a
fibred product over $Z_G(B_2)$.

Then the $([A_1],[A_3])$ component of $Z_G(S' \circ S)$ is a vector
space whose basis is the set of components of this groupoid, and
$\beta$ is an isomorphism which takes which takes the vector spaces
over this to those in (\ref{eq:cobmatmultcomponent}).
\end{remark}

\begin{example} Consider the ``pair of pants'' cobordism (the
``multiplication'' cobordism from the generators of $\iiCob$):
\begin{figure}[h]
\begin{center}
\includegraphics{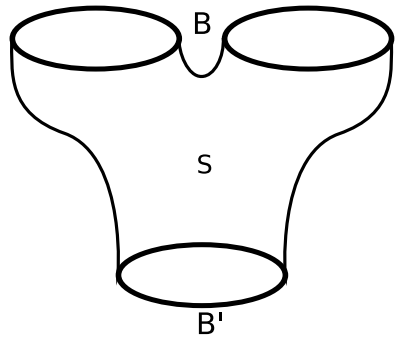}
\end{center}
\caption{The ``Pair of Pants''\label{fig:pants}}
\end{figure}
This can be seen as a morphism $S : B \ra B'$ in $\iiCob$, where
$B=S^1 \cup S^1$ and $B' = S^1$.  The 2-linear map corresponding to
it can be found by the above procedure.  To begin with, recall the
2-vector space on $S^1$ found in Example
\ref{ex:ZonS1}.  It is equivalent to $[G/\!\!/G,\V]$, the 2-vector
space of $\V$-presheaves on $G$ which are equivariant under
conjugation by elements of $G$.

The groupoid of connections on $S^1 \cup S^1$ can be found using
the fact that the path groupoid is just $\Pi_1(S^1) \cup
\Pi_1(S^1)$, a disjoint union of two copies of the groupoid
$\Pi_1(S^1) \cong \mathbbm{Z}$.  Notice that this is different from
the group $\mathbbm{Z}^2$, since a group is a one-object groupoid,
whereas here we have a two-object groupoid, each object having a group
of morphisms isomorphic to $\mathbbm{Z}$.  A functor from this into
$G$ amounts to two choices $g, g' \in G$, but a gauge
transformation amounts to a conjugation by some $h \in G$ at
\textit{each} of the two objects (one chosen base points in each
component), so:
\begin{eqnarray}
\fc{S^1 \cup S^1}  & \cong &  (G \times G) /\!\!/ (G \times G) \\
\nonumber  &\cong &  (G /\!\!/ G)^2
\end{eqnarray}
where $G \times G$ acts on itself by conjugation componentwise.  This
just says that a connection on the space consisting of two circles is
the same as a choice of connection on each one separately.  This is
illustrated in Figure \ref{fig:pantsconnection}, where we show the
pants as a disc with two holes, and label a connection on $S$ with its
restrictions to the boundary.  The connection on $S$ has holonomies
$g$ and $g'$ around the two holes.  On $S^1 \cup S^1$, this
restricts to a connection with holonomies $g$ and $g'$ respectively,
and on $S^1$ to the product (since the circle around the outside $S^1$
is homotopic to the composite of the two loops).

\begin{figure}[h]
\begin{center}
\includegraphics{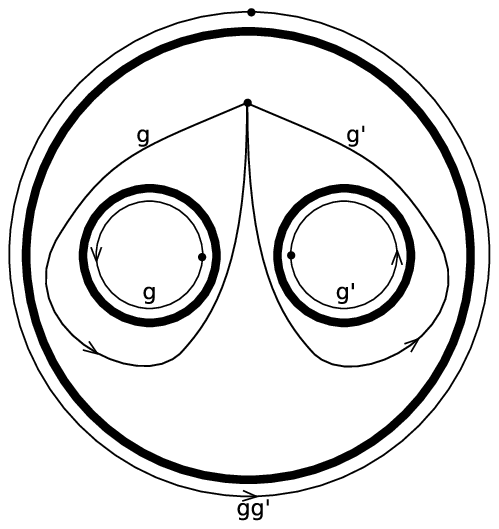}
\end{center}
\caption{Connection for Pants\label{fig:pantsconnection}}
\end{figure}

On the other hand, the manifold with boundary, $S$, is homeomorphic to
a two-punctured disc, whose path groupoid has a skeleton with
one-object, and group of morphisms $\pi_1(S) = F(\gamma_1,\gamma_2)$,
the free group on two generators.  Functors from this into $G$ amount
to homomorphisms $(g,g') : F(\gamma_1,\gamma_2) \ra G$.  That is, a
choice of two elements of $G$ (the images of the generators).  A gauge
transformation amounts to conjugation at the single object (a chosen
base point in $S$---indicated in Figure \ref{fig:pantsconnection} as a
dot on the loop).  So we have the span of connection groupoids:

\begin{equation}
\fc{S} \cong (G \times G) /\!\!/ G
\end{equation}
where $G$ acts on $G \times G$ by conjugation in both components at
once.  Then the span (\ref{eq:zstep2}):
\begin{equation}
\xymatrix{
  & (G \times G)/\!\!/ G \ar[dl]_{p_1} \ar[dr]^{p_2} & \\
 (G /\!\!/ G)^2 & & G /\!\!/G
}
\end{equation}
Both projections are restrictions of a connection on $S$ to the
corresponding connection on the components of the boundary.  It is
easily seen that $p_1$ leaves objects intact and takes the
morphism corresponding to conjugation by $h$ to that corresponding to
conjugation by $(h,h)$.  The projection $p_2$ maps object $(g,g')$
to $gg'$, and the morphism for conjugation by $h$ to, again,
conjugation by $h$.

The gauge-equivalent connections on $S$ have holonomies of the form
$(hgh^{-1},hg'h^{-1})$ for any $h \in G$, and those for $S^1$ are
compatible, since they have holonomies of the form $hgg'h^{-1}$ for $h
\in G$.  Those for $S^1 \cup S^1$ can be any connection with
holonomies $(hgh^{-1},h'g'(h')^{-1})$ for any choices of $(h,h') \in
G^2$, so that connections which are gauge equivalent on $S^1 \cup
S^1$ may be restrictions of inequivalent connections on $S$.

Finally, suppose we have a functor $f : \fc{S^1 \cup S^1} \ra \V$, and
transport it to $(p_2)_{\ast} \circ p_1^{\ast} (f) : \fc{S^1} \ra \V$.
To see what this does, note that since $Z_G(S^1 \cup S^1$ that any
such $f$ can be written as a sum of irreducible functors (since
$Z_G(S^1 \cup S^1)$ is a KV 2-vector space).  So we can consider one
of these, say $f$, which assigns a copy of $\mathbbm{C}$ to each
connection in some gauge-equivalence class, say $([g],[g'])$, and $0$
to all others.  This $f$ assigns an isomorphism, compatibly, to each
gauge transformation (i.e. pair of elements $(h,h')$).  Such an
isomorphism amounts to multiplication by a complex number---so we get
a representation $\rho : G \times G \ra \mathbbm{C}$.

Now pull $f$ back to $p_1^{\ast}(f) : \fc{S} \ra \V$, a functor
$f(p_1(A))$.  This assigns a copy of $\mathbbm{C}$ to any connection
on $S$ which restricts to any representative of $([g],[g'])$---note
that these are not all equivalent.  To any gauge transformation given
by conjugation by $h$, it assigns the isomorphism $\rho(h,h)$.  So we
get the representation $\rho \circ \Delta : G \ra \mathbbm{C}$ for each
equivalence class (where $\Delta : G \ra G \times G$ is the diagonal
map).

Then push $p_1^{\ast}(f)$ forward to $(p_2)_{\ast} \circ p_1^{\ast}(f)
: \fc{S^1} \ra \V$.  To each connection on $S^1$ (represented by $g_1
\in G$) the colimit over the diagram of all connections restricting to
$g_1$.  That is, over all $(g,g')$ such that $gg'=g_1$.  So then we
get a copy of $\mathbbm{C}$ for each pairs of representatives of $[g]$
and $[g']$ which give $g_1$ as a product: note that there may be more
than one such, which are not gauge equivalent in $\fc{S}$.  The
diagram of all these amounts (by taking its skeleton) to just a
disjoint union of gauge-equivalence classes in $\fc{S}$.

For each class (since all copies of $\mathbbm{C}$ over it are equipped
with compatible isomorphisms) we just get one copy of $\mathbbm{C}$.
The group $G$ thought of as the group of gauge transformations acts on
each copy of $\mathbbm{C}$. If it acts nontrivially, then in the
colimit, at least two points in that $\mathbbm{C}$ will be identified
(since the isomorphisms given by the $G$-action must agree with the
maps into the colimit).  If this happens, that copy of $\mathbbm{C}$
collapses to zero.

So finally we have that 
\begin{equation}
(\pi_2)_{\ast} \circ \pi_1^{\ast}(g_1) \cong \bigoplus_{(g,g') \in
([g],[g']) | gg' = g_1} \mathbbm{C}[\opname{Aut}(g_1)]
\otimes_{\mathbbm{C}[\opname{Aut}(g,g')]} \mathbbm{C}
\end{equation}
where the direct sum is over all non-equivalent
$(g,g')$ representing $([g],[g'])$ and satisfying $gg'=g_1$, and the
action of $G$ on each component is as we have described.  On morphisms,
we get the direct sum of the isomorphisms between these copies of
$\mathbbm{C}$.

We can describe this as a categorified ``convolution of class
functions'' on $G$.  This is related to Example \ref{ex:grp2alg}, the
group 2-algebra on a group.  Note that this is almost the 2-vector
space of $\V$-presheaves on the groupoid of connections on $S^1$ -
except that here only ``equivariant'' functors, where there are
isomorphisms between spaces over conjugate elements of $G$, are
considered.  For such functors, the ``pants'' morphism amounts to
multiplication in the group 2-algebra.
\end{example}

An important special case of a higher cobordism for our
extended TQFT is the one where the objects in $\nCob$ are empty
manifolds $\emptyset$.  Then cobordisms between these are themselves manifolds
\textit{without} boundary, and cobordisms between these have boundary,
but no nontrivial corners.  So we have just a cobordism from one
manifold to another.  It is reasonable to expect that in this case,
the extended TQFT based on a group $G$ should give results equivalent
to those obtained from a TQFT based on the same group, suitably
reinterpreted.

\begin{example}\label{ex:ZSemptyB}
Consider a manifold $S$, thought of as a cobordism $S :
\emptyset \ra \emptyset$.  We expect that finding our $Z_G(S)$ for such a
cobordism should be like finding the vector space assigned to the
manifold $S$ by an ordinary TQFT. 

To see this, first note that $\Pi_1(\emptyset) = \emptyset$, the empty
category, and since this is the initial category, there is a single
functor from it to $G$, hence $\fc{\emptyset} = \catname{1}$, the
category with one object and one morphism.  Thus, $Z(\emptyset) \cong
\V$

Now, since every connection on $S$ ``restricts'' to the unique trivial
connection on $\emptyset$, the 2-linear map takes $\V$ to $\V$, and can
be represented as a $1 \times 1$ matrix of vector spaces.  In other
words, the operators both involve tensoring with a single vector
space.

Too see which vector spaces this is, begin with a 2-vector in
$Z(\emptyset) \cong \V$.  This amounts to a choice of a vector space,
say $V \in \V$.  Pulling back to $S$, we simply get the functor
assigning a copy of $V$ to every object of the groupoid $\fc{S}$.
Isomorphisms from $V$ to $V$ must be assigned to every arrow in this
groupoid.  But there is a unique isomorphism is $\fc{\emptyset}$, namely
the identity---so the pullback to $S$ must assign the identity to
every arrow.

So in fact, taking the pushforward gives a colimit of a diagram which
has a single copy of $V$ for each isomorphism class in $\fc{S}$, which
decomposes as a direct sum of these classes.  This is since the
colimit for just one class is just $V$, and for the whole groupoid is
the direct sum of one copy of $V$ from each isomorphism class.

So we have that:
\begin{equation}
(\pi_2)_{\ast} \circ \pi_1^{\ast}(-) \cong (- \otimes \mathbbm{C}^k)
\end{equation}
Where $k = |\underline{\fc{S}}|$ is the number of connected components
of $\fc{S}$.

If we reinterpret this as assigning $\mathbbm{C}^k$ to $S$, thought of
as a manifold, this does indeed recover the usual formula obtained
from a TQFT.  The TQFT based on the finite group $G$ will assign to a
manifold the Hilbert space of complex-valued functions on the space of
connections (strictly) modulo gauge transformations.  This is
equivalent to what we have just found.
\end{example}

The final element of our weak 2-functor is its effect on 2-morphisms,
so this is the subject of the next section.

\subsection{$Z_G$ on Cobordisms of Cobordisms}\label{sec:ZGonCobCob}

Now we consider the situation of a cobordism between cobordisms.  We
want to describe our extended TQFT as a weak 2-functor, so we want a
bicategory derived from our double bicategory $\nCob$.  By Theorem
\ref{thm:equiv}, this is possible, but we need to see just what a
2-morphism in this corresponding bicategory looks like.  Recall that
the source and target morphisms of the corresponding 2-morphism are
those obtained by composing horizontal and vertical morphisms which
form the edges of the square.

Given a square in $\nCob$, we have a diagram of the form
(\ref{xy:cspan2}).  When we turn this into a 2-cell, the source
morphism will be a cospan in the category of manifolds with corners.
It is found by taking the following pushout:
\begin{equation} \xymatrix{
   & & T_Y \circ S & & \\
   & S\ar^{\iota_S}[ur] & & T_Y \ar_{\iota_{T_Y}}[ul] & \\
   X\ar^{\iota_X}[ur] & & Y\ar_{\iota_Y}[ul]\ar^{\iota_Y}[ur] & & Y'\ar_{\iota_{Y'}}[ul] \\
}
\end{equation}
The pushout square is the central square here, where we get the object
$T_Y \circ S$ equipped with injections $\iota_{S}$ and $\iota_{T_Y}$
which make the square commute, and which is universal in the sense
that any other object with injections from $S$ and $T_Y$ factors
through $T_Y \circ S$.  So in particular, the maps into $M$ can be
factorized as the maps into $T_Y \circ S$ and the canonical injection
$\iota : T_Y \circ S \ra M$.  A similar argument applies to the target
morphism, so the situation we are interested in can be represented as
a cospan of cospans in the following way:
\begin{equation}\label{eq:zcobcob1}
\xymatrix{
  & \Pi_1(M) & \\
\Pi_1(S_1) \ar[ur]^{\iota} & & \Pi_1(S_2) \ar[ul]_{\iota'} \\
\Pi_1(X) \ar[urr]^(.7){\iota_2} \ar[u]^{\iota_1} & & \Pi_1(Y') \ar[u]_{\iota'_2} \ar[ull]_(.7){\iota'_1} \\
}
\end{equation}
with $S_1 = T_Y \circ S$ and $S_2 = S' \circ T_X'$.

Given this situation, which is a 2-morphism for the bicategory of
cobordisms, we want to get a 2-morphism in the bicategory $\iiV$.
That is to say, a natural transformation $\alpha_M$ between a pair of
2-linear maps.  The 2-linear maps in question are those we get by the
construction (\ref{eq:zstep3}).  So in particular,
\begin{equation}\label{eq:zcobcob2}
\xymatrix{
  & \fc{M} \ar[dl]^{\pi} \ar[dr]_{\pi'} & \\
\fc{S_1} \ar[d]^{\pi_1} \ar[drr]^(.3){\pi'_1} & & \fc{S_2} \ar[dll]_(.3){\pi_2} \ar[d]_{\pi'_2}  \\
\fc{X}  & & \fc{Y'}  \\
}
\end{equation}

And finally, quantizing these configuration groupoids by taking
functors into $\V$:

\begin{equation}\label{eq:zcobcob3}
\xymatrix{
  & \Z{M} & \\
\Z{S} \ar[ur]^{\pi^{\ast}} & & \Z{S'} \ar[ul]_{\pi'^{\ast}} \\
\Z{X} \ar[u]^{\pi_1^{\ast}} \ar[urr]^(.7){{\pi'}_1^{\ast}} & & \Z{Y'} \ar[ull]_(.7){\pi_2^{\ast}} \ar[u]^{{\pi'}_2^{\ast}}\\
}
\end{equation}

Now, recall that each of the pullback maps appearing here has an
adjoint, so we have functors $F_1 = ({\pi'}_1)_{\ast} \circ \pi_1^{\ast}$
and $F_2 = ({\pi'}_2)_{\ast} \circ \pi_2^{\ast}$ from $Z(X)=\Z{X}$ to
$Z(Y')=\Z{Y'}$.  A natural transformation will take an object $f \in
Z_G(X)$ and give a morphism $Z_G(M)(f) : F_1(f) \ra F_2(f)$ in $Z(Y')$
satisfying the usual naturality condition.  Now, an object in $Z_G(X)$,
namely a 2-vector, is a $\V$-presheaf on the groupoid of
$G$-connections on $X$ weakly mod gauge transformations.

The hoped-for morphism $Z_G(M)(f)$ in $Z(Y')$ is just a natural
transformation between two such functors $g,g' : \fc{Y'} \ra \V$.
That is, it assigns, for each connection $A \in \fc{Y}$, a linear map
between the two vector spaces: $(Z_G(M)(f))(A) : g(A) \ra g'(A)$.
We want to get $Z_G(M)$ from the cobordism with corners, $M$.
This we define by means of a ``pull-push'' process, similar to the one
used to define the 2-linear maps in the first place.

However, as remarked in Section \ref{sec:kv2vsgrpd}, any natural
transformation between a pair of 2-linear maps between KV 2-vector
spaces can be represented as a matrix of linear operators, as in
(\ref{eq:kvnattransmatrix}).  The matrix in question is indexed by
gauge equivalence classes of connections on $X$ and on $Y$.  Writing
$Z(S)$ in the matrix form means that given a pair $([A],[A'])$ of such
classes, there is a vector space $Z(S)_{[A],[A']}$.  Recall that we
found these vector spaces by the ``pull-push'' process for presheaves
along inclusion maps.

A natural transformation between such functors is a matrix of linear
maps, so we will have 
\begin{equation}
Z_G(M)_{[A],[A']} : Z_G(S)_{[A],[A']} \ra Z_G(S')_{[A],[A']}
\end{equation}
But now we can use the fact that the top level of the tower of spans
of groupoids in (\ref{eq:zcobcob3}) is of the same form as that for
cobordisms between manifolds given in (\ref{eq:zstep3}).  The
component linear maps arise by applying a similar ``pull-push''
process to that used in Section \ref{sec:ZGonCob} to define $Z_G$ on
cobordisms.  

Since there are canonical bases ${[A] \in \mathcal{A}_0(S)}$ and $[A']
\in \mathcal{A}_0(S')$ for the vector spaces $Z(S) \cong
\mathbbm{C}^k$ and $Z(S') \cong \mathbbm{C}^{k'}$, so we can represent
$T$ as a $k \times k'$ matrix.  We then need to describe the effect of
$T$ on a vector in $\mathbbm{C}^k$.  Such a vector amounts to an
assignment of a scalar to each gauge equivalence class of connections
in $\fc{S}$.  In particular, to find the component $T_{[A'],[A]}$
indexed by the class $[A']$ of connections on $S'$, and the class
$[A]$ on $S$, take the vector corresponding to the function equal to
$1$ on $[A]$ and $0$ elsewhere.

The linear map $T$ acts by the ``pull-push'' operation.  The first
stage---pullback gives a function on $\fc{M}$ which is $1$ on any
gauge-equivalence class of connections $[B]$ on $M$ restricting to
$[A]$ on $S$.  Pushing this forward involves taking a sum over all
classes of connection restricting to $[A']$ on $S'$.  Clearly, the
only nonzero contributions are from those connections which restrict
to $[A]$ on $S$.  The action of $T$ extends linearly to all of $V$, so
it is represented by a $k \times k'$ matrix whose entries are indexed
by classes of connections.

So indeed, all discussion of the construction of the natural
transformation will parallel the construction of the 2-linear maps,
but at a lower categorical level, since we get a matrix of scalars
rather than vector spaces---this time in each component $([A],[A'])$.
The resulting linear map (and its matrix representation) can then be
``lifted'' to a natural transformation between 2-linear maps.

A more tricky question is what contribution to expect from those which
do restrict to $[A]$.  Naively, one might expect to simply take a sum
of the function values (all equal to $1$ at the moment) over all such
connections.  Since this ignores the morphisms in $\fc{M}$, one might
perhaps imagine the sum should be over only equivalence classes of
connections.  However, one should suspect that this is also incorrect,
since when we found a pushforward for $\V$-presheaves, we took not a
direct sum over equivalence classes, but a colimit.

In fact, the correct prescription involves the \textit{groupoid
cardinality} of the groupoid of those connections which contribute to
the sum.  This concept is described by Baez and Dolan \cite{finfeyn},
and related to Leinster's \cite{leinstereuler} concept of the Euler
characteristic of a category.  For a more in-depth discussion of
groupoid cardinality, and also of its role (closely related to the
role it plays here) in a simple model in quantum mechanics, see the
author's paper \cite{morton} on the categorified harmonic oscillator.

The cardinality of a groupoid $\catname{G}$ is:
\begin{equation}
 |\catname{G}| = \sum_{[ x ] \in \underline{\catname{G}}} \frac{1}{| \opname{Aut} ( x ) |}
\end{equation} the sum is over isomorphism classes in $\catname{G}$.
This quantity is invariant under equivalence of categories, and should
be the pushforward of the constant function $1$.  So we define:

\begin{definition}\label{def:ZGonM}Given cobordism between cobordisms,
$M : S \ra S'$, for $S,S' : B \ra B'$, then
\begin{equation}
Z_G(M) : Z_G(S) \ra Z_G(S')
\end{equation} is a natural transformation given by a matrix of linear
operators:
\begin{equation}
Z_G(M)_{j,k} : Z_G(S)_{j,k} \ra Z_G(S')_{j,k}
\end{equation}
where the vector space $Z_G(S)_{j,k}$ is the $(j,k)$ component of the
matrix for the 2-linear map $Z(S)$.  This is indexed by choices
$(j,k)$, where $j$ identifies an equivalence class $[A]$ of
connections on $B$.  

The linear map $Z_G(M)_{j,k} = T$ is described by the matrix:
\begin{equation}\label{eq:ZGonM}
T_{[A],[A']} = |(j \times j')^{-1}(A,A')|
\end{equation}
the groupoid cardinality of the \textit{essential preimage} of
$(A,A')$, where $A$ is a connection on $S$ and $A'$ a connection on $S'$.
\end{definition}
(That is, of the groupoid of all connections on $M$ simultaneously
restricting to a connection gauge equivalent to $A$ on $S$ and $A'$ on
$S'$.)

Since this is a matrix of linear transformations between the correct
vector spaces, it defines a natural transformation.  This is the last
element of the extended TQFT $Z_G$ which needs to be defined---Theorem
\ref{thm:XTQFTmainthm} will show that its behaviour on manifolds,
cobordisms, and cobordisms between cobordisms satisfy the axioms of a
weak 2-functor.  Two parts of this we prove here separately.  The
first is strict preservation of vertical composition; the second is
preservation of horizontal composition as strictly as possible
(i.e. up to the isomorphisms $\beta$ which make comparison possible -
as we will see).

\begin{theorem}The assignment $Z_G(M)$ to cobordisms with corners
given by (\ref{eq:ZGonM}) preserves vertical composition strictly:
$Z_G(M'M) = Z_G(M') \circ Z_G(M)$.
\end{theorem}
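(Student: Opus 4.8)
The plan is to reduce the claimed equality, step by step, to a single arithmetic identity about groupoid cardinality under gluing. First I would observe that in a bicategory vertical composition of $2$-cells is strictly associative and unital, so --- unlike horizontal composition, which is only preserved up to the comparison isomorphisms $\beta$ of Theorem~\ref{thm:zcomposfunc} --- the assertion $Z_G(M'M)=Z_G(M')\circ Z_G(M)$ is meant as an on-the-nose equality of natural transformations, and no associators or unitors enter. By Lemma~\ref{lemma:kvnattransmatrix} each of $Z_G(M)$, $Z_G(M')$, $Z_G(M'M)$ is recorded by a matrix of component linear maps indexed by pairs $(j,k)$ of gauge-equivalence classes of connections on the corner manifolds $B$ and $B'$, and vertical composition of natural transformations between $2$-linear maps is precisely componentwise composition of these matrices of linear maps. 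So it suffices to show, for each fixed $(j,k)$, that $Z_G(M'M)_{j,k}=Z_G(M')_{j,k}\circ Z_G(M)_{j,k}$ as linear maps $Z_G(S)_{j,k}\to Z_G(S'')_{j,k}$.

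Next I would pin down the geometry. The vertical composite $M'M$ is the manifold with corners obtained by gluing $M$ and $M'$ along the cobordism $S'$ that is simultaneously the target of $M$ and the source of $M'$, a pushout along a boundary face of exactly the kind analysed in Section~\ref{sec:cobcat}. Applying the functor $[\Pi_1(-),G]$ converts this pushout into a weak pullback of groupoids of connections, so $\fc{M'M}$ is equivalent to the weak pullback of $\fc{M}\to\fc{S'}\leftarrow\fc{M'}$; this is the comma-category picture of Remark~\ref{rk:commasquare}, now applied one categorical level above the use made of it in Theorem~\ref{thm:zcomposfunc}. Since taking essential preimages commutes with this weak pullback, the groupoid governing the $([A],[A''])$-entry of $Z_G(M'M)_{j,k}$ --- the connections on $M'M$ restricting, up to gauge, to $A$ on $S$ and $A''$ on $S''$ and lying over $j,k$ on $B,B'$, a constraint untouched by the gluing --- is the weak pullback over $\fc{S'}$ of the essential preimage of $A$ in $\fc{M}$ with the essential preimage of $A''$ in $\fc{M'}$.

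At this point the theorem is the statement that groupoid cardinality is multiplicative along a composite of spans of groupoids. Concretely, one decomposes the weak pullback groupoid above as a coproduct over the isomorphism classes $[A']$ of the connection it induces on $S'$; over a fixed $[A']$ the relevant part of $\fc{S'}$ is equivalent to a one-object groupoid whose morphism group is $\opname{Aut}(A')$, and the weak pullback is then the homotopy pullback over that one-object groupoid of the two essential-fibre groupoids, whose cardinality is the product of their cardinalities divided by $|\opname{Aut}(A')|$. Reassembling the coproduct, the $([A],[A''])$-entry of $Z_G(M'M)_{j,k}$ becomes $\sum_{[A']}\frac{1}{|\opname{Aut}(A')|}\,|(q,r)^{-1}(A,A')|\cdot|(q',r')^{-1}(A',A'')|$, where $q,r$ and $q',r'$ are the restriction functors $\fc{M}\to\fc{S}$, $\fc{M}\to\fc{S'}$ and $\fc{M'}\to\fc{S'}$, $\fc{M'}\to\fc{S''}$; and this is exactly the $([A],[A''])$-entry of the matrix product $Z_G(M')_{j,k}\circ Z_G(M)_{j,k}$, once one uses that the ``push'' half of the pull-push carries the groupoid-cardinality weights of Definition~\ref{def:ZGonM} (equivalently, that the bases of these $2$-Hilbert spaces are normalised by groupoid cardinality as in Section~\ref{sec:CY}), so that the weight $1/|\opname{Aut}(A')|$ at the intermediate connections is precisely what composing the two operators produces. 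Finally, the composite is manifestly a matrix of linear maps between the correct component vector spaces, hence a genuine natural transformation, and the equality holds as natural transformations.

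I expect the one genuinely delicate point to be this last bookkeeping of automorphism-group normalizations: verifying that the essential preimage over the glued cobordism really factors as the weak pullback over $\fc{S'}$ of the two essential preimages, and that the cardinality of such a weak pullback is the product of cardinalities divided by $|\opname{Aut}(A')|$ --- that is, getting the groupoid-cardinality arithmetic to line up with whatever normalization of the ``push'' and of the inner product on $\iiV$ is in force. Everything else --- the reduction to $(j,k)$-components, the strictness of vertical composition, and the identification of $M'M$ with a gluing whose connection groupoid is a weak pullback --- is formal and parallels arguments already in place in Section~\ref{sec:ZGonCob}.
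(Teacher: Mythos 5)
Your reduction to $(j,k)$-components and your identification of $\fc{M'M}$ as a weak pullback of $\fc{M}\to\fc{S'}\leftarrow\fc{M'}$ are both sound, and arguably more careful than what the paper does at the corresponding step. The gap is in the groupoid-cardinality arithmetic at exactly the point you yourself flag as delicate, and it does not close. For a homotopy pullback $X\times^h_{B}Y$ over a one-object groupoid $B$ with automorphism group $H$, the fibration $X\times^h_B Y\to Y$ (whose fibre over $y$ is the homotopy fibre of $X\to B$) gives
\begin{equation*}
|X\times^h_B Y| \;=\; |X|\cdot|Y|\cdot|H|,
\end{equation*}
when $|X|$ and $|Y|$ denote the cardinalities of the \emph{essential preimages} (full subgroupoids), because the homotopy fibre has cardinality $|H|$ times that of the essential preimage. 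A sanity check: for $EG\to BG\leftarrow EG$ the strict (equivalently homotopy) pullback is the discrete groupoid on $G$, of cardinality $|G|$, while each essential preimage is all of $EG$, of cardinality $1$. So the correct intermediate factor is $|\opname{Aut}(A')|$, not $1/|\opname{Aut}(A')|$ as you assert; your formula is off by $|\opname{Aut}(A')|^2$. Moreover, the claim that your leftover weight ``is precisely what composing the two operators produces'' has no support in the text: Definition \ref{def:ZGonM} defines the matrix entries as unweighted groupoid cardinalities of essential preimages, and the composite $Z_G(M')\circ Z_G(M)$ is the unweighted matrix product $\sum_{[A']} T_{[A],[A']}\,T'_{[A'],[A'']}$, so nothing in the composition generates a $1/|\opname{Aut}(A')|$ (or an $|\opname{Aut}(A')|$) to cancel against. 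The appeal to the inner-product normalization of Section \ref{sec:CY} is a red herring: that material plays no role in the definition of $Z_G$ on $2$-morphisms.

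For comparison, the paper closes this step by a different assertion: after fixing a particular connection $A_2$ on the gluing locus, it argues via Mayer--Vietoris that the fibre $\widehat{(A_1,A_2,A_3)}$ is the literal cartesian product of the two essential preimages, so that no $\opname{Aut}(A_2)$ factor appears at all. Your route, with the arithmetic corrected, would instead yield an extra factor of $|\opname{Aut}(A_2)|$ in each term of the sum (the cartesian product forgets the constraint that the two gauge transformations agree on the gluing locus, which both enlarges automorphism groups and merges isomorphism classes relative to the fibre product). That the two computations disagree---and that your version disagrees with both---is precisely the sign that this normalization cannot be waved through. To complete the proof along your lines you must either exhibit the fibre as an honest product of the two essential preimages, or build the compensating $|\opname{Aut}(A_2)|$ weight explicitly into the entries of (\ref{eq:ZGonM}) and the matrix product; as written, your argument does neither.
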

\begin{proof}
Vertical composition is just component-wise composition of linear
operators.  So it suffices to show that given any component,
composition is preserved.  That is, given a vertical composite of two
cobordisms between cobordisms:
\begin{equation}
\xymatrix{
B \ar@/^3pc/[rrr]^{S_1}="1" \ar[rrr]^{S_2}="2" \ar@/_3pc/[rrr]_{S_3}="3" & & & B' \\
 {\ar@{=>}"1"+<0ex,-2.5ex> ;"2"+<0ex,+2.5ex>^{M}}
 {\ar@{=>}"2"+<0ex,-2.5ex> ;"3"+<0ex,+2.5ex>^{M'}}
}
\end{equation}
we get matrices $Z(S_1)_{(j,k)}$, $Z(S_2)_{(j,k)}$, and
$Z(S_3)_{j,k}$, of vector spaces indexed by
connections-and-representations on $B$ and $B'$ as in Definition
\ref{def:ZGonM}.

For the following, fix a component---i.e. a gauge equivalence class of
connections $[A]$ on $B$ and representation of $\opname{Aut}([A])$,
and similarly for $B'$.

Then we have two linear operators.  The first is
\begin{equation}
Z_G(M)_{j,k}=T :  Z(S_1)_{(j,k)} \ra Z(S_2)_{(j,k)}
\end{equation}
and is given as a matrix, indexed by equivalence classes of
connections $[A_1]$ on $S_1$ and $[A_2]$ on $S_2$, as follows.  The
component $T_{[A_1],[A_2]}$ is the groupoid cardinality of the
groupoid of all connections on $M$ which are gauge equivalent to ones
restricting to both $A_1$ and $A_2$---that is, the \textit{essential
preimage} of $(A_1,A_2)$. Denote this by $|\widehat{(A_1,A_2)}|$.

The second operator
\begin{equation}
Z_G(M')_{j,k}=T' :  Z(S_2)_{(j,k)} \ra Z(S_3)_{(j,k)}
\end{equation}
is likewise a matrix, indexed by equivalence classes of connections
$[A_2]$ on $S_2$ and $[A_3]$ on $S_3$, where $T'_{[A_2],[A_3]} =
|\widehat{(A_2,A_3)}|$, the groupoid cardinality of the essential preimage
of $(A_2,A_3)$ (a groupoid of connections on $M'$).

The product of these is then just given by matrix multplication, so
that
\begin{equation}
(T'T)_{[A_1],[A_3]} = \sum_{[A_2]} |\widehat{(A_1,A_2)}| \times |\widehat{(A_2,A_3)}|
\end{equation}
That is, to get the component of the image of a delta functon on
$[A_1]$ in the connection $[A_2]$, one takes a sum over equivalence
classes of connections $[A_2]$ on $B_2$.  The sum is of of the
products of the groupoid cardinalities of connections on $M$ and $M'$
restricting to this $[A_2]$.

We need to show this is the same as the linear operator obtained from
the same $(j,k)$ component for the 2-morphism $Z_G(M'M)$.  But we know that
\begin{equation}
Z_G(M'M)_{(j,k)} = R :  Z(S_1)_{(j,k)} \ra Z(S_3)_{(j,k)}
\end{equation}
has component
\begin{equation}
|\widehat{(A_1,A_3)}|
\end{equation}
the groupoid cardinality of the essential preimage of $(A_1,A_3)$,
which is a groupoid of connections on $M'M$.  So we really just need
the fact that groupoid cardinalities behaves well with respect to sum
and product.

In particular, $\widehat{(A_1,A_3)}$ is a groupoid of connections on $M'M$,
but each of these has a restriction to $S_2$, and if two connections
on $M'M$ have gauge-inequivalent restrictions to $S_2$, they must be
gauge inequivalent.  So $\widehat{(A_1,A_3)}$ is a direct sum over the
possible gauge-equivalence classes of restrictions $[A_2]$ to $S_2$.
Since the groupoid cardinality of a direct sum of groupoids is the sum
of their cardinalities, we thus have
\begin{equation}
|\widehat{(A_1,A_2)}| = \sum_{[A_2]} |\widehat{(A_1,A_2,A_3)}|
\end{equation}
where $\widehat{(A_1,A_2,A_3)}$ is the groupoid of connections on $M'M$
which restrict to all the $A_i$ simultaneously.

However, we claim this is just the cartesian product of groupoids.
This is since $M'M$ is an equivalence class of manifolds with corners,
where a standard representative for $M'M$ is a representative for $M'$
and for $M$, identified at the images of the common inclusions of
$S_2$.  By a generalization of the Meyer-Vietoris theorem (see, for
instance, Brown \cite{brown}) we have $\Pi_1(M'M)$ likewise is a
disjoint union of $\Pi_1(M')$ and $\Pi_1(M)$, modulo the equivalence
of the images of $\Pi_1(S_2)$.  But then, taking functors into $G$, we
have $\fc{M'M}$ is a subgroupoid of the product $\fc{M'} \times
\fc{M}$, containing only the objects (connections) such that the
connections in the two components agree on $S_2$.  Since we have fixed
a particular connection $A_2$ on $S_2$, we just get the cartesian
product of groupoids of connections on $M'$ and $M$ respectively which
restrict to $A_2$.

Now, since the groupoid cardinality of a cartesian product of groupoids
is the product of their groupoid cardinalities, we have
\begin{equation}
R_{[A_1],[A_3]} = (T'T)_{[A_1],[A_3]}
\end{equation}
so $Z_G$ preserves vertical composition of 2-morphisms strictly.
\end{proof}

A similar result holds for vertical composition.


\begin{theorem}The assignment $Z_G(M)$ to cobordisms with corners
given by (\ref{eq:ZGonM}) preserves horizontal composition strictly,
up to the isomorphism weakly preserving composition of the source and
target morphisms:
\begin{equation}
\xymatrix{
Z_G(B) \ar@/^3pc/[rrrr]^{Z_G({S'}_1 \circ S_1)}="a" \ar@{}[rrrr]^{}="c" \ar@/_3pc/[rrrr]_{Z_G({S'}_2 \circ S_2)}="b" \ar@/^1pc/[rr]^{Z_G(S_1)}="1" \ar@/_1pc/[rr]_{Z_G({S'}_1)}="2"  & & Z_G(B') \ar@/^1pc/[rr]^{Z_G(S_2)}="3" \ar@/_1pc/[rr]_{Z_G({S'}_2)}="4" & & Z_G(B'') \\
 {\ar@{=>}"1"+<0ex,-2.5ex> ;"2"+<0ex,+2.5ex>^{M}}
 {\ar@{=>}"3"+<0ex,-2.5ex> ;"4"+<0ex,+2.5ex>^{M'}}
 {\ar@{=>}"a"+<0ex,-2.5ex> ;"c"+<0ex,+2.5ex>^{\beta_{S_1,{S'}_1}}}
 {\ar@{=>}"c"+<0ex,-2.5ex> ;"b"+<0ex,+2.5ex>^{\beta_{S_2,{S'}_2}^{-1}}}
}
=
\xymatrix{
Z_G(B) \ar@/^2pc/[rrrr]^{Z_G({S'}_1 \circ S_1)}="1" \ar@/_2pc/[rrrr]_{Z_G({S'}_2 \circ S_2)}="2" & & & & Z_G(B'') \\
 {\ar@{=>}"1"+<0ex,-2.5ex> ;"2"+<0ex,+2.5ex>^{Z_G(M' \otimes_H M)}}
}
\end{equation}
\end{theorem}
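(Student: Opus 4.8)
The plan is to follow the template of the just-completed proof for vertical composition: reduce both sides to matrices, check a single component at a time, and let everything come down to the additivity and multiplicativity of groupoid cardinality --- but now with the extra layer of bookkeeping forced by the fact that on the level of $1$-morphisms the composites only exist weakly. First I would invoke Lemma \ref{lemma:kvnattransmatrix} to represent each natural transformation in sight as a matrix of linear maps, so that it suffices to check the displayed equation in a single $([A],[A''])$-component, where $[A]$ is a gauge class of connections on $B$ and $[A'']$ one on $B''$ (together with representations of their automorphism groups). Here Theorem \ref{thm:zcomposfunc} is exactly what makes the comparison meaningful: the isomorphism $\beta$ identifies the $([A],[A''])$-component of $Z_G$ applied to the composite source (resp.\ target) cobordism --- a vector space whose basis is the set of gauge classes of connections on that composite restricting to $[A]$ and $[A'']$ --- with $\bigoplus_{[A']}[Z_G(S_1)]_{[A],[A']}\otimes[Z_G(S_2)]_{[A'],[A'']}$, the corresponding entry of the matrix product (see Remark \ref{rk:composbeta}).

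With this reduction in hand, the equality to be proved says that a single entry of the linear map $Z_G(M'\otimes_H M)$ in that component --- by Definition \ref{def:ZGonM} a groupoid cardinality $\bigl|\widehat{(A,A'')}\bigr|$ of the essential preimage, inside $\fc{M'\otimes_H M}$, of a connection on the composite source and one on the composite target --- equals the corresponding entry of the ``matrix product'' of $Z_G(M)$ and $Z_G(M')$, namely a sum over gauge classes $[A']$ on $B'$ of products $\bigl|\widehat{(A,A')}_M\bigr|\cdot\bigl|\widehat{(A',A'')}_{M'}\bigr|$. The topological input is that, by the construction of $\nCob$ in Chapter \ref{chap:cobcorn}, $M'\otimes_H M$ is the pushout of $M$ and $M'$ along the cobordism over $B'$ that they share; applying the generalized Mayer--Vietoris/van Kampen theorem for fundamental groupoids (Brown \cite{brown}), exactly as in the vertical-composition argument, $\Pi_1(M'\otimes_H M)$ is the corresponding amalgam, so after the contravariant functor $[-,G]$ the groupoid $\fc{M'\otimes_H M}$ is the \emph{weak} pullback (comma category) of $\fc{M}$ and $\fc{M'}$ over the connections on the shared boundary --- connections on $M$ and $M'$ need only agree \emph{up to gauge equivalence} there, which is precisely why this is a weak pullback and why the $\beta$'s have to appear.

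The computation then proceeds in two steps, as for vertical composition. First, two connections on $M'\otimes_H M$ whose restrictions to the shared boundary lie in distinct gauge classes $[A']$ are themselves gauge inequivalent, so the essential-preimage groupoid is a disjoint union over $[A']$, and groupoid cardinality is additive over disjoint unions. Second, over a fixed $[A']$ the comma-category description of Remark \ref{rk:commasquare} identifies the relevant subgroupoid with the fibred product of the essential-preimage groupoids of $M$ and $M'$ that restrict to $[A']$, whose automorphism groups are the fibre products $H\times_{M_0}K$ of Remark \ref{rk:commasquare} (with $M_0$ the automorphism group of the shared-boundary connection); since groupoid cardinality is multiplicative over fibred products of groupoids, this piece contributes $\bigl|\widehat{(A,A')}_M\bigr|\cdot\bigl|\widehat{(A',A'')}_{M'}\bigr|$. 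Summing over $[A']$ gives $\bigl|\widehat{(A,A'')}\bigr|=\sum_{[A']}\bigl|\widehat{(A,A')}_M\bigr|\cdot\bigl|\widehat{(A',A'')}_{M'}\bigr|$, and --- because $\beta$ in Theorem \ref{thm:zcomposfunc} was built out of exactly this same comma-square identification of connection groupoids --- the resulting equality of linear maps is precisely the displayed equation once one conjugates by the $\beta$ for the source cobordism and the inverse $\beta$ for the target.

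The step I expect to be the real obstacle is this last compatibility: confirming that the bookkeeping encoded in $\beta$ (defined in Theorem \ref{thm:zcomposfunc} through the tensor-product-over-group-algebra manipulation of the weak pullback square of connection groupoids) is the \emph{same} bookkeeping as the decomposition of the essential-preimage groupoid of $M'\otimes_H M$ --- in particular, that the automorphism groups $H\times_{M_0}K$ feed into the groupoid-cardinality sum with exactly the right denominators, and that the weakness of the pushout composition of cobordisms with corners contributes no isomorphism beyond the $\beta$'s. Once one has pinned down that the weak pullback square of connection groupoids sitting in the middle of the horizontal composite is literally the same square whose $\V$-presheaf avatar produced $\beta$ in Theorem \ref{thm:zcomposfunc}, the rest is the routine additivity and multiplicativity of groupoid cardinality already used for vertical composition, together with the observation that the isomorphisms $\beta_{S_1,{S'}_1}$ and $\beta_{S_2,{S'}_2}^{-1}$ named in the statement are exactly the two interface isomorphisms just produced.
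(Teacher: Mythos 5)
Your proposal is correct and follows essentially the same route as the paper's own proof: reduce to matrix components via the $\beta$ isomorphisms, decompose the groupoid of connections on $M'\otimes_H M$ as a disjoint union over gauge classes on the shared boundary, and then use additivity and multiplicativity of groupoid cardinality to match the block-matrix form of $Z_G(M')\otimes Z_G(M)$. If anything, you are more explicit than the paper about the comma-category/fibre-product structure of the automorphism groups in the weak pullback and about why the $\beta$'s of Theorem \ref{thm:zcomposfunc} are exactly the interface isomorphisms needed, points the paper treats rather tersely.
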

\begin{proof}
The horizontal composition involves ``matrix multiplication'' at the level of composition of 2-linear maps.  Given a
horizontal composite
\begin{equation}
\xymatrix{
B \ar@/^2pc/[rr]^{S_1}="1" \ar@/_2pc/[rr]_{S_2}="2"  & & B' \ar@/^2pc/[rr]^{{S'}_1}="3" \ar@/_2pc/[rr]_{{S'}_2}="4" & & B'' \\
 {\ar@{=>}"1"+<0ex,-2.5ex> ;"2"+<0ex,+2.5ex>^{M}}
 {\ar@{=>}"3"+<0ex,-2.5ex> ;"4"+<0ex,+2.5ex>^{M'}}
}
\end{equation}
the functor $Z_G$ assigns 2-linear maps to the cobordisms $S_1$,
$S_2$, ${S'}_1$, and ${S'}_2$, and natural transformations to $M$ and
$M'$.  Then the horizontal composite is a natural transformation
\begin{equation}
Z_G(M' \otimes_H M) : Z_G({S'}_1) \circ Z_G(S_1) \ra Z_G({S'}_2) \circ Z_G(S_2)
\end{equation}

As discussed in Remark \ref{rk:composbeta}, the isomorphisms $\beta$
allow comparison of the horizontal composite of natural
transformations $Z_G(M') \otimes Z_G(M)$ with the natural
transformation $Z_G(M' \otimes_H M)$.  The presence of the $\beta$
isomorphisms only allows us to disregard the distinction between
$Z_G(S_2 \circ S_1)$ and $Z_G(S_2) \circ Z_G(S_1)$ (and likewise for
the $S'$).

So first consider $Z_G(M') \otimes Z_G(M)$, the horizontal composite
of the natural transformations in $\iiV$ corresponding to the
cobordisms with corners.  Each of these natural transformations can be
represented as a matrix of linear maps:
\begin{equation}
Z_G(M)_{[A_1],[A_2]} : V_{[A_1],[A_2]} \ra W_{[A_1],[A_2]}
\end{equation}
where the $V$'s are the coefficients of $Z_G(S_1)$ and $W$'s are those
of $Z_G(S_2)$.  The coefficients of $Z_G(M')$ are similarly
\begin{equation}
Z_G(M')_{[A_2],[A_3]} : {V'}_{[A_2],[A_3]} \ra {W'}_{[A_2],[A_3]}
\end{equation}
Then the horizontal product $Z_G(M') \otimes Z_G(M)$ will be given
by the matrix of linear maps:
\begin{equation}
\bigoplus_{[A_2]} \bigl{(}  Z_G(M)_{[A_1],[A_2]} \otimes Z_G(M')_{[A_2],[A_3]} \bigr{)} : \\
   \bigoplus_{[A_2]} \bigl{(}  V_{[A_1],[A_2]} \otimes {V'}_{[A_2],[A_3]} \bigr{)}
    \ra 
    \bigoplus_{[A_2]} \bigl{(} W_{[A_1],[A_2]} \otimes {W'}_{[A_2],[A_3]} \bigr{)}
\end{equation}

The $([A_1],[A_3])$ component of this product is a linear map given as
a block matrix, with one block for each gauge equivalence class of
connections $[A_2]$ on $B_2$, and whose blocks consist of the tensor
product of the matrices from the components of $Z_G(M)$ and $Z_G(M')$.
So suppose these are, respectively, $n \times m$ and $n' \times m'$
dimensional matrices.  Then the result is an $(n\times n') \times (m
\times m')$ matrix, where the $((i,i'),(j,j'))$ component is the
product of the $(i,j)$ component of $Z_G(M)$ and the $(i',j')$
component of $Z_G(M')$.

Recall that these indexes mark connections on the cobordisms: the
$(i,j)$ component of $Z_G(M)$ is the groupoid cardinality of the
groupoid of connections on $M$ which match the $i^{th}$ on $S_1$ and
the $j^{th}$ on $S_2$; and the $(i',j')$ component of $Z_G(M')$ is the
groupoid cardinality of the groupoid of connections on $M'$ which
match the ${i'}^{th}$ on ${S'}_1$ and the ${j'}^{th}$ on ${S'}_2$.
But this is the groupoid cardinality of the product groupoid whose
objects are just these pairs, since groupoid cardinality respects
products.

Next consider $Z_G(M' \otimes_H M)$, the natural transformation in
$\iiV$ corresponding to the horizontal composite of the cobordisms
with corners.  Again, this can be represented as a matrix of linear
maps indexed by pairs $([A_1],[A_3])$ just as above:
\begin{equation}
Z_G(M' \otimes_H M)_{[A_1],[A_3]}: U_{[A_1],[A_3]} \ra X_{[A_1],[A_3]}
\end{equation}
where the $U$ have a basis of equivalence classes connections on
${S'}_1 \circ S_1$, and the $X$ on ${S'}_2 \circ S_2$, which restrict
to $[A_1]$ and $[A_3]$.

But on the other hand, using the $\beta$ isomorphisms to identify the
source and targets 
allows us to compare this directly to the other side.

But the groupoid of connections on ${S'}_1 \circ S_1$ has the
restriction maps $p_S$ and $p_{S'}$ to give connections on $S$ and
$S'$.  Moreover, the connections obtained this way agree up to gauge
equivalence on $B_2$ (since composition of cobordisms is given by a
weak pushout).  The gauge equivalence up to which these agree is given
by the natural isomorphism $\alpha$ from the weak pullback of
connection groupoids.  So the components $U_{[A_1],[A_3]}$ decompose
as a direct sum over $[A_2]$ on $B_2$ of pairs of connections, one on
$S_1$, and one on ${S'}_1$, each of which restricts to $[A_2]$ and
either $[A_1]$ or $[A_3]$.  Similarly for the vector spaces
$X_{[A_1],[A_3]}$.

Now, the groupoid of all connections on $M' \otimes_H M$ is a fibred
product over $\fc{B_2}$, since each such connection restricts to just
one gauge equivalence class on $B_2$.  Then for each such $[A_2]$, the
groupoid of connections decomposes as a product over choices of
restrictions to the $S$ on each side.  So it is just a product of the
groupoids of connections on $M'$ and $M$, separately, which restrict
$[A_2]$.  Restrictions to ${S'}_1 \circ S_1$ and ${S'}_2 \circ S_2$
each give separate restrictions to the two halves. Then the
cardinality of this groupoid in any component (i.e. with any
particular restrictions to source and target) is just the product of
the groupoid cardinalities for the corresponding restrictions on $M'$ and $M'$.

But this is exactly what we found for $Z_G(M') \otimes Z_G(M)$.  So
the two sides are equal as required.
\end{proof}

Again, a special instance of an extended TQFT is when it ``restricts''
to a TQFT.

\begin{example}\label{ex:ZMemptyB} Returning to the example of
cobordisms between empty manifolds, suppose we have two such
cobordisms $S$ and $S'$, and a cobordism with (trivial!) corners $M :
S \ra S'$.  In fact, the effect should be similar to that of
evaluating a TQFT on $M$ thought of as a cobordism between manifolds,
in a precisely analogous way that $Z_G(S)$ can be thought of as a TQFT
giving a vector space for the manifold $S$.

In particular, we have, by the argument in Example \ref{ex:ZSemptyB}, that:
\begin{equation}
Z(S) \cong (- \otimes \mathbbm{C}^k)
\end{equation}
and
\begin{equation}
Z(S') \cong (- \otimes \mathbbm{C}^{k'})
\end{equation}
where $k$ and $k'$ are the number of isomorphism classes of
connections on $S$ and $S'$ respectively.  If we think of these as
being vector spaces $\mathbbm{C}^k$ and $\mathbbm{C}^{k'}$ assigned by
a TQFT, then a cobordism should assign a linear map $T : \mathbbm{C}^k
\ra \mathbbm{C}^{k'}$.  Indeed, such a linear map will give rise to a
natural transformation from $Z(S)$ to $Z(S')$ by giving, for any
objects $V \in \V$ on the left side of the diagram, the map $1_V
\otimes T$ on the right side.  Moreover, all such natural
transformations arise this way.

Now, the diagram from (\ref{eq:zcobcob3}) gives rise to a 2-morphism
in $\Cat$:
\begin{equation}
\xymatrix{
  \V \ar@/^2pc/[rr]^{(\pi_2)_{\ast} \circ \pi_1^{\ast}}="0" \ar@/_2pc/[rr]_{(\pi'_2)_{\ast} \circ (\pi'_1)^{\ast}}="1" & & \V \\
   {\ar@{=>}"0"+<0ex,-2.5ex> ;"1"+<0ex,+2.5ex>^{Z(M)}}
}
\end{equation}

Here, $Z(M)$ arises from the 2-linear map
\begin{equation}
\pi'_{\ast} \circ \pi^{\ast} : \Z{S} \ra \Z{S'}
\end{equation}
as described in Definition \ref{def:ZGonM}.
\end{example}

Having now described the effect of the extended TQFT at each level -
manifolds, cobordisms, and cobordisms with corners---it remains to
check that these really define a 2-functor of the right kind.  This is
the task of Section \ref{sec:maintheorem}.

\subsection{Main Theorem}\label{sec:maintheorem}

Now let us recap the discussion so far.  For each finite group $G$, we
want to get a weak 2-functor from the bicategory associated to the
double bicategory of cobordisms with corners into 2-vector spaces,
$Z_G: \nCob \ra \iiV$.  This has three aspects, for which we then must
verify some properties.

To a compact $(n-2)$-manifold, $Z_G$ assigns a 2-vector space.  This consists of
$\V$-presheaves on the groupoid of $G$-connections on $B$ weakly
modulo gauge transformations.

To a cobordism between $(n-2)$-manifolds, $S: B \ra B'$ in $\nCob$,
$Z_G$ assigns a span of the groupoids of $G$-connections, as in
(\ref{eq:zstep2}).  Then a $\V$-presheaf $F$ on $\fc{B}$ can be
transported along the span by first pulling back onto $\fc{S}$ along
the restriction $\pi$ of connections on $S$ to connections on $B$.  We
then push forward this pullback $\pi^{\ast} F$ along the restriction
$\pi'$ of $\fc{S}$ to $\fc{B'}$ to give a $\V$-presheaf $\pi'_{\ast}
\circ \pi^{\ast} F$ on $\fc{B'}$.

To a cobordism between cobordisms, $Z_G$ assigns a natural
transformation in a similar fashion.  Given two functors corresponding
to cobordisms, as above, if there is a cobordism between them, it
defines a way to push forward a vector in any of the component vector
spaces of the functor, written as a matrix.  This is done by pulling
back the function on the basis defined by the vector, and then pushing
forward using a weight given by the groupoid cardinality.

This construction is to give a weak 2-functor.  This must be equipped
with natural isomorphisms $\beta_{S,S'} : Z_G(S' \circ S) \ra Z_G(S')
\circ Z_G(S)$ giving weak preservation of composition, as described in
Theorem \ref{thm:zcomposfunc}.  It also must have a natural
transformation $U_B : 1_{Z_G(B)} \tilde{\ra} Z_G(1_B)$ giving weak
preservation of units.  Note that for any $(n-2)$-manifold $B$, the
idenity $1_B$ is a cobordism $I \times B$, which has the manifold $B$
embedded as $\{ ( 0, b) | b \in B \}$ and $\{ ( 0, b) | b \in B \}$
(and this cobordism is exactly the collar on both source and target).
Then we note that there is an equivalence of categories between
$\fc{B}$ and $\fc{1_B}$ since $B$ and $1_B$ have the same homotopy
type.  So $Z_G(1_B)$, which uses a ``pull-push'' through the groupoid
of connections on $I \times B$, is equivalent to the identity on
$1_{Z_G(B)}$.

\begin{definition}\label{def:ZG}
Given a finite group $G$, the extended TQFT $Z_G$ is a 2-functor defined as follows:
\begin{itemize}
\item For a closed compact manifold $B$, the weak 2-functor assigns a
      2-vector space:
\begin{equation}\label{eq:ZGonB}
  Z_G(B) = \Z{B}
\end{equation}
\item For a cobordism between manifolds:
\begin{equation}
B \ralim^{i} S \lalim^{i'} B'
\end{equation}
the weak 2-functor assigns a 2-linear map:
\begin{equation}\label{eq:ZGonS}
  Z_G(S) = (p')_{\ast} \circ p^{\ast}
\end{equation} where $p$ and $p'$ are the associated projections for
the underlying groupoids of connections weakly modulo gauge
transformations.
\item For a cobordism with corners between two cobordisms with the
      same source and target:
\begin{equation}
\xymatrix{
 & S_1 \ar[d]_{i} & \\
B \ar[ur]^{i_1} \ar[dr]_{i_2} & M & B' \ar[ul]_{i'_1} \ar[dl]^{i'_2} \\
 & S_2 \ar[u]^{i'} & \\
}
\end{equation}
the weak 2-functor assigns a natural transformation $\alpha$, whose
components (in the matrix representation) are as in (\ref{eq:ZGonM}).
\end{itemize}
The 2-functor $Z_G$ also includes the following:
\begin{itemize}
\item For each composable pair of cobordisms $S : B_1 \ra B_2$ and $S'
      : B_2 \ra B_3$, a natural isomorphism
\begin{equation}
 \beta : Z_G ( S' \circ S ) \ra Z_G(S') \ra Z_G(S)
\end{equation}, as described in Theorem \ref{thm:zcomposfunc}.
\item For each object $B \in \nCob$, the natural transformation
\begin{equation} U_B : 1_{Z_G(B)} \tilde{\ra} Z_G(1_B) \end{equation}
is the natural transformation induced by the equivalence between
$\fc{B}$ and $\fc{1_B}$.  \end{itemize} \end{definition}

Then we have the following:

\begin{theorem}\label{thm:XTQFTmainthm} For any finite group $G$,
there is a weak 2-functor $Z_G : \nCob \ra \iiV$ given by the
construction in Definition \ref{def:ZG}.
\end{theorem}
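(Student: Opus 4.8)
The plan is to assemble the verification from the components already in place and then check the coherence data. First I would fix the target: by Theorem \ref{thm:equiv}, since $\Hor = \Ver$ for $\nCob$ (both being the same bicategory of cospans), we may regard $\nCob$ as a bicategory with 2-cells the squares of the {\vdb}, so that ``weak 2-functor $Z_G : \nCob \ra \iiV$'' makes literal sense. Then I would observe that the three assignments in Definition \ref{def:ZG} have the right codomain: Theorem \ref{thm:ZB2VS} shows each $Z_G(B) = \Z{B}$ is a KV 2-vector space; Definition \ref{def:ZGonS} together with Theorem \ref{thm:2mapadjoints} shows $Z_G(S) = (p')_{\ast}\circ p^{\ast}$ is a 2-linear map, being a composite of a pullback with an adjoint pushforward; and Definition \ref{def:ZGonM}, via Lemma \ref{lemma:kvnattransmatrix}, shows $Z_G(M)$ is a natural transformation of 2-linear maps, since a matrix of linear maps between the component vector spaces of $Z_G(S)$ and $Z_G(S')$ is precisely the data of such a natural transformation (and the entries, being groupoid cardinalities of essential preimages, are well-defined finite numbers because $G$ is finite and all the groupoids involved are essentially finite).

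Next I would collect the preservation properties that are already established. Weak preservation of horizontal composition of morphisms is Theorem \ref{thm:zcomposfunc}, furnishing the natural isomorphisms $\beta_{S',S} : Z_G(S'\circ S)\ra Z_G(S')\circ Z_G(S)$, built component-wise from the isomorphism $\gamma$ comparing the ``pull-push'' and ``push-pull'' routes around the weak pullback square (\ref{xy:conncompos}) of connection groupoids. Preservation of units is supplied by $U_B : 1_{Z_G(B)}\,\tilde{\ra}\,Z_G(1_B)$, which exists because $1_B = I\times B$ is homotopy equivalent to $B$, so $\fc{1_B}\simeq\fc{B}$ and pulling back then pushing forward along this equivalence is naturally isomorphic to the identity 2-linear map (Theorem \ref{thm:2mapadjoints}). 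On 2-cells, strict preservation of vertical composition and preservation of horizontal composition up to the $\beta$'s are exactly the two theorems on composition of 2-cells proved above; preservation of identity 2-morphisms holds because $\fc{1_S}\simeq\fc{S}$ and the groupoid-cardinality matrix of $1_S$ is the identity; and the interchange law for the image 2-cells is automatic, since both iterated composites are computed by the same groupoid-cardinality bookkeeping, which is multiplicative under cartesian product and additive under disjoint union of groupoids.

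What remains, and what I expect to be the main obstacle, is verifying the coherence axioms of a weak 2-functor: the pentagon-type identity comparing $\beta$ for the two bracketings of a triple composite $S''\circ S'\circ S$, and the triangle identities relating $\beta$ to $U_B$ for composites with an identity cobordism. My strategy is to trace every $\beta$ back to its origin in the universal property of the (weak) pushout defining composition of cobordisms, hence of the weak pullback (comma category) of connection groupoids it induces. For a triple composite, both associations yield weak pullbacks which are canonically equivalent by their universal property, exactly in the way the associator of $\CCosp$ arises (cf.\ the span argument in Remark \ref{thm:spanbicat} and the remark following Theorem \ref{thm:cspanthm}); the induced isomorphisms on $\V$-presheaf categories then assemble into the required pentagon because the comparison maps are unique. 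The unit triangles reduce similarly to compatibility of the equivalence $\fc{1_B}\simeq\fc{B}$ with the restriction maps. The genuine subtlety is that the square (\ref{eq:adjunctionmatesquare}) need not satisfy the Beck--Chevalley condition, so $\beta$ is not simply the mate of an invertible 2-cell; one must instead use the explicit description of $\gamma$ from the proof of Theorem \ref{thm:zcomposfunc} in terms of induced representations $\mathbbm{C}[H]\otimes_{\mathbbm{C}[G]}(-)$ and the relations $k k_1\otimes v = k\otimes hv$, and check by hand that the two sides of the pentagon agree on generators $k\otimes v$. This is a finite but lengthy computation, and it is where the bulk of the proof effort will lie.
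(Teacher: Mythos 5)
Your proposal follows essentially the same route as the paper's proof: both assemble Theorems \ref{thm:ZB2VS}, \ref{thm:2mapadjoints}, and \ref{thm:zcomposfunc} together with the two composition-preservation results for 2-cells, and then reduce the remaining work to the compositor and unitor coherence conditions, which the paper likewise disposes of by noting the relevant associators and unitors are (equivalent to) identities and leaving the final generator-level computation to the reader. Your explicit flagging of the possible failure of the Beck--Chevalley condition and the need to check coherence on generators $k \otimes v$ is consistent with the paper's remarks and does not constitute a different approach.
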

\begin{proof}
First, we note that by the result of Theorem \ref{thm:ZB2VS}, we know
that $Z_G$ assigns a 2-vector space to each object of $\nCob$.

If $S : B \ra B'$ is a cobordism between compact manifolds---i.e. a
morphism in $\nCob$, the map $Z_G(S)$ defined in Definition
\ref{def:ZGonS} is a linear functor by the result of Theorem
\ref{thm:2mapadjoints}, since it is a composite of two linear maps.
This respects composition, as shown in Theorem \ref{thm:zcomposfunc}.

Next we need to check that our $Z_G$ satisfies the properties of a
weak 2-functor: that the isomorphisms from the weak preservation of
composition and units satisfy the requisite coherence conditions; and
that $Z_G$ strictly preserves horizontal and vertical composition of
natural transformations.

The coherence conditions for the compositor morphisms
\begin{equation}
\beta_{S,T} :  Z_G(T) \circ Z_G(S) \ra Z_G(T \circ S)
\end{equation}
and the associator say that these must make the following diagram
commute for all composable triples of cobordisms $(S_1,S_2,S_3)$:
\begin{equation}\label{eq:wk2fncassoc}
\xy
 (0,20)*+{Z_G(S_3) \circ Z_G(S_2) \circ Z_G(S_1)}="top";
 (35,4)*+{Z_G(S_3 \circ S_2) \circ Z_G(S_1)}="rttop";
 (35,-20)*+{Z_G( (S_3 \circ S_2) \circ S_1 )}="rtbot";
 (-35,-20)*+{Z_G(S_3 \circ (S_2 \circ S_1) )}="leftbot";
 (-35,4)*+{Z_G(S_3) \circ Z_G(S_2 \circ S_1)}="leftop";
     {\ar^{1 \otimes \beta_{2,1}} "leftop";"top"}
     {\ar_{\beta_{3,2} \otimes 1} "rttop";"top"}
     {\ar^{\beta_{3,21}} "leftbot";"leftop"}
     {\ar_{\beta_{32,1}} "rtbot";"rttop"}
     {\ar^{Z_G(\alpha_{3,2,1})} "leftbot";"rtbot"}
\endxy
\end{equation}

We implicitly assume here a trivial associator for the 2-linear maps
in the expression $Z_G(S_3) \circ Z_G(S_2) \circ Z_G(S_1)$.  This is
because each 2-linear map is just a composite of functors, so this
composition is associative.  But note that we can similarly assume,
without loss of generality, that the associator $\alpha$ for
composition of cobordisms is trivial.  The composite $S_2 \circ S_1$
is a pushout of two spans of manifolds with boundary:
\begin{equation}
  \xymatrix{
      &   & S_2 \circ S_1  &  & \\
      & S_1  \ar[ur]_{I_1}  &   & S_2 \ar[ul]^{I_2} & \\
    B_1 \ar[ur]_{i_1} &  & B_2 \ar[ul]^{i_2} \ar[ur]_{i'_1} &   & B_3 \ar[ul]^{i'_2} \\
  }
\end{equation} This pushout is only defined up to diffeomorphism, but
one candidate is $S_1 \coprod S_2 / \sim$, where $i_1(x) \sim i_2(x)$
for any $x \in B_2$.  Any other condidate is diffeomorphic to this
one.  But then, the associator
\begin{equation}
\alpha_{3,2,1} : Z_G( S_3 \circ (S_2 \circ S_1) ) \ra Z_G( (S_3 \circ S_2) \circ S_1 )
\end{equation}
is just the identity.  (Choosing different candidates for the pushouts
involved would give a non-identity associator).

So it suffices to show that, with this identification,
\begin{equation}
(1 \otimes \beta_{2,1}) \circ \beta_{3,21} = (\beta_{3,2} \otimes 1) =\circ \beta{32,1}
\end{equation}
This is verified by a computation we leave to the reader.



In general, the coherence conditions for the ``unit'' isomorphism
\begin{equation}
U_B : 1_{Z_G(B)} \tilde{\ra} Z_G(1_B)
\end{equation}
which weakly preserves identities say that it must make the following
commute for any cobordism $S: B \ra B'$:
\begin{equation}\label{eq:wk2fncunit}
\xymatrix{
Z_G(S)  & \\
Z_G(S) \circ Z_G(1_B) \ar[u]_{1 \otimes U_B}  & Z_G(S \circ 1_B)\ar[ul]_{Z_G(r_S)} \ar[l]_{\beta_{S,1_B}}
}
\end{equation}
where $r_B$ is the right unitor for $B$.  There is also the symmetric
condition for the left unitor.

We notice that, as with $Z_G(1_B)$, $Z_G(r_B)$ is equivalent to the
identity since we can think of the unitor $r_S : S \circ 1_B \ra S$ as
a mapping cylinder diffeomorphic to $I \times S$.  Since $S \circ 1_B$
and $S$ are diffeomorphic, these are embedded as the ends of the
cylinder.

So the condition amounts to the fact that $\beta_{S,1_B} : Z_G(S \circ
1_B) \ra Z_G(S) \circ Z_G(1_B) = Z_G(S) $ is equivalent to the
identity in such a way that (\ref{eq:wk2fncunit}) commutes.  We again
leave this to the reader.
\end{proof}

This weak 2-functor is our extended TQFT.

\section{Extended TQFT's and Quantum Gravity}\label{chap:QG}

The title of this paper is ``Extended TQFT's and Quantum Gravity'',
but so far we have said much about the former and nearly nothing about
the latter.  Yet, despite the intrinsic interest extneded TQFT's in
themselves, the prospect of applying these results, or very similar
ones, to quantum gravity has been one of the major motivations behind
this work.  The prospects for doing this are good, at least in a
low-dimensional toy model.  In (2+1) dimensions (two dimensions of
space, and one of time), or 3 dimensions (with no time dimension),
Einsteinian gravity is a topological theory, whereas in higher
dimensions it is not.

So more specifically, the immediate result of extending our results
here will be not, in general, quantum gravity, but a topological gauge
theory called BF theory.  The connection to gravity is that this is
the same as Einsteinian gravity in 3 dimensions, and in 4-dimensions
it is a limit of Einsteinian gravity as $G \ra 0$ (where $G$ is
Newton's constant).  This is a limitation of our approach, but quantum
gravity is a large and mostly open field (see for instance Rovelli's
survey \cite{rovelli} of some of the work to date); so finding a clear
framework for certain, fairly simple, cases is a useful project.

In this final chapter, we sketch what kind of extension is needed, and
the implications of this work for quantum gravity in the case.  This
chapter is not intended to be mathematically rigorous.  Its role is to
describe in an impressionistic way some of the mathematical and
physical context for this work, as well as to suggest the directions
for its future development.

\subsection{Extension to Lie Groups}\label{sec:liegroups}

The first thing to consider is the possibility of extending the
analysis we have made for extended TQFT's corresponding to finite
groups.  In particular, we are interested in an analog of the
preceding when $G$ is a Lie group.  In particular, there is a special
case of interest, which is when $G = SU(2)$, and $n=3$: that is,
considering $Z_{SU(2)} : \catname{3Cob_2} \ra \iiV$.  We will describe
here how such a theory, if it is possible to construct it, would be
related to a well-studied theory of quantum gravity in three
dimensions: the Ponzano-Regge model.

The theorems so far apply only when $G$ is a finite group.  However,
we have seen in Section \ref{sec:CY} that there is a notion of an
infinite-dimensional 2-vector space $2L^2{X}$ for a measure space
$(X,\mu)$, consisting of maps from $X$ into $\V$.  This is an infinite
dimensional analog of the functor category $[ \X , \V ]$ which was used in
constructing an extended TQFT from a finite group (though we must
restrict to only ``measurable'' functors).  In particular, it should
still make sense to define a 2-vector space $\Z{B}$ for a manifold
$B$.  This involves both a generalization and a specialization from
the Crane-Yetter 2-vector space $\catname{Meas(X)}$, since in that
case $X$ was a measurable space, wheareas in the case of a Lie group
it comes equipped with a standard measure (Haar measure), but we also
consider its path groupoid, rather than merely the set.  So one would
need to extend the theory of categories of measurable fields of
Hilbert spaces to a theory of categories of measurable functors into
$\V$ from such a measurable groupoid.

Now, the construction used for a finite group used several facts we
showed for finite groupoids.  For example, Theorem
\ref{thm:2mapadjoints} established that the 2-linear map given by
pushforward is the adjoint of that given by pullback.  However, we
only showed this for finite groupoids.  In general, if $G$ is not
finite, $\fc{B}$ is not an essentially finite groupoid.  So this and
other theorems would need to be extended to the case of Lie groups.
In particular, since 2-vector spaces need not contain arbitrary
infinite colimits, the pushfoward we described may not exist.  So we
need the infiinite-dimensional 2-vector spaces in Crane and Yetter's
$\catname{Meas}$, as discussed in Section \ref{sec:CY}.

So in particular, such an extension should take advantage of the Haar
measure on $G$ to define the pushforward of a functor on a space by
direct integration, rather than by simply taking a general colimit
(which need not exist).  This and other such constructions would need
to be justified in order to try to imagine constructing an extended
TQFT from a Lie group as we have described with a finite group.  It
seems most clear how this would work in the case where $G$ is compact,
since compact Lie groups have finite total Haar measure.  If the total
measure of the group were infinite, we would not expect the integrals
one would use in these definitions to converge, and there would be a
problem of well-definedness.

Then in cases where the direct integral exists, we would expect, by
analogy with the formula from Definition \ref{def:ZGonS} that the
component in some connection $A'$ on $B'$ of $Z(S)$ applied to a
``state'' 2-vector $\Psi \in Z(B)$ is:
\begin{equation}\label{eq:Zofcob}
(Z(S)\Psi)(A) = \int_{\fc{B}}^{\oplus} \Bigl{(}\int_{\overline{\fc{S}}}^{\oplus} \Psi(A)  \mathd \overline{A} \Bigr{)} \mathd A'
\end{equation}
Where $\overline{\fc{S}}$ is the set of connections $\overline{A}$ on
$S$ such that $\overline{A}|_B = A'$ and $\overline{A}|_B' = A'$.
Both integrals are ``direct integrals'' of Hilbert spaces.  The outer
integral, over $B$, uses the principle that $\Psi$ can be represented
as a direct integral (though not a finite linear combination) of
simple objects in $Z_G(B)$.  The direct integral over connections on
$S$ stands in for a general colimit.  This assumes that we can treat
the ``pushforward'' phase of $Z_G(S)$ as a direct integral (rather
than a direct sum) of quotient spaces.

Here we are integrating with respect to a measure on the space of
connections.  Since this consists of functors from a finitely
generated groupoid into $G$, the measure is derived from the Haar
measure on $G$.

Presuming that this is justified, it should be possible to extend the
main results (somewhat modified) from this discussion of extended
TQFT's to the case where $G$ is any compact Lie group (and possibly
any Lie group).  The groups of major interest to quantum gravity are
rotation groups of various signatures, and their double covers (which
are used in describing \textit{spin} connections) .  For example,
connections valued in Euclidean rotation groups $SO(3)$ and $SO(4)$,
and their double covers $SU(2)$ and $SU(2) \times SU(2)$, are relevant
to 3- and 4-dimensional Euclidean quantum gravity respectively.

More precisely, since what we have discussed are \textit{flat}
connections, this remark needs to be qualified.  Flat $SU(2)$
connections do indeed describe configurations for 3D quantum gravity,
since in that case, gravity is a purely topological theory.  (For more
background on 3D quantum gravity, particularly in the case of
signature $(2,1)$, see work by Steven Carlip \cite{carlip},
\cite{carlipsixways}).

However, in 4 dimensions, a theory of flat connections does not
describe gravity, but rather a limiting case of Einsteinian gravity as
Newton's constant $G \ra 0$.  The subject of this limit, and in
general the deformation of gauge theories, is considered extensively
by Wise \cite{wise}.  What is true in 4 dimensions is that the
purely topological theory corresponds to a theory of flat connections
on a manifold known as $BF$ theory.   and by Freidel, Krasnov and Puzio
\cite{BFgrav}).  To describe a theory of gravity
would need something more than what is discussed here.  In Section
\ref{sec:QGwards} we briefly consider some possible approaches to this
problem.

\subsection{Ponzano-Regge with Matter}\label{sec:QG3D}

If $G=SU(2)$, the objects of $\mathcal{A}_0/\!\!/G$, just as for a
finite group, are equivariant functors from $[\Pi_1,SU(2)]$ to $\V$,
and can be represented in terms of a basis of irreducible objects.
Assuming that the previous results hold when $G$ is a Lie group, an
irreducible object amounts to a choice of conjugacy class in $SU(2)$
and action of $SU(2)$ on the associated vector spaces coming from the
isomorphism assocated to conjugation by $g$.
Let us assume that when we replace finite $G$ by the Lie group
$SU(2)$, we retain the classification of Example \ref{ex:ZonS1}.  Then
irreducible 2-vector by pairs $([g],\rho)$ of a conjugacy class $[g]
\in SU(2)/\opname{Ad}(SU(2))$, and representation $\rho$ of $SU(2)$ on
some vector space $V$.  Now, a conjugacy class in $SU(2)$ amounts to
specifying an \textit{angle of rotation} in $[0,4\pi]$.  This is since
this is the double cover of the 3D rotation group, and all rotations
by the same angle are conjugate to all others by some rotation taking
one axis of rotation to the other.  This number in $[0,4\pi]$ represents a
\textit{mass} in 3D quantum gravity---which manifests as an angle
deficit when one traces a path around a massive particle, one finds,
geometrically, that one has rotated by a certain angle proportional to
its mass, which has a maximum total mass allowable of $4\pi$ in
Euclidean 3D gravity.

On the other hand, a representation of $SU(2)$ is classified by a
half-integer, which is called a \textit{spin} since these label
angular momenta for spinning quantum particles.  This is exactly the
other attribute a particle in the 3D Ponzano-Regge model may have.
Mass and spin are the characteristics which determine the effect of a
particle on the connection---that is, its gravitational effect.  In
the Ponzano-Regge model, mass and spin label the edges of a graph
describing space.  In the case that the mass on an edge is zero, this
describes a \textit{spin network}, as described first by Penrose
\cite{penrose}.  A spin network is a combinatorial representation of
the geometry of space.

Penrose's original idea was that a quantum theory of gravity should
describe space in intrinsically discrete terms.  The description as a
graph is intrinsically discrete.  Edges are labelled with spins since
these are representations of the symmetry group related to angular
momentum.  This was chosen because angular momentum is already
discrete in quantum mechanics, and is plainly related to the (local)
rotational symmetry of space.

Such spin networks are related to the Ponzano-Regge model for 3D quantum
gravity.  The interpretation in terms of gravity comes from the
observation that a conjugacy class in $SU(2)$ is an angle in
$[0,\pi]$, which is a \textit{mass} $m$; In the case $m=0$ the
isomorphism is just a spin---an irrep of $SU(2)$, labelled by an
integer (or, for physics purposes, a half-integer).  For other $m$, we
get a spin when we reduce to a skeletal version of the 2-vector
spaces.

\begin{figure}[h]
\begin{center}
\includegraphics{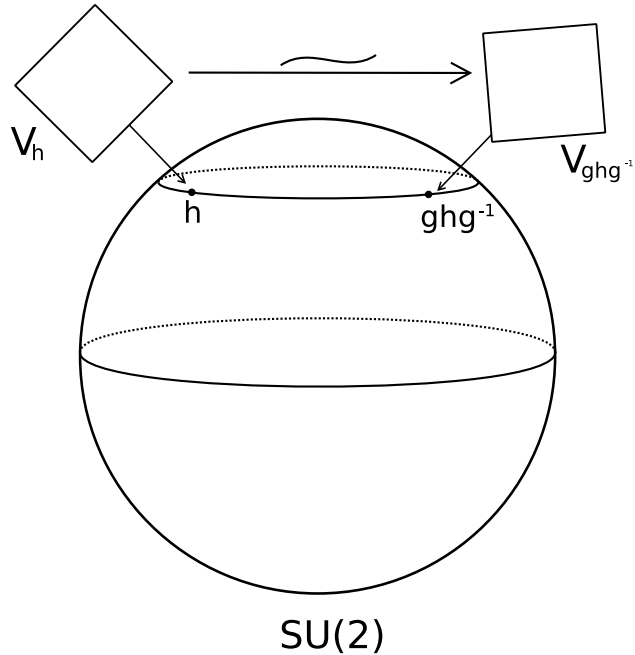}
\end{center}
\caption{Irreducible Object in $Z_{SU(2)}(S^1)$ \label{fig:catcentre}}
\end{figure}

The Ponzano-Regge model is a quantum theory which reproduces classical
General Relativity in a suitable limit.  Now, in General Relativity, gravity
can be thought of as the theory of a connection on a manifold, which
is the Levi-Civita connection associated to the metric in the usual
formalism.  So the Ponzano-Regge model can be seen as a quantum theory
for a connection on space of a given topology.

We can think of a cobordism with corners, as in Figure
\ref{fig:cobcorners} as having boundaries indicating the boundary of
the world lines of some system.  We can think of this as a Feynman
graph for some particles.  This interpretation makes the most sense if
our group $G$ is a Lorentz group, so that we think of the underlying
manifold with corners as ``spacetime''.  However, even if it is only
``space'', this cobordism can be though of as giving a graph, where
the circles represent the boundary in 2D ``space'' around some
system---the ``removed'' portions of space are the graph.  We can
think of the edges as particles---by which we only mean some bit of
matter.  A ``fundamental'' particle is then an irreducible state on
it.  This corresponds, as we remarked earlier, to a choice of a pair
$([g],\rho)$ consisting of a conjugacy class $[g]$ of $G$ and
representation $\rho$ of $G$ on some vector space.  Conjugacy classes
of rotation or Lorentz groups are ``mass shells'', corresponding to
the mass of the particle.  Representations of $G$, at least for
$SU(2)$ and similar groups, are labelled by ``spins''.  These
determine how a particle interacts with gravity.  This is precisely
what the Ponzano-Regge model describes: a network of edges labelled
with just this data, and with vertex amplitudes at the intersections.

So our extended TQFT gives ``particles''---boundaries in space -
labelled by a representation of a certain group.  Our example was
derived from a finite group, but if $G = SU(2)$ the label is a mass
and spin) moving on a background described by Ponzano-Regge quantum
gravity (see work by, for example, Freidel, Livine, and Louapr\'e \cite{PRI}
\cite{PRII}\cite{PRIII}, discussing the Ponzano-Regge model coupled to
matter, by Noui \cite{noui}, and Noui and Perez \cite{nouiperez} on 3D
quantum gravity with matter).

Baez, Crans, and Wise \cite{BCW} describe how conjugacy classes of
gauge groups can be construed as ``particle types'': an ``elementary''
particle corresponds with an irreducible 2-vector in $Z_G(B)$.  This
associates to a hole---whose boundary is diffeomorphic to the circle
$S^1$---a holonomy in a given conjugacy class $[g]$ of $G$.  This is
physically indistinguishable from any other corresponding to the same
class.  But they are distinguishable from particles giving holonomies
in some other conjugacy class.  So one says these represent different
``types'' of particle.  

Now, we have said that for a 3D extended TQFT, the 2-vector space of
states for a circle has a basis in which each object is given by a
conjugacy class of $G$ and representation of the stabilizer of that
class.  Wise \cite{wise} describes a way to interpret such conjugacy
classes as \textit{particle types} in a topological gauge theory.
More generally, in any dimension, given a space with a ``puncture'' of
codimension 2, there can be nontrivial holonomy for a connection
around that puncture.  In 3D, this is a 1-D puncture, which we think
of as the worldline of a point particle.  In the framework discussed
in this paper, we think of the particle as a puncture in 2D space,
surrounded by a 1D manifold, namely a circle.  This is the manifold
$B$ for our extended TQFT.  Then the ``space'' from which the particle
is removed is represented as the cobordism $S$ in our setup, and ``spacetime'' 

Just as a conjugacy class in $SU(2)$, as we have seen, can be
interpreted as a mass in Ponzano-Regge gravity, similarly, for other
gauge groups, conjugacy classes in the group classify ``types'' of
matter particles which may be coupled to the field.  A state for the
boundary around such a defect in our extended TQFT gives These
represent possible holonomies, up to gauge equivalence, around such a
defect.  These classify the physically distinguishable particles.

The interpretation described here so far is purely kinematical, though
in 4D, where these punctures are ``strings'' (i.e. the punctures in
space are 1-dimensional manifolds, namely circles, and in
``spacetime'' are 2-dimensional, namely ``world-sheets'') the dynamics
for such matter has been studied by Baez and Perez \cite{bfbrane}.  In
terms of our extended TQFT setting, the dynamics are described by the
action of $Z_G$ on cobordisms of cobordisms.

In particular, suppose we have a cobordism with corners $M: S \ra S'$
for cobordisms $S, S' : B \ra B'$, and are given specified ``particle
types'' for the punctures in the initial and final spaces.  This
amounts to choosing particular basis 2-vectors in $Z_G(B)$ and
$Z_G(B')$.

Then on each space---cobordisms having these punctures as boundary -
this gives a vector space as the component of $Z_G(S)$ which
corresponds to these basis 2-vectors, and similarly for $Z_G(S')$.
Then the corresponding component of $Z_G(M)$ is a linear operator
between these states.  The interpretation is that these components
describe the spaces of states for a field coupled to matter of the
specified type, and the linear operator which gives its time
evolution.  This is found, as we saw in (\ref{eq:ZGonM}), is given by
a certain ``sum over histories'', where each history is a connection
on the ``spacetime'' $M$.  The topology of the punctures in $M$ can be
thought of as a Feynman graph for interactions of the matter which is
the source of the field.

One should carefully note that to take this interpretation in terms of
``histories'' and ``spacetime'' literally requires a noncompact gauge
group $G$ such as Loretz groups $SO(2,1)$, $SO(3,1)$, or their double
covers $SL(2,\mathbbm{R})$ and $SL(2,\mathbbm{C})$ respectively.  We
expect that it would be more difficult to make these concepts precise
for noncompact gauge groups.

\subsection{Further Prospects}\label{sec:QGwards}

The relationship between the extended TQFT's discussed here and BF
theory leads one to ask about the relations between this approach and
other ways of looking at BF theory which have already been studied.
One of these which is particularly relevant involves so-called
\textit{spin foam} models.  A self-contained description of such
models for BF theory and quantum gravity by Baez \cite{BFfoam}.  Spin
foam models are a generalization of the \textit{spin networks} of
Penrose \cite{penrose}.

A spin network is a network in the sense of a \textit{graph}---a
collection of nodes, connected by edges.  In a spin network, the edges
are labelled by spins---representations of $SU(2)$, which are labelled
by half-integers.  The vertices by intertwining operators---that is,
morphisms in the category of representations of $SU(2)$ taking some
tensor product of irreducible representations to some other such
tensor product.  These are taken to be a representation of a
``combinatorial spacetime'' in which the nodes represent events, and
the edges give information about distance between events.  In
particular, the attitude is that this is the \textit{only} information
about distances within this combinatorial model of spacetime.

The idea behind spin foam models is to view spin networks as
describing configurations for the geometry of space.  Then a spin
\textit{foam} is a morphism between spin networks.  In fact, it is a
structure which contains spin networks as start and end states in much
the same way that an $n$-dimensional cobordism has $(n-1)$-manifolds
as source and targets.  A spin foam is a complex vertices, edges, and
faces, with group representations labelling faces, and intertwining
operators labelling edges.  So, in particular, a generic codimension-1
cross-section of a spin foam

The expected link to the present work is a generalization of the FHK
construction described in Section \ref{sec:fhk}.  In that case, one
develops a TQFT by using triangulations of the manifolds and
cobordisms on which the TQFT is to define Hilbert spaces and linear
maps.  We saw, as illustrated in Figure \ref{fig:FHK}, that there is a
network dual to this triangulation.  To the edges in this network one
assigns copies of a certain algebra, namely $\ZCG$, and to the nodes
one assigns a multiplication operator.  As described in Section
\ref{sec:pachner}, the coherence laws satisfied by these operators are
described by tetrahedrons.  These are the Pachner moves in 2-D:
attaching a tetrahedron to a triangulation along one, two, or three
triangular faces gives a move by replacing the attached faces with the
remaning faces of the tetrahedron.  The way of assigning an operator
to a vertex of the dual to a triangulation must have the property that
it is invariant under such moves.

We have categorified this picture in order to increase the codimension
of the theory - that is, the difference in dimension between the basic
manifolds and the highest-dimensional cobordisms.  So there should be
a categorified equivalent of the FHK construction, in which we begin
with triangulated manifolds and cobordisms.  In categorifying, we
replace the equations given by the Pachner moves with 2-morphisms.
Each move gives a 2-morphism between a pair of morphisms in $\iiV$,
corresponding to a tetrahedron thought of as a cobordism connecting
two parts of its boundary.  Any cobordism can be built of such units,
attached together in some triangulation:

\begin{figure}[h]
\begin{center}
\includegraphics{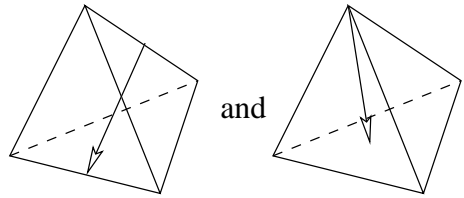}
\end{center}
\caption{Tetrahedra Assigned 2-Morphisms \label{fig:tetra2mor}}
\end{figure}

These obey coherence laws (equations) given by the 2-3 and 1-4 Pachner
moves:

\begin{figure}[h]
\begin{center}
\includegraphics{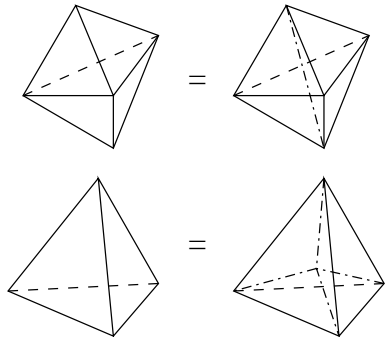}
\end{center}
\caption{Coherence Rules as Pachner Moves \label{fig:tetracoherence}}
\end{figure}

As in 2D, where the algebra assigned by the FHK construction to edges
is $\ZCG$, the categorified version should assigne $\ZVG$, which
corresponds to our assignment to a circle of equivariant
$\V$-presheaves on $G$.  Assigning these to edges reproduces the
Ponzano-Regge model when $G=SU(2)$, since the irreducible objects in
this category are, as we have seen, precisely labelled by mass and
spin.  Analogous results hold for other $G$, giving different field
theories.  But notice that this is different from the way we recovered
the Ponzano-Regge model above: now we are assigning this data to edges
of a triangulation, not a boundary of a ``worldline''.  The relations
between these two pictures are close, but more than we can go into in
detail here.

However, it is enough to observe that there is a close relation
between the extended TQFT we have developed and state-sum
(i.e. spin-network and spin-foam) models for BF theory, and 3D quantum
gravity.  So one avenue for further exploration is to see how the
framework described here can be extended to incorporate other theories
described by such state sum models.

Our basic result involved the construction of an extended TQFT as a
weak 2-functor for any finite group $G$.  In Section
\ref{sec:liegroups}, we discussed the possibility of extending the
construction to the case where $G$ is a Lie group, and in particular,
indicated that this is expected to be more natural when $G$ is a
\textit{compact} Lie group.  Of course, noncompact groups are also of
interest - for example, the Lorentz groups.  But there are other
directions in which to generalize this.  We briefly consider two
possibilities here: categorical groups (also known as 2-groups), and
quantum groups (by which we mean quasitriangular Hopf algebras).

An extension of the Dijkgraaf-Witten theory to categorical groups is
described by Martins and Porter \cite{martinsporter}.  A categorical
group, also known as a 2-group, is a category object in the category
$\catname{Grp}$ of groups.  That is, it is a structure having a group
of objects and a group of morphisms, satisfying the usual category
axioms expressed in terms of morphisms within $\catname{Grp}$.  Any
group $G$ is an example of a 2-group, where the group of objects is
trivial : this is in fact how we have been thinking of the gauge group
$G$ throughout this paper.  But there are many other examples of
2-groups, including, importantly for us, 2-groups which arise from
semidirect products of groups $H \rtimes G$.  In this case, the group
of objects is $G$ and whose group of morphisms is $H \rtimes G$: the
group of automorphisms of any given object is isomorphic to $H$.  Such
a 2-group is called an \textit{automorphic 2-group}.

The category of 2-groups can be shown to be equivalent to the category
of \textit{crossed modules}, a concept due to Brown and Spencer
\cite{brownspencer}.  A crossed module consists of a tuple
$(G,H,t,\alpha)$, where $G$ and $H$ are groups, $t : H \ra G$ is a
homomorphism, and $\alpha : G \ra Aut(H)$ is an action of $G$ on $H$,
such that $t$ and $\alpha$ satisfy some compatibility conditions,
which turn out to be equivalent to the category axioms in the 2-group
described above.  The \textit{Poincare 2-group}, introduced by Baez
\cite{baezpoinc}, is an example of an automorphic 2-group.  It has
been a subject of interest as a source of a new class of spin foam
models, first suggested by Crane and Sheppeard \cite{cranesheppeard}.
Such models are based on the representation theory of 2-groups, which
is 2-categorical in nature, since one must consider representations,
intertwiners between representations, and 2-intertwiners between
intertwiners, which form a 2-category.  A spin foam model based on a
2-group uses these to label faces, edges, and vertices respectively.

The most evident relation of 2-groups to the sort of extended TQFT's
we have been discussing is related to gauge theory.  The role of the
group $G$ in constructing the weak 2-functor $Z_G$ was through the
groupoid of \textit{connections on $G$-bundles} on a space $X$.  This
is $\fc{X}$, the category of functors from the fundamental groupoid of
$X$ into $G$ thought of as a category with one object.  One might
suppose that the natural extension would be to take $G$ to be a
2-group, with a group of objects, and take functors from $\Pi_1(X)$
into this.

This could be done, but perhaps a better approach is in the form of
\textit{higher gauge theory}.  Discussion of higher gauge theory can
be found in work by Baez and Schreiber \cite{HGA} and Bartels
\cite{bartels}.  The principle is that one should assign data from a
2-group to both paths and homotopies of paths, so what one uses is not
$\Pi_1(X)$, but $\Pi_2(X)$, the fundamental 2-groupoid of $X$.  This
is a 2-category whose objects are points in $X$, morphisms are paths,
and 2-morphisms are homotopy classes of homotopies of paths.  It
should be clear that this encodes information not only about the first
homotopy group of a space (as does the fundamental group), but also
the second homotopy group.  In higher gauge theory one studies, in
this case, flat ``2-connections'' (or more generally $n$-connections),
which are seen as 2-functors from $\Pi_2(X)$ to a 2-group.  In 3D, we
have discussed how an extended TQFT based on a Lie group could
possibly describe the evolution of point-particles along worldlines in
spacetime.  A categorified form of this based on 2-connections could
be of interest in 4D, where one could study the behaviour of
``strings'' as well as point particles (see, for instance, Baez and
Perez \cite{bfbrane}).

Having begin by categorifying the standard definition of a TQFT, one
could then hope to continue the process and find an infinite ``tower''
of theories, each having one more codimension than the last.

The last possible direction of generalization from our extended TQFT
based on a finite group would involve quantum groups.  Whereas moving
to 2-groups involves ``categorifying'' the concept of a group, moving
to quantum groups, as the name suggests, involves ``quantizing''.
Neither procedure is, in general, a well defined operation, but
particular examples are understood.  In particular, we could try to
generalize from finite groups to ``finite quantum groups'', by which
we mean finite-dimensional quasitriangular Hopf algebras.

The idea behind quantum groups is described by Shahn Majid
\cite{majid} and also notably by Ross Street \cite{streetqgrp}.  The
idea provides a way to speak of deforming topological groups, although
there is no way of smoothly deforming the group action of a
topological group to a family of other such groups.  Instead, one
works in a larger category, of ``quantum'' groups, of which usual
groups correspond to special cases.  This is done using
\textit{Gelfand duality}, which relates commutative algebra and
topological spaces.  Specifically, it gives an equivalence saying that
each commutative $C^{\star}$-algebra is the algebra $C(X)$ of
continuous complex functions on a compact Hausdorff space $X$.

Continuous functions $f : X \ra Y$ give algebra homomorphisms $C(f) :
C(Y) \ra C(X)$, so that if $X$ is a group as well as a space, the
$C^{\star}$-algebra $C(X)$ gets a comultiplication $C(\cdot) : C(X)
\ra C(X) \otimes C(X)$, counit $C(1) : C(X) \ra \mathbbm{C}$ and
involution $C(^{-1}): C(X) \ra C(X)$.  Since these come from
operations on a group, they, along with the (pointwise)
multiplication, unit, and inverse in $C(X)$, satisfy certain axioms,
and relations.  The axioms for a \textit{Hopf} algebra generalize
these.  In particular, they require that the multiplication be
associative, but not necessarily commutative.  A quasitriangular, or
``braided'' Hopf algebra $H$ has a distinguished element $\gamma$,
thought of as the image of $1 \otimes 1$ under a ``switch'' operation
$H \otimes H \ra H \otimes H$.  These Hopf algebras are what are
called ``quantum groups''.

We will not attempt a full explanation of quantum groups here, though
see the above references for full details).  For our purposes, the
interesting point is that the Hopf algebras coming from Lie groups $G$
as $C(G)$ can be deformed to noncommutative and non-cocommutative
quantum groups, with a parameter $q$ which is a unit complex number.
Given elements $x$ and $y$, the deformation replaces the operations
such as multiplication by new ones, given as power series in $q$.
When $q$ is a complex root of unity, this has particularly good properties.

In particular, we expected to recover the Ponzano-Regge model of 3D
quantum gravity, based on $SU(2)$, as an extended TQFT.  Now, the
Turaev-Viro model (see \cite{turaevviro} and \cite{foxon}) is based on
the $q$-deformed quantum groups $SU(2)_q$, and in some respects is
more convenient than the Ponzano-Regge model.  In particular, there
are infinitely many representations of $SU(2)$, but only finitely many
of $SU(2)_q$ when $q$ is an $n^{th}$ root of unity (specifically,
$n-1$ of them).  This gives better convergent properties when summing
over representations.  In general, spin foam models involving quantum
groups sometimes have such good finiteness properties.

As a first effort to generalize from our situation of an extended TQFT
based on a finite group, we may try to develop an extended TQFT from
the corresponding class of quantum groups - namely, finite dimensional
quasitriangular Hopf algebras.

Finally, it should be possible to combine our different directions of
generalization.  For example, Crane and Yetter \cite{CY} discuss
generally a similar family of algebraic and higher-algebraic
structures which give rise to TQFTs in various dimensions.  In 4D, the
relevant structure is a Hopf category - a categorified equivalent of a
Hopf algebra.  Marco Mackaay \cite{mackaay} shows very explicitly how
to construct invariants of 4-manifolds from certain kinds of
2-categories by means of the sort of state-sum model which we have
been discussing.  It would be useful to study how much of this can be
described in the ``geometric'' style which we have examined here in
the form of groupoids of connections.

All of these directions suggest ways in which our results could be
expanded further by future investigation.

\appendix
\section{Internal Bicategories in $\Bicat$}\label{sec:internalbicat}

We rely on the notion of a bicategory \textit{internal} to $\Bicat$ in
our discussion of {\vdbs} in Chapter \ref{chap:doublebicat}, and thus
in the development of the {\vdb} $\nCob$ in Chapter
\ref{chap:cobcorn}.  Here we present a more precise definition of this
concept, and in Lemmas \ref{lemma:span2bicat} and
\ref{lemma:doublebicat} we use it to show that examples having
properties like those of $\iiCCosp_0$ (definition \ref{def:cspan20})
give ``double bicategories'' in the sense of Verity.  These lemmas were
used in the proofs of Theorems \ref{thm:cspanthm} and
\ref{thm:maintheorem}.

To begin with, we remark that the theory of bicategories,
$\catname{Th(\Bicat)}$ is more complicated than that for categories.
However as with $\catname{Th(\Cat)}$, it will be a category with
objects $\Obj$, $\M$ and $\B$, and having all equalizers and
pullbacks.  To our knowledge, a model of $\catname{Th(\Bicat)}$ in
$\Bicat$ has not been explicitly described as such before.  We could
treat $\Obj$ as a horizontal bicategory, and the objects of $\Obj$,
$\M$ and $\B$ as forming a vertical bicategory, but we note that
diagrammatic representation of, for instance, 2-morphisms in $\B$
would require a 4-dimensional diagram element.  The comparison can be
seen by contrasting tables \ref{table:doublecatdata} and
\ref{table:doublebicatdata}.

The axioms satisfied by such a structure are rather more unwieldy than
either a bicategory or a double category, but they provide some order
to the axioms for a {\vdb}, as shown in Definition
\ref{def:doublebicat}.  We note that, although that definition is
fairly elaborate, it is simpler than would be a similarly elementary
description of a \db.

In particular, where there are compatibility conditions involving
equations in this definition, such a structure would have only
isomorphisms, themselves satisfying additional coherence laws.  In
particular, in {\vdbs}, the action of 2-morphisms on
squares is described by strict equations, rather than being given by a
definite isomorphism.

Similarly, it is possible (see \cite{verity} sec. 1.4) to define
categories $\catname{Cyl_H}$ (respectively, $\catname{Cyl_V}$) of
\textit{cylinders} whose objects are squares, and maps are pairs of
vertical (respectively, horizontal) 2-morphisms joining the vertical
(resp. horizontal) source and targets of pairs of squares which share
the other two sides (this is shown in Table
\ref{table:doublebicatdata}, in Section \ref{sec:decatfy}: the
cylinders are ``thin'' versions of higher morphisms appearing there).
These are plain categories, with strict associativity and unit laws.
These conditions would be weakened in a {\db} (in which maps would
include not just pairs of 2-morphisms, but also a 3-dimensional
interior of the cylinder, which is a morphism in $2\Mor$, or
2-morphism in $\Mor$, satisfying properties only up to a 4-dimensional
2-morphism in $2\Mor$).

We start to see all this by describing how to obtain a {\db}.

\subsection{The Theory of Bicategories}

We described in Section \ref{sec:doublecat} how a double category may
be seen as a category internal to $\Cat$.  To put it another way, it a
model of $\catname{Th(\Cat)}$, the theory of categories, in $\Cat$,
which is a limit-preserving functor from $\catname{Th(\Cat)}$ into
$\Cat$.  We did not make a special point of the fact, but this is a
\textit{strict} model.  A weak model would satisfy the category axioms
such as composition only up to a 2-morphism in $\Cat$, namely up to
natural transformation.  So, for instance, the pullback
(\ref{eq:thcatmodel}) would be a weak pullback, so that instead of
satisfying $t \circ c_1 = s \circ c_2$, there would only be a natural
transformation relating $t \circ c_1$ and $s \circ c_2$.  Such a weak
model is the most general kind of model available in $\Cat$, but
double categories arise as strict models.

So here we note that we are thinking of $\Bicat$ as a mere category,
and that we are speaking of \textit{strict} internal bicategories.
In particular, the most natural structure for $\Bicat$ is that of a
tricategory: it has objects which are bicategories, morphisms which
are weak 2-functors between bicategories, 2-morphisms which are natural
transformations between such weak 2-functors, and 3-morphisms which are
``modifications'' of such transformations.  Indeed, $\Bicat$ is the
standard example of a tricategory, just as $\Cat$ is the standard
example of a bicategory.  But we ignore the tricategorical structure
for our purposes.

So as with double categories, we only consider strict models of the
theory of bicategories, $\catname{Th(\Bicat)}$ in $\Bicat$.  That is,
functors from the category $\catname{Th(\Bicat)}$ into $\Bicat$ (seen
as a category).  Equations in a model are mapped to equations (not
isomorphisms) in $\Bicat$.  We call these models {\dbs}.

Before we can say explicitly what this means, we must describe
$\catname{Th(\Bicat)}$ as we did for $\catname{Th(\Cat)}$ in Section
\ref{sec:doublecat}.

\begin{definition} The \textit{theory of bicategories} is the category
(with finite limits) $\catname{Th(\Bicat)}$ given by the following
data:
\begin{itemize}
\item Objects $\opname{Ob}$,  $\opname{Mor}$,  $\opname{2Mor}$
\item Morphisms $s,t:\opname{Ob} \ra \opname{Mor}$ and $s,t:\opname{Mor} \ra \opname{2Mor}$
\item \textbf{composition} maps $\circ :
      \opname{MPairs} \ra \M$ and $\cdot : \opname{BPairs}
      \ra \B$, satisfying the interchange law
      (\ref{eq:bicatinterchange}), where $\opname{MPairs} =
      \opname{Mor} \times_{\opname{Ob}} \opname{Mor}$ and
      $\catname{BPairs} = \opname{2Mor} \times_{\opname{Mor}}
      \opname{2Mor}$ are equalizers of diagrams of the form:
  \begin{equation}
    \xymatrix{
        &  & \opname{Mor} \ar[dr]^{t} & \\
      \catname{MPairs} \ar[r]^{i} & \opname{Mor}^2 \ar[ru]^{\pi_1}
        \ar[rd]_{\pi_2} & & \opname{Ob} \\
        &  & \opname{Mor} \ar[ur]^{s} & \\
    }
  \end{equation}
      and similarly for $opname{BPairs}$.
\item the \textbf{associator} map $a : \opname{Triples} \ra
      \B$, where $\opname{Triples} = \opname \times_{\opname{Ob}}
      \opname{Mor} \times_{\opname{Ob}} \opname{Mor}$ is the equalizer
      of a similar diagram for involving $\opname{Mor}^3$, such that
      $a$ satisfies $s(a(f,g,h)) = (f \circ g) \circ h$ and
      $t(a(f,g,h)) = f \circ (g \circ h)$
\item \textbf{unitors} $l,r : \opname{Ob} \ra \opname{Mor}$
      with $s \circ l = t \circ l = \opname{id}_{\opname{Ob}}$ and $s
      \circ r = t \circ r = \opname{id}_{\opname{Ob}}$
\end{itemize} This data is subject to the conditions that the
associator is subject to the Pentagon identity, and the unitors obey
certain unitor laws.
\end{definition}

\begin{remark}The Pentagon identity is shown in (\ref{eq:pentagonid})
for a model of $\catname{Th(\Bicat)}$ in $\catname{Sets}$), where we
can specify elements of $\Mor$, but the formal relations---that the
composites on each side of the diagram are equal---hold in general.
These are built from composable quadruples of morphisms and
composition as indicated in the labels.  Similar remarks apply to the
unitor laws shown in (\ref{eq:unitorlaws}).
\end{remark}

So we have the following:

\begin{definition}
A {\db} consists of:
\begin{itemize}
\item bicategories $\Ob$ of \textbf{objects}, $\M$ of
      \textbf{morphisms}, $\B$ of \textbf{2-morphisms}
\item \textbf{source} and \textbf{target} maps $s,t : \M \ra
      \Ob$ and $s,t : \B \ra \M$
\item partially defined \textbf{composition} functors $\circ : \M^2
      \ra \M$ and $\cdot : \B^2 \ra \B$, satisfying
      the interchange law (\ref{eq:bicatinterchange})
\item partially defined \textbf{associator} $a : \M^3 \ra \B$
      with $s(a(f,g,h)) = (f \circ g) \circ h$ and $t(a(f,g,h)) = f
      \circ (g \circ h)$
\item partially defined \textbf{unitors} $l,r : \Ob \ra \M$
      with $s(l(x)) = t(l(x)) = x$ and $s(r(x)) = t(r(x)) = x$
\end{itemize}
All the partially defined functors are defined for \textit{composable}
pairs or triples, for which source and target maps coincide in the
obvious ways.  The associator should satisfy the pentagon identity
(\ref{eq:pentagonid}), and the unitors should satisfy the unitor laws
(\ref{eq:unitorlaws}).
\end{definition}

With this definition in mind, we recall B\'enabou's classic example of
a bicategory, that of spans, revieweed in Section \ref{sec:spanbicat}.
There is an analogous example here, namely double spans, or in our
case \textit{double cospans}.

\subsection{The Double Cospan Example}\label{sec:doublecospan}

In Section \ref{sec:dblspan}, we described a {\vdb} of ``double
cospans'', $\iiCCosp_0$.  This notation is intended to suggest it
derives from a larger structure, $\iiCCosp$, which is a {\db}, as we
shall show shortly.  It is analogous to the ``profunctor-based
examples'' of pseudo-double categories described by Grandis and Par\'e
\cite{GP2}.  The {\vdb} described above is derived from it.  To see
these facts, we first define $\iiCCosp$ explicitly:

\begin{definition} $\iiCCosp$ is a {\db} of \textbf{double cospans} in $\C$,
consisting of the following: \begin{itemize}
\item the bicategory of objects is $\Ob = \CCosp$
\item the bicategory of morphisms $\M$ has: as objects, cospans in $\C$;
      as morphisms, commuting diagrams of the form \ref{xy:cspan2}
(in subsequent diagrams we suppress the labels for clarity)

\item as 2-morphisms, cospans of cospan maps, namely commuting
      diagrams of the following shape:
\begin{equation}\label{xy:cospanmap1}
\xy
 (0,0)*{\bullet}="11";
 (0,15)*{\bullet}="12";
 (0,30)*{\bullet}="13";
 (15,0)*{\bullet}="21";
 (15,15)*{\bullet}="22";
 (15,30)*{\bullet}="23";
 (30,0)*{\bullet}="31";
 (30,15)*{\bullet}="32";
 (30,30)*{\bullet}="33";
 (-5,10)*{\bullet}="12a";
 (10,10)*{\bullet}="22a";
 (25,10)*{\bullet}="32a";
     {\ar "11";"12"};
     {\ar "13";"12"};
     {\ar "21";"22"};
     {\ar "23";"22"};
     {\ar "31";"32"};
     {\ar "33";"32"};
     {\ar "11";"21"};
     {\ar "31";"21"};
     {\ar "12";"22"};
     {\ar "32";"22"};
     {\ar "13";"23"};
     {\ar "33";"23"};
     {\ar "12a";"22a"};
     {\ar "32a";"22a"};
     {\ar "11";"12a"};
     {\ar "13";"12a"};
     {\ar "21";"22a"};
     {\ar "23";"22a"};
     {\ar "31";"32a"};
     {\ar "33";"32a"};
     {\ar "12";"12a"};
     {\ar "22";"22a"};
     {\ar "32";"32a"};
\endxy
\\
\end{equation}


\item the bicategory of 2-morphisms has:
  \begin{itemize}
  \item as objects, cospan maps in $\C$ as in (\ref{eq:spanmorph})
  \item as morphisms, cospan maps of cospans:
\begin{equation}\label{xy:cospanmap2}
\xy
 (0,0)*{\bullet}="11";
 (15,0)*{\bullet}="12";
 (30,0)*{\bullet}="13";
 (0,15)*{\bullet}="21";
 (15,15)*{\bullet}="22";
 (30,15)*{\bullet}="23";
 (0,30)*{\bullet}="31";
 (15,30)*{\bullet}="32";
 (30,30)*{\bullet}="33";
 (20,-5)*{\bullet}="12a";
 (20,10)*{\bullet}="22a";
 (20,25)*{\bullet}="32a";
     {\ar "11";"12"};
     {\ar "13";"12"};
     {\ar "21";"22"};
     {\ar "23";"22"};
     {\ar "31";"32"};
     {\ar "33";"32"};
     {\ar "11";"21"};
     {\ar "31";"21"};
     {\ar "12";"22"};
     {\ar "32";"22"};
     {\ar "13";"23"};
     {\ar "33";"23"};
     {\ar "12a";"22a"};
     {\ar "32a";"22a"};
     {\ar "11";"12a"};
     {\ar "13";"12a"};
     {\ar "21";"22a"};
     {\ar "23";"22a"};
     {\ar "31";"32a"};
     {\ar "33";"32a"};
     {\ar "12";"12a"};
     {\ar "22";"22a"};
     {\ar "32";"32a"};
\endxy
\\
\end{equation}

  \item as 2-morphisms, cospan maps of cospan maps:
\begin{equation}\label{eq:cospanmapmap}
\xy
 (0,0)*{\bullet}="11";
 (30,0)*{\bullet}="12";
 (60,0)*{\bullet}="13";
 (0,30)*{\bullet}="21";
 (30,30)*{\bullet}="22";
 (60,30)*{\bullet}="23";
 (0,60)*{\bullet}="31";
 (30,60)*{\bullet}="32";
 (60,60)*{\bullet}="33";
 (40,-5)*{\bullet}="12a";
 (40,25)*{\bullet}="22a";
 (40,55)*{\bullet}="32a";
 (-5,20)*{\bullet}="21b";
 (25,20)*{\bullet}="22b";
 (55,20)*{\bullet}="23b";
 (35,15)*{\bullet}="22ab";
     {\ar "11";"12"};
     {\ar "13";"12"};
     {\ar "21";"22"};
     {\ar "23";"22"};
     {\ar "31";"32"};
     {\ar "33";"32"};
     {\ar "11";"21"};
     {\ar "31";"21"};
     {\ar "12";"22"};
     {\ar "32";"22"};
     {\ar "13";"23"};
     {\ar "33";"23"};
     {\ar "12a";"22a"};
     {\ar "32a";"22a"};
     {\ar "11";"12a"};
     {\ar "13";"12a"};
     {\ar "21";"22a"};
     {\ar "23";"22a"};
     {\ar "31";"32a"};
     {\ar "33";"32a"};
     {\ar "12";"12a"};
     {\ar "22";"22a"};
     {\ar "32";"32a"};
     {\ar "11";"21b"};
     {\ar "31";"21b"};
     {\ar "12";"22b"};
     {\ar "32";"22b"};
     {\ar "13";"23b"};
     {\ar "33";"23b"};
     {\ar "21b";"22ab"};
     {\ar "23b";"22ab"};
     {\ar "22b";"22ab"};
     {\ar "22a";"22ab"};
     {\ar "21";"21b"};
     {\ar "22";"22b"};
     {\ar "23";"23b"};
     {\ar "21b";"22b"};
     {\ar "23b";"22b"};
     {\ar "12a";"22ab"};
     {\ar "32a";"22ab"};
\endxy
\\
\end{equation}
  \end{itemize}
\end{itemize} All composition operations are by pushout; source and
target operations are the same as those for cospans.
\end{definition}

Note that we could of course make the dual definition for spans, which
may be more natural (but is not what we need for the cobordism case).

\begin{remark} Just as 2-morphisms in $\M$ and morphisms in $\B$ can
be seen as diagrams which are ``products'' of a cospan with a map of
cospans, 2-morphisms in $\B$ are given by diagrams which are
``products'' of horizontal and vertical cospan maps.  These have, in
either direction, four maps of cospans, with objects joined by maps of
cospans.  Composition again is by pushout in composable pairs of
diagrams.
\end{remark}

The next lemma shows how this is really an example of a {\db}:

\begin{lemma}\label{lemma:span2bicat} For any category $\C$ with
pushouts, $\iiCCosp$ forms a {\db}.
\end{lemma}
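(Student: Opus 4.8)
The plan is to verify the data listed in the definition of a {\db}: that $\Ob = \CCosp$, $\M$, and $\B$ are each bicategories, that source and target are weak 2-functors, that composition is given by pushout, and that the associator and unitor arising from the universal property of pushouts satisfy the pentagon and unit laws. The guiding principle throughout is that everything reduces to the single fact, already used in Remark \ref{thm:spanbicat}, that pushout is a universal construction, so all structural morphisms and all their coherence equations are forced by uniqueness. First I would observe that $\Ob = \CCosp$ is a bicategory by the dual of B\'enabou's theorem (Remark \ref{thm:spanbicat} applied to $\Cop$), which handles the ``objects'' level for free.

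\textbf{The morphism and 2-morphism bicategories.} Next I would show that $\M$ is a bicategory. Its objects are cospans; its morphisms are the ``square'' diagrams (\ref{xy:cspan2}), which one can see as cospans-of-cospans in the horizontal direction (a cospan $X \to S \leftarrow Y$ of the vertical cospans $T_X, M, T_Y$); its 2-morphisms are the diagrams (\ref{xy:cospanmap1}). Composition of squares is by taking pushouts of the three constituent cospans, exactly as in Definition \ref{def:cspan20}, and the well-definedness and associativity-up-to-iso of this composition follow by the same universal-property argument as for $\Cspan$, applied simultaneously to the three rows. In effect $\M$ is itself ``$\CCosp$ of $\CCosp$'', so it inherits a bicategory structure from two nested applications of B\'enabou's construction. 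The same reasoning, one categorical level down, shows $\B$ is a bicategory: its objects are cospan maps, its morphisms the diagrams (\ref{xy:cospanmap2}), its 2-morphisms the diagrams (\ref{eq:cospanmapmap}), with composition again by pushout. I would present these three verifications in parallel, emphasizing that the argument is uniform and that the only input is the existence of pushouts in $\C$.

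\textbf{Source, target, composition, and coherence.} Then I would check that $s,t : \M \ra \Ob$ and $s,t : \B \ra \M$ are weak 2-functors: on each square (\ref{xy:cspan2}) they return the top and bottom horizontal cospans (or left and right vertical ones, depending on which pair we call source/target), and they commute with pushout composition because pushouts of the boundary cospans are computed as the appropriate faces of the pushout of the whole square; the required compositor isomorphisms for these 2-functors are again pushout comparison isomorphisms. The interchange law (\ref{eq:bicatinterchange}) holds because horizontal and vertical pushouts of a grid of cospans commute up to canonical isomorphism, which is the standard ``pushouts commute with pushouts'' fact. Finally the associator $a : \M^3 \ra \B$ and unitors $l, r : \Ob \ra \M$ are the isomorphisms supplied by universality of the iterated pushouts, and the pentagon (\ref{eq:pentagonid}) and unitor laws (\ref{eq:unitorlaws}) hold automatically since both sides of each diagram are morphisms with the same source and target into a pushout, hence equal by uniqueness --- precisely the mechanism cited in the proof sketch of Remark \ref{thm:spanbicat}.

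\textbf{Main obstacle.} The genuinely delicate part is bookkeeping rather than mathematics: one must pin down exactly which of the many faces of the diagrams (\ref{xy:cospanmap1}), (\ref{xy:cospanmap2}), (\ref{eq:cospanmapmap}) serve as source, target, and composite, and check that the interchange and coherence equations really are instances of uniqueness of maps out of a pushout and not some finer compatibility that must be imposed by hand. I expect the hardest single step to be verifying the interchange law at the top level --- that composing squares horizontally then vertically agrees with vertical then horizontal --- because this requires identifying a $3 \times 3$ grid of pushouts and confirming that the two orders of iterated pushout yield canonically isomorphic (indeed equal, once one fixes representatives) results; here one leans on the fact that a colimit of colimits is a colimit of the total diagram. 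Once that is in hand, everything else follows the template of B\'enabou's span bicategory applied twice over, and the lemma is proved.
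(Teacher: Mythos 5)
Your proposal is correct and follows essentially the same route as the paper's proof: $\Ob = \CCosp$ is a bicategory by B\'enabou's construction, $\M$ and $\B$ inherit bicategory structure because composition acts columnwise just as in $\CCosp$, and all structural isomorphisms and their coherence laws (associator, unitors, interchange) are forced by the universal property of pushouts. If anything, your treatment of the interchange law and of the weak 2-functoriality of the source and target maps is more explicit than the paper's, which disposes of these points in a sentence.
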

\begin{proof}
$\M$ and $\B$ are bicategories since the composition functors act just
like composition in $\CCosp$ in each column, and therefore satisfies
the same axioms.

Since the horizontal and vertical directions are symmetric, we can
construct functors between $\Ob$, $\M$, and $\B$ with the properties
of a bicategory simply by using the same constructions that turn each
into a bicategory.  In particular, the source and target maps from
$\M$ to $\Ob$ and from $\B$ to $\M$ are the obvious maps giving the
ranges of the projection maps in (\ref{xy:cspan2}).  The partially
defined (horizontal) composition maps $\circ : \M^2 \ra \M$
and $\otimes_H : \B^2\ra \B$ are defined by taking pushouts
of diagrams in $\C$, which exist for any composable pairs of diagrams
because $\C$ has pushouts.  They are functorial since they are
independent of composition in the horizontal direction.  The
associator for composition of morphisms is given in the pushout
construction.

To see that this construction gives a \db, we note that
$\Ob$, $\M$, and $\B$ as defined above are indeed bicategories.
$\Ob$, because $\CCosp$ is a bicategory.  $\M$ and $\B$ because the
morphism and 2-morphism maps from the composition, associator, and
other functors required for an {\db} give these the structure of
bicategories as well.

Moreover, the composition functors satisfy the properties of a
bicategory for just the same reason that composition of cospans (and
spans) does, since each of the three maps involved are given by this
construction.  Thus, we have a {\db}.
\end{proof}


\subsection{Decategorification}\label{sec:decatfy}

Our motivation for showing Lemma \ref{lemma:span2bicat} is to get show
that cobordisms with corners form a special example of a {\vdb} of
double cospans in some suitable category $\C$.  We have described how to
get a {\db} of such structures, so to get what we want, we need to
show how a {\vdb} can be a special kind of {\db}.  In particular, we
need to define conditions which allow us to speak of the action of
2-cells upon squares.  It is helpful, in trying to understand what
these are, to consider a ``lower dimensional'' example of a similar
process.

In a double category, thought of as an internal category in $\Cat$, we
have data of four sorts, as shown in Table \ref{table:doublecatdata}.

\begin{table}[h]
\begin{tabular}{|l|l|l|}
\hline
& $\Ob$ & $\M$ \\
\hline
Objects
&
\begin{minipage}{1in}
  \begin{equation*}
  \xymatrix{
  \bullet^{x} \\
  }
  \end{equation*}
\end{minipage}
 & 
\begin{minipage}{1in}
  \begin{equation*}
  \xymatrix{
  \bullet \ar[r]^{f} & \bullet
  }
  \end{equation*}
\end{minipage}
\\

\hline
Morphisms
&
\begin{minipage}{1in}

  \begin{equation*}
  \xymatrix{
  \bullet \ar[d]^{g} \\
  \bullet
  }
  \end{equation*}
\end{minipage}

 & 
\begin{minipage}{1in}
  \begin{equation*}
  \xymatrix{
  \bullet \ar[r] \ar[d] & \bullet \ar[d] \\
  \bullet \ar[r] \uriicell{F} & \bullet
  }
  \end{equation*}
\end{minipage}
\\

  \hline

\end{tabular}
\caption{Data of a Double Category\label{table:doublecatdata}}
\end{table}

That is, a double category $\catname{DC}$ has categories $\Ob$ of
objects and $\M$ of morphisms.  The first column of the table shows
the data of $\Ob$: its objects are the objects of $\catname{DC}$; its
morphisms are the \textit{vertical} morphisms.  The second column
shows the data of $\M$: its objects are the \textit{horizontal}
morphisms of $\catname{DC}$; its morphisms are the squares of
$\catname{DC}$.

\begin{remark}The kind of ``decategorification'' we will want to do to
obtain {\vdbs} has an analog in the case of double categories.  Namely,
there is a condition we can impose which effectively turns the double
category into a category, where the horizontal and vertical morphisms
are composable, and the squares can be ignored.  The sort of condition
involved is similar to the \textit{horn-filling conditions} introduced
by Ross Street \cite{street} in his first introduction of the idea of
weak $\omega$-categories.  In that case, all morphisms correspond to
simplicial sets, and a horn filling condition is one which says that,
for a given hollow simplex with just one face (morphism) missing from
the boundary, there will be a morphism to fill that face, and a
``filler'' for the inside of the simplex, making the whole commute.  A
restricted horn-filling condition demands that this is possible for
some class of candidate simplices.
\end{remark}

For a double category, morphisms can be edges or squares, rather than
$n$-simplices, but we can define the following ``filler'' condition:
given any pair $(f,g)$ of a horizontal and vertical morphism where the
target object of $f$ is the source object of $g$, there will be a
unique pair $(h,\star)$ consisting of a unique horizontal morphism $h$ and
unique invertible square $\star$ making the following diagram commute:
\begin{equation}\label{eq:hornfiller}
\xymatrix{
  x \ar@{-->}[d]_{h} \ar[r]^{f} & y \ar[d]^g \\
  z \ar[r]_{1_z} \uriicell{\ast} & z
}
\end{equation}
and similarly when the source of $f$ is the target of $g$.  Notice
that taking $f$ or $g$ to be the identity in these cases implies $F$
is the identity.

If, furthermore, there are no other interesting squares, then this
double category can be seen as just a category.  In that case, the
unique $h$ can just be interpreted as the composite of $f$ and $g$ and
$\star$ as the process of composition.  So we will use the notation
$g\circ f$ instead of $h$ in this situation.

To see that this defines a composition operation, we need to observe
that composition defined using these fillers agrees with the usual
composition in the horizontal or vertical categories, is associative,
etc.  For example, given morphisms as in the diagram:
\begin{equation}
\xymatrix{
  w \ar[r]^{f} & x \ar[r]^{f'} & y \ar[d]^{g}\\
  z \ar[r]_{1_z} & z \ar[r]_{1_z} & z \\
}
\end{equation}
there are two ways to use the unique-filler principle to fill this
rectangle.  One way is to first compose the pairs of horizontal
morphisms on the top and bottom, then fill the resulting square.  The
square we get is unique, and the morphism is denoted $g \circ (f'
\circ f)$.  The second way is to first fill the right-hand square, and
then using the unique morphism we call $g \circ f'$, we get another
square on the left hand side, which our principle allows us to fill as
well.  The square is unique, and the resulting morphism is called $(g
\circ f') \circ f$.  Composing the two squares obtained this way must
give the square obtained the other way, since both make the diagram
commute, and both are unique.  So we have:
\begin{equation}
\xymatrix{
  w \ar@{-->}[d]_{(g \circ f') \circ f} \ar[r]^{f} & x \ar@{-->}[d]^{g \circ f'} \ar[r]^{f'} & y \ar[d]^{g} \\
  z \ar[r]_{1_z} \uriicell{\ast} & z \ar[r]_{1_z} \uriicell{\ast}  & z \\
} \qquad = \qquad
\xymatrix{
  w \ar@{-->}[d]_{g \circ (f'\circ f)} \ar[r]^{f' \circ f} & y \ar[d]^{g} \\
  z \ar[r]_{1_z} \uriicell{\ast} & z \\
}
\end{equation}

So in fact we can ``decategorify'' a double category satisfying the
unique filler condition, and treat it as if it were a mere category
with horizontal and vertical morphisms equivalent.  The composition
between horizontal and vertical morphisms is given by the filler:
given one of each, we can find a square of the required kind, by
taking the third side to be an identity.

\begin{remark} Note that our condition does not give a square for
every possible combination of morphisms which might form its sources
and targets.  In particular, there must be an identity morphism---on
the bottom in the example shown.  If that identity could be any
morphism $h$, then by choosing $f$ and $g$ to be identities, this
would imply that every morphism must be invertible (at least weakly),
since there must then be an $h^{-1}$ with $h^{-1} \circ h$ isomorphic
to the identity.  When a filler square does exist, and we consider
$\catname{DB}$ as a category $\C$, the filler square indicates there
is a commuting square in $\C$: we think of it as the identity between
the composites along the upper right and lower left.
\end{remark}

The decategorification of a {\db} to give a {\vdb} is similar, except that
whereas with a double category we were cutting down only the squares
(the lower-right quadrant of Table \ref{table:doublecatdata}.  We need
to do more with a \db, since there are more sorts of data, but they
fall into a similar arrangement, as shown in Table
\ref{table:doublebicatdata}.

\begin{table}[h]
\begin{tabular}{|l|l|l|l|}
\hline
& $\Ob$ & $\M$ & $\B$ \\
\hline
Objects
&
\begin{minipage}{1in}
  \begin{equation*}
  \xymatrix{
  \bullet^{x} \\
  }
  \end{equation*}
\end{minipage}
 & 
\begin{minipage}{1in}
  \begin{equation*}
  \xymatrix{
  \bullet \ar[r]^{f} & \bullet
  }
  \end{equation*}
\end{minipage}
 & 
\begin{minipage}{1.5in}
  \begin{equation*}
  \xymatrix{
  \bullet \ar@/^1pc/[r]^{ }="0" \ar@/_1pc/[r]^{ }="1" & \bullet \\
  \ar@{=>}"0" ;"1"^{\alpha}
  }
  \end{equation*}
\end{minipage}
\\

\hline
Morphisms
&
\begin{minipage}{1in}

  \begin{equation*}
  \xymatrix{
  \bullet \ar[d]^{g} \\
  \bullet
  }
  \end{equation*}
\end{minipage}

& 
\begin{minipage}{1in}
  \begin{equation*}
  \xymatrix{
  \bullet \ar[r] \ar[d] & \bullet \ar[d] \\
  \bullet \ar[r] \uriicell{F} & \bullet
  }
  \end{equation*}
\end{minipage}
&
\begin{minipage}{1.5in}
  \begin{equation*}
  \xymatrix{
  \bullet \driicell{P_1} \ar[d] \ar@/^1pc/[r]^{ }="0" \ar@/_1pc/[r]^{ }="1" & \bullet \ar[d] \\
  \ar@{=>}"0" ;"1"
  \bullet \ar@{-->}@/^1pc/[r]^{ }="2" \ar@/_1pc/[r]^{ }="3" & \bullet  \\
  \ar@{==>}"2" ;"3"
  }
  \end{equation*}
\end{minipage}

\\

  \hline
2-Cells
&
\begin{minipage}{1in}
  \begin{equation*}
  \xymatrix{
  \bullet \ar@/^1pc/[d]^{ }="0" \ar@/_1pc/[d]^{ }="1" \\
  \bullet \\
  \ar@{=>}"0" ;"1"^{\alpha}
  }
  \end{equation*}
\end{minipage}

&
\begin{minipage}{1in}
  \begin{equation*}
  \xymatrix{
  \bullet \driicell{P_2} \ar@/^1pc/[d]^{ }="0" \ar@/_1pc/[d]^{ }="1" \ar[r] & \bullet  \ar@/^1pc/[d]^{ }="2" \ar@{-->}@/_1pc/[d]^{ }="3" \\
  \bullet \ar[r] & \bullet \\
  \ar@{=>}"0" ;"1"
  \ar@{==>}"2" ;"3"
  }
  \end{equation*}
\end{minipage}
&
\begin{minipage}{1.5in}
  \begin{equation*}
  \xymatrix{
  \bullet \ar@/^1pc/[dd]^{ }="0" \ar@/_1pc/[dd]^{ }="1" \ar@/^1pc/[rr]^{ }="2" \ar@/_1pc/[rr]^{ }="3" & & \bullet \ar@/^1pc/[dd]^{ }="4" \ar@{-->}@/_1pc/[dd]^{ }="5" \\
   & \Lleftarrow^{T} & \\
  \bullet \ar@{-->}@/^1pc/[rr]^{ }="6" \ar@/_1pc/[rr]^{ }="7" & &
  \bullet \\ \ar@{=>}"0" ;"1" \ar@{=>}"2" ;"3" \ar@{==>}"4" ;"5"
  \ar@{==>}"6" ;"7" } \end{equation*}
\end{minipage}
  \\
\hline
\end{tabular}
\caption{The data of a \db\label{table:doublebicatdata}}
\end{table}

\begin{remark}
This shows the data of the bicategories $\Ob$, $\M$, and $\B$, each of
which has objects, morphisms, and 2-cells.  Note that the morphisms in
the three entries in the lower right hand corner---2-cells in $\M$,
and morphisms and 2-cells in $\B$---are not 2-dimensional.  The
2-cells in $\M$ and morphisms in $\B$ are the three-dimensional
``filling'' inside the illustrated cylinders, which each have two
square faces and two bigonal faces.

The 2-cells in $\B$ should be drawn 4-dimensionally.  The picture
illustrated can be thought of as taking both square faces of one
cylinder $P_1$ to those of another, $P_2$, by means of two other cylinders
($S_1$ and $S_2$, say), in such a way that $P_1$ and $P_2$ share their
bigonal faces.  This description works whether we consider the $P_i$
to be horizontal and the $S_j$ vertical, or vice versa.  These
describe the ``frame'' of this sort of morphism: the ``filling'' is the
4-dimensional track taking $P_1$ to $P_2$, or equivalently, $S_1$ to
$S_2$ (just as a square in a double category can be read horizontally
or vertically).  Not all relevant parts of the diagram have been
labelled here, for clarity.
\end{remark}

Next we want to describe a condition similar to the one we gave which
made it possible to think of a double category as a category.  In that
case, we got a condition which effectively allowed us to treat any
square as an identity, so that we only had objects and morphisms.
Here, we want a condition which lets us throw away the three entries
of table \ref{table:doublebicatdata} in the bottom right.  This
condition, when satisfied, should allow us to treat a {\db} as a \vdb.
It comes in two parts:

\begin{definition}\label{def:actionconds} We say that a {\db} satisfies
the \textbf{vertical action condition} if, for any morphism $M_1 \in
\M$ and 2-morphism $\alpha \in \Ob$ such that $s(M_1) = t(\alpha)$,
there is a morphism $M_2 \in \M$ and 2-morphism $P \in \M$ such that
$P$ fills the ``pillow diagram'':
\begin{equation}\label{eq:vertactcond}
  \xymatrix{
    x \ar[r]^{ }="1" \ar[d] \ar@/^2pc/[r]^{ }="0" & y \ar[d] \\
    x' \ar[r] \uriicell{M_1} & y' \\
    \ar@{=>}"0" ;"1"^{\alpha}
  } \qquad \ra_P \qquad
  \xymatrix{
      x \ar [r] \ar[d] & y \ar[d] \\
      x' \ar[r] \uriicell{M_2} & y'
  }
\end{equation} where $M_2$ is the back face of this diagram, and the
2-morphism in $\Ob$ at the bottom is the identity.

An {\db} satisfies the \textbf{horizontal action condition} if for any
morphism $M_1 \in \M$ and object $\alpha$ in $\B$ with $s(M_1) =
t(\alpha$ there is a morphism $M_2 \in \M$ and morphism $P \in \B$
such that $P$ fill the pillow diagram which is the same as
(\ref{eq:vertactcond}) turned sideways.
\end{definition}

Here, $M_2$ is the square which will eventually be named $M_1 \star_V
\alpha$ when we define an action of 2-cells on squares.

\begin{remark} One can easily this condition is analogous to our
filler condition (\ref{eq:hornfiller}) in a double category by turning
the diagram (\ref{eq:vertactcond}) on its side.  What the diagram says
is that when we have a square with two bigons---the top one arbitrary
and the bottom one the identity---there is another square $M_2$ (the
back face of a pillow diagram) and a filler 2-morphism $P \in \B$
which fills the diagram.  If one imagines turning this diagram on its
side and viewing it obliquely, one sees precisely
(\ref{eq:hornfiller}), as a dimension has been suppressed.  What is a
square in (\ref{eq:hornfiller}) is a cylinder (2-morphism in $\B$);
the roles of both squares and bigons in (\ref{eq:vertactcond}) are
played by arrows in (\ref{eq:hornfiller}); arrows in
(\ref{eq:vertactcond}) become pointlike objects in
(\ref{eq:hornfiller}).
\end{remark}

However, to get the compatibility between horizontal and vertical
actions, we need something more than this.  In particular, since these
involve both horizontal and vertical cylinders (3-dimensional
morphisms in the general sense), the compatibility condition must
correspond to the 4-dimensional 2-cells in $\B$, shown in the lower
right corner of Table \ref{table:doublebicatdata}.

To draw necessary condition is difficult, since the necessary diagram
is four-dimensional, but we can describe it as follows:

\begin{definition} We say a {\db} satisfies the
\textbf{action compatibility condition} if the following holds.
Suppose we are given \begin{itemize}
\item a morphism $F \in \M$
\item an object $\alpha \in \B$ whose target in $\M$ is a source of $F$
\item a 2-cell $\beta \in \Ob$ whose target morphism is a source of $F$
\item an invertible morphism $P_1 \in \B$ with $F$ as source, and the
      objects $\alpha$ and $\opname{id}$ in $\B$ as source and target
\item an invertible 2-cell $P_2 \in \M$ with $F$ as source, and the
      2-cells $\beta$ and $\opname{id}$ in $\M$ as source and target
\end{itemize} where $P_1$ and $P_2$ have, as targets, morphisms in
$\M$ we call $\alpha \star F$ and $\beta \star F$ respectively.  Then
there is a unique morphism $\hat{F}$ in $\M$ and 2-cell $T$ in $\B$
having all of the above as sources and targets.
\end{definition}

Geometrically, we can think of the unique 2-cell in $\B$ as resembling
the structure in the bottom right corner of Table
\ref{table:doublebicatdata}.  This can be seen as taking one
horizontal cylinder to another in a way that fixes the (vertical)
bigons on its sides, by means of a translation which acts on the front
and back faces with a pair of vertical cylinders (which share the top
and bottom bigonal faces).  Alternatively, it can be seen as taking
one vertical cylinder to another, acting on the faces with a pair of
horizontal cylinders.  In either case, the cylinders involved in the
translation act on the faces, but the four-dimensional interior, $T$,
acts on the original cylinder to give another.  The simplest
interpretation of this condition is that it is precisely the condition
needed to give the compatibility condition (\ref{eq:actionindep}).

\begin{remark}Notice that the two conditions given imply the existence
of unique data of three different sorts in our \db.  If these are the
only data of these kinds, we can effectively omit them (since it
suffices to know information about their sources and targets.  This
omission is part of a decategorification of the same kind we saw for
the double category $\catname{DC}$.
\end{remark}

In particular, we use the above conditions to show the following:

\begin{lemma}\label{lemma:doublebicat}Suppose $\catname{D}$ is a {\db}
which has at most a unique morphism or 2-morphisms in $\B$, and at
most a unique 2-morphism in $\M$, having any specified sources and
targets; and $\catname{D}$ satisfies the horizontal and vertical
action conditions and the action compatibility condition; then
$\catname{D}$ gives a {\vdb} in the sense of Verity.
\end{lemma}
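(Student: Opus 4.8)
The plan is to show that a \db $\catname{D}$ satisfying the stated uniqueness and action conditions can be ``decategorified'' into a Verity double bicategory by discarding the redundant top-dimensional data and reorganizing the rest. First I would set up the correspondence of data. A \db has bicategories $\Ob$, $\M$, $\B$; a \vdb needs a class of objects $\Obj$, horizontal and vertical bicategories $\Hor$ and $\Ver$, and a class of squares $\Squ$ with actions. I would define $\Obj$ to be the objects of $\Ob$; take $\Hor$ to be $\Ob$ itself (its objects, morphisms, and 2-cells become the objects, horizontal morphisms, and horizontal 2-morphisms of the \vdb); and take $\Ver$ to be the bicategory whose objects are objects of $\Ob$, whose morphisms are objects of $\M$, and whose 2-morphisms are objects of $\B$ --- this is a bicategory by exactly the argument in Lemma \ref{lemma:span2bicat} that $\M$ and $\B$ inherit bicategory structure. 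The squares $\Squ$ are the morphisms of $\M$ (the square-shaped data in Table \ref{table:doublebicatdata}), with $s_h, t_h$ and $s_v, t_v$ being the four evident source/target maps, and the corner equations (\ref{eq:squarestmaps}) following from the compatibility built into the \db source and target functors.

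Next I would verify the structure on squares. Horizontal and vertical composition $\otimes_H$, $\otimes_V$ of squares come from the partially-defined composition functor $\circ : \M^2 \ra \M$ (in the two directions built into $\M$ as a bicategory of the relevant shape); the interchange law (\ref{eq:squareinterchangelaw}) follows from the interchange law (\ref{eq:bicatinterchange}) in the \db. The unit squares $1_f$ come from the unitor data. The key new point is the \emph{action} of horizontal and vertical 2-morphisms on squares, $\star_H$ and $\star_V$: here is where the horizontal and vertical action conditions (Definition \ref{def:actionconds}) do the work. Given a square $F$ and a 2-morphism $\alpha$ with matching source/target, the vertical action condition produces a square $M_2$ together with a filling $P \in \B$; since by hypothesis $P$ is the \emph{unique} such 2-morphism in $\B$, and hence $M_2$ is determined, we may \emph{define} $F \star_V \alpha := M_2$ and discard $P$ itself. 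I would then check that the required laws for the actions --- the interchange laws, compatibility with composition (\ref{eq:actioncompat}), associativity $\alpha \star (S \star \beta) = (\alpha \star S) \star \beta$, and the independence condition (\ref{eq:actionindep}) --- all follow from uniqueness: both sides of each equation name squares (or morphisms in $\B$, $\M$) that solve the same universal-style filling problem, so they coincide. The independence condition (\ref{eq:actionindep}), mixing a horizontal and a vertical action, is exactly what the action compatibility condition delivers: the unique 2-cell $T \in \B$ it produces forces the two composites to agree, after which $T$ is discarded. Finally, the compatibility of the actions with the associators and unitors, conditions (\ref{eq:assocaction}) and (\ref{eq:unitaction}), I would obtain the same way: the associator and unitor of $\Hor = \Ver$ are 2-cells in the relevant bicategories, their action on squares is a special case of $\star$, and the ``pillow'' equalities asserting agreement with composition of squares are again forced by the uniqueness hypothesis applied to the corresponding filling problems in $\B$ and $\M$.

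The main obstacle I expect is bookkeeping rather than conceptual: making precise, in each of the many coherence equations in Definition \ref{def:doublebicat}, that the two sides genuinely arise as solutions to \emph{the same} filling problem so that uniqueness applies --- this requires carefully tracking sources and targets through composites of squares, 2-cells, and the pillow diagrams, and checking that no spurious choices (e.g. of which pushout representative, or which order of filling) enter. A secondary subtlety is that the action conditions as stated produce fillers only for the specific shape where one bigon is an identity; I would need to note, as in the double-category warm-up in Section \ref{sec:decatfy}, that this restricted form is exactly what is needed and that allowing arbitrary fillers would over-determine the structure. Given that the analogous one-lower-dimensional argument (turning a double category with the filler condition into a plain category) is spelled out in the excerpt, I would largely proceed by that analogy, dimension-shifted up by one, and leave the fully routine verifications to the reader as the paper does elsewhere.
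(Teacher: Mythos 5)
Your proposal is correct and follows essentially the same route as the paper's own proof: you take $\Hor = \Ob$, build $\Ver$ from the objects of $\Ob$, $\M$, $\B$, take squares to be the morphisms of $\M$, and then use the uniqueness hypotheses to declare the top-dimensional fillers to be identities, with the action and compatibility conditions supplying the $\star_H$, $\star_V$ actions and the independence law (\ref{eq:actionindep}). The only difference is one of emphasis (you foreground the uniqueness-of-fillers argument for each coherence equation, where the paper leans slightly more on functoriality of the structure maps), which does not change the substance.
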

\begin{proof}
$\catname{D}$ consists of bicategories $(\Ob,\M,\B)$ together with all
required maps (three kinds of source and target maps, two kinds of
identity, three partially-defined compositions, left and right
unitors, and the associator), satisfying the usual properties.  To
begin with, we describe how the elements of a {\vdb} $\catname{V}$
(definition \ref{def:doublebicat}) arise from this.

The horizontal bicategory $\Hor$ of $\catname{V}$ is simply $\Ob$.
The vertical bicategory $\Ver$ consists of the objects of each of
$\Ob$, $\M$, and $\B$, where the required source, target and
composition maps for $\Ver$ are just the object maps from those for
$\catname{D}$, which are all functors.  We next check that this is a
bicategory.

The source and target maps for $\Ver$ satisfy all the usual rules for a
bicategory since the corresponding functors in $\catname{D}$ do.
Similarly, the composition maps satisfy (\ref{eq:bicatcompos}),
(\ref{eq:bicatrunit}) and (\ref{eq:bicatlunit}) up to natural
isomorphisms: they are just object maps of functors which satisfy
corresponding conditions.  We next illustrate this for composition.

In $\catname{D}$, there is an associator 2-natural transformation.
That is, a partially defined weak 2-functor $\alpha : {\M}^3 \rightarrow
\B$ satisfying the pentagon identity (strictly, since we are
considering a \textit{strict model} of the theory of bicategories).
Among the data for $\alpha$ are the object maps, which give the maps
for the associator in $\Ver$.  Since the associator 2-natural
transformation satisfies the pentagon identity, so do these object
maps.  The other properties are shown similarly, so that $\Ver$ is a
bicategory.

Next, we declare that the squares of $\catname{V}$ are the morphisms
of $\M$.  Their vertical source and target maps are the morphism maps
from the source and target functors from $\M$ to $\Ob$.  Their
horizontal source and target maps are the internal ones in $\M$.
These satisfy equations (\ref{eq:squarestmaps}) because the source and
target maps of $\catname{D}$ are functors (in our special example of
cospans, this amounts to the fact that (\ref{xy:cspan2}) commutes).

The horizontal composition of squares (\ref{eq:squarehorizcomp}) is just
the composition of morphisms in $\M$.  Now, by assumption, $\M$ is a
bicategory with at most unique 2-morphisms having any given source and
target.  If we declare these are identities (that is, identify their
source and target morphisms), we get that horizontal composition is
exactly associative and has exact identities.

The vertical composition of squares (\ref{eq:squarevertcomp}) is given
by the morphism maps for the partially defined functor $\circ$ for
$\M$, and so composition here satisfies the axioms for a bicategory.
In particular, it has an associator and a unitor: but these must be
morphisms in $\B$ since we take the morphism maps from the associator
and unitor functors (and the theory of bicategories says that these give
2-morphisms).  But again, we can declare that there are only identity
morphisms in $\B$, and this composition is exactly associative.

The interchange rule (\ref{eq:squareinterchangelaw}) follows again from
functoriality of the composition functors.

The action of the 2-morphisms (bigons) on squares is guaranteed by the
horizontal and vertical action conditions.  In particular, by
composition of in $\M$ or $\B$, we guarantee the existence of the
categories of horizontal and vertical cylinders $\catname{Cyl_H}$ and
$\catname{Cyl_V}$, respectively.  These come from the 2-morphisms in
$\M$ or morphisms in $\B$ respectively which those conditions demand
must exist.  Taking these to be identities, the cylinders consist of
commuting cylindrical diagrams with two bigons and two squares.

In the case where one bigon is the identity, and the other is any
bigon $\alpha$, the conditions guarantee the existence of a cylinder,
which we have declared to be the identity.  This defines the effect of
the action of $\alpha$ on the square whose source is the target of
$\alpha$.  If this square is $F$, we denote the other square $\alpha
\star_H F$ or $\alpha \star_V F$ as appropriate.

The condition (\ref{eq:actioncompat}) guaranteeing independence of the
horizontal and vertical actions follows from the action compatibility
condition.  For suppose we have a square $F$ whose horizontal and
vertical source arrows are the targets of 2-cells $\alpha$ and
$\beta$, and attach to its opposite faces two identity 2-cells.  Then
the horizontal and vertical action conditions mean that there will be
a square $\alpha\star_H F$ and a square $\beta\star_V F$).  Then the
action compatibility condition applies (the $P_i$ are the identities
we get from the action condition), and there is a morphism in $\M$,
namely a square in $\catname{V}$ and a 2-cell $T \in
\B$.  Consider the remaining face, which the action condition suggests
we call $\alpha \star_H (\beta \star_V F)$ or $\beta \star_V (\alpha
\star_H F)$, depending on the order in which we apply them.  The
compatibility condition says that there is a unique square which fills
this spot so the two must be equal.

Now suppose we have three composable squares---that is, morphisms $F$,
$G$, and $H$ in $\M$, which are composable along shared source and
target objects in $\M$.  The associator functor has an object map,
giving objects in $\B$ at the ``top'' and ``bottom'' of the squares.
It also has a morphism map, giving morphisms in $\B$.  But by
assumption there is only a unique such map between , these associators
must be the unique morphism in $\B$ with source $(H \circ G) \circ F$
and target $H \circ (G \circ F)$.  Then by the vertical action
condition, we have a filler 2-morphism in $\M$ for the action on the
composite square by the top associator, and then, taking the result
and composing with the bottom associator, we get another filler.  This
must be the unique map between the two composites---which is the
identity, since they have the same sources and targets.  So we get a
commuting cylinder.  Composing squares along source and target
morphisms in $\Ob$ works the same way by a symmetric argument.

The condition (\ref{eq:unitaction}) is similar---the unitor functor
will give the unique morphism in $\B$, and the action compatibility
condition gives the commuting cylinder for unitors on the composite of
squares.

So from any such {\db} we get a \vdb.
\end{proof}

\begin{remark} It is interesting to note how these arguments apply to
the case when we are looking at constructions in $\iiCCosp$, as will
be the case in $nCob$.

In particular, the interchange rules hold because the middle objects
in the four squares being composed form the vertices of a new square.
The pushouts in the vertical and horizontal direction form the
middle objects of vertical and horizontal cospans over these.  The
interchange law means that the pushout (in the horizontal direction)
of the objects from the vertical cospans is in the same isomorphism
class as the pushout (in the vertical direction) of the objects from
the horizontal cospans.  This is true because of the universal property
of the pushout.

The horizontal and vertical 2-morphisms are maps of cospans, and act on
the squares by composition of morphisms in $\C$: given a square $M$
with four maps $P_i$ and $\Pi_i$ to the edges as in (\ref{xy:cspan2});
and a morphism of cospans on any edge (for definiteness, say the top),
where the $\C$-morphism in the middle is $S \ralim^{f}
\tilde{S}$.  Then the composite $f \circ P_1 : M \ra
\tilde{S}$ is a source (or target) map to the cospan $X
\ralim^{\iota_1} \tilde{S} \lalim^{\iota_2} Y$.  The result
is again a square.  In particular, composition of internal maps in
horizontal and vertical morphism of cospans with the projections in a
square are independent.
\end{remark}

\bibliography{TQFTpaper}
\bibliographystyle{acm}

\end{document}